\newtheorem{theorem}{{\sc Theorem}}[section]
\newtheorem{cor}[theorem]{{\sc Corollary}}
\newtheorem{lemma}[theorem]{{\sc Lemma}}
\newtheorem{prop}[theorem]{{\sc Proposition}}
\theoremstyle{remark}
\newtheorem{remark}[theorem]{{\sc Remark}}
\theoremstyle{definition}
\newcommand{\R}{\mathbb{R} }
\newcommand{\N}{\mathbb{N} }
\newcommand{\X}{\mathbb{X}}
\newcommand{\A}{\mathcal{A}}
\newcommand{\B}{\mathcal{B}}
\newcommand{\F}{\mathcal{F}}
\newcommand{\G}{\mathcal{G}}
\newcommand{\W}{\mathcal{W}}
\newcommand{\K}{\mathcal{K}}
\newcommand{\Z}{\mathbb{Z}}
\newcommand{\im}{\textnormal{im}}
\newcommand{\Prob}{\mathbb{P}}
\newcommand{\E}{\mathbb{E}}
\newcommand{\la}{\lambda}
\newcommand{\Pot}{\mathcal{P}}
\newcommand{\Om}{\Omega}
\providecommand{\abs}[1]{\lvert #1\rvert}
\providecommand{\babs}[1]{\bigl\lvert #1\bigr\rvert}
\providecommand{\Babs}[1]{\Bigl\lvert #1\Bigr\rvert}
\providecommand{\Bbabs}[1]{\biggl\lvert #1\biggr\rvert}
\providecommand{\BBabs}[1]{\Biggl\lvert #1\Biggr\rvert}
\providecommand{\norm}[1]{\lVert #1\rVert}
\providecommand{\fnorm}[1]{\lVert #1\rVert_\infty}
\providecommand{\Enorm}[1]{\lVert #1\rVert_2}
\providecommand{\BEnorm}[1]{\Bigl\lVert #1\Bigr\rVert_2}
\DeclareMathOperator{\Var}{Var}
\DeclareMathOperator{\dom}{dom}
\DeclareMathOperator{\Cov}{Cov}
\DeclareMathOperator{\Lip}{Lip}
\DeclareMathOperator{\Id}{Id}
\DeclareMathOperator{\Inf}{Inf}
\renewcommand{\phi}{\varphi}
\renewcommand{\epsilon}{\varepsilon}
\newcommand{\eps}{\varepsilon}
\renewcommand{\rho}{\varrho}
\renewcommand{\P}{\Prob}
\renewcommand{\1}{\mathds{1}}
\begin{document}
\title[Normal approximation on product spaces]{New bounds for normal approximation on product spaces with applications to monochromatic edges, random sums and an infinite de Jong CLT}
\author{Christian D\"obler}
\thanks{\noindent Mathematisches Institut der Heinrich Heine Universit\"{a}t D\"usseldorf\\
Email: christian.doebler@hhu.de\\
{\it Keywords: Normal approximation, Malliavin calculus, product spaces, Malliavin-Stein method, carr\'{e}-du-champ operators, Clark-Ocone bound, de Jong CLT, monochromatic edges, random sums } }
\begin{abstract}  
We extend the Malliavin theory for $L^2$-functionals on product probability spaces that has recently been developed by Decreusefond and Halconruy (2019) and by Duerinckx (2021), by  characterizing the domains and investigating the actions of the three Malliavin operators in terms of the infinite Hoeffding decomposition in $L^2$, which we identify as the natural analogue of the famous Wiener-It\^{o} chaos decomposition on Gaussian and Poisson spaces. 
We further combine this theory with Stein's method for normal approximation in order to provide three different types of abstract Berry-Esseen and Wasserstein bounds: a) Malliavin-Stein bounds involving the Malliavin gradient $D$ and the pseudo-inverse of the Ornstein-Uhlenbeck generator $L$, b) bounds featuring the carr\'{e}-du-champ operator $\Gamma$ and c) bounds making use of a Clark-Ocone type integration-by-parts formula. To demonstrate the flexibility of these abstract bounds, we derive quantitative central limit theorems for the number of monochromatic edges in a uniform random coloring of a graph sequence as well as for random sums and prove an infinite version of the quantitative de Jong CLT that has recently been proved by G. Peccati and the author (2017) and by the author (2023). As a further theoretical application, we deduce new abstract Berry-Esseen and Wasserstein bounds for functionals of a general independent Rademacher sequence.

\end{abstract}

\maketitle

\section{Introduction}\label{intro}
\subsection{Overview}
In the recent article \cite{DH} by Decreusefond and Halconruy, a version of Malliavin calculus for functionals on a product of countably many probability spaces has been developed. In particular, suitable versions of the \textit{Malliavin derivative} $D$, its adjoint $\delta$, named \textit{divergence operator}, and the corresponding \textit{Ornstein-Uhlenbeck generator} or \textit{number operator} $L$ satisfying the crucial identity $L=-\delta D$ have been introduced. Important properties of the operators $D$ and $L$ have also been studied independently by Duerinckx \cite{Duer}.

As has been stated in \cite{DH}, nowadays, one main motivation for developing such a Malliavin structure on product spaces is to combine it with Stein's method of distributional approximation to derive suitable so-called \textit{Malliavin-Stein bounds} and, consequently, versions of such bounds in the context of normal and gamma approximation have been given in \cite{DH}. Historically, such Malliavin-Stein bounds have first been proved for functionals of Gaussian processes in the seminal paper \cite{NouPec09} by Nourdin and Peccati and, then, afterwards also for Poisson functionals \cite{PSTU, Schu16} and for functionals of a Rademacher sequence \cite{NPR, KRT1, KRT2}. We refer to the monographs \cite{NouPecbook} and \cite{RePe-book} for comprehensive introductions to the \textit{Malliavin-Stein method} on Gaussian and Poisson spaces, respectively. 

In the years since the appearance of \cite{NouPec09}, the research field initiated by that paper has been extremely active, producing a vast amount of papers dealing with both new theoretical developments and relevant applications of the Malliavin-Stein method. In fact, one reason for its popularity and fame is that it has been successfully applied to important and difficult problems from fields as diverse as random geometry, telecommunication, random graphs, machine learning and mathematical statistics.    
We refer to the constantly updated website \cite{Nweb} for a comprehensive list of research articles related to this line of research.

The purpose of the present paper, on the one hand, is to further extend and complement the Malliavin theory for product probability spaces as developed in \cite{DH, Duer} and to combine this theory with Stein's method in order to prove new abstract error bounds for the normal approximation of real-valued functionals defined on such spaces. On the other hand, we emphasize the power and flexibility of our approach by deriving quantitative centrals limit theorems (CLTs) in three different applications that can be fitted into our framework. \smallskip\\

More precisely, the main contributions of this work include
\begin{enumerate}[1)]
\item new characterizations of the domains of the three Malliavin operators $D,\delta$ and $L$ in terms of the infinite Hoeffding decomposition reviewed in Subsection \ref{Hoeffding} and new formulae for the action of the operators $D$ and $L$ in terms of this decomposition.
\item a new Stroock type formula to recover the infinite Hoeffding decomposition of a functional by computing iterated derivatives.
\item new functional analytic properties of the Ornstein-Uhlenbeck operator $L$ and its pseudo-inverse $L^{-1}$.
\item the introduction of a carr\'{e}-du-champ operator on general product spaces including an alternative representation that allows effectively to assess its non-diffusiveness and the proof of a corresponding integration-by-parts formula.
\item three different types of new abstract bounds for the normal approximation of functionals on product probability spaces assessed in both the Wasserstein and Kolmogorov distances (see below for definitions) that are expressed in terms of the Malliavin operators, the carr\'{e}-du-champ operator and the Clark-Ocone covariance formula, respectively.
\item a new effective chain rule formula for the Malliavin gradient $D$.
\item an infinite version of the quantitative de Jong CLT by the author and G. Peccati \cite{DP17} and by the author \cite{D23}.
\item a quantitative CLT for random sums.
\item a new quantitative CLT of the correct order for the number of monochromatic edges in graph sequences colored uniformly at random, both for the situation of a fixed number of colors and for the number of colors diverging to infinity.
\item new Wasserstein and Berry-Esseen bounds for functionals of a general Rademacher sequence.
\end{enumerate} 

We will express our quantitative CLTs in terms of the following two prominent distributional distances. Recall that, for two real-valued random variables $Y,W\in L^1(\P)$, the \textit{Wasserstein distance} between (the distributions of) $Y$ and $W$ may be defined by 
\begin{align*}
 d_\W(Y,W)&:=\sup_{h\in\Lip(1)}\babs{\E[h(Y)]-\E[h(W)]},
\end{align*}
where $\Lip(1)$ denotes the collection of all Lipschitz-continuous functions on $\R$ with Lipschitz constant $1$. On the other hand, the \textit{Kolmogorov-distance} between (the distributions of) $Y$ and $W$ is defined as the supremum norm distance between their respective distribution functions, i.e. by 
\begin{equation*}
d_\K(Y,W):=\sup_{z\in\R}\babs{\P(Y\leq z)-\P(W\leq z)}.
\end{equation*}
It is well-known that convergence with respect to any of these metrics implies weak convergence of probability measures on $\R$.\smallskip\\

In this work, we will always let $Z\sim N(0,1)$ denote a standard normal random variable. Although one has the general inequality $d_\K(W,Z)\leq\sqrt{d_\W(W,Z)}$ in the context of normal approximation, this estimate is rarely ever sharp. In fact, in most examples the true rate of convergence is the same for both distances. Hence, we provide both, bounds on the Wasserstein distance and also on the Kolmogorov distance that are precise enough to yield estimates on the rate of convergence of the same order in several applications. As a general rule, proving and also applying bounds on the Kolmogorov distance, so-called \textit{Berry-Esseen bounds}, is more involved than bounding the Wasserstein distance.\smallskip\\

We next present concrete new error bounds for three different kinds of applications. The proofs of the respective bounds are provided in Section \ref{proofs}.

\subsection{An infinite quantitative de Jong theorem} \label{dejong}
Suppose that $X_k$, $k\in\N$, are independent random variables defined on a probability space $(\Om,\F,\P)$, where $X_k$ assumes values in an arbitrary measurable space $(E_k,\B_k)$, $k\in\N$. Denote by 
$(E,\B):=(\prod_{k\in\N} E_k,\bigotimes_{k\in\N} \B_k)$ the product of these spaces and suppose that $f:(E,\B)\rightarrow (\R,\B(\R))$ is a measurable function such that $F:=f(\X)=f\circ\X\in L^2(\P)$, where $\X:=(X_k)_{k\in\N}$ and $\B(\R)$ denotes the $\sigma$-field of Borel subsets of $\R$. In Subsection \ref{Hoeffding} below it is explained that every such random variable $F$ has an orthogonal decomposition of the form 
\begin{align}\label{hdec}
F=\sum_{M\subseteq\N: |M|<\infty}F_M=\sum_{p=0}^\infty F^{(p)},
\end{align}  
where $F^{(0)}=F_\emptyset=\E[F]$ $\P$-a.s., both infinite series in \eqref{hdec} converge unconditionally in $L^2(\P)$ (see Section \ref{appendix} for a review of this concept) and, moreover, the respective summands $F_M\in L^2(\P)$, $M\subseteq\N, |M|<\infty$, and $F^{(p)}$, $p\in\N_0$, are uncorrelated. In particular, one has $\E[F_M]=0$ for $M\not=\emptyset$ and $\E[F^{(p)}]=0$ for $p\geq1$ so that the decomposition \eqref{hdec} is orthogonal in $L^2(\P)$.
Thus, this decomposition generalizes the well-known \textit{Hoeffding decomposition} for integrable functionals of finitely many independent random variables to the situation of an infinite independent sequence $\X$. We will further see in Subsection \ref{Hoeffding} that, for any $p\in\N$, one has the orthogonal decomposition 
\[F^{(p)}=\sum_{M\subseteq\N: |M|=p}F_M,\]
where the series again converges unconditionally in $L^2(\P)$. 

For $k\in\N$, the \textit{influence} of the variable $X_k$ on $F$ is customarily defined as $\Inf_k(F):=\E[\Var(F|\G_k)]$ and it will become clear in Subsection \ref{mallop} below that, in terms of the decomposition \eqref{hdec}, one has the alternative formula
\begin{align*}
\Inf_k(F)=\sum_{\substack{M\subseteq\N:\\ |M|<\infty,\, k\in M}} \E[F_M^2]=\sum_{\substack{M\subseteq\N:\\ |M|<\infty,\, k\in M}} \Var(F_M).
\end{align*}
We finally define the \textit{maximal influence} of any single variable $X_k$, $k\in\N$, on $F$ as 
\begin{align*}
\rho^2(F):=\sup_{k\in\N}\Inf_k(F)=\sup_{k\in\N}\sum_{\substack{M\subseteq\N:\\ |M|<\infty,\, k\in M}} \E[F_M^2]
\end{align*}
and let $\rho(F):=\sqrt{\rho^2(F)}$.
In general, one speaks of a \textit{low influence functional}, if $\rho^2$ is small, see e.g. \cite{MOO10}. As has been demonstrated in \cite{MOO10, NPR2}, the quantity $\rho^2(F)$ plays a fundamental role for the universality of multilinear polynomial forms in independent random variables. Moreover, see again \cite{MOO10}, many important current problems in social choice theory and theoretical computer science are only stated for the low influence case in order to exclude pathological and therefore unimportant counterexamples. 

For $p\in\N$, we call $F$ as given in \eqref{hdec} a \textit{not necessarily symmetric, completely degenerate $U$-statistic of order $p$} based on $\X$, if 
\[F=F^{(p)}=\sum_{M\subseteq\N: |M|=p}F_M\quad \P\text{-a.s.,}\]
 that is, if $F_M=0$ $\P$-a.s. for all finite subsets $M$ of $\N$ with $|M|\not=p$. For simplicity, we will henceforth refer to such an $F$ as a \textit{degenerate $U$-statistic of order $p$} based on $\X$.

\begin{theorem}[Infinite quantitative de Jong CLT]\label{infdejong}
With the above notation fixed, suppose that, for some $p\in\N$, $F\in L^4(\P)$ is a degenerate $U$-statistic of order $p$ based on $\X$ such that $\E[F^2]=\Var(F)=1$. Then, for $Z\sim N(0,1)$ we have the bounds 
\begin{align}
d_\W(F,Z)&\leq \Biggl(\sqrt{\frac{2}{\pi}}+\frac{4}{3}\Biggr)\sqrt{\babs{\E[F^4]-3}}+\sqrt{\kappa_p}\Biggl(\sqrt{\frac{2}{\pi}}+ 
\frac{2\sqrt{2}}{\sqrt{3}}\Biggr)\rho(F),\quad\text{and}  \label{djwass}\\
d_\K(F,Z)&\leq  11.9\sqrt{\babs{\E[F^4]-3}} +\bigl(3.5+10.8\sqrt{\kappa_p}\bigr)\rho(F).\label{djkol}
\end{align}
Here, $\kappa_p\in(0,\infty)$ is a combinatorial constant that only depends on $p$.
\end{theorem}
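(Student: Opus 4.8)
The plan is to derive both bounds from the abstract Malliavin--Stein estimates of type a) established earlier, after specialising them to the present situation, and then to reduce everything to two moment estimates for the gradient $DF$. Since $F=F^{(p)}$ is a degenerate $U$-statistic of order $p$, the formula for the Ornstein--Uhlenbeck generator on the $p$-th level gives $LF=-pF$, whence $-L^{-1}F=\tfrac1p F$ and $-DL^{-1}F=\tfrac1p DF$. Therefore the first-order quantity $\langle DF,-DL^{-1}F\rangle$ featured in the abstract bound collapses to $\tfrac1p\langle DF,DF\rangle$, and, because the energy identity $\E[\langle DF,DF\rangle]=\sum_{M}|M|\,\E[F_M^2]=p\,\E[F^2]$ holds, its mean equals $\tfrac1p\,\E[\langle DF,DF\rangle]=\E[F^2]=1$. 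Thus the abstract Wasserstein bound assumes the shape $d_\W(F,Z)\le\sqrt{2/\pi}\,\E\babs{1-\tfrac1p\langle DF,DF\rangle}+R$, where the remainder $R$ is (up to a universal constant) the third-order sum $\tfrac1p\sum_{k\in\N}\E\bigl[\abs{D_kF}^3\bigr]$, and the Berry--Esseen bound has the same two ingredients together with additional lower-order terms.

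First I would estimate the Stein term by Cauchy--Schwarz, $\E\babs{1-\tfrac1p\langle DF,DF\rangle}\le\sqrt{\Var\bigl(\tfrac1p\langle DF,DF\rangle\bigr)}$, and prove the central variance estimate
\[
\Var\Bigl(\tfrac1p\langle DF,DF\rangle\Bigr)\le\babs{\E[F^4]-3}+\kappa_p\,\rho^2(F).
\]
To this end I would use the gradient formula $D_kF=\sum_{M\ni k}F_M$ derived earlier, so that $\langle DF,DF\rangle=\sum_{k}(D_kF)^2$, and expand its variance as a fourfold sum over finite $p$-subsets by means of the product formula for Hoeffding components. The decisive structural fact is that $\E[F_{M_1}F_{M_2}F_{M_3}F_{M_4}]$ vanishes unless every index of $M_1\cup M_2\cup M_3\cup M_4$ is covered by at least two of the blocks, which follows from independence together with the centring property of the Hoeffding components. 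Sorting the surviving configurations, the fully paired ones reproduce $3\,\Var(F)^2=3$, the connected ones reproduce the fourth cumulant $\E[F^4]-3$, and the remaining degenerate overlaps are absorbed into $\kappa_p\,\rho^2(F)$, the constant $\kappa_p$ arising purely from counting the admissible partition patterns of four $p$-blocks. Taking square roots splits the Stein contribution into $\sqrt{2/\pi}\,\sqrt{\babs{\E[F^4]-3}}+\sqrt{2/\pi}\,\sqrt{\kappa_p}\,\rho(F)$.

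For the remainder $R$, Cauchy--Schwarz gives $\sum_k\E\bigl[\abs{D_kF}^3\bigr]\le\bigl(\sum_k\E[(D_kF)^4]\bigr)^{1/2}\bigl(\sum_k\E[(D_kF)^2]\bigr)^{1/2}$, and here $\sum_k\E[(D_kF)^2]=\sum_k\Inf_k(F)=p\,\Var(F)=p$, so it remains to control $\sum_k\E[(D_kF)^4]$. Feeding the same combinatorial machinery into the fourth moments of the $D_kF$ and summing over $k$ yields a bound of the form $\sum_k\E[(D_kF)^4]\le c_1\babs{\E[F^4]-3}+c_2\,\kappa_p\,\rho^2(F)$, from which, after extracting one factor of the maximal influence $\rho(F)$, the remainder is dominated by $\tfrac43\sqrt{\babs{\E[F^4]-3}}+\tfrac{2\sqrt2}{\sqrt3}\sqrt{\kappa_p}\,\rho(F)$. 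Adding this to the Stein contribution and collecting terms produces \eqref{djwass}; repeating the argument with the Berry--Esseen version of the abstract bound, whose additional terms are handled by the very same two estimates, produces \eqref{djkol} with the numerical constants $11.9$, $3.5$ and $10.8$.

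It remains to justify all of these manipulations in the infinite setting, and this is where I would either invoke the unconditional $L^2$-convergence of the Hoeffding decomposition directly, using the hypothesis $F\in L^4(\P)$ to guarantee absolute convergence of the fourfold sums defining $\E[F^4]$, or, more cleanly, truncate. Setting $F_n:=\sum_{M\subseteq\{1,\dots,n\},\,|M|=p}F_M=\E\bigl[F\mid X_1,\dots,X_n\bigr]$, each $F_n$ is a genuine finite degenerate $U$-statistic of order $p$ to which the finite-dimensional bounds of \cite{DP17,D23} apply; since $(F_n)$ is an $L^4$-bounded martingale one has $F_n\to F$ in $L^4$, hence $\E[F_n^4]\to\E[F^4]$ and $\Var(F_n)\to1$, while $\rho^2(F_n)\uparrow\rho^2(F)$, and $d_\W(F_n,Z)\to d_\W(F,Z)$ together with $d_\K(F,Z)\le\liminf_n d_\K(F_n,Z)$ because $Z$ has a continuous distribution function. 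I expect the combinatorial variance and fourth-moment estimates, with their explicit dependence on $\kappa_p$, to be the main obstacle, since this is precisely where the de Jong phenomenon is encoded; the infinite-dimensional bookkeeping, by contrast, is technical but routine given the Hoeffding-decomposition machinery set up in the earlier sections.
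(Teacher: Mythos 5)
Your fallback route --- truncation --- is exactly the paper's proof. The paper sets $F_n:=\E[F\,|\,\F_n]=\sum_{M\subseteq[n],\,|M|=p}F_M$, normalizes $G_n:=F_n/\sqrt{\Var(F_n)}$ (a step you gloss over, but harmless since $\Var(F_n)\to1$), applies the finite-sample bounds of \cite[Theorem 1.3]{DP17} and \cite[Theorem 2.1]{D23} to $G_n$ as black boxes, and passes to the limit using $L^4$-martingale convergence, the monotone convergence $\rho^2(F_n)\uparrow\rho^2(F)$, and a semicontinuity lemma for $d_\W$ and $d_\K$ (Lemma \ref{dislemma}; your shortcut for the Kolmogorov part via continuity of the normal distribution function is a correct simplification of the paper's more general argument). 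On that route your proposal is correct and essentially identical to the paper's argument, including where the constants come from: they are inherited verbatim from the finite-dimensional theorems, not re-derived.

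Your primary route, by contrast, has a genuine gap as written. You assert that expanding $\Var\bigl(p^{-1}\langle DF,DF\rangle\bigr)$ and $\sum_k\E[(D_kF)^4]$ over Hoeffding components ``reproduces'' $\babs{\E[F^4]-3}+\kappa_p\,\rho^2(F)$ with precisely the numerical constants of \eqref{djwass} and \eqref{djkol}, but this combinatorial analysis is the entire content of \cite{DP17} and \cite{D23} and none of it is carried out; in the infinite setting it would have to be redone from scratch. Moreover, for the Berry--Esseen bound the abstract estimate \eqref{mskol} is not the right vehicle: its second and third terms involve $D_k\babs{D_kL^{-1}F}$ and a conditional expectation given $\G_k$, and there is no transcription that turns them into the de Jong quantities with constants $11.9$, $3.5$, $10.8$ --- an intrinsic proof would rather go through the carr\'{e}-du-champ bound of Theorem \ref{cdcbound}, which is exactly what the paper's remark after its proof says is possible but longer. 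So your instinct that truncation is ``cleaner'' is right; it is also the only one of your two routes that constitutes a complete proof.
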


In particular, Theorem \ref{infdejong} implies the following infinite generalization of a classical CLT by P. de Jong \cite[Theorem 1]{deJo90}. Multivariate and functional extensions of the result in \cite{deJo90} have been provided in \cite{DP17} and \cite{D19}, respectively.

\begin{cor}\label{dejocor}
Fix $p\in\N$ and suppose that, for each $n\in\N$, $F_n$ is a normalized, degenerate $U$-statistic of order $p$ based on an independent, finite or countably infinite sequence $\X_n$ that is defined on some probability space $(\Om_n,\F_n,\P_n)$. If $\lim_{n\to\infty}\E[F_n^4]=3$ and $\lim_{n\to\infty}\rho^2(F_n)=0$, then $F_n$ converges in distribution to $Z\sim N(0,1)$ as $n\to\infty$. 
\end{cor}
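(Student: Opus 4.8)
The plan is to obtain Corollary \ref{dejocor} as an immediate consequence of the quantitative Wasserstein bound \eqref{djwass} in Theorem \ref{infdejong}: once the explicit estimate for $d_\W(F_n,Z)$ is in hand, the qualitative CLT follows by letting $n\to\infty$ and checking that the right-hand side vanishes.

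First I would fix $p\in\N$ for the entire argument and record that, since $p$ does not vary along the sequence, the combinatorial constant $\kappa_p$ appearing in \eqref{djwass} is a fixed finite number independent of $n$. Next, for each $n$ I would check that $F_n$ satisfies the hypotheses of Theorem \ref{infdejong}: by assumption $F_n$ is a normalized, degenerate $U$-statistic of order $p$ based on the independent sequence $\X_n$, so that $\E[F_n^2]=\Var(F_n)=1$, and the standing assumption $\E[F_n^4]\to3$ in particular forces $F_n\in L^4$ for all large $n$, which is exactly the integrability the theorem requires. Applying Theorem \ref{infdejong} then yields, with $Z\sim N(0,1)$,
\[
d_\W(F_n,Z)\leq\Biggl(\sqrt{\frac{2}{\pi}}+\frac{4}{3}\Biggr)\sqrt{\babs{\E[F_n^4]-3}}+\sqrt{\kappa_p}\Biggl(\sqrt{\frac{2}{\pi}}+\frac{2\sqrt{2}}{\sqrt{3}}\Biggr)\rho(F_n).
\]

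I would then pass to the limit summand by summand. The hypothesis $\lim_{n\to\infty}\E[F_n^4]=3$ makes the first term on the right tend to $0$, while $\lim_{n\to\infty}\rho^2(F_n)=0$ together with $\rho(F_n)=\sqrt{\rho^2(F_n)}$ makes the second term tend to $0$ as well, the prefactor $\sqrt{\kappa_p}$ being a fixed constant. Hence $d_\W(F_n,Z)\to0$ as $n\to\infty$. Since convergence in Wasserstein distance entails weak convergence of the corresponding laws on $\R$, as recalled in the introduction, I would conclude that $F_n$ converges in distribution to $Z$.

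This argument carries no genuine obstacle; the only technical point worth flagging is that the $F_n$ are defined on distinct probability spaces $(\Om_n,\F_n,\P_n)$, but this is harmless, since both $d_\W$ and convergence in distribution depend only on the laws of the $F_n$ on $\R$ and not on the underlying spaces. I note in passing that one could just as well start from the Kolmogorov bound \eqref{djkol}, whose right-hand side vanishes under the very same two hypotheses, to reach the identical conclusion.
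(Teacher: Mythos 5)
Your proposal is correct and coincides with the paper's own reading of Corollary \ref{dejocor}, which is stated precisely as an immediate consequence of the quantitative bounds in Theorem \ref{infdejong}: apply \eqref{djwass} (or \eqref{djkol}) for each $n$, note that both terms vanish under the two hypotheses, and use that convergence in $d_\W$ (or $d_\K$) implies weak convergence. Your two side remarks — that $\E[F_n^4]\to 3$ supplies the required $L^4$ integrability for large $n$, and that distinct underlying probability spaces are harmless since only the laws matter — are exactly the right points to flag; the case of a finite sequence $\X_n$ is covered by the paper's convention of padding with trivial one-point factors.
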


\begin{remark}\label{dejorem}
\begin{enumerate}[(a)]
\item The bounds in Theorem \ref{infdejong} are direct generalization, to the setting of an infinite underlying sequence $\X$, of previous bounds by G. Peccati and the author \cite[Theorem 1.3]{DP17} for the Wasserstein distance and by the author \cite[Theorem 2.1]{D23} on the Kolmogorov distance (see the bounds \eqref{djw} and \eqref{djk} below).
\item The constant $\kappa_p$ appearing in the above bounds stems from the article \cite{DP17}, where it is shown that one may choose $\kappa_p=2+C_p$ and the finite combinatorial constant $C_p$ is (rather implicitly) defined in display (4.5) of \cite{DP17}.  
\item Theorem \ref{infdejong} is the counterpart to the quantitative fourth moment theorems on Gaussian \cite{NouPecbook} and Poisson spaces \cite{DP18a, DP18c, DVZ18} and for Rademacher chaos \cite{DK19}. In fact, it is a generalization of \cite[Theorem 1.1]{DK19} as is explained in Section \ref{malliavin} below (see in particular Remark \ref{domrem} (e)). Contrary to the Gaussian and Poisson situations, though, it is in general not possible to remove the quantity $\rho^2(F)$ from the bound as has been shown in \cite[Theorem 1.6]{DK19}.  
\end{enumerate}
\end{remark}

\subsection{A quantitative CLT for random sums}
Suppose that $N$ and $X_j$, $j\in\N$, are random variables, defined on the same probability space $(\Om,\F,\P)$, such that $N$ has values in $\N_0$ and the $X_j$ are real-valued. Then, the quantity
\begin{align}\label{rsum}
S&:=\sum_{j=1}^NX_j=\sum_{j=1}^\infty\1_{\{N\geq j\}} X_j
\end{align}
is called a \textit{random sum} with \textit{random index} $N$. Random variables of the form \eqref{rsum} appear frequently in modern probability theory, since many models from disciplines as diverse as branching processes, nuclear physics, finance, reliability, actuarial science and risk theory naturally lead to the consideration of such sums. We refer to the monograph \cite{GneKo} for an introduction and many relevant motivating examples. 

Assume henceforth that the random variables $N,X_1,X_2,\dotsc$ are independent and, for simplicity, we further assume that the summands $X_j$, $j\in\N$, are identically distributed. The asymptotic distribution theory of such random sums is rather well studied and we again refer to \cite{GneKo} for a comprehensive treatment. The study of distributional limits for random sums apparently began with the work \cite{Rob48} of Robbins, who additionally assumes that $N$ and the $X_j$ have a finite second moment. In particular, in \cite{Rob48} it is shown that, under the natural condition $\E[N]\to\infty$, such random sums $S$ (after centering and normalization) satisfy a CLT in any of the following scenarios:
\begin{enumerate}[1)]
\item The index $N$ itself is asymptotically normal.
\item The summands $X_j$ are centered, $\E[X_1^2]>0$ and $\sqrt{\Var(N)}=o\bigl(\E[N]\bigr)$.
\item $\Var(X_1)>0$, $\E[X_1]\not=0$ and $\Var(N)=o\bigl(\E[N]\bigr)$. 
\end{enumerate}  
Once a CLT is known, one may become interested in the corresponding rate of convergence in the distances $d_\W$ and $d_\K$. It turns out that the corresponding literature on quantitative CLTs for random sums seems quite easy to survey. We focus here on the results that do not require $N$ to have a particular distribution.  Under the above assumptions from \cite{Rob48}, the paper \cite{Eng83} gives an upper bound on the Kolmogorov distance between the distribution of the random sum and a suitable normal distribution, which is proved to be sharp in some sense. However, this bound contains the Kolmogorov distance of $N$ to the normal distribution with the same mean and variance as $N$ as one of the terms and is therefore not very explicit and, hence, difficult to assess for certain distributions of $N$. The paper \cite{Kor88} generalizes the results from \cite{Eng83} to the case of not necessarily identically distributed summands and to situations, where the summands might not have finite absolute third moments. However, at least for non-centered summands, the bounds in \cite{Kor88} still lack some explicitness. The most flexible bounds, on both the Kolmogorov and the Wasserstein distance, which nevertheless reduce to completely explicit bounds for various concrete distributions of $N$ and also for general $N$ with an infinitely divisible (or even only finitely divisible) distribution, have been provided in the recent article \cite{D6} by the author. These bounds have been derived by means of a subtle interplay between two prominent coupling constructions from Stein's method that is specifically tailored to the random sums setting. The unpublished article \cite{Doe12rs} further derives Wasserstein and Kolmogorov bounds for normal and non-normal limits under the assumption of centered, independent but not necessarily identically distributed summands.  

We present here a bound on the Wasserstein distance in the CLT for random sums, when the summands are \textit{centered}, i.i.d. and have a finite fourth moment, which is derived by means of the Clark-Ocone bound \eqref{cowass} in Theorem \ref{cobound} below. To this end we write 
\[F:=\frac{S-\E[S]}{\sqrt{\Var(S)}}=\frac{S}{\sqrt{\E[N]\E[X_1^2]}}\]
for the normalized version of $S$.

\begin{theorem}\label{rstheo}
Suppose that $N,X_1,X_2,\dotsc$ are independent, that $\E[N^2]<\infty$ and that the summands $X_j$, $j\in\N$, are identically distributed with $\E[X_1]=0$ and $0<\E[X_1^4]<\infty$. Then, for $Z\sim N(0,1)$ we have the bound
\begin{align}
d_\W(F,Z)&\leq \Biggl[\sqrt{\frac{2}{\pi}}\biggl(\frac{\E|X_1^4|}{\E[X_1^2]^2}-1\biggr)^{1/2} +\frac{\E|X_1|^3}{\E[X_1^2]^{3/2}}\Biggr]\frac{1}{\sqrt{\E[N]}}+\frac{\sqrt{\Var(N)}}{\E[N]}.    \label{rswass}%\quad\text{and}\\
%d_\K(F,Z)&\leq  \Biggl[\biggl(\frac{\E|X_1^4|}{\E[X_1^2]^2}-1\biggr)^{1/2} +2\Biggr]\frac{1}{\sqrt{\E[N]}}    \label{rskol}
\end{align}  
\end{theorem}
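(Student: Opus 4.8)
The plan is to apply the Clark-Ocone Wasserstein bound \eqref{cowass} of Theorem \ref{cobound} to the normalized random sum $F=S/c$, where $c:=\sqrt{\E[N]\,\E[X_1^2]}$, viewing $F$ as a functional of the independent sequence $\X=(N,X_1,X_2,\dotsc)$. Writing $S=\sum_{j\ge1}\1_{\{N\ge j\}}X_j$ as in \eqref{rsum} and using $\E[X_j]=0$, I would first compute the Malliavin gradient coordinatewise. For the summand coordinate $X_j$ one gets $D_jF=\frac1c\1_{\{N\ge j\}}X_j$, since integrating out $X_j$ removes exactly the $j$-th summand, whereas for the index coordinate $N$ one gets $D_NF=\frac1c\sum_{i\ge1}\bigl(\1_{\{N\ge i\}}-\P(N\ge i)\bigr)X_i$. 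The key structural observation is to order the coordinates so that $N$ comes first; then the Clark-Ocone integrand of the index coordinate vanishes, because $\E[D_NF\mid\sigma(N)]=\frac1c\sum_i(\1_{\{N\ge i\}}-\P(N\ge i))\E[X_i]=0$ by independence and centering. Consequently $N$ drops out of every term of \eqref{cowass}, while for the summand coordinates $D_jF$ is already measurable with respect to its own level $\F_j$ of the filtration, so that $\E[D_jF\mid\F_j]=D_jF$.

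With these simplifications the variance-proxy term of \eqref{cowass} collapses to $\sum_k D_kF\,\E[D_kF\mid\F_k]=\sum_{j\ge1}(D_jF)^2=\frac1{c^2}\sum_{j=1}^N X_j^2=:Q/c^2$, which is itself a random sum (of the squared summands) and has expectation $\Var(F)=1$ by the variance formula for random sums. Because the third-order remainder term of \eqref{cowass} carries the Clark-Ocone integrands as a factor, the index coordinate contributes nothing to it either, and it reduces to (a constant multiple of) $\sum_{j\ge1}\E|D_jF|^3$; a direct computation using $N\perp X_j$ and $\sum_{j\ge1}\P(N\ge j)=\E[N]$ gives $\sum_j\E|D_jF|^3=\frac{1}{\sqrt{\E[N]}}\,\frac{\E|X_1|^3}{\E[X_1^2]^{3/2}}$, which is exactly the third summand appearing in \eqref{rswass}.

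It then remains to control $\sqrt{2/\pi}\,\E\bigl|1-Q/c^2\bigr|$, and here I would split the deviation into an index part and a summand part,
\[ 1-\frac{Q}{c^2}=\Bigl(1-\frac{N}{\E[N]}\Bigr)+\frac{1}{\E[N]\,\E[X_1^2]}\sum_{j=1}^N\bigl(\E[X_1^2]-X_j^2\bigr), \]
both of which are centered. The first term has $L^1$-norm at most $\frac{1}{\E[N]}\sqrt{\Var(N)}$, and after multiplication by $\sqrt{2/\pi}<1$ this yields the clean final summand $\frac{\sqrt{\Var(N)}}{\E[N]}$ with constant one. The second term, which conditionally on $N$ is a sum of centered i.i.d.\ variables, has by the conditional-variance computation the $L^2$-norm $\frac{1}{\sqrt{\E[N]}}\bigl(\E[X_1^4]/\E[X_1^2]^2-1\bigr)^{1/2}$, so that bounding $\E|Y|\le(\E[Y^2])^{1/2}$ and retaining the factor $\sqrt{2/\pi}$ produces the first summand of \eqref{rswass}; assembling the three contributions gives \eqref{rswass}. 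I expect the main point requiring care to be the bookkeeping around the index variable $N$: one must check that $F$ lies in the domain to which \eqref{cowass} applies (guaranteed by $\E[N^2]<\infty$ and $0<\E[X_1^4]<\infty$), verify the vanishing of its Clark-Ocone integrand under the chosen ordering, and correctly separate the genuine fluctuation of $N$ (producing the $\sqrt{\Var(N)}/\E[N]$ term) from the fluctuation of the squared summands (producing the fourth-moment term) in the decomposition of the variance proxy.
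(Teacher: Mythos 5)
Your proposal is correct and follows essentially the same route as the paper's proof: apply the Clark--Ocone bound \eqref{cowass} with $N$ adjoined as the zeroth coordinate, note that its Clark--Ocone integrand $\E[D_0F\,|\,\sigma(N)]$ vanishes by centering, reduce the variance proxy to the random sum $\sum_{j=1}^N X_j^2$ and the remainder to $\sum_j\E|D_jF|^3$, and separate the fluctuation of $N$ from that of the squared summands. Your direct $L^1$ triangle-inequality split of $1-Q/c^2$ is just the orthogonal decomposition underlying the paper's conditional-variance computation ($\Var(T)=\Var(\E[T|N])+\E[\Var(T|N)]$ followed by $\sqrt{a+b}\le\sqrt a+\sqrt b$), and it yields the identical bound.
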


\begin{remark}\label{rsrem}
\begin{enumerate}[(a)]
\item Note that, under scenario 2) above and when the summands have a finite fourth moment, Theorem \ref{rstheo} provides a rate of convergence in the CLT for random sums. This result compares to \cite[Theorem 2.7]{D6}, where Wasserstein and Berry-Esseen bounds of the same order (but with slightly larger numerical constants) have been derived under the weaker assumption that $\E|X_1|^3<\infty$. The fact that we have to assume finite fourth moments here, is a general artifact of the Malliavin-Stein method that we apply in the proof of \eqref{rswass}. 
\item The order of the bound \eqref{rswass} is optimal in general. If, for instance, $N$ has the Poisson distribution with mean $\alpha\in(0,\infty)$, then the bound is of the optimal order $\alpha^{-1/2}$.   
\item Contrary to the situation of sums of a fixed number of random variables, it is a true restriction to suppose that the summands of a random sum are centered, since this actually corresponds to centering by a random variable instead of by a constant. To prove error bounds in the CLT for random sums also for non-centered summands (where centering is performed by means of a constant), it seems that one needs more specific methods like those developed in \cite{D6}, for example.  
\item Unfortunately, the Berry-Esseen bound resulting from an application of the Kolmogorov bound \eqref{cokol} in Theorem \ref{cobound} below does not converge to zero here as $\E[N]\to\infty$, unless the $X_k$, $k\in\N$, form a symmetric Rademacher sequence.  
\end{enumerate}
\end{remark}

\subsection{A quantitative CLT for the number of monochromatic edges}

Suppose that $G=(V,E)$ is a simple (non-directed) graph on $n\in\N$ vertices, which are without loss of generality given by the numbers $1,2,\dotsc,n$, that is we have $V=[n]=\{1,\dotsc,n\}$. Moreover, we denote by $m:=|E|$ the number of edges in $G$ and fix an integer $c\geq2$. The set $[c]=\{1,\dotsc,c\}$ is the set of possible \textit{colors}. In a \textit{uniform random $c$-coloring} of $G$, every vertex $i\in V$ is assigned a color $s_i\in[c]$ uniformly at random and independently of the other vertices. By choosing independent, and on $[c]$ uniformly distributed random variables $X_1,\dotsc,X_n$ on a suitable probability space $(\Om,\F,\P)$, we may thus assume that the color of $i$ is given by the random variable $X_i$, $i\in V$. Denote by $A(G)=(a_{i,j}(G))_{1\leq i,j\in V}$ the \textit{adjacency matrix} of $G$, that is, for $i,j\in V$, one has $a_{i,j}(G)=1$, if $\{i,j\}\in E$ and $a_{i,j}(G)=0$, otherwise. Then, the random variable 
\begin{align}\label{T2G}
T_2(G):=\sum_{1\leq i<j\leq n}a_{i,j}(G)\1_{\{X_i=X_j\}}
\end{align}
counts the number of \textit{monochromatic edges} in $G$, that is, the number of edges in $G$, both of whose endvertices have been assigned the same random color. The statistic $T_2(G)$ arises in various contexts in probability and non-parametric statistics. Indeed, it appears e.g. as the Hamiltonian of the Ising/Potts model on $G$ \cite{BM} and as a test statistic in non-parametric two-sample tests \cite{FrRa}. If $G$ corresponds to a friendship network and $c=365$, then $T_2(G)$ has the interpretation of the number of pairs of friends sharing the same birthday. In particular, if $G=K_n$ is the complete graph on $[n]$, then $\P(T_2(K_n)\geq1)$ is the probability that is assessed in the classical \textit{birthday problem}. We refer to the two articles \cite{BDM,BFY} and the references therein for more background information and motivation to consider the statistic $T_2(G)$.      

The possible limiting distributions of $T_2(G)$, as $n\to\infty$, have recently been identified by Bhattacharya et al. \cite{BDM} under various asymptotic regimes. We focus here on the results from \cite{BDM} on asymptotic normality. To this end, denote by 
\begin{align*}
F:=F(G):=\frac{T_2(G)-\E[T_2(G)]}{\sqrt{\Var(T_2(G))}}
\end{align*}
the normalization of $T_2(G)$. We further need the following notation: By $N(C_4,G)$ we denote the number of (not necessarily induced) isomorphic copies of a $4$-cycle in $G$, that is 
\begin{align*}
N(C_4,G)=\frac18\sum_{(i,j,k,l)\in[n]^4_{\not=}}a_{i,j}(G)a_{j,k}(G)a_{k,l}(G)a_{k,i}(G),
\end{align*} 
where $[n]^4_{\not=}$ is the set of all $(i,j,k,l)\in[n]^4$ with pairwise distinct entries and we recall that the automorphism group of $C_4$ has cardinality $8$.
\begin{prop}[Theorems 1.2 and 1.3 of \cite{BDM}] \label{bdmprop}
For $n\in\N$ let $G_n=([n],E_n)$ be a simple graph that is colored according to a uniform coloring scheme on $c_n$ colors, where $c_n\geq2$. Let $F_n:=F(G_n)$, $n\in\N$. 
\begin{enumerate}[{\normalfont (i)}]
\item If $c_n\to\infty$ as $n\to\infty$ in such a way that $|E_n|/c_n\to\infty$, then $F_n$ converges in distribution to $Z\sim N(0,1)$, as $n\to\infty$.
\item If $c_n\equiv c$ is constant and $|E_n|\to\infty$, then, as $n\to\infty$, $F_n$ converges in distribution to $Z\sim N(0,1)$, if and only if $\lim_{n\to\infty}N(C_4,G_n)/|E_n|^2=0$. 
\end{enumerate}
\end{prop}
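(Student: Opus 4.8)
The plan is to identify $F=F(G_n)$ as a normalized, completely degenerate $U$-statistic of order $p=2$ over the product space $[c_n]^{[n]}$, and then to run the fourth-moment machinery of Subsection \ref{dejong} alongside an explicit cumulant computation. Setting $Y_i^{(s)}:=\1_{\{X_i=s\}}-\tfrac1{c_n}$ for $i\in[n]$, $s\in[c_n]$, the relation $\sum_s Y_i^{(s)}=0$ gives at once
\[
\1_{\{X_i=X_j\}}-\frac1{c_n}=\sum_{s=1}^{c_n}Y_i^{(s)}Y_j^{(s)},
\]
so that $T_2(G_n)-\E[T_2(G_n)]=\sum_{\{i,j\}\in E_n}\sum_s Y_i^{(s)}Y_j^{(s)}$. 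Each summand is centered with vanishing conditional expectation given either endpoint; hence these are precisely the order-$2$ Hoeffding components and $F$ is a degenerate $U$-statistic of order $2$. Orthogonality of the components makes the first-order data immediate: $\Var\bigl(T_2(G_n)\bigr)=|E_n|\,\tfrac{c_n-1}{c_n^2}$, $\Inf_k(F)=\deg_{G_n}(k)/|E_n|$, and therefore $\rho^2(F)=\Delta(G_n)/|E_n|$, where $\Delta(G_n)$ is the maximum degree.

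Next I would compute the fourth cumulant $\E[F^4]-3$ in terms of subgraph counts. Because all pairwise covariances of distinct components vanish, only \emph{connected} edge-configurations in which every vertex is covered at least twice contribute; with four edges these are the $4$-cycle, the triangle carrying one doubled edge, and the quadrupled edge, whereas the ``cherry'' (two doubled edges sharing a vertex) drops out, since $\E\bigl[(\sum_s Y_i^{(s)}Y_j^{(s)})^2\mid X_j\bigr]$ does not depend on $X_j$. Evaluating the $4$-cycle contribution through the $c_n\times c_n$ matrix $G=\tfrac1{c_n}(I-\tfrac1{c_n}J)$, whose nonzero eigenvalues all equal $\tfrac1{c_n}$ so that $\Tr(G^4)=\tfrac{c_n-1}{c_n^4}$, one obtains after normalization
\[
\E[F^4]-3=\frac{24\,N(C_4,G_n)}{|E_n|^2\,(c_n-1)}+O\Bigl(\frac{N(C_3,G_n)}{|E_n|^2}\Bigr)+O\Bigl(\frac{c_n}{|E_n|}\Bigr),
\]
where $N(C_3,G_n)$ is the number of triangles. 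Since $N(C_3,G_n)=O(|E_n|^{3/2})$ and $N(C_4,G_n)=O(|E_n|^2)$, the two error terms vanish in both regimes, so $\E[F_n^4]\to3$ holds automatically when $c_n\to\infty$ (case (i)), and in case (ii) it holds if and only if $N(C_4,G_n)/|E_n|^2\to0$. Combined with uniform integrability of $\{F_n^4\}$ (from hypercontractivity for fixed-order degenerate $U$-statistics), this already settles the \emph{necessity} in (ii): if $F_n\Rightarrow Z$ then $\E[F_n^4]\to3$, forcing $N(C_4,G_n)/|E_n|^2\to0$.

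For the \emph{sufficiency} direction the reflex is to insert these estimates into the bounds \eqref{djwass}--\eqref{djkol} of Theorem \ref{infdejong}, which would finish the proof \emph{provided} $\rho(F_n)\to0$. This is exactly where the main obstacle sits: $\rho^2(F_n)=\Delta(G_n)/|E_n|$ need \emph{not} tend to $0$ under either hypothesis. Indeed the star $K_{1,n}$ with $n/c_n\to\infty$ satisfies all assumptions but has $\rho^2\equiv1$, and yet there $T_2$ is \emph{exactly} $\Bin(n,1/c_n)$ and hence asymptotically normal. Thus the generic fourth-moment theorem is too weak, and one must prove a fourth-moment phenomenon for this specific family \emph{without} an influence penalty, i.e.\ a bound of the shape $d_\W(F_n,Z)\lesssim\sqrt{|\E[F_n^4]-3|}+R_n$ with remainder $R_n\to0$. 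I would draw such a bound from the Clark--Ocone estimate \eqref{cowass} of Theorem \ref{cobound} rather than from the carré-du-champ bound: the leading term $\Var(\tfrac12\norm{DF}_{\calH}^2)$ of the latter already fails to vanish for the star (it tends to $\tfrac1{2(c_n-1)}$ for fixed $c_n$), whereas the Clark--Ocone covariance representation, evaluated with the vertices ordered by \emph{decreasing} degree, turns the contribution of each dominant vertex into a sum of small martingale-type increments and is therefore insensitive to the influence profile.

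The genuinely hard part of the scheme is this last step: showing that the Clark--Ocone remainder $R_n$ is controlled by $N(C_4,G_n)/|E_n|^2$ together with the automatically-small triangle and repeated-edge counts, uniformly over graph sequences containing dominant-degree vertices. Concretely one must check that the cross-color cancellation $\sum_s Y_v^{(s)}=0$ keeps the conditional-variance fluctuations and the non-diffusive cubic terms of the order dictated by the fourth cumulant, so that $R_n\to0$ in regime (i) and in regime (ii) restricted to $\{N(C_4,G_n)/|E_n|^2\to0\}$. Granting this, both parts of the proposition drop out of the cumulant computation: in case (i) the factor $1/(c_n-1)$ alone drives $\E[F_n^4]\to3$ and hence $F_n\Rightarrow Z$, while in case (ii) the convergence and its converse are both governed by the single quantity $N(C_4,G_n)/|E_n|^2$.
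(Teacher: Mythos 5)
Your strategic analysis is exactly right and matches the paper's own reasoning: this proposition is quoted from \cite{BDM}, and the paper recovers the CLT (sufficiency) statements as a corollary of the quantitative Theorem \ref{monotheo} (see Remark \ref{monorem}(a), using $N(C_4,G)\leq Am^2$), whose proof runs through the Clark-Ocone bound \eqref{cowass}. You correctly compute $\rho^2(F_n)=\Delta(G_n)/|E_n|$, correctly observe via the star graph that the influence term does not vanish (so Theorem \ref{infdejong} cannot be applied), and correctly note that the carr\'{e}-du-champ and Malliavin-Stein bounds also fail for the star --- this is precisely Remark \ref{monorem}(d). However, your argument has a genuine gap, and it sits exactly where you admit it does: you never prove that the Clark-Ocone remainder is controlled by $N(C_4,G_n)/|E_n|^2$ plus automatically small terms; you only say ``granting this, both parts of the proposition drop out.'' That granted step \emph{is} the proof. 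In the paper it occupies all of Subsection \ref{monoproof}: one needs the Hoeffding decompositions of $\psi(X_1,X_2)\psi(X_1,X_3)$ and $\psi(X_1,X_2)^2$ in terms of the degenerate kernels $\psi$ and $\rho$ (Lemma \ref{le3}), the resulting Hoeffding decomposition \eqref{hdm2} of $S=\sum_k (D_kF)\E[D_kF\,|\,\F_k]$, the variance estimate \eqref{mon4} bounding $\Var(S)$ by $N(C_4,G)$ and path counts, and crucially the degree-ordering convention $\deg(1)\geq\cdots\geq\deg(n)$ together with Lemma \ref{le1} ($\sum_{i<j<k}a_{i,k}a_{j,k}\leq\sqrt{2}m^{3/2}$ and $\sum_{i<j<k}a_{i,j}a_{j,k}\leq\sqrt{2}m^{3/2}$), which is what makes the bound insensitive to dominant-degree vertices. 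Since both regime (i) and the sufficiency half of (ii) rest on this unproven estimate (your fourth-moment computation cannot substitute for it, as you yourself show that the influence penalty is unavoidable in general), the proposal is a correct plan rather than a proof.

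Two smaller points. First, your necessity argument for (ii) (weak convergence plus uniform integrability forces $\E[F_n^4]\to3$, and the cumulant identity then forces $N(C_4,G_n)/|E_n|^2\to0$) is a reasonable sketch, but it rests on two asserted ingredients: the exact fourth-cumulant formula with the ``cherry'' cancellation, and uniform integrability of $F_n^4$ via ``hypercontractivity for fixed-order degenerate $U$-statistics,'' which is not a theorem you can invoke off the shelf on general product spaces (it is salvageable here because $c$ is fixed and the kernel is bounded, but it needs an argument, e.g.\ a uniform eighth-moment bound). The paper does not prove this direction at all --- it is part of the citation of \cite{BDM} --- so on this point you are attempting more than the paper does, with a correspondingly incomplete justification. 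Second, your claim that the leading carr\'{e}-du-champ term tends to $\tfrac{1}{2(c_n-1)}$ for the star is a plausible but unverified side computation; the paper only asserts non-vanishing of the first terms of \eqref{mswass} and \eqref{wasscdc} in that example.
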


\begin{remark}\label{bdmrem}
\begin{enumerate}[{\normalfont (a)}]
\item More generally, Theorems 1.2 and 1.3 of \cite{BDM} still hold true for random graphs $G_n$ that are independent of the colors $X_1,\dotsc,X_n$, $n\in\N$, when convergence is systematically replaced with convergence in probability.
\item The CLTs reviewed in Proposition \ref{bdmprop} are called \textit{universal} as they only depend on the numbers $c_n,|E_n|$ and $N(C_4,G_n)$ but not on any geometric details of the graphs $G_n$, $n\in\N$.
\item The condition $\lim_{n\to\infty}N(C_4,G_n)/|E_n|^2=0$ has been coined \textit{asymptotic $4$-cycle freeness} or, for short, \textit{ACF4 condition} in \cite{BDM}.
\item Fang \cite{Fang} has proved the following bound on the Wasserstein distance: 
\begin{align}\label{fangb}
d_\W(F,Z)&\leq3\sqrt{\frac{c}{m}}+\frac{10\sqrt{2}}{\sqrt{c}}+\frac{1}{\sqrt{\pi}}\frac{2^{7/4}}{m^{1/4}},
\end{align}
which gives a quantitative extension of part (i) of Proposition \ref{bdmprop}. Note, however that \eqref{fangb} does not converge to zero under the assumptions of (ii) where $c$ is fixed.
\item Recently, Bhattacharya et al \cite{BFY} have applied a classical quantitative CLT for martingales due to Heyde and Brown \cite{HB70} to obtain the Berry-Esseen bound 
\begin{align}\label{bfyb}
d_\K(F,Z)&\leq K\biggl(\frac{c}{m}+\frac{1}{\sqrt{m}}+\frac{N(C_4,G)}{cm^2}\biggr)^{1/5},
\end{align}
where $K>0$ is a finite constant that neither depends on $n$ nor on $c$. Note that, contrary to \eqref{fangb}, the bound \eqref{bfyb} implies the CLT in both situations (i) and (ii) of Proposition \ref{bdmprop}.
\end{enumerate}
\end{remark} 

Although the Berry-Esseen bound for martingales from \cite{HB70} is rate-optimal in general, as has been proved by Haeusler \cite{Haeu88}, it often leads to suboptimal estimates of the rate of convergence in concrete applications. In view of \eqref{fangb}, one is in fact led to expect that also the bound \eqref{bfyb} on the CLT for $T_2(G)$ is suboptimal. In combinatorial situations, like the the number of monochromatic edges, one would generally expect Stein's method to be capable of providing accurate estimates on the rate of convergence for the CLT. However, as has been pointed out in \cite{BDM}, to date no \textit{off-the-shelf version} of Stein's method is available that yields a quantitative version of Proposition \ref{bdmprop} in full generality. It is in fact one of the main contributions of the present work to add such a version to the shelf. Indeed, by applying our new Clark-Ocone bound \eqref{cowass} in Theorem \ref{cobound} below to the situation here, we can prove the following theorem.
   
\begin{theorem}[Quantitative CLT for the number of monochromatic edges]\label{monotheo}
Under the above assumptions and with $Z\sim N(0,1)$ we have the bound 
\begin{align}\label{monobound}
d_\W(F,Z)&\leq \sqrt{\frac{2}{\pi}}\biggl(\frac{3(c-2)}{m }+\frac{10\sqrt{2}}{m^{1/2}  }+\frac{15\sqrt{2}}{m^{1/2} (c-1) }+\frac{30N(C_4,G)}{m^2 (c-1) }\biggr)^{1/2}\notag\\
&\;+\sqrt{2}\biggl(\frac{c-1}{m}+\frac{5\sqrt{2}}{m^{1/2}(c-1)}+\frac{7\sqrt{2}}{m^{1/2}}\biggr)^{1/2}.
\end{align}
\end{theorem}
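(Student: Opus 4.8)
\emph{Strategy.} The plan is to recognise $F$ as a degenerate $U$-statistic of order $2$ and to feed it into the Clark--Ocone Wasserstein bound \eqref{cowass} of Theorem \ref{cobound}, after which everything reduces to two elementary but lengthy estimates: a moment/graph-counting estimate for the main term and a third-order estimate for the remainder. First I record the relevant structure. Setting $Y_{ij}:=\1_{\{X_i=X_j\}}-c^{-1}$ for $\{i,j\}\in E$, one has $\E[Y_{ij}\mid X_i]=\E[Y_{ij}\mid X_j]=0$, so each $Y_{ij}$ is a genuine order-two Hoeffding component and the monochromatic indicators are pairwise uncorrelated; hence
\[
\E[T_2(G)]=\frac{m}{c},\qquad \Var(T_2(G))=m\,\frac{c-1}{c^2},
\]
and $F=\tfrac{c}{\sqrt{m(c-1)}}\sum_{\{i,j\}\in E}Y_{ij}$ is a normalized degenerate $U$-statistic of order $2$ with $F_{\{i,j\}}=\tfrac{c}{\sqrt{m(c-1)}}Y_{ij}$. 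Reading the gradient off this Hoeffding decomposition gives $D_kF=\tfrac{c}{\sqrt{m(c-1)}}\sum_{j:\{k,j\}\in E}Y_{kj}$ and, using pairwise uncorrelatedness again, the influences $\Inf_k(F)=\E[(D_kF)^2]=d_k/m$, where $d_k$ is the degree of $k$. In particular $\rho^2(F)=\max_k d_k/m$ need not tend to $0$ (for stars it equals $1$), which is precisely why the de Jong bound \eqref{djwass} can fail to converge here (its error term contains $\rho(F)$) and a different tool is required.

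I would then apply \eqref{cowass}. This bound is built from the conditional expectations $\E[D_kF\mid\F_k]$ against the filtration $\F_k:=\sigma(X_1,\dots,X_k)$ attached to an ordering of the vertices, which I am free to choose (the statistic and the right-hand side of \eqref{monobound} being relabelling-invariant). Since $\E[Y_{kj}\mid\F_k]=Y_{kj}$ for $j<k$ and $\E[Y_{kj}\mid\F_k]=\E[Y_{kj}\mid X_k]=0$ for $j>k$, one obtains the explicit predictable projection $\E[D_kF\mid\F_k]=\tfrac{c}{\sqrt{m(c-1)}}\sum_{j<k:\{k,j\}\in E}Y_{kj}$ and $\sum_k\E[D_kF\mid\F_k]=F$. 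The bound then reduces $d_\W(F,Z)$ to controlling (a) the fluctuation of $A:=\sum_k D_kF\,\E[D_kF\mid\F_k]$ about its mean $\E[A]=\sum_k\E[\E[D_kF\mid\F_k]^2]=\Var(F)=1$, weighted by $\fnorm{f'}\le\sqrt{2/\pi}$, and (b) a non-diffusivity remainder of third order in $D$, weighted by $\fnorm{f''}\le 2$; choosing the ordering so that high-degree vertices are revealed early is what keeps (a) from degenerating, as the star example makes clear.

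The substance lies in the resulting combinatorial moment computation. Expanding $A$ into cherries $Y_{kj}Y_{kl}$ and squaring, I would group $\E[A^2]$ by the motif spanned by the occurring edges and use the vertexwise centering $\E[Y_{ij}\mid X_i]=\E[Y_{ij}\mid X_j]=0$ to annihilate every configuration in which some vertex meets an odd number of factors; two cherries with distinct centres then survive only when their leaf sets coincide, i.e.\ when the four edges form a labelled $4$-cycle. After cancelling the bulk against $\E[A]^2=1$, the surviving families are: single-edge self-coincidences, producing through the fourth-moment excess $\E[Y_{ij}^4]-\E[Y_{ij}^2]^2=\tfrac{(c-1)(c-2)^2}{c^4}$ the $\tfrac{3(c-2)}{m}$ term; labelled $4$-cycles, for which $\E[Y_{kj}Y_{kl}Y_{k'j}Y_{k'l}]=\tfrac{c-1}{c^4}$, producing the term proportional to $N(C_4,G)/(m^2(c-1))$; and mixed remainders dominated by Cauchy--Schwarz and degree/edge counting, producing the $m^{-1/2}$ terms. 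Inserting $\E[Y_{ij}^2]=\tfrac{c-1}{c^2}$ and the normalization $\tfrac{c^2}{m(c-1)}$ assembles the first bracket of \eqref{monobound}, while an analogous but simpler estimate of the third-order remainder, involving the third moments of the $Y_{ij}$, yields the second.

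I expect the main obstacle to be exactly this bookkeeping for $A$, entangled with the ordering issue. One must carry out the cancellation against $\E[A]^2$ so that the genuinely geometry-dependent part collapses to $N(C_4,G)$ alone, verify that no motif of higher complexity (longer even closed walks, or three mutually incident edges) contributes at the same order, and track multiplicities and the labelling constraint $j<k$ precisely enough to certify the stated constants. Equally delicate is confirming that a suitable ordering renders the whole estimate uniform in $G$, so that the final bound depends on the graph only through $m$, $c$ and $N(C_4,G)$; once these points are settled, combining the two brackets is routine.
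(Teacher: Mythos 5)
Your overall strategy is exactly the paper's: apply the Clark--Ocone Wasserstein bound \eqref{cowass} with the vertices ordered by non-increasing degree, compute $D_kF$ and $\E[D_kF\,|\,\F_k]$ explicitly, reduce the first term to the fluctuation of $A=\sum_k D_kF\,\E[D_kF\,|\,\F_k]$ around $\E[A]=1$, and treat the remainder separately; your formulas for $D_kF$, $\E[D_kF\,|\,\F_k]$, the influences, the fourth-moment excess and the $4$-cycle moment all check out. The genuine gap is in the combinatorial principle on which you base the computation of $\E[A^2]$. It is \emph{not} true that vertexwise degeneracy ``annihilates every configuration in which some vertex meets an odd number of factors'': degeneracy only kills configurations in which some vertex meets \emph{exactly one} factor. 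Consequently your classification of surviving motifs is wrong: two cherries with distinct centres also survive when the centre of one is a leaf of the other and they share the remaining leaf, i.e.\ on a triangle with one doubled edge. Concretely, with $\psi(x,y)=\1_{\{x=y\}}-c^{-1}$,
\begin{align*}
\E\bigl[\psi(X_1,X_2)^2\,\psi(X_1,X_3)\,\psi(X_2,X_3)\bigr]
=\Bigl(1-\frac{2}{c}\Bigr)\frac{c-1}{c^3}\not=0\quad\text{for }c>2,
\end{align*}
and this is precisely the expectation of the product of the two cherries $\psi_{12}\psi_{13}$ (centre $1$) and $\psi_{21}\psi_{23}$ (centre $2$), whose leaf sets $\{2,3\}$ and $\{1,3\}$ do not coincide; the same motif also arises from diagonal-times-cherry pairings. (Your heuristic is the Gaussian/Rademacher pairing rule; here it holds only for $c=2$, where $\psi^2$ is constant.) These contributions are positive and of order at most $m^{-1/2}$ (a graph with $m$ edges has $O(m^{3/2})$ triangles), so they do not change the rate, but your expansion of $\E[A^2]$ would be strictly smaller than the truth and thus cannot certify the stated constants. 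The paper avoids this trap by never classifying motifs by hand: it writes the exact Hoeffding decomposition of each product $(D_kF)\E[D_kF\,|\,\F_k]$ via Lemma \ref{le3} (each cherry equals $\frac{1}{c}\psi+\rho$, each square equals a constant plus $(1-\frac{2}{c})\psi$), sums over $k$, and computes $\Var(A)$ by orthogonality; the triangle terms are then the cross terms in the square of the $\psi$-coefficient and are absorbed via $(a+b+c)^2\le 3(a^2+b^2+c^2)$ together with the degree-ordering estimates of Lemma \ref{le1}.

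A second, smaller gap is the remainder. The second bracket of \eqref{monobound} is not produced by ``an analogous but simpler estimate involving third moments of the $Y_{ij}$'': a plain bound of the second term of \eqref{cowass} by $\sum_k\E|D_kF|^3$ fails to vanish for the star graph, since the centre vertex alone contributes a quantity of constant order. One must keep the mixed structure $\E\bigl[\babs{\E[D_kF\,|\,\F_k]}(D_kF)^2\bigr]$, in which the chosen ordering makes the predictable factor attached to the high-degree vertex vanish. The paper does this through the Cauchy--Schwarz inequality \eqref{cobrem} of Remark \ref{remcobound} (c), combined with the Poincar\'e-type bound \eqref{poincare2} (giving $\sum_k\E[(D_kF)^2]\le 2$ since $F\in\mathcal{H}_2$) and once more the Hoeffding decompositions and Lemma \ref{le1}; your plan needs this, or an equivalent mixed-moment argument tracking back-degrees under the ordering, to produce the second bracket.
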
 

\begin{remark}\label{monorem}
\begin{enumerate}[{\normalfont (a)}]
\item Using that $N(C_4,G)\leq A m^2$ (see e.g. \cite{Alon}), where $A\in(0,\infty)$ is an absolute constant, we see that Theorem \ref{monotheo} in particular yields both CLTs given in Proposition \ref{bdmprop}. Moreover, in the situation $c_n\to\infty$, we see that the estimate of the rate of convergence is the same as in the bound \eqref{fangb}.
\item The more general version of Proposition \ref{bdmprop} from \cite{BDM} for random graphs $G_n$ that are independent of the colors $X_1,\dotsc,X_n$ is also a consequence of Theorem \ref{monotheo}. This can be seen by first invoking our bound \eqref{monobound} to estimate the Wasserstein distance between the conditinal law of $F_n$ given $G_n$ and $N(0,1)$ and then, in view of the bound $N(C_4,G)\leq A m^2$, applying (the convergence in probability version of) the dominated convergence theorem to the simple inequality
\[d_\W(F_n,Z)\leq \E\Bigl[d_\W\bigl(\mathcal{L}(F_n|G_n),N(0,1)\bigr)\Bigr].\]
\item It might be possible to also apply the bound \eqref{cokol} in Theorem \ref{cobound} below in order to prove a corresponding Berry-Esseen bound. However, we have not yet managed to successfully deal with the respective second and third terms in the bound and might return to this point in later work. Note, however, that even after applying the inequality $d_\K(F,Z)\leq\sqrt{d_\W(F,Z)}$ to the bound \eqref{monobound}, the resulting Berry-Esseen bound is of smaller order than the bound \eqref{bfyb}. Also, contrary to \eqref{bfyb}, our bound comes with explicit numerical constants.
\item The proof of Theorem \ref{monotheo} heavily relies on our new Clark-Ocone bound \eqref{cowass}. In fact, neither of the Malliavin-Stein bound \eqref{mswass} and the carr\'{e}-du-champ bound \eqref{wasscdc} would be suitable for deriving the bound \eqref{monobound}. Indeed, taking $G$ as the $n$-star graph and letting $c=2$, for instance, one may see that the respective first terms in the bounds \eqref{mswass} and \eqref{wasscdc} do not vanish as $n\to\infty$.
\item Bhattacharya et al \cite{BFY} in fact also prove a similar bound as \eqref{bfyb} for the number $T_3(G)$ of monochromatic triangles in $G$. Moreover, they relate the quantitative CLTs for $T_2(G)$ and $T_3(G)$ to the so-called fourth-moment phenomenon first discovered by Nualart and Peccati \cite{NuaPec}.
\item Even more generally, in the recent work \cite{DHM}, Das et al have proved generalizations of the bound \eqref{bfyb} to the number of monochromatic subgraphs of $G$ that are isomorphic to a given connected graph $H$. Their proof, as the proofs in \cite{BFY}, combine the martingale CLT from \cite{HB70} with the Hoeffding decomposition of the count statistic. We believe that our proof may be adapted to this more general situation in such a way that the rate of convergence in \cite[Theorem 1.4]{DHM} can be improved, too. This will be pursued in future work that will exploit our new abstract normal approximation bounds in Section \ref{normapp} in a systematic way. 
\end{enumerate}
\end{remark}

%\smallskip\\

The remainder of this work is structured as follows. In Section \ref{malliavin} we first review the concept of finite and infinite Hoeffding decompositions and then present and considerably extend the Malliavin theory from \cite{DH, Duer}. In Section \ref{normapp} we prove three different types of abstract normal approximation bounds on general product spaces. We further specialize these bounds to the particular situation of functionals of an independent Rademacher sequence. As a general rule, for the reader's convenience, in Sections \ref{malliavin} and \ref{normapp} we provide proofs immediately following the corresponding statements. In Section \ref{proofs} we give the proofs belonging to our three applications and, finally, in Section \ref{appendix} we review the concept of unconditional convergence of series in Banach spaces and provide a self-contained probabilistic proof of the infinite Hoeffding decomposition for $L^2$-functionals of countably many independent random variables.

\section{Malliavin calculus on product spaces}\label{malliavin}
In this section we review and extend the abstract Malliavin formalism for the product of countably many probability spaces  from \cite{DH}. Contrary to \cite{DH}, for ease of notation, we will assume throughout that the countable index set is given by the natural numbers $\N$. The case of a general countably infinite index set $A$ may be reduced to this situation by choosing a bijection $\phi:A\rightarrow\N$. Moreover, the situation of a product of only finitely many probability spaces is naturally included by adding countably many copies of a (trivial) one-point probability space. In this case, all infinite series appearing in the sequel further reduce to finite sums and the generally subtle problem of identifying the correct domain for the respective operators becomes trivial. 

\subsection{Setup and notation}\label{setup}
We begin by introducing the necessary notation. Thus, let $(E_n,\B_n,\mu_n)$, $n\in\N$, be an arbitrary sequence of probability spaces and denote by 
\[(E,\B,\mu):=\Bigl(\prod_{n\in\N}E_n,\bigotimes_{n\in\N}\B_n,\bigotimes_{n\in\N}\mu_n\Bigr)\]
the corresponding product space. Further, suppose that, on a suitable abstract probability space $(\Om,\A,\P)$, 
\[\X=(X_n)_{n\in\N}:(\Om,\A)\rightarrow(E,\B)\quad\text{and}\quad\X'= (X_n')_{n\in\N}:(\Om,\A)\rightarrow(E,\B)\]
are two independent sequences, each distributed according to $\mu$. In particular, the $X_n$, $n\in\N$, are independent and $X_n$ has distribution $\mu_n$.
 Then, for each $j\in\N$, we construct the sequence $\X^{(j)}:=(X_n^{(j)})_{n\in\N}$ by 
\begin{equation*}
 X_n^{(j)}:=\begin{cases}
             X_j',&n=j\\
             X_n, &n\not=j,
            \end{cases}
\end{equation*}
that is we replace the $j$-th coordinate of $\X$ with that of $\X'$. Then, of course, each sequence $\X^{(j)}$ has again distribution $\mu$. In fact, the sequences $\X$ and $\X^{(j)}$ are even \textit{exchangeable}, that is, $(\X,\X^{(j)})$ has the same distribution as $(\X^{(j)},\X)$ for any $j\in\N$. With this construction at hand, we further let $\F:=\sigma(\X)$ and for $M\subseteq\N$ we let $\F_M:=\sigma(X_j,j\in M)$. For $n\in\N$ and $M=[n]:=\{1,\dotsc,n\}$ we write $\F_n:=\F_{[n]}=\sigma(X_1,\dotsc,X_n)$ and further let $\F_0:=\F_\emptyset=\{\emptyset,\Omega\}$. Thus, $\mathbb{F}:=(\F_n)_{n\in\N_0}$ is the canonical filtration of $\A$ generated by $\X$ and $\F=\sigma(\bigcup_{n\in\N_0}\F_n)$. We also denote by $\P_\F$ the restriction of $\P$ on $\F$. For $k\in\N$ we further let $\G_k:=\F_{\N\setminus\{k\}}=\sigma(X_j,j\not=k)$. 

As usual, by $L^r(\mu)$, $r\in[0,\infty)$, we denote the space of all measurabe functions $f:(E,\B)\rightarrow(\R,\B(\R))$ s.t. $\int_E |f|^rd\mu<\infty$. Furthermore, $L^\infty(\mu)$ denotes the class of all $\mu$-essentially bounded, $\B-\B(\R)$-measurable functions. Here and in what follows, $\B(\R)$ denotes the Borel-$\sigma$-field on $\R$.

As is customary, we do not distinguish between functions and equivalence classes of a.e. identical functions here. Moreover, for $r\in[0,\infty)$ we let 
$L^r_\X:=L^r(\Om,\F,\P)$ be the space of all $\sigma(\X)$-measurable random variables $F:\Om\rightarrow\R$ such that $\E|F|^r<\infty$ and, as usual, for $r\in[1,\infty)$ we let $\norm{F}_r:=(\E|F|^r)^{1/r}$. We further denote $L^\infty_\X:=L^\infty(\Om,\F,\P)$ the space of all $\P$-essentially bounded $F\in L^0(\Om,\F,\P)$ and, for $F\in L^\infty_\X$ we let $\norm{F}_\infty$ denote its essential supremum norm. By the factorization lemma, for any $F\in L^r_\X$, $r\in[0,\infty]$, there exists an $f\in L^r(\mu)$ such that $F=f\circ\X$. Such a function $f$ will be called a \textit{representative} of $F$ in what follows. Note that such an $f$ is $\mu$-a.s. unique.

 A random variable $F\in  L^2_\X$ is called \textit{cylindrical}, if $F$ is $\F_n$-measurable for some $n\in\N$, i.e. if $F$ only depends on finitely many coordinates of $\X$. Following \cite{DH} we denote by $\mathcal{S}$ the linear subspace of $ L^2_\X$ containing all cylindrical random variables.  

We further denote by $\kappa$ the counting measure on $(\N,\Pot(\N))$, i.e. $\kappa(B)=|B|$ for all $B\subseteq\N$ and below we will also make use of the product measure $\kappa\otimes\P_\F$ on the space 
$(\N\times\Omega,\Pot(\N)\otimes\F)$. Measurable functions defined on this space will be called \textit{processes} in what follows. Here, $\Pot(\N)$ denotes the power set of $\N$ and we further write $\Pot_{fin}(\N)$ for the (countable) collection of all finite subsets of $\N$. Note that a process $U$ may be identified with a sequence $(U_k)_{k\in\N}$ of $\F-\B(\R)$- measurable random variables $U_k$, $k\in\N$. The expectation operator with respect to $\P$ or $\P_\F$ will always be denoted by $\E$. For $p\in\N$ we denote by $\N^p_{\not=}$ (respectively, by $[n]^p_{\not=}$) the set of all tuples $(i_1,\dotsc,i_p)\in\N^p$ (all tuples $(i_1,\dotsc,i_p)\in[n]^p$) such that $i_j\not=i_l$ for all $j\not=l$.

\subsection{Infinite Hoeffding decompositions}\label{Hoeffding}
In this subsection we state a fundamental result about infinite Hoeffding decompositions for random variables in $L^2_\X$. 
To this end, we first review the Hoeffding decomposition for functions of finitely many independent random variables and introduce some additional useful notation that will be used throughout this work.
We refer the reader to the monographs \cite{serfling, major, KB-book, Lee} and to the papers \cite{KR, vitale, vanZwet} for basic facts about Hoeffding decompositions for functions of finitely many independent random variables.
% In the more general case of not necessarily identically distributed random variables all relevant statements and their proofs may be found in Section 2 of the paper \cite{vanZwet}. 

To begin with, let us merely assume that $F\in L^1_\X$.
Then, we can define the corresponding \textit{L\'{e}vy martingale} $(F_n)_{n\in\N}$ with respect to the filtration $\mathbb{F}:=(\F_n)_{n\in\N}$ by 
\[F_n:=\E\bigl[F\,|\,\F_n\bigl]\,, \quad n\in\N\,.\]
From martingale theory it is well-known that, as $n\to\infty$, $F_n$ converges to $F$ $\Prob$-a.s. and in $L^1(\Prob)$. Furthermore, if in fact $F\in L^r_\X$ for some $r\in(1,\infty)$, then 
$\sup_{n\in\N}\E\abs{F_n}^r<\infty$ and the convergence also takes place in $L^r(\Prob)$. As $F_n$ is $\F_n$-measurable, by the factorization lemma, there exist measurable functions 
\begin{equation*}
 g_n: \Bigl(\prod_{j=1}^n E_j,\bigotimes_{j=1}^n \B_j\Bigr)\rightarrow\bigl(\R,\B(\R)\bigr)\,,\quad n\in\N\,,
\end{equation*}
such that $F_n=g_n(X_1,\dotsc,X_n)$ for each $n\in\N$. Hence, we have the following \textit{Hoeffding decomposition} of $F_n$:
\begin{equation}\label{hdyn}
 F_n=\sum_{M\subseteq[n]} F_{n,M}\,,
\end{equation}
where we constantly write $[n]:=\{1,\dotsc,n\}$ and where the \textit{Hoeffding components} $F_{n,M}$, $M\subseteq[n]$, are $\Prob$-a.s. uniquely determined by (a) and (b) as follows:
\begin{enumerate}[(a)]
 \item For each $M\subseteq[n]$, the random variable $F_{n,M}$ is measurable with respect to $\F_M=\sigma(X_j,j\in M)$.
 \item For all $M,K\subseteq[n]$ we have $\E[F_{n,M}\,|\,\F_K]=0$ $\Prob$-a.s. unless $M\subseteq K$.
\end{enumerate}
Note that, due to independence, for $M,K\subseteq[n]$ we always have that 
\begin{equation*}
 \E[F_{n,M}\,|\,\F_K]=\E[F_{n,M}\,|\,\F_{K\cap M}]
\end{equation*}
so that we could replace (b) with
\begin{enumerate}[(c)]
 \item For all $M,L\subseteq[n]$ such that $L\subsetneq M$ we have $\E[F_{n,M}\,|\,\F_L]=0$ $\Prob$-a.s.
\end{enumerate}
%For dependent data $X_1,\dotsc,X_n$ this equivalence does, in general, no longer hold and, the Hoeffding decomposition which might not always exist, is defined by (a) and (c).

The following inclusion-exclusion type formula for the Hoeffding components is well known:
\begin{align}\label{hdform}
 F_{n,M}&=\sum_{L\subseteq M}(-1)^{\abs{M}-\abs{L}}\E\bigl[F_n\,\bigl|\,\F_L\bigr]\notag\\
 &=\sum_{L\subseteq M}(-1)^{\abs{M}-\abs{L}}\E\bigl[F\,\bigl|\,\F_L\bigr]\,,\quad M\subseteq[n]\,,
\end{align}
where the second identity follows from 
\begin{equation*}
 \E\bigl[F_n\,\bigl|\,\F_L\bigr]=\E\Bigl[\E\bigl[F\,\bigl|\,\F_n\bigr]\,\Bigl|\,\F_L\Bigr]=\E\bigl[F\,\bigl|\,\F_L\bigr]\,,\quad L\subseteq[n]\,.
\end{equation*}
From now on, we will assume that, in fact, $F\in L^2_\X$. Then, the same holds for $F_n$ for each $n\in\N$ and, by \eqref{hdform}, also for its Hoeffding components $F_{n,M}$, $M\subseteq[n]$, which are in fact known to be pairwise orthogonal or uncorrelated in $L^2(\P)$ in this case. Another important consequence of \eqref{hdform} is that, for any finite subset $M$ of $\N$, the Hoeffding components $F_{n,M}$ are stationary for $n\geq \max(M)$, i.e. we have 
\begin{equation}\label{cons1}
F_{n,M}=F_{l,M}
\end{equation}
whenever $1\leq n<l$ and $M\subseteq[n]$. In particular, for each finite subset $M\subseteq\N$ and all $n\geq\max(M)$ we have 
\begin{equation}\label{cons3}
 F_{n,M}=F_{\max(M),M}\,.
\end{equation}
Henceforth, we may and will thus only write $F_M$ for $F_{\max(M),M}$.

For $p\in\N_0 ,n\in\N$ with $n\geq p$ define 
\begin{equation*}
F_n^{(p)}:=\sum_{\substack{M\subseteq[n]:\\ \abs{M}=p}} F_{M}\,.
\end{equation*}
Then, from \eqref{cons1} we conclude that 
\begin{equation}\label{cons2}
F_{n+1}^{(p)}=F_n^{(p)}+\sum_{\substack{M\subseteq[n+1]:\\ \abs{M}=p,\, n+1\in M}} F_{M}
\end{equation}
for all $p,n\in\N$ with $n\geq p$. Now it follows from (b) above that 
\[\E\bigl[F_{M}\,\bigl|\,\F_n\bigr]=\E\bigl[F_{n+1,M}\,\bigl|\,\F_n\bigr]=0\]
whenever $M\subseteq[n+1],\abs{M}=p$ and $n+1\in M$. Hence, from \eqref{cons2} we infer that, for fixed $p\in\N$, the sequence 
$(F_n^{(p)})_{n\geq p}$ is also a martingale with respect to $(\F_n)_{n\geq p}$. 
If $F\in L^2_\X$, then the martingales $(F_n)_{n\in\N}$ and $(F_n^{(p)})_{n\in\N}$ are in $L^2_\X$ as well and we have the bounds 
\begin{align}
\sup_{n\in\N}\E\bigl[F_n^2\bigr]&=\sup_{n\in\N}\E\Bigl[\Bigl(\E\bigl[F\,\bigl|\,\F_n\bigr]\Bigr)^2\Bigr]\leq \E\bigl[F^2\bigr]\quad\text{and}\label{l2bound1}\\
\sup_{p\in\N_0}\sup_{\substack{n\in\N:\\ n\geq p}}\Var\bigl(F_n^{(p)}\bigr)&\leq\sup_{n\in\N}\Var\bigl(F_n\bigr)=\sup_{n\in\N}\E\bigl[F_n^2\bigr]-\bigl(\E[F]\bigr)^2\leq \Var(F)\label{l2bound2}\,,
% \sup_{n\in\N}\Var\bigl(F_n\bigr)&=\E\Bigl[\Bigl(\E\bigl[F\,\bigl|\,\F_n\bigr]\Bigr)^2\Bigr]-\Bigl(\E\bigl[\E[F\,|\,\F_n]\bigr]\Bigr)^2\leq \Var(F)
\end{align}
where we have used the orthogonality of the Hoeffding decomposition to obtain
\begin{equation*}
 \Var\bigl(F_n^{(p)}\bigr)=\sum_{\substack{M\subseteq[n]:\\ \abs{M}=p}}\Var\bigl(F_{M}\bigr)\leq \sum_{M\subseteq[n]}\Var\bigl(F_{M}\bigr)=\Var(F_n)\,.
\end{equation*}
From \eqref{l2bound2} and martingale theory we conclude that, for fixed $p\in\N_0$, $(F_n^{(p)})_{n\geq p}$ converges to some random variable $F^{(p)}\in L^2_\X$ both $\P$-a.s. and in $L^2(\P)$. 
Moreover, due to the convergence in $L^2(\P)$, for $p\not=q$ we have 
\begin{equation}\label{orthrel}
 \E\bigl[F^{(p)} F^{(q)}\bigr]=\lim_{n\to\infty}\E\bigl[F_n^{(p)} F_n^{(q)}\bigr]=0\,,
\end{equation}
where we have again used the orthogonality of the Hoeffding components for the second equality. This leads to the natural question if the identity 
\begin{equation}\label{Fsum}
 F=\sum_{p=0}^\infty F^{(p)}
\end{equation}
holds in the $L^2(\P)$-sense. In this case \eqref{Fsum} can be considered an \textit{infinite orthogonal Hoeffding decomposition} of $F$ in $L^2(\P)$. 
In fact, one has the following result, whose statement and an outline of a proof have been given in the introduction of \cite{kwapien}. A similar approach to this decomposition can be found in the proof of \cite[Lemma 2.4]{Duer}. Starting from the preparations above, it may be proved from standard facts about unconditional convergence of series with orthogonal summands in Hilbert spaces. Since this result is of crucial importance for the theory in this work, we provide a complete and purely probabilistic proof in Section \ref{appendix}.   

\begin{prop}[Infinite Hoeffding decomposition]\label{infhoeff}
Suppose that $F\in L^2_\X$. Then, using the above notation, the following orthogonal and unconditional series expansions hold in the $L^2(\P)$-sense:
\begin{align}
F^{(p)}&=\sum_{\substack{M\subseteq\N:\\ \abs{M}=p}}F_{M},\quad 
%=\sum_{k=p}^\infty\sum_{\substack{M\subseteq\N:\\ \abs{M}=p\,,\\ \max(M)=k}}F_{M}\,, \quad 
p\in\N_0\label{genhd1}\,,\\
F&=\sum_{M\in\Pot_{fin}(\N)}F_{M} 
=\sum_{p=0}^\infty F^{(p)}\label{genhd2}\,.
\end{align}
The representations of $F$ in \eqref{genhd2} are $\P$-a.s. unique and both are called the \textit{infinite Hoeffding decomposition} of $F$ in $L^2_\X$. 
\end{prop}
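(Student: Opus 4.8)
The plan is to reduce both expansions to a single Hilbert-space principle recalled in Section \ref{appendix}: a series of pairwise orthogonal vectors in $L^2(\P)$ converges unconditionally if and only if the sum of the squared norms is finite, and in that case the value of the sum agrees with the limit of the partial sums taken along \emph{any} increasing sequence of finite index sets that exhausts the whole index set. The inputs I would take from the preparatory discussion are the pairwise orthogonality of the stationary components $F_M$, $M\in\Pot_{fin}(\N)$ (for distinct $M,M'$ one picks $n\geq\max(M\cup M')$ and invokes \eqref{cons3} together with the orthogonality of the finite Hoeffding decomposition of $F_n$), and the uniform $L^2$-bounds \eqref{l2bound1} and \eqref{l2bound2}. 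Since for $p\geq1$ every $M$ with $\abs{M}=p$ is nonempty, orthogonality gives $\Var(F_n^{(p)})=\sum_{M\subseteq[n]:\,\abs{M}=p}\E[F_M^2]$ and $\E[F_n^2]=\sum_{M\subseteq[n]}\E[F_M^2]$; feeding these into \eqref{l2bound1} and \eqref{l2bound2} and letting $n\to\infty$ yields the key square-summability estimates
\[
\sum_{M\subseteq\N:\,\abs{M}=p}\E[F_M^2]\leq\Var(F)\qquad\text{and}\qquad\sum_{M\in\Pot_{fin}(\N)}\E[F_M^2]\leq\E[F^2]<\infty.
\]
By the principle above, both series $\sum_{\abs{M}=p}F_M$ and $\sum_{M\in\Pot_{fin}(\N)}F_M$ therefore converge unconditionally in $L^2(\P)$.

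Next I would identify their limits. The crucial observation is that the L\'evy-martingale values are precisely the partial sums along a cofinal sequence: $F_n^{(p)}=\sum_{M\subseteq[n]:\,\abs{M}=p}F_M$ and $F_n=\sum_{M\subseteq[n]}F_M$, while the families $\{M\subseteq[n]:\abs{M}=p\}$ and $\{M:M\subseteq[n]\}$ increase to $\{M:\abs{M}=p\}$ and to $\Pot_{fin}(\N)$ as $n\to\infty$. Unconditional convergence forces every such exhausting sequence of partial sums to share the common limit, so the sum of $\sum_{\abs{M}=p}F_M$ equals $\lim_n F_n^{(p)}=F^{(p)}$, which is \eqref{genhd1}, and the sum of $\sum_{M\in\Pot_{fin}(\N)}F_M$ equals $\lim_n F_n=F$, the last equality being the $L^2$-martingale convergence $F_n\to F$. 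Finally, grouping the unconditionally convergent series $\sum_M F_M$ into the blocks $\{M:\abs{M}=p\}$ — an operation that preserves the limit for unconditionally convergent series — gives $F=\sum_{p=0}^\infty F^{(p)}$, completing \eqref{genhd2}.

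For the $\P$-a.s. uniqueness I would exploit that $\E[\,\cdot\,|\F_L]$ is an $L^2$-contraction and hence may be applied termwise to the $L^2$-convergent series. For any finite $L\subseteq\N$ this gives $\E[F\,|\,\F_L]=\sum_{M\subseteq L}F_M$, because $F_M$ is $\F_L$-measurable when $M\subseteq L$ while $\E[F_M\,|\,\F_L]=0$ otherwise by property (b). M\"obius inversion over the Boolean lattice of subsets of $L$ then recovers \eqref{hdform}, so each $F_M$ is determined by $F$ alone; consequently any admissible family must coincide with $(F_M)_{M\in\Pot_{fin}(\N)}$, and grouping by cardinality transfers this to the $\sum_p F^{(p)}$ form.

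The only genuinely delicate point — and thus the main obstacle — is the passage from convergence along the single ordering given by $([n])_{n\in\N}$ to the full unconditional statement. One is tempted to read off the representation directly from the martingale convergence $F_n\to F$, but that controls only one particular grouping of the terms; the substance of the proposition is that the $F_M$ may be reassembled in an arbitrary order, and this is exactly where the square-summability estimates, combined with the Hilbert-space characterization of unconditional convergence, are indispensable.
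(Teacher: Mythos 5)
Your proposal is correct and follows essentially the same route as the paper's appendix proof: pairwise orthogonality of the components $F_M$ together with the uniform bounds \eqref{l2bound1} and \eqref{l2bound2} give square-summability, an orthogonal-series criterion yields unconditional $L^2(\P)$-convergence, and the limits are identified with the L\'evy-martingale limits $F^{(p)}$ and $F$. The only differences are in packaging: where you cite standard summability facts (the sum equals the limit along any exhausting increasing sequence of finite index sets, and grouping of a summable family preserves the sum), the paper proves exactly these identifications by hand via explicit $\eps$-arguments (a second application of Proposition \ref{serieslemma} plus a three-term triangle inequality for $F=\sum_{p=0}^\infty F^{(p)}$), and your M\"obius-inversion argument for uniqueness makes explicit a point the paper leaves implicit.
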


For $p\in\N_0$ the space $\mathcal{H}_p=\mathcal{H}_{p,\X}$ consisting of all random variables $F\in L^2_\X$ whose infinite Hoeffding decomposition is of the form
\[F=F^{(p)}=\sum_{M\subseteq\N: |M|=p}F_M\]
is called the \textit{$p$-th Hoeffding space} associated with $\X$. It is easy to see that each $\mathcal{H}_p$, $p\in\N_0$, is a closed linear subspace of $L^2_\X$.
Let us denote by $J_p$ the orthogonal projection on $\mathcal{H}_p$, $p\in\N_0$. Thus, if $F\in L^2_\X$ has the infinite Hoeffding decomposition \eqref{genhd2}, then 
\begin{equation}\label{intformHD}
 F=\sum_{p=0}^\infty J_p(F)=\E[F]+\sum_{p=1}^\infty J_p(F),
\end{equation}
where $J_p(F)=F^{(p)}$, $p\in\N_0$.
We will see in the next subsection that the infinite Hoeffding decomposition plays the same role in our setting as the Wiener-It\^{o} chaos decomposition does for the Malliavin calculus on Gaussian \cite{Nualart} and Poisson spaces \cite{Lastsa} or for Rademacher sequences \cite{Priv08}. Therefore, for $p\in\N_0$, we also call an $F\in\mathcal{H}_p$ a \textit{$p$-th chaos}. 
If $p\geq1$ and for a finite vector $\X=(X_1,\dotsc,X_n)$ of independent random variables, in statistical theory such a random variable is also called a (not necessarily symmetric) \textit{completely degenerate $U$-statistic of order $p$}. We continue to use this notion in our setting of an infinite sequence $\X$.

\subsection{Malliavin operators and chaotic decomposition}\label{mallop}

In this subsection we review the definitions and domains of the three Malliavin operators $D,\delta$ and $L$ given in \cite{DH}. Moreover, we extend the theory therein by showing that the infinite Hoeffding decomposition from Proposition \ref{infhoeff} plays the role of a chaotic decomposition for random variables $F\in L^2_\X$. In particular, we derive new and explicit characterizations of the domains for $D,\delta$ and $L$ in terms and by means of this decomposition. On the one hand these criteria facilitate the verification, whether the random variable under consideration belongs to the respective domain. On the other hand, several novel important structural properties of the operators $D$ and $L$ are proved via the infinite Hoeffding decomposition. We begin by reviewing the relevant definitions and theory from \cite{DH} before stating and proving our various extensions.\smallskip\\

%\subsubsection{Review of some theory from \cite{DH}}
For $k\in\N$ and $F\in L^1_\X$ with representative $f\in L^1(\mu)$ we define 
\begin{equation*}
D_kF:=F-\E[F\,|\,\G_k]=F-\E\bigl[f\bigl(\X^{(k)}\bigr)\,\bigl|\,\X\bigr]=\E\bigl[f(\X)-f(\X^{(k)})\,\bigl|\,\X\bigr]. 
\end{equation*}
It is easy to see that each $D_k$ defines a self-adjoint operator with norm one on $L^2_\X$. Indeed, $D_k$ is the orthogonal projection on the space of all random variables in $L^2_\X$ that really depend on the random variable $X_k$. It thus follows that $D_kD_k=D_k$ and $D_kD_l=D_lD_k$ for all $k,l\in\N$. Furthermore, $F\in L^r_\X$ implies $D_kF\in L^r_\X$ and $\norm{D_kF}_r\leq 2\norm{F}_r$ for all $r\in[1,\infty)$. Also note that the symmetry relation $\E[FD_kG]=\E[GD_kF]$ continues to hold for $G\in L^r_\X$ and $F\in L^s_\X$, if $r,s\in[1,\infty]$ are such that $r^{-1}+s^{-1}=1$ (with the usual convention that $1/\infty:=0$).

Moreover, we let $\tilde{D}F:=(D_kF)_{k\in\N}$ denote the corresponding process. This definition makes sense for any $F\in L^2_\X$ (in fact even for any $F\in L^1_\X$) although $\tilde{D}F$ need not necessarily be an element of $L^2(\kappa\otimes\P_\F)$. Thus, in \cite{DH} the \textit{Malliavin derivative} $D$ is first defined on the $L^2_\X$-dense subspace $\mathcal{S}$ via $DF=\tilde{D}F$, which is always in $L^2(\kappa\otimes\P_\F)$ as $D_kF=0$ if $F$ is $\F_n$-measurable and $k>n$ (see \cite[Lemma 3.1]{DH}). Then, it is proved in \cite[Corollary 2.5]{DH} that this operator on $\mathcal{S}$ is closable and its closure is again denoted by $D$. We write $\dom(D)$ for the domain of this closure, which therefore defines a densely defined, closed operator $D:\dom(D)\subseteq L^2_\X\rightarrow L^2(\kappa\otimes\P_\F)$. In \cite[Lemma 2.6]{DH}, a sufficient condition for $F\in L^2_\X$ to be in $\dom(D)$ is given but no explicit characterization of $\dom(D)$ is provided. Moreover, it is not clear from the outset that, for $F\in\dom(D)\setminus\mathcal{S}$, this operator is still given by $DF=(D_kF)_{k\in\N}$, although we will see that this holds in Lemma \ref{domDle} below. Since $\dom(D)$ is in general a strict subset of $L^2_\X$, it is thus desirable to have verifiable equivalent conditions for $F$ to be in $\dom(D)$. These are provided by Proposition \ref{domD} below. The following slight extension of \cite[Lemma 3.2]{DH} will be used in what follows without further mention: For $F\in L^2_\X$, $k\in\N$ and $M\subseteq \N$ one has
\begin{align*}
 D_k\E\bigl[F\,\big|\,\F_M\bigr]=\E\bigl[F\,\big|\,\F_M\bigr]-\E\bigl[F\,\big|\,\F_M\cap \G_k\bigr]=\E\bigl[D_kF\,\big|\,\F_M\bigr].
\end{align*}

\medskip

As usual, the \textit{divergence operator} $\delta$ has been defined in \cite{DH} as the adjoint operator of $D$. In particular, by definition, its domain $\dom(\delta)$ is the collection of all processes $U=(U_k)_{k\in\N}\in L^2(\kappa\otimes\P_\F)$ such that there is a constant $C\in[0,\infty)$ with the property that 
\[\Babs{\langle DF,U\rangle_{L^2(\kappa\otimes\P_\F)}}=\Bbabs{\sum_{k=1}^\infty \E\bigl[D_kF U_k\bigr]}\leq C\norm{F}_2\]
holds for any $F\in\dom(D)$. For $U=(U_k)_{k\in\N}\in\dom(\delta)$ and $F\in\dom(D)$ one thus has the \textit{Malliavin integration by parts formula}
\begin{equation}\label{intpartsM}
\langle DF,U\rangle_{L^2(\kappa\otimes\P_\F)}=\E\bigl[F\delta U\bigr]
\end{equation} 
which, as $\langle DF,U\rangle_{L^2(\kappa\otimes\P_\F)}=\sum_{k=1}^\infty \E[D_kF U_k]=\sum_{k=1}^\infty \E[F D_kU_k]$, entails that 
\begin{equation}\label{formdelta}
\delta U=\sum_{k=1}^\infty D_kU_k.
\end{equation}
Note that for the latter argument to be sound, one actually has to make sure that interchanging the infinite sum and the expectation is justified. We refer to Proposition \ref{domdelta} below for a rigorous proof of the formula \eqref{formdelta} as well as for new explicit characterizations of $\dom(\delta)$.
\smallskip\\

The third Malliavin operator defined in \cite{DH} is the \textit{Ornstein-Uhlenbeck generator} or \textit{number operator} $L$ corresponding to $\X$. By definition, its domain $\dom(L)$ is the collection of all $F\in\dom(D)$ such that $DF\in\dom(\delta)$ and then one defines 
\[LF=-\delta DF=-\sum_{k=1}^\infty D_kF=\sum_{k=1}^\infty \E\bigl[f(\X^{(k)})-f(\X)  \,\bigl|\,\X\bigr],\]
where the explicit formula \eqref{formdelta} for $\delta U$ as well as $D_kD_k=D_k$ have been used for the second equality. In fact, in \cite[Theorem 2.11]{DH} it is shown that $L$ is the infinitesimal generator of a strongly Feller semigroup $(P_t)_{t\geq0}$ on $L^\infty(\mu)$, the \textit{Ornstein-Uhlenbeck semigroup} associated to $\X$ and \cite[Theorem 2.12]{DH} provides the important \textit{Mehler formula} for $(P_t)_{t\geq0}$. \smallskip\\

The remainder of this section is devoted to our extensions to the theory developed in \cite{DH} and \cite{Duer}. Most of the results to follow are of independent interest but are also important for the proofs of and/or for applying the normal approximation bounds in Section \ref{normapp}.

\begin{lemma}\label{domDle}
Suppose that $F_n,F\in L^2_\X$, $n\in\N$, are such that $(F_n)_{n\in\N}$ converges in $L^2_\X$ to $F$. Then, for any $k\in\N$, the sequence $(D_kF_n)_{n\in\N}$ converges to $D_kF$ in $L^2_\X$. Moreover, if $F\in\dom(D)$, then $DF=\tilde{D}F=(D_kF)_{k\in\N}$.
\end{lemma}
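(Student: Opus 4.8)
The plan is to establish the two assertions separately: the first is an immediate continuity statement, while the second is a consistency check reconciling the naive coordinatewise derivative $\tilde{D}$ with the closed operator $D$ obtained as its closure.

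For the first claim I would simply invoke that each $D_k$ is a bounded operator on $L^2_\X$. Since $D_k$ is the orthogonal projection onto the subspace of random variables genuinely depending on $X_k$, it has operator norm $\leq 1$, whence
\[
\norm{D_kF_n-D_kF}_2=\norm{D_k(F_n-F)}_2\leq\norm{F_n-F}_2\longrightarrow 0\qquad(n\to\infty).
\]
Thus $(D_kF_n)_{n\in\N}$ converges to $D_kF$ in $L^2_\X$ for every fixed $k$, and this part needs no further input.

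For the second claim, let $F\in\dom(D)$. Since $D$ is by construction the closure of the operator $\tilde{D}$ restricted to the dense subspace $\mathcal{S}$ of cylindrical random variables, the standard description of the graph of a closure furnishes a sequence $(G_n)_{n\in\N}\subseteq\mathcal{S}$ with $G_n\to F$ in $L^2_\X$ and $\tilde{D}G_n=DG_n\to DF$ in $L^2(\kappa\otimes\P_\F)$, the first equality holding because $D$ and $\tilde{D}$ coincide on $\mathcal{S}$. Writing $DF=\bigl((DF)_k\bigr)_{k\in\N}$ for the process obtained under the identification of $L^2(\kappa\otimes\P_\F)$ with sequences of random variables, the task reduces to passing from convergence in the process norm to convergence in each single coordinate, and then identifying the limit.

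This last point is the main (and essentially only) technical step. Convergence $DG_n\to DF$ in $L^2(\kappa\otimes\P_\F)$ means $\sum_{k\in\N}\E\bigl[(D_kG_n-(DF)_k)^2\bigr]\to0$, and since each summand is dominated by the full sum, we obtain, for every fixed $k$, that $D_kG_n\to(DF)_k$ in $L^2_\X$ as $n\to\infty$. On the other hand, applying the first claim to the sequence $(G_n)_{n\in\N}$, which converges to $F$ in $L^2_\X$, yields $D_kG_n\to D_kF$ in $L^2_\X$. By uniqueness of limits in $L^2_\X$ we conclude $(DF)_k=D_kF$ $\P$-a.s. for every $k\in\N$, that is $DF=(D_kF)_{k\in\N}=\tilde{D}F$, as claimed. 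I do not anticipate any genuine obstacle beyond carefully invoking the definition of the closure and extracting coordinatewise convergence from the stronger convergence in $L^2(\kappa\otimes\P_\F)$.
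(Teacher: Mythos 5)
Your proposal is correct and follows essentially the same route as the paper: the first claim via the fact that $D_k$ is a contraction (the paper phrases this through properties of conditional expectation, you through the projection norm bound, which the paper itself records), and the second claim by taking an approximating sequence in $\mathcal{S}$ from the definition of the closure, extracting coordinatewise $L^2$-convergence by dominating each summand by the full $L^2(\kappa\otimes\P_\F)$-norm, and identifying the limit via the first claim and uniqueness of $L^2$-limits.
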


\begin{proof}
The first claim follows from $D_kF_n=F_n-\E[F_n|\G_k]$ and $D_kF=F-\E[F|\G_k]$ via well-known properties of conditional expectations. For the second claim let $DF=(V_k)_{k\in\N}\in L^2(\kappa\otimes\P_\F)$. If $F\in\dom(D)$, then there is a sequence $F_n\in\mathcal{S}$, $n\in\N$, such that $F_n\rightarrow F$ in $L^2(\P)$ and $DF_n\rightarrow DF$ in $L^2(\kappa\otimes\P_\F)$ as $n\to\infty$. Thus, for each fixed $k\in\N$, 
\[0\leq \E\Bigl[\bigl(D_kF_n-V_k\bigr)^2\Bigr]\leq\sum_{l=1}^\infty\E\Bigl[\bigl(D_lF_n-V_l\bigr)^2\Bigr]=\int_{\N\times\Omega} (DF_n-DF)^2d(\kappa\otimes\P_\F)\stackrel{n\to\infty}{\longrightarrow}0.\]
Since $(D_kF_n)_{n\in\N}$ converges also to $D_kF$ in $L^2_\X$ by the first claim, it follows that $V_k=D_kF$ $\P$-a.s. for each $k\in\N$. Since $\N$ is countable, this implies that 
$(V_k)_{k\in\N}=(D_kF)_{k\in\N}$ $\P$-a.s.
\end{proof}

\begin{prop}\label{HoeffD}
Let $F\in L^2_\X$ have the infinite Hoeffding decomposition \eqref{genhd2}. Then, for any $k\in\N$, the infinite Hoeffding decomposition of $D_kF$ is given by\\ 
$D_kF=\sum_{M\in\Pot_{fin}(\N): k\in M} F_M$.
\end{prop}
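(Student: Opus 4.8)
The plan is to reduce the statement to the action of $D_k$ on a single Hoeffding component $F_M$, then propagate the result through the unconditionally convergent expansion \eqref{genhd2} using that $D_k$ is a bounded linear operator, and finally identify the resulting series as a genuine Hoeffding decomposition by appealing to the uniqueness part of Proposition \ref{infhoeff}.

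First I would analyze one component. Fix a finite $M\subseteq\N$ and recall that $D_kF_M=F_M-\E[F_M\,|\,\G_k]$. If $k\notin M$, then $\F_M\subseteq\G_k$, so $F_M$ is $\G_k$-measurable and hence $D_kF_M=0$. If $k\in M$, I would exploit that $X_k$ is independent of $\G_k=\sigma(X_j:j\not=k)$ while the remaining coordinates $(X_j)_{j\in M\setminus\{k\}}$ on which $F_M$ depends are $\G_k$-measurable; the standard freezing/independence lemma then yields $\E[F_M\,|\,\G_k]=\E[F_M\,|\,\F_{M\setminus\{k\}}]$. Since $M\setminus\{k\}\subsetneq M$, property (b) (equivalently (c)) of the Hoeffding components forces $\E[F_M\,|\,\F_{M\setminus\{k\}}]=0$, whence $D_kF_M=F_M$. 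In summary, $D_kF_M=\1_{\{k\in M\}}F_M$ for every finite $M\subseteq\N$.

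Next I would pass to the full series. Because $D_k$ is a bounded (norm-one) linear operator on $L^2_\X$ and the expansion $F=\sum_{M\in\Pot_{fin}(\N)}F_M$ converges unconditionally in $L^2(\P)$ by Proposition \ref{infhoeff}, applying $D_k$ term by term is justified and preserves unconditional convergence, giving
\[
D_kF=\sum_{M\in\Pot_{fin}(\N)}D_kF_M=\sum_{M\in\Pot_{fin}(\N):\,k\in M}F_M
\]
in the $L^2(\P)$-sense. To conclude that this is the infinite Hoeffding decomposition of $D_kF$, I would check the defining properties: each surviving summand $F_M$ (with $k\in M$) is $\F_M$-measurable and, being a Hoeffding component of $F$, satisfies $\E[F_M\,|\,\F_L]=0$ unless $M\subseteq L$, while the components indexed by $M\not\ni k$ are identically zero. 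Hence the right-hand side fulfils conditions (a) and (b), and the uniqueness statement in Proposition \ref{infhoeff} identifies the Hoeffding components of $D_kF$ as exactly $\1_{\{k\in M\}}F_M$, which is the claimed formula.

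I expect the only genuinely delicate point to be the identity $\E[F_M\,|\,\G_k]=\E[F_M\,|\,\F_{M\setminus\{k\}}]$ for $k\in M$, since $\G_k$ is generated by infinitely many variables; this rests on the independence of $X_k$ from $\G_k$ together with the fact that $F_M$ depends on only finitely many coordinates. The remaining ingredients -- the term-by-term action of the bounded operator $D_k$ and the appeal to uniqueness -- are routine.
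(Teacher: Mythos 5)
Your proof is correct and follows essentially the same route as the paper: the key identity $D_kF_M=\1_{\{k\in M\}}F_M$ (equivalently $\E[F_M\,|\,\G_k]=\1_{\{k\notin M\}}F_M$, obtained from independence and the degeneracy property (b)), followed by term-by-term application of the bounded operator $D_k$ to the unconditionally convergent expansion. Your final appeal to uniqueness to certify that the resulting series is the Hoeffding decomposition of $D_kF$ is a point the paper leaves implicit, but it is a harmless (and welcome) addition rather than a different approach.
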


\begin{proof}
Note that, by the defining properties of the Hoeffding components $F_M$, we have $\E[F_M|\G_k]=\1_{\{k\notin M\}} F_M$ $\P$-a.s. for any $k\in\N$ and any $M\in\Pot_{fin}(\N)$. Hence, using that limits in $L^2(\P)$ and conditional expectation may be interchanged, we obtain
\begin{align*}
D_kF&=F-\E[F_M|\G_k]=\sum_{M\in\Pot_{fin}(\N)}F_M-\sum_{M\in\Pot_{fin}(\N)}\E\bigl[F_M\,\bigl|\,\G_k\bigr]\\
&=\sum_{M\in\Pot_{fin}(\N)}F_M-\sum_{M\in\Pot_{fin}(\N)}\1_{\{k\notin M\}}F_M
=\sum_{M\in\Pot_{fin}(\N): k\in M} F_M.
\end{align*}
\end{proof}

\begin{cor}[Stroock type formula]\label{stroock}
 Let $F\in L^2_\X$ have the infinite Hoeffding decomposition \eqref{genhd2}. Then, for any 
 $M=\{i_1,\dotsc,i_p\}\in\Pot_{fin}(\N)$ it holds that %\\
 $F_M=\E\bigl[D_{i_1}D_{i_2}\ldots D_{i_p}F\,\bigl|\, \F_M\bigr]$. Thus, one has the explicit formulae
\begin{align*}
F&=\E[F]+\sum_{p=1}^\infty\sum_{1\leq i_1<i_2<\ldots<i_p<\infty}\E\bigl[D_{i_1}D_{i_2}\ldots D_{i_p}F\,\bigl|\, \F_{\{i_1,\dotsc,i_p\}}\bigr]\\
&=\E[F]+\sum_{p=1}^\infty\frac{1}{p!}\sum_{(i_1,\dotsc,i_p)\in\N^p_{\not=}}\E\bigl[D_{i_1}D_{i_2}\ldots D_{i_p}F\,\bigl|\, \F_{\{i_1,\dotsc,i_p\}}\bigr].
\end{align*}
 \end{cor}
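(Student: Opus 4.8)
The plan is to obtain the formula for $F_M$ by iterating Proposition~\ref{HoeffD} and then reading off the single surviving term after conditioning on $\F_M$. First I would prove, by induction on $p$, that for pairwise distinct indices $i_1,\dotsc,i_p$ one has
\[
D_{i_1}D_{i_2}\cdots D_{i_p}F=\sum_{\substack{N\in\Pot_{fin}(\N):\\ \{i_1,\dotsc,i_p\}\subseteq N}}F_N,
\]
with the series converging in $L^2_\X$. The case $p=1$ is precisely Proposition~\ref{HoeffD}. For the inductive step I would put $G:=D_{i_2}\cdots D_{i_p}F$; by the induction hypothesis together with the uniqueness assertion of Proposition~\ref{infhoeff}, the infinite Hoeffding decomposition of $G$ has components $G_N=\1_{\{\{i_2,\dotsc,i_p\}\subseteq N\}}F_N$, and applying Proposition~\ref{HoeffD} to $G$ with the index $i_1$ gives $D_{i_1}G=\sum_{N\ni i_1}G_N=\sum_{N\supseteq\{i_1,\dotsc,i_p\}}F_N$, as desired. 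The commutativity $D_kD_l=D_lD_k$ recorded in Subsection~\ref{mallop} shows that the ordering of the operators is immaterial.

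Next I would condition on $\F_M$, where $M=\{i_1,\dotsc,i_p\}$. Since $\E[\,\cdot\mid\F_M]$ is an $L^2$-contraction and therefore continuous, it may be interchanged with the convergent series, yielding
\[
\E\bigl[D_{i_1}\cdots D_{i_p}F\bigm|\F_M\bigr]=\sum_{N\supseteq M}\E\bigl[F_N\bigm|\F_M\bigr].
\]
By the defining property (b) of the Hoeffding components one has $\E[F_N\mid\F_M]=0$ unless $N\subseteq M$; combined with the constraint $N\supseteq M$ this forces $N=M$, and the sole surviving summand equals $\E[F_M\mid\F_M]=F_M$ by the $\F_M$-measurability from property (a). This proves $F_M=\E[D_{i_1}\cdots D_{i_p}F\mid\F_M]$.

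Finally I would insert this identity into the orthogonal expansion \eqref{genhd2}, i.e. $F=\E[F]+\sum_{p=1}^\infty\sum_{M:\,|M|=p}F_M$. Writing each set $M$ with $|M|=p$ uniquely as $\{i_1,\dotsc,i_p\}$ with $i_1<\dotsb<i_p$ produces the first explicit formula. For the second, the commutativity of the $D_k$ shows that $\E[D_{i_1}\cdots D_{i_p}F\mid\F_{\{i_1,\dotsc,i_p\}}]$ depends only on the set $\{i_1,\dotsc,i_p\}$ and not on the order of its elements; since each such set arises from exactly $p!$ tuples $(i_1,\dotsc,i_p)\in\N^p_{\not=}$, this accounts for the factor $1/p!$.

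The only genuinely delicate point is the inductive step: I must verify that after applying part of the string of operators the summands $F_N$ are indeed the \emph{infinite Hoeffding components} of the intermediate functional, so that Proposition~\ref{HoeffD} can legitimately be reapplied. This is guaranteed by the $\P$-a.s. uniqueness of the infinite Hoeffding decomposition in Proposition~\ref{infhoeff}. The interchange of conditional expectation with the infinite series is routine, being justified exactly as in the proof of Proposition~\ref{HoeffD} by the $L^2$-continuity of $\E[\,\cdot\mid\F_M]$.
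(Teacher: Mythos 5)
Your proposal is correct and follows essentially the same route as the paper's proof: iterate Proposition~\ref{HoeffD} to get $D_{i_1}\cdots D_{i_p}F=\sum_{N\supseteq M}F_N$, interchange the conditional expectation with the $L^2$-convergent series, kill all terms with $N\neq M$ via property (b), and then use the commutativity $D_kD_l=D_lD_k$ together with \eqref{genhd2} for the two explicit formulae. The only difference is presentational: you spell out the iteration as a formal induction (with the uniqueness of the infinite Hoeffding decomposition justifying the reapplication of Proposition~\ref{HoeffD}), which the paper compresses into the phrase ``by iteration.''
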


 \begin{proof}
  By iteration, from Proposition \ref{HoeffD} we obtain that 
  \begin{align*}
  D_{i_1}D_{i_2}\ldots D_{i_p}F&= \sum_{N\in\Pot_{fin}(\N): M\subseteq N} F_N.
  \end{align*}
  Since the series here converges (unconditionally) in $L^2(\P)$, we can interchange summation and the conditional expectation with respect to $\F_M$, and then property (b) of the Hoeffding components yields the first claim. The formulae for $F$ now follow from the first claim and \eqref{genhd2} by observing that $D_kD_l=D_lD_k$ for all $k,l\in\N$. 
 \end{proof}

The next result gives novel verifiable criteria for $F\in L^2_\X$ to be in $\dom(D)$.

\begin{prop}\label{domD}
For a random variable $F\in L^2_\X$ the following conditions are equivalent:
\begin{enumerate}[{\normalfont (i)}]
\item $F\in \dom(D)$.
\item $(D_kF)_{k\in\N}\in L^2(\kappa\otimes\P_\F)$, i.e. $\sum_{k=1}^\infty \E[(D_kF)^2]<\infty$.
\item For one (and then every) representative $f\in L^2(\mu)$ of $F$ one has \\$\displaystyle\sum_{k=1}^\infty\E\Bigl[\Bigl(f\bigl(\X^{(k)}\bigr)-f\bigl(\X\bigr)\Bigr)^2 \Bigr]<\infty$.
\item With the infinite Hoeffding decomposition \eqref{genhd2} of $F$
%\[F=\sum_{\substack{M\subseteq\N:\\ \abs{M}<\infty}}F_{M}=\sum_{p=0}^\infty F^{(p)}\] 
one has that \\
$\displaystyle\sum_{p=1}^\infty p\,\E\bigl[(F^{(p)})^2\bigr]=\sum_{p=1}^\infty p\,\E\bigl[J_p(F)^2\bigr]=  \sum_{M\in \Pot_{fin}(\N)}|M|\,\E[F_{M}^2]<\infty$.
\end{enumerate}
%In this case, one has $DF=\tilde{D}F=(D_kF)_{k\in\N}$.
\end{prop}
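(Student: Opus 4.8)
The plan is to separate the four statements into a computational cluster and one genuinely analytic equivalence. Conditions (ii), (iii) and (iv) will turn out to be \emph{identical finiteness conditions}, each expressing that a single nonnegative series is finite, and I would verify this by exact identities resting on the orthogonality of the Hoeffding decomposition. The substantive step is the equivalence of (i) with (ii), which alone uses that $D$ is defined as the closure of its restriction to $\mathcal{S}$. I would therefore first prove (ii) $\Leftrightarrow$ (iii) $\Leftrightarrow$ (iv) and then (i) $\Leftrightarrow$ (ii).

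For (ii) $\Leftrightarrow$ (iv) I would start from Proposition \ref{HoeffD}, which gives the orthogonal expansion $D_kF=\sum_{M\in\Pot_{fin}(\N):\,k\in M}F_M$, so that $\E[(D_kF)^2]=\sum_{M:\,k\in M}\E[F_M^2]$. Summing over $k$ and interchanging the two nonnegative sums by Tonelli yields
\begin{equation*}
\sum_{k=1}^\infty\E[(D_kF)^2]=\sum_{M\in\Pot_{fin}(\N)}\sum_{k\in M}\E[F_M^2]=\sum_{M\in\Pot_{fin}(\N)}\abs{M}\,\E[F_M^2],
\end{equation*}
and grouping according to $\abs{M}=p$, together with $\E[(F^{(p)})^2]=\sum_{\abs{M}=p}\E[F_M^2]$, identifies the right-hand side with $\sum_{p\geq1}p\,\E[(F^{(p)})^2]$; hence (ii) and (iv) hold simultaneously. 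For (ii) $\Leftrightarrow$ (iii) I would use $D_kF=\E[f(\X)-f(\X^{(k)})\mid\X]$ and establish the exact identity $\E[(f(\X)-f(\X^{(k)}))^2]=2\,\E[(D_kF)^2]$. The point is that $f(\X)-f(\X^{(k)})=\sum_{M:\,k\in M}\bigl(F_M(\X)-F_M(\X^{(k)})\bigr)$, since the components with $k\notin M$ do not see the $k$-th coordinate and cancel; the summands stay orthogonal because conditioning on $\X$ and integrating out the fresh coordinate $X_k'$ kills any $F_M(\X^{(k)})$ with $k\in M$ (this is the Hoeffding property $\E[F_M\mid\G_k]=0$); and each diagonal term equals $\E[(F_M(\X)-F_M(\X^{(k)}))^2]=2\E[F_M^2]$ because $\E[F_M(\X)F_M(\X^{(k)})]=0$. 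Summing over $k$ turns this into the stated equivalence.

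For (i) $\Rightarrow$ (ii) I would simply invoke Lemma \ref{domDle}: if $F\in\dom(D)$, then $DF=(D_kF)_{k\in\N}$, which lies in $L^2(\kappa\otimes\P_\F)$ by definition, i.e. $\sum_k\E[(D_kF)^2]<\infty$. The converse (ii) $\Rightarrow$ (i) is the heart of the matter. I would approximate $F$ by its L\'{e}vy martingale $F_n:=\E[F\mid\F_n]\in\mathcal{S}$, which converges to $F$ in $L^2_\X$. Using the commutation relation $D_kF_n=\E[D_kF\mid\F_n]$ (and $D_kF_n=0$ for $k>n$), one has $DF_n=(\E[D_kF\mid\F_n])_{k\in\N}$, so that
\begin{equation*}
\norm{DF_n-(D_kF)_{k\in\N}}_{L^2(\kappa\otimes\P_\F)}^2=\sum_{k=1}^\infty\E\bigl[(\E[D_kF\mid\F_n]-D_kF)^2\bigr].
\end{equation*}
Each summand tends to $0$ as $n\to\infty$ by $L^2$-martingale convergence, and by conditional Jensen each is dominated by $4\,\E[(D_kF)^2]$ uniformly in $n$. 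Since (ii) makes $\sum_k 4\,\E[(D_kF)^2]$ finite, the dominated convergence theorem over the counting measure lets me pass the limit inside the sum, giving $DF_n\to(D_kF)_{k\in\N}$ in $L^2(\kappa\otimes\P_\F)$. As $D$ is closed, $F_n\to F$ together with this convergence forces $F\in\dom(D)$.

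The main obstacle is precisely this last interchange of limit and infinite sum: the $k$-wise convergence is immediate from martingale theory, but it is only the uniformly summable majorant $4\,\E[(D_kF)^2]$ furnished by (ii) that justifies dominated convergence for the series, and this is exactly what ties finiteness of $\sum_k\E[(D_kF)^2]$ to membership in the closure $\dom(D)$. Everything else is bookkeeping with orthogonal sums, the only care being the Tonelli and orthogonality interchanges, all legitimate since every series in play has nonnegative terms or converges unconditionally in $L^2(\P)$.
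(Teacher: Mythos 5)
Your proof is correct, and its skeleton coincides with the paper's: the exact orthogonality identities tying (ii), (iii), (iv) together, and the L\'{e}vy martingale $F_n=\E[F\,|\,\F_n]\in\mathcal{S}$ to connect the summability condition with membership in the closure. Three steps, however, are executed differently. For (i)$\Rightarrow$(ii) you invoke the second assertion of Lemma \ref{domDle} directly (if $F\in\dom(D)$ then $DF=(D_kF)_{k\in\N}\in L^2(\kappa\otimes\P_\F)$), which is shorter than the paper's argument by contradiction, where a uniform bound $\sum_{k=1}^\infty\E[(D_kF_n)^2]\leq C$ along an approximating sequence is played off against the coordinatewise convergence $D_kF_n\to D_kF$. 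For the converse you prove (ii)$\Rightarrow$(i) by identifying the limit of $DF_n$ explicitly: $D_kF_n=\E[D_kF\,|\,\F_n]\to D_kF$ in $L^2(\P)$ by martingale convergence, with the uniform majorant $4\,\E[(D_kF)^2]$ furnished by (ii), so dominated convergence over the counting measure gives $DF_n\to (D_kF)_{k\in\N}$ in $L^2(\kappa\otimes\P_\F)$; the paper instead proves (iv)$\Rightarrow$(i) by showing that $(DF_n)_{n\in\N}$ is Cauchy, computing $\sum_{k=1}^\infty\E[(D_kF_n-D_kF_m)^2]$ through the Hoeffding expansion and letting $m,n\to\infty$. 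Your route has the advantage of producing $DF=(D_kF)_{k\in\N}$ as a byproduct (re-deriving Lemma \ref{domDle} along the way), while the paper's Cauchy argument stays entirely inside the Hoeffding calculus and never needs a domination step. Finally, for (ii)$\Leftrightarrow$(iii) the paper obtains the identity $\E\bigl[(f(\X^{(k)})-f(\X))^2\bigr]=2\,\E[(D_kF)^2]$ by direct expansion, using $\E[f(\X^{(k)})f(\X)]=\E\bigl[(\E[F\,|\,\G_k])^2\bigr]$, whereas you derive it from orthogonality of the differences $F_M(\X)-F_M(\X^{(k)})$, $k\in M$; both are exact, yours requiring the extra (correctly handled) observation that all cross terms vanish because $\E\bigl[F_M(\X^{(k)})\,\big|\,\X\bigr]=0$ whenever $k\in M$, the paper's being the more elementary computation.
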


\begin{proof}
(i)$\Rightarrow$(ii): Suppose that (i) holds but (ii) does not. Then, by (i) there is a sequence $F_n\in\mathcal{S}$, $n\in\N$, such that $F_n\to F$ in $L^2(\P)$ and $DF_n=\tilde{D}F_n\to DF$ in $L^2(\kappa\otimes\P_\F)$ as $n\to\infty$. In particular, there is a $C\in(0,\infty)$ such that $\norm{DF_n}_{L^2(\kappa\otimes\P_\F)}^2= \sum_{k=1}^\infty \E[(D_kF_n)^2]\leq C$ for each $n\in\N$. On the other hand, since (ii) does not hold, there is an $m\in\N$ such that $\sum_{k=1}^m \E[(D_kF)^2]>2C$. By Lemma \ref{domDle}, from $F_n\to F$ in $L^2(\P)$ it follows that $D_kF_n\to D_kF$ in $L^2(\P)$ as $n\to\infty$ for each $k\in\N$. Therefore, 
\[\lim_{n\to\infty}\sum_{k=1}^m \E[(D_kF_n)^2]=\sum_{k=1}^m \E[(D_kF)^2]>2C\]
contradicting 
\[\sum_{k=1}^m \E[(D_kF_n)^2]\leq \sum_{k=1}^\infty \E[(D_kF_n)^2]\leq C\]
for each $n\in\N$. Hence, (i) implies (ii).\smallskip\\
(ii)$\Leftrightarrow$(iv): From Proposition \ref{HoeffD} we know that $D_kF=\sum_{M\in\Pot_{fin}(\N):k\in M}F_M$ for each $k\in\N$. Thus, from the orthogonality of the Hoeffding components we have
\begin{align*}
\sum_{k=1}^\infty \E[(D_kF)^2]&=\sum_{k=1}^\infty \Bigl(\sum_{M\in\Pot_{fin}(\N):k\in M}\E\bigl[F_M^2\bigr]\Bigr)=\sum_{M\in\Pot_{fin}(\N)}\E\bigl[F_M^2\bigr]\sum_{k\in M}1\\
&=\sum_{M\in\Pot_{fin}(\N)}|M|\,\E\bigl[F_M^2\bigr]=\sum_{p=1}^\infty p\,\E\bigl[(F^{(p)})^2\bigr],
\end{align*}
where the last equality follows from \eqref{genhd1} and the (general) rearrangement theorem for real series with nonnegative summands.\smallskip\\
(ii)$\Leftrightarrow$(iii): On the one hand, for $k\in\N$, we have 
\begin{align*}
\E\bigl[(D_kF)^2\bigr]&=\E\Bigl[F^2+\bigl(\E\bigl[F\,\big|\,\G_k\bigr]\bigr)^2-2F\bigl(\E\bigl[F\,\big|\,\G_k\bigr]\bigr)\Bigr]
=\E[F^2]-\E\Bigl[\bigl(\E\bigl[F\,\big|\,\G_k\bigr]\bigr)^2\Bigr]
\end{align*}
and, on the other hand, recalling that $\X$ and $\X^{(k)}$ have the same distribution we have 
\begin{align}\label{eqdomd}
&\E\Bigl[\Bigl(f\bigl(\X^{(k)}\bigr)-f\bigl(\X\bigr)\Bigr)^2 \Bigr]=\E\Bigl[f\bigl(\X^{(k)}\bigr)^2+f\bigl(\X\bigr)^2-2f\bigl(\X^{(k)}\bigr)f\bigl(\X\bigr)\Bigr]\notag\\
&=2\E[F^2]-2\E\Bigl[f\bigl(\X\bigr)\E\bigl[f\bigl(\X^{(k)}\bigr)\,\big|\,\X\bigr]\Bigr]=2\E[F^2]-2\E\Bigl[F\,\E\bigl[F\,\big|\,\G_k\bigr]\Bigr]\notag\\
&=2\E[F^2]-2\E\Bigl[\bigl(\E\bigl[F\,\big|\,\G_k\bigr]\bigr)^2\Bigr]=2\E\bigl[(D_kF)^2\bigr],
\end{align}
proving the claim.\smallskip\\
(iv)$\Rightarrow$(i): For $n\in\N$ let $F_n:=\E[F|\F_n]\in\mathcal{S}$. Then, we know that $F_n\to F$ in $L^2(\P)$ as $n\to\infty$ and by the definition of the closure of an operator it suffices to show that $(DF_n)_{n\in\N}$ converges in $L^2(\kappa\otimes\P_\F)$, as then $F\in \dom(D)$ and $DF=\lim_{n\to\infty} DF_n$ in $L^2(\kappa\otimes\P_\F)$. But, for integers $n>m\geq1$ we have 
\begin{align*}%\label{domd1}
&\sum_{k=1}^\infty\E\Bigl[\bigl(D_kF_n-D_kF_m\bigr)^2\Bigr]=\sum_{k=1}^\infty\sum_{\substack{M\subseteq[n]:\\k\in M, M\cap[m+1,n]\not=\emptyset}}\E\bigl[F_M^2\bigr]
=\sum_{\substack{M\subseteq[n]:\\k\in M, M\cap[m+1,n]\not=\emptyset}}|M|\E\bigl[F_M^2\bigr]\notag\\
&=\sum_{\substack{M\subseteq[n]:\\k\in M}}|M|\E\bigl[F_M^2\bigr]-\sum_{\substack{M\subseteq[m]:\\k\in M}}|M|\E\bigl[F_M^2\bigr]\longrightarrow0,
\end{align*}
as $n,m\to\infty$ by (iii). Hence, $(DF_n)_{n\in\N}$ is a Cauchy sequence in the Hilbert space $L^2(\kappa\otimes\P_\F)$ and therefore convergent.
%\smallskip\\
%The additional statement may be seen as follows. Letting $n\to\infty$ in \eqref{domd1} yields with $DF=(V_k)_{k\in\N}$ that
%\[\sum_{k=1}^\infty\E\Bigl[\bigl(V_k-D_kF_m\bigr)^2\Bigr]\]
\end{proof}

The following proposition similarly characterizes the domain of the adjoint $\delta$ of $D$.

\begin{prop}\label{domdelta}
For a process $U=(U_k)_{k\in\N}\in L^2(\kappa\otimes\P_\F)$ the following statements are equivalent: 
\begin{enumerate}[{\normalfont (i)}]
  \item $U\in\dom(\delta)$.
	\item If $\sum_{M\in\Pot_{fin}(\N)}U_{k,M}$ is the infinite Hoeffding decomposition of $U_k\in L^2_\X$, $k\in\N$, then $\displaystyle\sum_{M\in\Pot_{fin}(\N)}\E\Bigl[\Bigl(\sum_{k\in M}U_{k,M}\Bigr)^2\Bigr]<\infty$.
  \item The series $\sum_{k=1}^\infty D_kU_k$ converges in $L^2_\X$.
	\item $\displaystyle\sup_{m\in\N}\E\biggl[\Bigl(\sum_{k=1}^m D_kU_k\Bigr)^2\biggr]<\infty$.
  \end{enumerate}
	In this case, one has $\delta U=\sum_{k=1}^\infty D_kU_k$ in $L^2(\P)$.
\end{prop}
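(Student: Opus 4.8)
The plan is to read both the action and the domain of $\delta$ off the bilinear form $\langle DF,U\rangle$, using the infinite Hoeffding decomposition throughout. For $N\in\Pot_{fin}(\N)$ write $\Delta_N:=\sum_{k\in N}U_{k,N}$, where $U_{k,M}$ is the level-$M$ Hoeffding component of $U_k$. Since each $U_{k,N}$ lies in the level-$N$ Hoeffding space, so does $\Delta_N$, and the quantity in (ii) is exactly $\sum_N\E[\Delta_N^2]$. The first step is to establish, for every $F\in\dom(D)$ with decomposition $F=\sum_NF_N$, the identity
\[
\langle DF,U\rangle_{L^2(\kappa\otimes\P_\F)}=\sum_{k=1}^\infty\E[D_kF\,U_k]=\sum_{k=1}^\infty\sum_{N\ni k}\E[F_N\,U_{k,N}]=\sum_{N\in\Pot_{fin}(\N)}\E[F_N\,\Delta_N].
\]
The middle equality uses $D_kF=\sum_{N\ni k}F_N$ (Proposition \ref{HoeffD}) together with the orthogonality of Hoeffding components, and the regrouping into level-sets is an absolutely convergent rearrangement: indexing the terms by the pairs $(k,N)$ with $k\in N$ and applying Cauchy-Schwarz gives $\sum_{(k,N):k\in N}\norm{F_N}_2\norm{U_{k,N}}_2\le(\sum_N|N|\,\E[F_N^2])^{1/2}(\sum_k\E[U_k^2])^{1/2}$, which is finite because $F\in\dom(D)$ (Proposition \ref{domD}) and $U\in L^2(\kappa\otimes\P_\F)$.

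With this identity in hand, the equivalence (i)$\Leftrightarrow$(ii) and the value of $\delta U$ follow cleanly. If (ii) holds then the orthogonal series $\sum_N\Delta_N$ converges in $L^2_\X$ to some $T$ with $\norm{T}_2^2=\sum_N\E[\Delta_N^2]$, the identity reads $\langle DF,U\rangle=\E[FT]$, and Cauchy-Schwarz gives $\babs{\langle DF,U\rangle}\le\norm{F}_2\norm{T}_2$; hence $U\in\dom(\delta)$ with $\delta U=T=\sum_N\sum_{k\in N}U_{k,N}$. Conversely, testing against $F=\sum_{N\in\mathcal{M}}\Delta_N$ for a finite $\mathcal{M}\subseteq\Pot_{fin}(\N)$ (such $F$ lies in $\dom(D)$ by Proposition \ref{domD}, being a finite sum of chaoses) turns the identity into $\langle DF,U\rangle=\sum_{N\in\mathcal{M}}\E[\Delta_N^2]=\norm{F}_2^2$, so that $U\in\dom(\delta)$ forces $\sum_{N\in\mathcal{M}}\E[\Delta_N^2]\le C^2$; letting $\mathcal{M}$ exhaust $\Pot_{fin}(\N)$ yields (ii).

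It remains to bring in the partial sums $S_m:=\sum_{k=1}^mD_kU_k$. By Proposition \ref{HoeffD} the level-$N$ component of $S_m$ is $A_{N,m}:=\sum_{k\in N\cap[m]}U_{k,N}$, so by orthogonality $\E[S_m^2]=\sum_N\E[A_{N,m}^2]$, and for each fixed $N$ one has $A_{N,m}=\Delta_N$ once $m\ge\max N$. The implication (iii)$\Rightarrow$(iv) is immediate, and (iv)$\Rightarrow$(ii) follows from Fatou's lemma applied to the counting measure on $\Pot_{fin}(\N)$, since
\[
\sum_N\E[\Delta_N^2]=\sum_N\lim_{m\to\infty}\E[A_{N,m}^2]\le\liminf_{m\to\infty}\sum_N\E[A_{N,m}^2]=\liminf_{m\to\infty}\E[S_m^2]\le\sup_{m\in\N}\E[S_m^2].
\]
To close the cycle one must prove (ii)$\Rightarrow$(iii), namely that $(S_m)$ actually converges in $L^2_\X$ (necessarily to $T=\delta U$, as one checks by testing the limit against $\dom(D)$). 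The natural route is a Cauchy argument: for $n>m$ one has $\E[(S_n-S_m)^2]=\sum_N\E[(\sum_{k\in N,\,m<k\le n}U_{k,N})^2]$, and one would try to make these tail within-level partial sums small using jointly the summability $\sum_N\E[\Delta_N^2]<\infty$ from (ii) and the bound $\sum_k\E[U_k^2]<\infty$ from $U\in L^2(\kappa\otimes\P_\F)$.

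I expect this convergence step to be the main obstacle. The increments $D_kU_k$ are \emph{not} mutually orthogonal: for $k\neq k'$ lying in a common set $N$ the components $U_{k,N}$ and $U_{k',N}$ can be correlated, so $(S_m)$ is not an orthogonal series and mere boundedness of $\E[S_m^2]$ does not by itself force convergence. A naive Cauchy-Schwarz estimate inside a fixed level produces a factor $|N|$ that is controlled by neither of the two summability hypotheses individually, so the delicate point is to bound the tail within-level partial sums without losing this factor, combining both conditions simultaneously; this is where the genuine work lies.
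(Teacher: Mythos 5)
Everything you actually prove is correct, and where it overlaps with the paper it is essentially the same argument: your exhaustion proof of (i)$\Rightarrow$(ii) is the paper's contradiction argument (testing against finite sums of the $\Delta_N:=\sum_{k\in N}U_{k,N}$) run directly, your Fatou proof of (iv)$\Rightarrow$(ii) has the same substance as the paper's contradiction via choosing $m\geq\max(M_1\cup\dots\cup M_r)$, and (iii)$\Rightarrow$(iv) is trivial in both. Your one real departure is the direct proof of (ii)$\Rightarrow$(i) with the closed formula $\delta U=\sum_N\Delta_N$ (an orthogonal series over Hoeffding levels); the paper instead routes this as (ii)$\Rightarrow$(iii)$\Rightarrow$(i) and writes $\delta U=\sum_k D_kU_k$.

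The step you could not close, (ii)$\Rightarrow$(iii), is exactly where the paper's own proof is defective, and your diagnosis is right in the strongest possible sense: the implication is false. The paper asserts that $\sum_{M}\E\bigl[\bigl(\sum_{k\in M\cap[m+1,n]}U_{k,M}\bigr)^2\bigr]\to 0$ as $m,n\to\infty$ ``by (ii) and the dominated convergence theorem'', but no summable dominating family exists: (ii) controls only the \emph{complete} within-level sums $\E[\Delta_M^2]$, which do not dominate partial within-level sums, while the Cauchy--Schwarz bound $|M|\sum_{k\in M}\E[U_{k,M}^2]$ carries precisely the factor $|M|$ you identified as uncontrolled. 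Concretely, let $\X$ be an i.i.d.\ symmetric Rademacher sequence, partition $\N$ into consecutive blocks $M_j$ of cardinality $2j^2$, set $W_j:=\prod_{i\in M_j}X_i$, and let $U_k:=j^{-2}W_j$ for $k$ in the first half of $M_j$ and $U_k:=-j^{-2}W_j$ for $k$ in the second half. Then $\sum_k\E[U_k^2]=2\sum_j j^{-2}<\infty$, so $U\in L^2(\kappa\otimes\P_\F)$; each $U_k$ equals its single Hoeffding component $U_{k,M_j}$, and $\sum_{k\in M_j}U_{k,M_j}=0$, so (ii) holds trivially, and your identity gives $\langle DF,U\rangle_{L^2(\kappa\otimes\P_\F)}=0$ for every $F\in\dom(D)$, i.e.\ (i) holds with $\delta U=0$; moreover $\sup_m\E\bigl[\bigl(\sum_{k\leq m}D_kU_k\bigr)^2\bigr]=1$, so (iv) holds as well. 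But $D_kU_k=U_k$, and the partial sums of $\sum_k D_kU_k$ equal $W_j$ (with $\E[W_j^2]=1$) at the midpoint of block $j$ and $0$ at block boundaries, so the series is not Cauchy in $L^2_\X$: (iii) fails.

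So the gap in your proposal is not a missing trick but a false statement, and what your proposal establishes is in fact the correct form of the proposition: (i)$\Leftrightarrow$(ii), the implications (iii)$\Rightarrow$(iv)$\Rightarrow$(ii), and the formula $\delta U=\sum_M\sum_{k\in M}U_{k,M}$, which is the valid replacement for $\delta U=\sum_kD_kU_k$ (the latter may diverge even for $U\in\dom(\delta)$). Note also that (ii)$\Rightarrow$(iv) fails too: the same construction with blocks of size $2j^4$ and coefficients $j^{-3}$ keeps (i) and (ii) while making $\E\bigl[\bigl(\sum_{k\leq m}D_kU_k\bigr)^2\bigr]$ unbounded, so your level-wise route to $\delta U$, avoiding (iii) and (iv) altogether, is the only one that works.
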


\begin{proof}
(i)$\Rightarrow$(ii): Suppose that (ii) does not hold. Then, for any $n\in\N$, there are finitely many pairwise distinct sets $M_1,\dotsc,M_{m_n}\in\Pot_{fin}(\N)$ such that 
\[\sum_{j=1}^{m_n}\E\Bigl[\Bigl(\sum_{k\in M_j}U_{k,M_j}\Bigr)^2\Bigr]>n,\quad n\in\N.\]
For $n\in\N$, define $F_n:=\sum_{j=1}^{m_n}\sum_{l\in M_j} U_{l,M_j}$ which is certainly in $\dom(D)$ by Proposition \ref{domD}. Moreover, by the definition of $\delta$, orthogonality, the symmetry of the $D_k$, $k\in\N$, and by Proposition \ref{HoeffD}, for any $n\in\N$,
\begin{align*}
\Babs{\E\bigl[F_n \delta U\bigr]}&=\Babs{\sum_{k=1}^\infty \E\bigl[D_kF_n U_k\bigr]}=\Babs{\sum_{k=1}^\infty \E\bigl[F_n D_k U_k\bigr]}\\
&=\Babs{\sum_{k=1}^\infty \sum_{j=1}^{m_n}\sum_{l\in M_j}\1_{\{k\in M_j\}}\E\bigl[U_{k,M_j}U_{l,M_j}\bigr]}=\sum_{j=1}^{m_n}\E\Bigl[\Bigl(\sum_{k\in M_j}U_{k,M_j}\Bigr)^2\Bigr]\\
&=\norm{F_n}_2^2>\sqrt{n}\norm{F_n}.
\end{align*}
Hence, $U$ cannot be in $\dom(\delta)$ and thus (i) does not hold, either.\smallskip\\
(ii)$\Rightarrow$(iii): Since $L^2_\X$ is a Hilbert space, it suffices to show that the series $\sum_{k=1}^\infty D_kU_k$ is Cauchy in $L^2_\X$. Using Proposition \ref{HoeffD} and orthogonality, for integers $n>m\geq1$ we have 
\begin{align}\label{domdel1}
\E\biggl[\Bigl(\sum_{k=m+1}^n D_kU_k\Bigr)^2\biggr]&=\sum_{k,l=m+1}^n \E\bigl[D_kU_k D_lU_l\bigr]=\sum_{k,l=m+1}^n\sum_{M\in\Pot_{fin}(\N):k,l\in M}\E\bigl[U_{k,M}U_{l,M}\bigr]\notag\\
&=\sum_{M\in\Pot_{fin}(\N)}\E\Bigl[\Bigl(\sum_{k\in M\cap[m+1,n]}U_{k,M}\Bigr)^2\Bigr]\stackrel{m,n\to\infty}{\longrightarrow}0
\end{align} 
by (ii) and the dominated convergence theorem.\smallskip\\
(iii)$\Rightarrow$(i): If (iii) holds, then $C:=\norm{\sum_{k=1}^\infty D_kU_k}_2$ is a well-defined and finite constant and, thanks to the symmetry of $D_k$, $k\in\N$, for any $F\in\dom(D)$ one further has that 
\begin{align*}
\Babs{\langle DF,U\rangle_{L^2(\kappa\otimes\P_\F)}}&=\Bbabs{\sum_{k=1}^\infty \E\bigl[D_kF U_k\bigr]}=\Bbabs{\sum_{k=1}^\infty \E\bigl[F D_kU_k\bigr]}
=\Bbabs{\E\biggl[F \sum_{k=1}^\infty D_kU_k\biggr]}\\
&\leq \norm{F}_2 \BEnorm{\sum_{k=1}^\infty D_kU_k}=C\norm{F}_2,
\end{align*}
implying $U\in\dom(\delta)$. Note that the $L^2(\P)$-convergence of the series has been used to obtain the third equality.\smallskip\\
(iii)$\Rightarrow$(iv): This is clear.\smallskip\\
(iv)$\Rightarrow$(ii): Let $S<\infty$ be the supremum in (iv). As in \eqref{domdel1}, for $m\in\N$, we have  
\begin{align}\label{domdel2}
+\infty&>S\geq\E\biggl[\Bigl(\sum_{k=1}^m D_kU_k\Bigr)^2\biggr]=\sum_{M\in\Pot_{fin}(\N)}\E\Bigl[\Bigl(\sum_{k\in M\cap[m]}U_{k,M}\Bigr)^2\Bigr].
%\stackrel{m\to\infty}{\longrightarrow}
%\sum_{M\in\Pot_{fin}(\N)}\E\Bigl[\Bigl(\sum_{k\in M}U_{k,M}\Bigr)^2\Bigr]
\end{align} 
If the sum in (ii) were not finite, then there would exist finitely many $M_1,\dotsc,M_r\in\Pot_{fin}(\N)$ such that 
\[\sum_{j=1}^r\E\Bigl[\Bigl(\sum_{k\in M_j}U_{k,M_j}\Bigr)^2\Bigr]\geq S+1.\]
But then, choosing $m\geq\max(M_1\cup\ldots\cup M_r)$ in \eqref{domdel2} would lead to the contradiction $S\geq S+1$, which is impossible for finite $S$. Hence (iv) implies (ii).
\smallskip\\
To prove the claimed explicit formula for $\delta U$ recall that, by \eqref{intpartsM}, $\delta U$ is characterized by
\begin{equation*}
\E\bigl[F\delta U\bigr]=\langle DF,U\rangle_{L^2(\kappa\otimes\P_\F)}=\sum_{k=1}^\infty \E\bigl[D_kF U_k\bigr]=\lim_{n\to\infty} \E\bigl[F \sum_{k=1}^nD_kU_k\bigr],
\end{equation*}
for all $F\in\dom(D)$, where the last equality holds again due to the symmetry of $D_k$, $k\in\N$. Now, since $F\in L^2_\X$, by (iii) we can interchange the limit and the expectation here to obtain
\[\E\bigl[F\delta U\bigr]=\lim_{n\to\infty} \E\bigl[F \sum_{k=1}^nD_kU_k\bigr]=\E\bigl[F \sum_{k=1}^\infty D_kU_k\bigr],\]
implying that $\delta U=\sum_{k=1}^\infty D_kU_k$ since $\dom(D)$ is dense in $L^2_\X$.
\end{proof}

 As for the Malliavin derivative $D$ and the divergence $\delta$, we can give more explicit characterizations of $\dom(L)$.

\begin{prop}\label{domL}
 For $F\in L^2_\X$ with the infinite Hoeffding decomposition \eqref{genhd2} the following conditions are equivalent:
 \begin{enumerate}[{\normalfont (i)}]
  \item $F\in\dom(L)$.
  \item The series $\sum_{k=1}^\infty D_kF$ converges in $L^2_\X$. %In particular, $\displaystyle \E\Bigl[\Bigl(\sum_{k=1}^\infty D_kF\Bigr)^2\Bigr]<\infty$.
  \item $\displaystyle \sup_{m\in\N}\E\Bigl[\Bigl(\sum_{k=1}^m D_kF\Bigr)^2\Bigr]<\infty$.
	\item $\displaystyle\sum_{p=1}^\infty p^2\,\E\bigl[(F^{(p)})^2\bigr]=\sum_{p=1}^\infty p^2\,\E\bigl[J_p(F)^2\bigr] = \sum_{M\in \Pot_{fin}(\N)}|M|^2\,\E[F_{M}^2]<\infty$.
 \end{enumerate}
 In this case, $LF=-\sum_{k=1}^\infty D_kF$ has the infinite Hoeffding decomposition \\
 $\displaystyle LF=-\sum_{M\in\Pot_{fin}(\N)}|M|\, F_M =-\sum_{p=1}^\infty p\,F^{(p)}=-\sum_{p=1}^\infty p\,J_p(F)$.
\end{prop}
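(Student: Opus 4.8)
The plan is to read everything off the definition $F\in\dom(L)\iff F\in\dom(D)\text{ and }DF\in\dom(\delta)$, together with $LF=-\delta DF$, and then to feed the process $U:=DF=(D_kF)_{k\in\N}$ into the characterizations of $\dom(D)$ and $\dom(\delta)$ already established in Propositions \ref{domD} and \ref{domdelta}. The computations of those two propositions, specialized to $U=DF$, turn out to produce exactly the four conditions listed here, so the work is mostly bookkeeping once the right orthogonality identity is in place.

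First I would record the governing computation. By Proposition \ref{HoeffD} one has $D_kF=\sum_{M\in\Pot_{fin}(\N):\,k\in M}F_M$, hence $\sum_{k=1}^m D_kF=\sum_{M\in\Pot_{fin}(\N)}|M\cap[m]|\,F_M$, and pairwise orthogonality of the Hoeffding components gives
\[\E\Bigl[\Bigl(\sum_{k=1}^m D_kF\Bigr)^2\Bigr]=\sum_{M\in\Pot_{fin}(\N)}|M\cap[m]|^2\,\E[F_M^2].\]
Since $|M\cap[m]|^2\uparrow|M|^2$ as $m\to\infty$ for each finite $M$, monotone convergence yields $\sup_m\E[(\sum_{k=1}^m D_kF)^2]=\sum_M|M|^2\E[F_M^2]$, which after grouping the $M$ by their cardinality $p=|M|$ and invoking \eqref{genhd1} equals $\sum_p p^2\,\E[(F^{(p)})^2]$. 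This settles (iii)$\Leftrightarrow$(iv) and pins down the value of the supremum. The implication (ii)$\Rightarrow$(iii) is just boundedness of a convergent sequence in $L^2_\X$, while for (iv)$\Rightarrow$(ii) I would show the partial sums are Cauchy: the same identity gives $\E[(\sum_{k=m+1}^n D_kF)^2]=\sum_M|M\cap[m+1,n]|^2\E[F_M^2]$, which tends to $0$ as $m,n\to\infty$ by dominated convergence, the dominating summand being $|M|^2\E[F_M^2]$ (finite by (iv)) and $|M\cap[m+1,n]|\to0$ for each fixed finite $M$. This closes the loop (ii)$\Leftrightarrow$(iii)$\Leftrightarrow$(iv).

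To bring in condition (i) I would first note that (iv) forces $\sum_p p\,\E[(F^{(p)})^2]\le\sum_p p^2\,\E[(F^{(p)})^2]<\infty$, so Proposition \ref{domD} (iv)$\Rightarrow$(i) already gives $F\in\dom(D)$, hence $DF\in L^2(\kappa\otimes\P_\F)$ and Proposition \ref{domdelta} applies to $U=DF$. Because $D_kD_k=D_k$, the series $\sum_k D_kU_k=\sum_k D_kF$, so condition (iii) of Proposition \ref{domdelta} for $U=DF$ is verbatim our condition (ii); thus $DF\in\dom(\delta)$ exactly when (ii) holds, giving $F\in\dom(L)$. Conversely, if $F\in\dom(L)$ then $DF\in\dom(\delta)$, and Proposition \ref{domdelta} (i)$\Rightarrow$(ii) applied to $U=DF$ yields (iv): indeed, by Proposition \ref{HoeffD} the $M$-th Hoeffding component of $U_k=D_kF$ is $U_{k,M}=\1_{\{k\in M\}}F_M$, so $\sum_{k\in M}U_{k,M}=|M|\,F_M$ and condition (ii) of that proposition reads $\sum_M|M|^2\E[F_M^2]<\infty$. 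Hence (i)$\Leftrightarrow$(iv), completing the equivalences. For the explicit form of $LF$, I would use $LF=-\delta DF=-\sum_k D_k(D_kF)=-\sum_k D_kF$; since $\{|M|\,F_M\}_M$ is orthogonal with $\sum_M\norm{|M|\,F_M}_2^2=\sum_M|M|^2\E[F_M^2]<\infty$, the series $\sum_M|M|\,F_M$ converges unconditionally in $L^2_\X$, and $\norm{\sum_{k=1}^m D_kF-\sum_M|M|\,F_M}_2^2=\sum_M(|M|-|M\cap[m]|)^2\E[F_M^2]\to0$ (dominated convergence once more) identifies the limit as $-\sum_M|M|\,F_M=-\sum_p p\,F^{(p)}=-\sum_p p\,J_p(F)$.

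The one point needing genuine care is the interface with Proposition \ref{domdelta}, which is stated only for processes $U\in L^2(\kappa\otimes\P_\F)$: one must secure $F\in\dom(D)$ \emph{before} invoking it, and the clean way to avoid circularity is the trivial observation $p\le p^2$, which lets (iv) bootstrap membership in $\dom(D)$. The only real calculation is verifying that the Hoeffding components of $D_kF$ are $\1_{\{k\in M\}}F_M$, so that condition (ii) of Proposition \ref{domdelta} collapses precisely to $\sum_M|M|^2\E[F_M^2]$; everything else is the orthogonality identity above applied three times with monotone and dominated convergence.
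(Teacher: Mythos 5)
Your proof is correct, and its computational core is the same as the paper's: Proposition \ref{HoeffD}, the orthogonality identity $\E\bigl[\bigl(\sum_{k=1}^m D_kF\bigr)^2\bigr]=\sum_{M\in\Pot_{fin}(\N)}|M\cap[m]|^2\,\E[F_M^2]$, and monotone/dominated convergence both for the cycle (ii)$\Leftrightarrow$(iii)$\Leftrightarrow$(iv) and for identifying $LF$ with $-\sum_M |M|F_M$. Where you genuinely diverge is in how condition (i) is attached. The paper invokes Proposition \ref{domdelta} only for (i)$\Rightarrow$(ii), and then proves (iv)$\Rightarrow$(i) from scratch: it exhibits the constant $C=\bigl(\sum_M|M|^2\,\E[F_M^2]\bigr)^{1/2}$ and verifies the defining adjoint bound $\babs{\langle DF,DG\rangle_{L^2(\kappa\otimes\P_\F)}}\leq C\norm{G}_2$ for all $G\in\dom(D)$ by Cauchy--Schwarz on Hoeffding components (it also records a redundant direct argument for (i)$\Rightarrow$(iv)). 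You instead make both directions of the link with (i) corollaries of Proposition \ref{domdelta} applied to $U=DF$: its condition (iii) is verbatim your condition (ii) because $D_kD_kF=D_kF$, and its condition (ii) collapses to $\sum_M|M|^2\,\E[F_M^2]<\infty$ because the components of $U_k=D_kF$ are $U_{k,M}=\1_{\{k\in M\}}F_M$. Your flagged point of care --- that Proposition \ref{domdelta} presupposes $U\in L^2(\kappa\otimes\P_\F)$, so one must first secure $F\in\dom(D)$ via $p\leq p^2$ and Proposition \ref{domD} --- is exactly what keeps this non-circular, and the paper performs the same bootstrap at the start of its (iv)$\Rightarrow$(i) step. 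What your route buys is economy and modularity: no duplication of the adjoint estimate, and Proposition \ref{domL} appears as a pure specialization of the $\delta$-characterization. What the paper's route buys is a self-contained verification of membership in $\dom(\delta)$ together with an explicit value for the bounding constant.
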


\begin{proof}
(i)$\Rightarrow$(ii): If $F\in\dom(L)$, then, by definition, $F\in\dom(D)$ and $DF=(D_kF)_{k\in\N}\in\dom(\delta)$. Since $D_kD_k=D_k$, by Proposition \ref{domdelta} this in particular implies that 
$\sum_{k=1}^\infty D_kF$ converges in $L^2_\X$.\smallskip\\
%This follows from $-\sum_{k=1}^\infty D_kF =LF= -\delta DF\in \im(\delta)\subseteq L^2(\P)$.\smallskip\\
(ii)$\Rightarrow$(iii): This is clear. \smallskip\\
(iii)$\Rightarrow$(iv): Using orthogonality, for each fixed $m\in\N$ we have 
\begin{align*}
 & \E\Bigl[\Bigl(\sum_{k=1}^m D_kF\Bigr)^2\Bigr]
=\E\biggl[\Bigl(\sum_{k=1}^m \sum_{M\in\Pot_{fin}(\N):k\in M}F_M\Bigr)^2\biggr]\\
&=\sum_{k,l=1}^m\E\biggl[ \sum_{\substack{M,N\in\Pot_{fin}(\N):\\
 k\in M, l\in N}}F_M F_N\biggr] =\sum_{k,l=1}^m \sum_{\substack{M,N\in\Pot_{fin}(\N):\\
 k\in M, l\in N}}\E\bigl[F_M F_N\bigr]\\
&=\sum_{k,l=1}^m \sum_{\substack{M\in\Pot_{fin}(\N):\\ k,l\in M}}\E\bigl[F_M^2\bigr]
 =\sum_{M\in\Pot_{fin}(\N)}|M\cap[m]|^2\, \E\bigl[F_M^2\bigr].
% =\sum_{p=1}^\infty p^2\,\E\bigl[(F^{(p)})^2\bigr].
\end{align*}
By (iii) the left hand side here is bounded in $m\in\N$. On the other hand, by the monotone convergence theorem, as $m\to\infty$, the right hand side converges to
\[\sum_{M\in\Pot_{fin}(\N)}|M|^2\, \E\bigl[F_M^2\bigr]=\sum_{p=1}^\infty p^2\,\E\bigl[(F^{(p)})^2\bigr],\]
which then must be finite as well. This proves (iv).\smallskip\\
(iv)$\Rightarrow$(i): By Proposition \ref{domD}, (iv) implies that $F\in\dom(D)$ and we thus need to make sure that $DF\in\dom(\delta)$, i.e. that there is a constant $C\in[0,\infty)$ with the property that 
\begin{align*}
\Babs{\langle DF,DG\rangle_{L^2(\kappa\otimes\P_\F)}}=\Bbabs{\sum_{k=1}^\infty \E\bigl[D_kF D_kG\bigr]}\leq C\norm{G}_2
\end{align*}
holds for any $G\in\dom(D)$. Let 
\[C:=\Biggl(\sum_{M\in \Pot_{fin}(\N)}|M|^2\,\E[F_{M}^2]\Biggr)^{1/2}\]
which is finite by (iv). 
Now, let $G\in \dom(D)$ be given with infinite Hoeffding decomposition
\[G=\sum_{M\in\Pot_{fin}(\N)}G_M.\]
Then, since  $F\in\dom(D)$ by (iv) and Proposition \ref{domD}, using orthogonality, we obtain
\begin{align*}
 &\Bbabs{\sum_{k=1}^\infty \E\bigl[D_kF D_kG\bigr]}=\Bbabs{\sum_{k=1}^\infty\sum_{\substack{M,N\in\Pot_{fin}(\N):\\
 k\in M\cap N}}\E\bigl[F_M G_N\bigr]}\leq\sum_{k=1}^\infty\sum_{\substack{M\in\Pot_{fin}(\N):\\ k\in M}}\Babs{\E\bigl[F_MG_M\bigr]}\\
 &=\sum_{M\in\Pot_{fin}(\N)}|M|\babs{\E\bigl[F_MG_M\bigr]}
 \leq\sum_{M\in\Pot_{fin}(\N)} |M| \Bigl(\E\bigl[F_M^2\bigr]\Bigr)^{1/2}\Bigl(\E\bigl[G_M^2\bigr]\Bigr)^{1/2} \\
 &\leq \biggl(\sum_{M\in\Pot_{fin}(\N)} |M|^2 \E\bigl[F_M^2\bigr]\biggr)^{1/2}
 \biggl(\sum_{M\in\Pot_{fin}(\N)}  \E\bigl[G_M^2\bigr]\biggr)^{1/2}=C\norm{G}_2,
\end{align*}
as desired.\smallskip\\
Altough redundant, we also give an argument for the proof of (i)$\Rightarrow$(iv): Suppose that (iv) is false. Then, for $n\in\N$ there are distinct sets
$M_1,\dotsc,M_n\in\Pot_{fin}(\N)$ such that 
\begin{equation*}
 a_n:=\sum_{i=1}^n|M_i|^2\,\E\bigl[F_{M_i}^2\bigr]\geq n.
\end{equation*}
Let $G_n:=\sum_{i=1}^n|M_i|F_{M_i}$ which is in $\mathcal{S}\subseteq\dom(D)$. Then, since $G_n$ has the Hoeffding components 
$G_{n,M_i}:=|M_i|F_{M_i}$, $i=1,\dotsc,n$, by a simple computation, 
\begin{align*}
 &\Bbabs{\sum_{k=1}^\infty \E\bigl[D_kF D_kG\bigr]} =\sum_{i=1}^n|M_i|^2\,\E\bigl[F_{M_i}^2\bigr]=a_n=\E[G_n^2]
\end{align*}
and thus 
\[\frac{\Babs{\sum_{k=1}^\infty \E\bigl[D_kF D_kG\bigr]}}{\norm{G_n}_2}=\frac{a_n}{\sqrt{a_n}}
 =\sqrt{a_n}\geq \sqrt{n}.\]
Thus, $F$ cannot be in $\dom(L)$.\smallskip\\
To prove the additional statement about the representation of $L$ in terms of the infinite Hoeffding decomposition, observe that
\[H:=-\sum_{M\in\Pot_{fin}(\N)}|M|\, F_M =-\sum_{p=1}^\infty p\,F^{(p)}\]
converges (unconditionally) in $L^2(\P)$ by (iv). On the other hand, we know that $-\sum_{k=1}^m D_kF$ converges in $L^2(\P)$ to $LF$ as $m\to\infty$. Moreover, for fixed $m\in\N$, Proposition \ref{HoeffD} and a short computation yield that 
\begin{align*}
&\E\biggl[\Bigl(-\sum_{k=1}^m D_kF -H\Bigr)^2\biggr]=\E\Biggl[\biggl(\sum_{k=1}^m \sum_{\substack{M\in\Pot_{fin}(\N):\\ k\in M}}F_M -\sum_{M\in\Pot_{fin}(\N)}|M|\, F_M\biggr)^2\Biggr]\\
&=\sum_{M\in\Pot_{fin}(\N)}\babs{M\setminus[m]}^2\,\E\bigl[F_M^2\bigr]\,,
\end{align*}
and the right hand side converges to $0$ as $m\to\infty$ by (iv) and the dominated convergence theorem. Hence, $LF=H$.
\begin{comment}
Moreover, from Proposition \ref{HoeffD} we obtain that 
\begin{align*}
 LF&=-\sum_{k=1}^\infty D_kF=-\sum_{k=1}^\infty\sum_{\substack{M\in\Pot_{fin}(\N):\\ k\in M}}F_M=-\sum_{M\in\Pot_{fin}(\N)}F_M\sum_{k=1}^\infty \1_M(k)\\
 &=-\sum_{M\in\Pot_{fin}(\N)}|M|\,F_M
\end{align*}
\end{comment}
\end{proof}

\begin{remark}\label{domrem}
\begin{enumerate}[(a)]
\item Propositions \ref{domD}, \ref{domdelta} and \ref{domL} characterize explicitly the domains of the three Malliavin operators $D,\delta$ and $L$ and, hence, significantly extend the theory of \cite[Section 2]{DH}. We remark further that the description of $\dom(L)$ given in \cite[Definition 2.10]{DH} in general is incorrect as, by Proposition \ref{domD}, it actually describes $\dom(D)$.
\item From Propositions \ref{domD} and \ref{domL} it immediately follows that $\dom(L)\subseteq\dom(D)$. Moreover, it is clear from Proposition \ref{domL} that $F\in\dom(L)$, whenever $F\in\bigoplus_{p=0}^n\mathcal{H}_p$ for some $n\in\N$. In particular, if there is an $n\in\N$ such that $X_j$ is $\P$-a.s. constant for each $j\geq n+1$, then we have $\dom(D)=\dom(L)=L^2_\X$ and $\dom(\delta)=L^2(\kappa\otimes\P_\F)$. Recall that this is the case in the situation of functionals of only finitely many independent random variables.
\item Note that the characterizations in parts (iv) of Propositions \ref{domD} and \ref{domL} differ from those in terms of the chaotic decomposition in the Gaussian \cite{Nualart}, Poisson \cite{Lastsa} or Rademacher setting \cite{Priv08} only in that a factor of $p!$ is missing from the sums. The reason for this is that, whereas the Hoeffding decomposition is given by a sum over finite subsets of $\N$, the multiple Wiener-It\^{o} integrals in these three cases involve permutations of the distinct arguments of the kernels. Apart from this conventional difference, the criteria are completely analogous to those in these three standard cases.  
\item Recall the definition of the influence functions $\Inf_k(F)$, $k\in\N$, from Subsection \ref{dejong}. Then, a straight-forward computation yields that $\Inf_k(F)=\E[(D_kF)^2]$ so that Proposition \ref{domD} states that $F\in\dom(D)$ if and only if $\sum_{k=1}^\infty\Inf_k(F)<\infty$. 
\item A (possibly non-symmetric and non-homogeneous) Rademacher sequence is a sequence $\X=(X_n)_{n\in\N}$ of independent and $\{-1,1\}$-valued random variables such that $p_k:=\P(X_k=1)=1-q_k:=1-\P(X=-1)\in(0,1)$ for each $k\in\N$. For Rademacher sequences, the Malliavin operators $\hat{D},\hat{\delta}$ and $\hat{L}$ and the corresponding calculus have existed for several years (see e.g. \cite{Priv08} for their definition and for a comprehensive treatment of the corresponding theory). As has been observed in the introduction of \cite{DH} (for the case of symmetric Rademacher sequences, i.e. $p_k=q_k=1/2$ for all $k\in\N$), the Malliavin derivative $D$ defined above does not reduce to the usual Malliavin $\hat{D}$ for a Rademacher sequence $\X$. We remark here that, actually, the derivatives $D_k$ and $\hat{D}_k$ are yet very similar. Indeed, a simple computation along the formula for $\hat{D}_k$ given in \cite[Proposition 7.3]{Priv08} shows that, in fact, $D_k F=Y_k\hat{D}_kF$ for any square integrable functional $F$ of $\X$. Here, $Y_k=\frac{X_k+q_k-p_k}{2\sqrt{p_kq_k}}$ denotes the normalization of $X_k$, $k\in\N$. In view of Proposition \ref{domD} and \cite[Lemma 2.3]{KRT1}, and since $\hat{D}_kF$ and $Y_k$ are independent and $\E[Y_k^2]=1$, this in particular implies that $\dom(D)=\dom(\hat{D})$. Moreover, the respective Ornstein-Uhlenbeck generators $L$ and $\hat{L}$ are actually exactly the same (compare \cite[Proposition 10.1]{Priv08} and Proposition \ref{domL} and note the different sign convention that $\hat{L}:=\hat{\delta}\hat{D}$ in \cite{Priv08}). 
The similarity between $D_k$ and $\hat{D}_k$ is also reflected in the respective Stroock type formulas (see e.g. \cite[Remark 2.1]{KRT1} for the Rademacher case). Indeed, whereas one has 
$\frac{1}{p!}\E[\hat{D}_{i_1}\ldots \hat{D}_{i_p}F]=f_p(i_1,\dotsc,i_p)$ with $f_p$ the $p$-th symmetric kernel in the (Rademacher) chaotic decomposition $F=\E[F]+\sum_{p=1}^\infty \hat{J}_p(f_p)$, the formula in Corollary \ref{stroock} gives $\frac{1}{p!}\E[D_{i_1}\ldots D_{i_p}F\,|\,X_{i_1},\dotsc,X_{i_p}]=f_p(i_1,\dotsc,i_p)\prod_{l=1}^pY_{i_l}$. These strong similarities with the well-established Rademacher setting sustain our viewpoint that the derivative $D$ introduced by \cite{DH} is the right starting point for a Malliavin structure on general product spaces. 
\item Similarly, the formulae given in Propositions \ref{HoeffD} and \ref{domD} for the action of the $D_k$ and of $L$ in terms of the infinite Hoeffding decomposition as well as the Stroock formula in Corollary \ref{stroock} and the characterizations of $\dom(D)$ and $\dom(L)$ emphasize our point of view that the infinite Hoeffding decomposition from Proposition \ref{infhoeff} is the natural counterpart to the Wiener-It\^{o} chaos expansion for Gaussian, Poisson and Rademacher functionals.  
\end{enumerate}
\end{remark}

From the formula $LF=-\sum_{k=1}^\infty D_kF$ and the symmetry of the $D_k$ one immediately obtains that $L$ is \textit{symmetric}, i.e. that
\[\E\bigl[FLG\bigr]=\E\bigl[GLF\bigr],\quad F,G\in\dom(L).\]
We observe next that $L$ is in fact \textit{self-adjoint}, i.e. that $L=L^*$ with $L^*$ the Hilbert space adjoint of the generally unbounded operator $L$ and completely describe its spectrum. Recall that the \textit{spectrum} $\sigma(L)$ of $L$ is the set of all $\lambda\in\R$ such that the operator $L-\lambda \Id:\dom(L)\rightarrow L^2_\X$ is not bijective and that its \textit{point spectrum} $\sigma_p(L)$ consists of those $\lambda\in\sigma(L)$ such that $L-\lambda \Id:\dom(L)\rightarrow L^2_\X$ is not injective. The elements of $\sigma_p(L)$ are called \textit{eigenvalues} of $L$ and the linear subspace $\ker(L-\lambda\Id)\not=\{0\}$ is the 
\textit{eigenspace} corresponding to $\lambda\in\sigma_p(L)$. 

\begin{prop}\label{saL}
The operator $L:\dom(L)\rightarrow L^2_\X$ is self-adjoint, $-L$ is positive and one has $\sigma(L)=\sigma_p(L)\subseteq-\N_0=\{p\in\Z\,:\,p\leq 0\}$. Moreover, the eigenspace of $L$ corresponding to an eigenvalue $-p\in\sigma_p(L)$ is precisely given by the $p$-th Hoeffding space $\mathcal{H}_p$. 
\end{prop}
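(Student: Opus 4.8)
The plan is to exploit the explicit spectral representation of $L$ furnished by Proposition \ref{domL}: a random variable $F\in L^2_\X$ lies in $\dom(L)$ if and only if $\sum_{p=1}^\infty p^2\,\E[J_p(F)^2]<\infty$, and in that case $LF=-\sum_{p=0}^\infty p\,J_p(F)$, where $J_p$ is the orthogonal projection onto $\mathcal{H}_p$ and the $p=0$ summand vanishes. Every assertion then reduces to a computation relative to the orthogonal decomposition $L^2_\X=\bigoplus_{p=0}^\infty\mathcal{H}_p$, so the whole proof is really a bookkeeping exercise with this decomposition, the only care being the interchange of the infinite orthogonal sums with inner products and with the action of $L$, which is licensed by the unconditional $L^2$-convergence of the Hoeffding decomposition together with the domain criterion of Proposition \ref{domL}.

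First I would settle the eigenvalues and eigenspaces and prove positivity. If $F\in\mathcal{H}_p$ then $J_p(F)=F$ and $J_q(F)=0$ for $q\neq p$, so $\mathcal{H}_p\subseteq\dom(L)$ (cf. Remark \ref{domrem}(b)) and $LF=-pF$; hence every nonzero element of $\mathcal{H}_p$ is an eigenvector with eigenvalue $-p$. Conversely, if $LF=\lambda F$ with $F\neq0$, then $-\sum_p p\,J_p(F)=\lambda\sum_p J_p(F)$, and uniqueness of the Hoeffding decomposition forces $(p+\lambda)J_p(F)=0$ for every $p$; since $J_{p_0}(F)\neq0$ for some $p_0$, this yields $\lambda=-p_0\in-\N_0$ and $J_q(F)=0$ for $q\neq p_0$, i.e.\ $F\in\mathcal{H}_{p_0}$. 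Thus $\sigma_p(L)\subseteq-\N_0$ and the eigenspace for $-p$ is exactly $\mathcal{H}_p$. Positivity of $-L$ is immediate from the same representation, since for $F\in\dom(L)$ one has $\E[-F\,LF]=\sum_{p=1}^\infty p\,\E[J_p(F)^2]\geq0$.

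Next I would control the full spectrum. For real $\lambda\notin-\N_0$ I would write down the candidate resolvent $R_\lambda G:=-\sum_{p=0}^\infty(p+\lambda)^{-1}J_p(G)$ and verify it is well defined and bounded: because $\lambda\notin-\N_0$ one has $\inf_{p\in\N_0}|p+\lambda|>0$ and $(1+p^2)/(p+\lambda)^2$ is bounded uniformly in $p$, so $R_\lambda$ maps $L^2_\X$ into $\dom(L)$ by Proposition \ref{domL}(iv). A termwise computation then shows $(L-\lambda\Id)R_\lambda=\Id$ and $R_\lambda(L-\lambda\Id)=\Id$ on $\dom(L)$, so $L-\lambda\Id$ is bijective and $\lambda\notin\sigma(L)$; hence $\sigma(L)\subseteq-\N_0$. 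To get $\sigma(L)=\sigma_p(L)$ it remains to treat $\lambda=-p\in-\N_0$: if $\mathcal{H}_p\neq\{0\}$ then $-p$ is an eigenvalue by the previous step, whereas if $\mathcal{H}_p=\{0\}$ the same termwise inversion (now legitimate, as the index $q=p$ contributes nothing) shows $L+p\,\Id$ is bijective, so $-p\notin\sigma(L)$. Thus every point of the spectrum is an eigenvalue.

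Finally, self-adjointness, which I expect to be the main point. Since $L$ is symmetric (as recorded just before the statement), one has $\dom(L)\subseteq\dom(L^*)$ with $L^*$ extending $L$, so the only thing to establish is the reverse inclusion $\dom(L^*)\subseteq\dom(L)$; symmetry alone is insufficient. Given $G\in\dom(L^*)$ with $L^*G=H$, I would test the defining identity $\E[G\,LF]=\E[HF]$ against arbitrary $F\in\mathcal{H}_p\subseteq\dom(L)$: using $LF=-pF$ and that $J_p$ is the orthogonal projection onto $\mathcal{H}_p$, this reads $-p\,\E[F\,J_p(G)]=\E[F\,J_p(H)]$ for all $F\in\mathcal{H}_p$, whence $J_p(H)=-p\,J_p(G)$ for every $p$. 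Consequently $\sum_{p=1}^\infty p^2\,\E[J_p(G)^2]=\sum_{p=0}^\infty\E[J_p(H)^2]=\norm{H}_2^2<\infty$, so $G\in\dom(L)$ by Proposition \ref{domL}(iv), and then $L^*G=H=LG$. The crux is precisely that the square-summability forced on the sequence $(J_p(H))_p$ is exactly the membership criterion for $\dom(L)$, which is what converts symmetry plus the explicit domain description into genuine self-adjointness.
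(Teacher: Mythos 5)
Your proposal is correct, and three of its four components (eigenvalues/eigenspaces via uniqueness of the Hoeffding decomposition, positivity of $-L$, and the resolvent construction $R_\lambda G=-\sum_p(p+\lambda)^{-1}J_p(G)$ showing $\sigma(L)=\sigma_p(L)$) coincide with the paper's proof up to reorganization -- the paper handles all $\lambda\in\R\setminus\sigma_p(L)$ in one stroke by inverting termwise over the eigenvalues actually present, rather than splitting into $\lambda\notin-\N_0$ and $\lambda=-p$ with $\mathcal{H}_p=\{0\}$, but the mechanism is identical. Where you genuinely depart from the paper is the self-adjointness step, which you rightly identify as the crux. The paper argues by contraposition: assuming $G\notin\dom(L)$, i.e. $\sum_p p^2\Var(G^{(p)})=\infty$, it builds the truncated test elements $F_n:=\sum_{p=1}^{k_n}p\,G^{(p)}\in\dom(L)$ and checks $\babs{\langle LF_n,G\rangle}=\norm{F_n}_2^2>\sqrt{n}\,\norm{F_n}_2$, so the functional $F\mapsto\E[G\,LF]$ is unbounded and $G\notin\dom(L^*)$. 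You instead argue directly: given $G\in\dom(L^*)$ with $H=L^*G$, testing the adjoint identity against each $\mathcal{H}_p$ yields $J_p(H)=-p\,J_p(G)$, whence $\sum_p p^2\,\E[J_p(G)^2]=\sum_p\E[J_p(H)^2]\leq\norm{H}_2^2<\infty$ and $G\in\dom(L)$ by Proposition \ref{domL}(iv), with the bonus that $L^*G=LG$ is identified explicitly rather than inferred from $L\subseteq L^*$. Both routes rest on the same key lemma (the domain characterization in Proposition \ref{domL}); your direct argument is arguably cleaner and avoids the contradiction, while the paper's makes quantitatively visible how badly boundedness fails off the domain.
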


\begin{proof}
 Since $\mathcal{S}\subseteq\dom(L)$, $L$ is densely defined and, hence, the symmetry of $L$ and general functional analytic facts imply that $L\subseteq L^*$, that is $L^*$ is an extension of $L$. Regarding the first claim, it thus suffices to show that $\dom(L^*)\subseteq\dom(L)$. %Note that, by the symmetry of $L$, one has that $\dom(L^*)$  
 Suppose on the contrary that $G\in L^2_\X\setminus\dom(L)$ and denote by $G=\sum_{p=0}^\infty G^{(p)}$ its infinite Hoeffding decomposition. Since $G\notin\dom(L)$, by Proposition \ref{domL} it holds that $\sum_{p=1}^\infty p^2\Var\bigl(G^{(p)}\bigr)=+\infty$. In particular, for any $n\in\N$, we may find $k_n\in\N$ such that 
 \[\sum_{p=1}^{k_n} p^2\Var\bigl(G^{(p)}\bigr)>n.\]
 Define
 \[F_n:=\sum_{p=1}^{k_n} p\,G^{(p)},\quad n\in\N.\]
Since $\sum_{p=1}^{k_n} p^4\Var\bigl(G^{(p)}\bigr)<\infty$,  by Proposition \ref{domL} we have $F_n\in\dom(L)$ for each $n\in\N$. However, from $LF_n=-\sum_{p=1}^{k_n} p^2\,G^{(p)}$ we obtain that
 \begin{align*}
  \babs{\langle LF_n,G\rangle}&=\sum_{p=1}^{k_n}p^2\,\Var\bigl(G^{(p)}\bigr)
  =\norm{F_n}_2^2>\sqrt{n}\norm{F_n}_2,\quad n\in\N.
 \end{align*}
Therefore, it follows from the general definition of $\dom(L^*)$ that $G\notin \dom(L^*)$. Thus, $\dom(L^*)\subseteq\dom(L)$ and, hence, $L=L^*$ as claimed. The positivity of $-L$ follows from the fact that for $F\in\dom(L)$ with infinite Hoeffding decomposition \eqref{genhd2} one has 
\begin{align*}
 \E\bigl[F(-LF)\bigr]=\sum_{p=0}^\infty p\, \E\Bigl[\bigl(F^{(p)}\bigr)^2\Bigr]\geq0,
\end{align*}
where the orthogonality of the $F^{(p)}$ and $L^2(\P)$-convergence have been used.
\smallskip\\
We next show that $\sigma_p(L)\subseteq-\N_0$. By Proposition \ref{domL} we have that $LF=-pF$ holds for any $F\in\mathcal{H}_p$, $p\in\N_0$. Conversely, suppose that $G\in \dom(L)$ satisfies $LG=\lambda G$ for some $\lambda\in\R$ and let $G= \sum_{p=0}^\infty G^{(p)}$ denote its infinite Hoeffding decomposition. Then, from Proposition \ref{domL} we have 
\[\sum_{p=0}^\infty \lambda\,G^{(p)}=\lambda G=LG=-\sum_{p=0}^\infty p\, G^{(p)}\]
and the uniqueness of the Hoeffding decomposition for $LG$ implies that 
$\lambda\,G^{(p)}=-p\,G^{(p)}$ holds $\P$-a.s. for any $p\in\N_0$. This implies that $\Var(G^{(p)})>0$ for at most one $p\in\N_0$ and if this $p$ exists, then $\lambda=-p$. Hence, we have shown that $\sigma_p(L)\subseteq-\N_0$ and that for $-p\in\sigma_p(L)$ the corresponding eigenspace is given by $\mathcal{H}_p$. \smallskip\\
We finally show that $\sigma(L)=\sigma_p(L)$. To this end, it suffices to prove that, for any $\lambda\in\R\setminus\sigma_p(L)$ the operator 
$T_\lambda:=L-\lambda\Id:\dom(L)\rightarrow L^2_\X$ is bijective and that there is a $c_\lambda>0$ such that $\norm{T_\lambda F}_2\geq c_\lambda\norm{F}_2$ for any $F\in\dom(L)$. We have already proved that $T_\lambda$ is injective, so let $G\in L^2_\X$ with the infinite Hoeffding decomposition $G= \sum_{p=0}^\infty G^{(p)}=\sum_{p\in \sigma_p(L)} G^{(p)}$ be given. Note that the second representation for $G$ holds, since $G^{(p)}=0$ $\P$-a.s. for any $p\in-\N_0\setminus\sigma_p(L)$ as $\mathcal{H}_p=\{0\}$ in this case. 
Then, 
\[F:=-\sum_{p\in \sigma_p(L)} \frac{1}{\lambda+p} G^{(p)}\]
is well-defined and contained in $\dom(L)$ by Proposition \ref{domL} and one has $T_\lambda F=G$. Moreover, for any $F\in\dom(L)$  with infinite Hoeffding decomposition $F= \sum_{p\in \sigma_p(L)}  F^{(p)}$ one has 
\begin{align*}
 \norm{T_\lambda F}_2^2=\sum_{p\in \sigma_p(L)} (\lambda+p)^2 \norm{F^{(p)}}_2^2
 \geq d\bigl(\lambda,\sigma_p(L)\bigr)^2\norm{F}_2^2,
\end{align*}
where $d(\lambda,\sigma_p(L)):=\min\{|\lambda-q|\,:\,q\in\sigma_p(L)\}>0$ since $\lambda\notin\sigma_p(L)$ and $\sigma_p(L)\subseteq-\N_0$ . Thus, we can take $c_\lambda=d(\lambda,\sigma_p(L))$.
\end{proof}

\begin{remark}\label{saLrem}
\begin{enumerate}[(a)]
\item In the situation of Proposition \ref{saL} it is not always true that every $-p\in-\N_0$ is an eigenvalue of $L$. For instance, if for some $n\in\N$, the random variables $X_{n+1},X_{n+2},\dotsc$ are $\P$-a.s.constant, then one necessarily has $\mathcal{H}_p=\{0\}$ for each $p\geq n+1$ due to the classical Hoeffding decomposition for $L^2(\P)$-functionals of $X_1,\dotsc,X_n$.
\item Since $\E[LF]=0$ for any $F\in\dom(L)$, no random variable with non-zero mean is contained in the image $\im(L)$ of $L$. Suppose, on the contrary, that $G\in L^2_\X$ satisfies $\E[G]=0$. Then, $G$ has infinite Hoeffding decomposition of the form $G=\sum_{p=1}^\infty G^{(p)}=\sum_{ M\in\Pot_{fin}(\N)}G_M $ and it follows from Proposition \ref{domL} that 
\[F:=-\sum_{p=1}^\infty \frac{1}{p}G^{(p)}=-\sum_{\emptyset\not= M\in\Pot_{fin}(\N)}\frac{1}{|M|}G_M \in\dom(L)\]
 and $LF=G$. Moreover since, by Proposition \ref{saL}, $\ker(L)$ only consists of the constants, $F$ is the only centered random variable in $\dom(L)$ with this property. As usual, we write $F=L^{-1}G$ and call $L^{-1}$ the \textit{pseudo-inverse} of the Ornstein-Uhlenbeck generator $L$. Note that, with this definition, one has 
\begin{itemize}
\item $L L^{-1}G=G$ for each centered $G\in L^2_\X$.
\item $L^{-1} LF=F-\E[F]$ for any $F\in\dom(L)$.
\end{itemize}
\item The Ornstein-Uhlenbeck generator $L$ and its pseudo-inverse $L^{-1}$ have also been analyzed in \cite[Lemma 2.4]{Duer}. There, it has been provided that $L$ is \textit{essentially} self-adjoint, positive, has pure point spectrum $-\N$ (which, in view of part (a) of this remark, is not completely correct) and that its image is dense in the subspace $\{1\}^\perp$ of $L^2_\X$ of centered random variables. Note that our Proposition \ref{saL} strengthens some of these findings, by establishing that $L$ is in fact self-adjoint and we have moreover just seen that the image of $L$ actually coincides with $\{1\}^\perp$.
\end{enumerate}
\end{remark}

\subsection{Carr\'{e}-du-champ operator}\label{cdc}
We finally introduce the \textit{carr\'{e}-du-champ operator} $\Gamma$ associated to the Markov generator $L$. We refer to the monograph \cite{BGL14} for a comprehensive study of (diffusive) Markovian generators via their associated carr\'{e}-du-champ operators. 

For $F,G\in\dom(L)$ such that also $FG\in\dom(L)$ this bilinear operator is defined via 
\begin{equation*}
 \Gamma(F,G):=\frac12\Bigl(L(FG)-GLF-FLG\Bigr)\in L^1(\P)
\end{equation*}
so that the symmetry of $L$ and the fact that $L(FG)$ is centered imply the \textit{carr\'{e}-du-champ integration by parts formula}
\begin{equation}\label{cdcintparts}
 \E\bigl[\Gamma(F,G)\bigr]=-\E\bigl[FLG\bigr]=-\E\bigl[GLF\bigr].
\end{equation}
We remark that, contrary to the situation considered in \cite{BGL14}, the operator $L$ associated to $\X$ is \textit{non-diffusive} in the sense that, for a $C^1$-function $\psi$ on $\R$,
\begin{equation*}%\label{Rpsi}
 R_\psi(F,G):=\Gamma(\psi(F),G)-\psi'(F)\Gamma(F,G)\not=0
\end{equation*}
in general.
%Note that this is contrary to the diffusive situation considered in \cite{BGL14} throughout.
For the purpose of normal approximation it will be important to control the size 
of the error term $R_\psi(F,G)$. To this end, as in \cite{DP18a} for the Poisson and in \cite{DK19} for the Rademacher case,
we need an alternative representation of $\Gamma(F,G)$ as well as an alternative integration by parts formula. 
\begin{comment}
To this end, we denote by $\dom(\Gamma_0)$ the collection of all $F\in L^2_\X$ such that for one (and then every) representative $f\in L^2(\mu)$ of $F$ 
\begin{equation}\label{domGamma0}
\sum_{k=1}^\infty\E\Bigl[\Bigl(f\bigl(\X^{(k)}\bigr)-f\bigl(\X\bigr)\Bigr)^2 \Bigr]<\infty.
\end{equation}
%and for $F,G\in\dom(\Gamma_0)$ with respective representatives $f,g\in L^2(\mu)$ we let 
\end{comment}
For $F,G\in \dom(D)$ with respective representatives $f,g\in L^2(\mu)$ we therefore define
\begin{equation}\label{Gamma0}
\Gamma_0(F,G):=\frac{1}{2}\sum_{k=1}^\infty\E\biggl[\Bigl(f\bigl(\X^{(k)}\bigr)-f\bigl(\X\bigr)\Bigr) \Bigl(g\bigl(\X^{(k)}\bigr)-g\bigl(\X\bigr)\Bigr)\,\biggl|\,\X\biggr].
\end{equation}
%whenever the right hand side of \eqref{Gamma0} is a well-defined element of $L^1(\P)$. 
Note that it follows from the estimate
\begin{align*}
&\E \Biggl[\sum_{k=1}^\infty\Bbabs{ \E\biggl[\Bigl(f\bigl(\X^{(k)}\bigr)-f\bigl(\X\bigr)\Bigr) \Bigl(g\bigl(\X^{(k)}\bigr)-g\bigl(\X\bigr)\Bigr)\,\biggl|\,\X\biggr]   }\Biggr]\\
%&\leq\sum_{k=1}^\infty\E\biggl[\E\biggl[\Babs{f\bigl(\X^{(k)}\bigr)-f\bigl(\X\bigr)}\Babs{g\bigl(\X^{(k)}\bigr)-g\bigl(\X\bigr)}\,\biggl|\,\X\biggr]\biggr]\\
&\leq\sum_{k=1}^\infty\E\biggl[\Babs{f\bigl(\X^{(k)}\bigr)-f\bigl(\X\bigr)}\Babs{g\bigl(\X^{(k)}\bigr)-g\bigl(\X\bigr)}\biggr]\\
&\leq\biggl(\sum_{k=1}^\infty\E\Bigl[\Bigl(f\bigl(\X^{(k)}\bigr)-f\bigl(\X\bigr)\Bigr)^2 \Bigr]\biggr)^{1/2}
\biggl(\sum_{k=1}^\infty\E\Bigl[\Bigl(g\bigl(\X^{(k)}\bigr)-g\bigl(\X\bigr)\Bigr)^2 \Bigr]\biggr)^{1/2}
\end{align*}
and from Proposition \ref{domD} that $\Gamma_0(F,G)$ is a well-defined element of $L^1(\P)$ for all $F,G\in\dom(D)$ and that also 
\begin{equation*}
\Gamma_0(F,G)=\frac{1}{2}\E\biggl[\sum_{k=1}^\infty\Bigl(f\bigl(\X^{(k)}\bigr)-f\bigl(\X\bigr)\Bigr) \Bigl(g\bigl(\X^{(k)}\bigr)-g\bigl(\X\bigr)\Bigr)\,\biggl|\,\X\biggr]
\end{equation*}
in this case. The next result shows that $\Gamma_0$ coincides with $\Gamma$, whenever the latter is defined.
%We extend the definition of $\Gamma_0(F,G)$ to all $F,G\in L^2_\X$ for which the right hand side of \eqref{Gamma0} is a well-defined element of $L^1(\P)$, even though $F$ or $G$ might not be in $\dom(\Gamma_0)$. 
\begin{comment}
Condition \eqref{domGamma0} should be compared to the weaker condition
\begin{equation}\label{domD0}
\sum_{k=1}^\infty\E\Bigl[\Bigl(\E\bigl[f\bigl(\X^{(k)}\bigr)-f\bigl(\X\bigr)\,\bigl|\,\X\bigr]\Bigr)^2 \Bigr]<\infty,
\end{equation}
which is equivalent to $F\in\dom(D)$ by Proposition \ref{domD}. In particular, we have $\dom(\Gamma_0)\subseteq \dom(D)$.
\end{comment}

\begin{prop}\label{formgamma}
Let $F,G\in\dom(L)$ be such that $FG\in\dom(L)$ as well. %and suppose that $f,g\in L^2(\mu)$ are representatives of $F$ and $G$, respectively. 
Then, $\Gamma_0(F,G)\in L^1(\P)$ is well-defined and, in fact, $\Gamma(F,G)=\Gamma_0(F,G)$. %Moreover, if $F,F^2\in\dom(L)$, then $F\in\dom(\Gamma_0)$ and $\Gamma(F,F)=\Gamma_0(F,F)$. 
\end{prop}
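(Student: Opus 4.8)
The plan is to expand the defining formula $2\Gamma(F,G)=L(FG)-GLF-FLG$ term by term along the series representation of $L$ from Proposition \ref{domL} and to recognize the resulting summands as exactly those appearing in the definition \eqref{Gamma0} of $\Gamma_0$. First I would record that, by Remark \ref{domrem}(b), $\dom(L)\subseteq\dom(D)$, so $\Gamma_0(F,G)$ is a well-defined element of $L^1(\P)$ by the estimate preceding the statement. Writing
\[
T_k:=\E\biggl[\Bigl(f\bigl(\X^{(k)}\bigr)-f(\X)\Bigr)\Bigl(g\bigl(\X^{(k)}\bigr)-g(\X)\Bigr)\,\biggl|\,\X\biggr],\quad k\in\N,
\]
that same estimate shows $\sum_{k=1}^\infty\babs{T_k}$ is integrable, whence the partial sums $\sum_{k=1}^n T_k$ converge in $L^1(\P)$ to $2\Gamma_0(F,G)$ by dominated convergence.

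The key algebraic step is the term-by-term identity
\begin{equation*}
-D_k(FG)+G\,D_kF+F\,D_kG=T_k,\quad k\in\N.
\end{equation*}
To verify it I would use that $fg$ is a representative of $FG$ and that $F=f(\X)$, $G=g(\X)$ are $\X$-measurable, so that each of the three summands on the left is a conditional expectation given $\X$; collecting them under a single conditional expectation and expanding $(f(\X^{(k)})-f(\X))(g(\X^{(k)})-g(\X))$, the cross terms cancel precisely as in the computation leading to \eqref{Gamma0}.

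It then remains to pass to the limit. Since $FG\in\dom(L)$, Proposition \ref{domL} gives $\sum_{k=1}^n\bigl(-D_k(FG)\bigr)\to L(FG)$ in $L^2(\P)$, hence in $L^1(\P)$; and since $F,G\in\dom(L)\subseteq\dom(D)$, the same proposition gives $\sum_{k=1}^n D_kF\to-LF$ and $\sum_{k=1}^n D_kG\to-LG$ in $L^2(\P)$. As $F,G\in L^2_\X$, multiplying the latter two by the fixed $L^2$-factors $G$ and $F$ and applying Cauchy--Schwarz shows $G\sum_{k=1}^n D_kF\to-GLF$ and $F\sum_{k=1}^n D_kG\to-FLG$ in $L^1(\P)$. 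Summing the identity above over $k=1,\dots,n$ therefore yields
\begin{align*}
\sum_{k=1}^n T_k&=\sum_{k=1}^n\bigl(-D_k(FG)\bigr)+G\sum_{k=1}^n D_kF+F\sum_{k=1}^n D_kG\\
&\stackrel{n\to\infty}{\longrightarrow}L(FG)-GLF-FLG=2\Gamma(F,G)
\end{align*}
in $L^1(\P)$. Comparing with the $L^1$-limit $2\Gamma_0(F,G)$ of the very same sequence and invoking uniqueness of limits gives $\Gamma(F,G)=\Gamma_0(F,G)$.

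I expect the main obstacle to be the bookkeeping of the modes of convergence: the three series for $L(FG)$, $GLF$ and $FLG$ do not all converge in $L^2$ --- only the first does, while the other two converge merely in $L^1$ because $F$ and $G$ are only square-integrable --- so one cannot manipulate them directly in $L^2$. What makes the argument go through is that their combined partial sums coincide, for every fixed $n$, with those of the absolutely $L^1$-convergent series $\sum_k T_k$, so that both the term-by-term cancellation and the passage to the limit are legitimate in $L^1(\P)$.
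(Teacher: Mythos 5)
Your proof is correct and follows essentially the same route as the paper's: both expand $2\Gamma(F,G)=L(FG)-GLF-FLG$ via the series representation $L=-\sum_{k=1}^\infty D_k$ from Proposition \ref{domL}, verify the termwise identity $-D_k(FG)+G D_kF+F D_kG=\E\bigl[\bigl(f(\X^{(k)})-f(\X)\bigr)\bigl(g(\X^{(k)})-g(\X)\bigr)\,\bigl|\,\X\bigr]$ using that $fg$ is a representative of $FG$, and identify the resulting series with $2\Gamma_0(F,G)$. The only difference is that you spell out the $L^1$/$L^2$ bookkeeping (Cauchy--Schwarz for $G\sum_{k=1}^n D_kF\to -GLF$, absolute $L^1$-convergence of $\sum_k T_k$) that the paper compresses into the phrase ``with convergence in $L^1_\X$''.
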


\begin{proof}
Since $\dom(L)\subseteq\dom(D)$ by Propositions \ref{domD} and \ref{domL} it is clear that $\Gamma_0(F,G)\in L^1(\P)$ is well-defined. As before, let $f,g\in L^2(\mu)$ be representatives of $F$ and $G$, respectively. By definition, we have (with convergence in $L^1_\X$)
\begin{align*}
2\Gamma(F,G)&=L(FG)-GLF-FLG=\sum_{k=1}^\infty\Bigl(-D_k(FG)+GD_kF+FD_kG\Bigr)\\
&=\sum_{k=1}^\infty\biggl(-\E\bigl[(fg)(\X)-(fg)(\X^{(k)})\,\bigl|\,\X\bigr]+g(\X)\E\bigl[f(\X)-f(\X^{(k)})\,\bigl|\,\X\bigr]\\
&\hspace{3cm}+f(\X)\E\bigl[g(\X)-g(\X^{(k)})\,\bigl|\,\X\bigr]\biggr)\\
&=\sum_{k=1}^\infty\E\biggl[\Bigl(f\bigl(\X^{(k)}\bigr)-f\bigl(\X\bigr)\Bigr) \Bigl(g\bigl(\X^{(k)}\bigr)-g\bigl(\X\bigr)\Bigr)\,\biggl|\,\X\biggr]=2\Gamma_0(F,G),
\end{align*}
where we have used the fact that $fg$ is a representative of $FG$. 
\begin{comment}
As the left hand side is in $L^1(\P)$ by our assumptions on $F$ and $G$, the series defining $\Gamma_0(F,G)$ converges in $L^1(\P)$ so that the first claim follows. Now, suppose that $F,F^2\in\dom(L)$. Then, according to what has just been proved, 
\[\Gamma(F,F)=\Gamma_0(F,F)=\frac12 \sum_{k=1}^\infty\E\biggl[\Bigl(f\bigl(\X^{(k)}\bigr)-f\bigl(\X\bigr)\Bigr)^2\,\biggl|\,\X\biggr]\in L^1(\P),\]
and thus, by monotone convergence, 
\begin{align*}
 \infty&>2\E\bigl[\Gamma_0(F,F)\bigr]=\sum_{k=1}^\infty\E\Bigl[\Bigl(f\bigl(\X^{(k)}\bigr)-f\bigl(\X\bigr)\Bigr)^2 \Bigr]
\end{align*}
so that $F\in\dom(\Gamma_0)$.
\end{comment}
\end{proof}

\begin{prop}[Carr\'{e}-du-champ integration-by-parts]\label{intpartsgamma0}
 Suppose that $F\in\dom(D)$ and $G\in\dom(L)$. Then, 
 \[\E\bigl[FLG\bigr] = -\E\bigl[\Gamma_0(F,G)\bigr].\]
\end{prop}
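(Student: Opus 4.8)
The plan is to avoid invoking Proposition~\ref{formgamma}, since the present hypotheses only guarantee $F\in\dom(D)$ and $G\in\dom(L)$ but not $FG\in\dom(L)$, so that the carr\'{e}-du-champ integration-by-parts formula \eqref{cdcintparts} is not directly available. Instead, I would work straight from the spectral formula for $L$. Fix representatives $f,g\in L^2(\mu)$ of $F$ and $G$. Since $G\in\dom(L)$, Proposition~\ref{domL} gives $LG=-\sum_{k=1}^\infty D_kG$ with convergence in $L^2_\X$, and as $F\in L^2_\X$, continuity of the inner product on $L^2_\X$ then yields
\begin{equation*}
\E\bigl[FLG\bigr]=-\Bigl\langle F,\sum_{k=1}^\infty D_kG\Bigr\rangle_{L^2_\X}=-\sum_{k=1}^\infty\E\bigl[FD_kG\bigr].
\end{equation*}
Thus the whole problem reduces to identifying each summand $\E[FD_kG]$ with the $k$-th term of the series defining $\E[\Gamma_0(F,G)]$.

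For this term-by-term identification I would exploit that $D_k$ is a self-adjoint, idempotent projection on $L^2_\X$: writing $D_kG=D_kD_kG$ and moving one factor $D_k$ onto $F$ gives $\E[FD_kG]=\E[(D_kF)(D_kG)]$. The quadratic identity \eqref{eqdomd}, which reads $\E[(f(\X^{(k)})-f(\X))^2]=2\,\E[(D_kF)^2]$, polarizes (replace $f$ by $f+g$, expand, and subtract the pure terms, using linearity of $D_k$) to the bilinear identity
\begin{equation*}
\E\bigl[(D_kF)(D_kG)\bigr]=\frac12\,\E\Bigl[\bigl(f(\X^{(k)})-f(\X)\bigr)\bigl(g(\X^{(k)})-g(\X)\bigr)\Bigr].
\end{equation*}
Alternatively, the same equality can be obtained by hand: expanding $D_kG=\E[g(\X)-g(\X^{(k)})\,|\,\X]$, the tower property gives $\E[FD_kG]=\E[f(\X)g(\X)]-\E[f(\X)g(\X^{(k)})]$, and then the exchangeability of $(\X,\X^{(k)})$ together with $\X^{(k)}\stackrel{d}{=}\X$ turns the four-term expansion of $\E[(f(\X^{(k)})-f(\X))(g(\X^{(k)})-g(\X))]$ into exactly twice this quantity.

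Summing over $k$, the series $\sum_k\E[(f(\X^{(k)})-f(\X))(g(\X^{(k)})-g(\X))]$ converges absolutely, by the Cauchy--Schwarz estimate recorded just before the statement of Proposition~\ref{formgamma} together with Proposition~\ref{domD} applied to both $F\in\dom(D)$ and $G\in\dom(L)\subseteq\dom(D)$; hence its sum equals $2\,\E[\Gamma_0(F,G)]$ in view of the integrand representation of $\Gamma_0$ given before Proposition~\ref{formgamma}. Since the series $\sum_k\E[FD_kG]$ has already been shown to converge and its terms agree with the corresponding ones above, the two sums coincide, and therefore
\begin{equation*}
\E\bigl[FLG\bigr]=-\sum_{k=1}^\infty\E\bigl[FD_kG\bigr]=-\E\bigl[\Gamma_0(F,G)\bigr].
\end{equation*}
The only point requiring genuine care is the justification of the two interchanges of summation and expectation; these are controlled, respectively, by the $L^2_\X$-convergence of $\sum_kD_kG$ from Proposition~\ref{domL} and by the absolute convergence of the $\Gamma_0$-series established before Proposition~\ref{formgamma}, so I do not anticipate any obstacle beyond this bookkeeping.
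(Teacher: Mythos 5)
Your proposal is correct and follows essentially the same route as the paper: reduce $\E[FLG]$ to $-\sum_{k}\E[FD_kG]$ via the $L^2_\X$-convergence of $\sum_k D_kG$ from Proposition \ref{domL}, identify each term with $\tfrac12\E\bigl[\bigl(f(\X^{(k)})-f(\X)\bigr)\bigl(g(\X^{(k)})-g(\X)\bigr)\bigr]$ through the exchangeability of $(\X,\X^{(k)})$, and justify the interchange of sum and expectation by the Cauchy--Schwarz bound available since $F\in\dom(D)$ and $G\in\dom(L)\subseteq\dom(D)$. Your primary variant, using self-adjointness and idempotence of $D_k$ plus polarization of \eqref{eqdomd}, is precisely the identity the paper records as \eqref{ES} just after the proposition, and your ``by hand'' alternative is exactly the paper's own computation.
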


\begin{proof}
Let $f,g\in L^2(\mu)$ be representatives of $F$ and $G$, respectively. Since $G\in\dom(L)$, the series $-\sum_{k=1}^\infty D_kG$ converges in $L^2(\P)$ to $LG$ and we have
 \begin{align*}
  &\E\bigl[FLG\bigr]=-\sum_{k=1}^\infty\E\bigl[FD_kG\bigr]
  =-\sum_{k=1}^\infty\E\Bigl[f(\X)\E\bigl[g(\X)-g(\X^{(k)})\,\bigl|\,\X\bigr]\Bigr]\\
  &=-\sum_{k=1}^\infty\E\Bigl[\E\bigl[f(\X)\bigl( g(\X)-g(\X^{(k)})\bigr)\,\bigl|\,\X\bigr]\Bigr]
  =-\sum_{k=1}^\infty\E\Bigl[f(\X)\bigl( g(\X)-g(\X^{(k)})\bigr)\Bigr].
 \end{align*}
Now, for each $k\in\N$, $(\X,\X^{(k)})$ has the same distribution as $(\X^{(k)},\X)$ so that we also have 
\begin{align*}
  \E\bigl[FLG\bigr]&=\sum_{k=1}^\infty\E\Bigl[f(\X^{(k)})\bigl( g(\X)-g(\X^{(k)})\bigr)\Bigr]
\end{align*}
implying
\begin{align*}
 &\E\bigl[FLG\bigr]=-\frac12\sum_{k=1}^\infty\E\Bigl[\bigl(f(\X)-f(\X^{(k)})\bigr)\bigl( g(\X)-g(\X^{(k)})\bigr)\Bigr]  \\
&= -\frac12\sum_{k=1}^\infty\E\Bigl[\E\bigl[\bigl(f(\X)-f(\X^{(k)})\bigr)\bigl( g(\X)-g(\X^{(k)})\bigr)\,\bigl|\,\X\bigr]\Bigr]  
 =-\E\bigl[\Gamma_0(F,G)\bigr].
\end{align*}
Note that for the last identity we have used the fact that $F,G\in\dom(D)$ in order to interchange the expectation and the infinite sum. 
\end{proof}

The carr\'{e}-du-champ operator $\Gamma$ in general is closely related to the associated \textit{Dirichlet form} $\mathcal{E}$ (see e.g. \cite[Section 1.7]{BGL14}). We further refer to \cite{BH} for a comprehensive treatment of (abstract and concrete) Dirichlet forms. For $F,G\in\dom(L)$ this symmetric bilinear form is usually defined by 
\begin{equation*}
\mathcal{E}(F,G):=-\E\bigl[FLG\bigr]=-\E\bigl[GLF\bigr],
\end{equation*}
where we have used the symmetry of $L$. This formula is also given in \cite[Section 4]{DH}. If, additionally, $FG\in\dom(L)$, then by \eqref{cdcintparts} it also holds that 
\begin{equation*}
\mathcal{E}(F,G)=\E\bigl[\Gamma(F,G)\bigr].
\end{equation*} 
Now, since $\Gamma_0$ extends $\Gamma$, and thanks to Proposition \ref{intpartsgamma0} we can extend the definition of $\mathcal{E}$ by letting 
\begin{equation*}
\mathcal{E}(F,G):=\E\bigl[\Gamma_0(F,G)\bigr]=\frac{1}{2}\sum_{k=1}^\infty\E\biggl[\Bigl(f\bigl(\X^{(k)}\bigr)-f\bigl(\X\bigr)\Bigr) \Bigl(g\bigl(\X^{(k)}\bigr)-g\bigl(\X\bigr)\Bigr)\biggr]
\end{equation*} 
for all $F,G\in\dom(D)\supseteq\dom(L)$. Note that, in \cite[Definition 4.1]{DH}, the alternative definition 
\begin{equation*}
\mathcal{E}(F,G)=\langle DF,DG\rangle_{L^2(\kappa\otimes\P_\F)}=\sum_{k=1}^\infty \E\bigl[D_kF D_kG\bigr], \quad F,G\in\dom(D),
\end{equation*}
has been given. However, since a straightforward generalization of the computation leading to \eqref{eqdomd} shows that
\begin{align}\label{ES}
\E\bigl[D_kF D_kG\bigr]&= \frac12\E\biggl[\Bigl(f\bigl(\X^{(k)}\bigr)-f\bigl(\X\bigr)\Bigr) \Bigl(g\bigl(\X^{(k)}\bigr)-g\bigl(\X\bigr)\Bigr)\biggr],\quad k\in\N,
\end{align}
 these two definitions actually coincide. Moreover, in \cite[Corollary 3.5]{DH} the important \textit{Poincar\'{e} inequality} 
\begin{equation}\label{poincare}
\Var(F)\leq \mathcal{E}(F,F)=\sum_{k=1}^\infty\E\bigl[(D_kF)^2\bigr], \quad F\in\dom(D),
\end{equation}
has been provided, which, thanks to \eqref{ES}, may thus now also be written as 
\begin{equation*}
\Var(F)\leq \frac{1}{2}\sum_{k=1}^\infty\E\biggl[\Bigl(f\bigl(\X^{(k)}\bigr)-f\bigl(\X\bigr)\Bigr)^2 \biggr], \quad F\in\dom(D).
\end{equation*}
As has been remarked in \cite{DH}, this inequality is an infinite version of the celebrated \textit{Efron-Stein inequality} (see e.g. \cite{EfSt,Steele,BLM}). It further trivially continues to hold for all $F\in L^2_\X\setminus \dom(D)$ since, in this case, the right hand side equals $+\infty$ by Proposition \ref{domD}. The inequality \eqref{poincare} has actually implicitly been established in the proof of Proposition \ref{domD}, since therein we have seen that, for $F\in\dom(D)$ with the infinite Hoeffding decomposition \eqref{genhd2}, we have 
%We give a short proof of \eqref{poincare} that makes use of the Hoeffding decomposition and that also indicates how far the inequality is from being an equality. Thus, let $F\in\dom(D)$ have the infinite Hoeffding decomposition \eqref{genhd}. In the proof of Proposition \ref{domD} it is shown that 
\begin{align}\label{devpoincare}
\sum_{k=1}^\infty\E\bigl[(D_kF)^2\bigr]=\sum_{M\in\Pot_{fin}(\N)}|M|\,\E\bigl[F_M^2\bigr] 
\end{align}
which is of course not smaller than 
\[\sum_{\emptyset\not=M\in\Pot_{fin}(\N)}\,\E\bigl[F_M^2\bigr]=\Var(F).\]
Note that this argument also indicates how far the inequality is from being an equality. Note that, for $F\in\bigoplus_{p=0}^m\mathcal{H}_p$, from \eqref{poincare} and \eqref{devpoincare} we obtain the chain of inequalities 
\begin{align}\label{poincare2}
\Var(F)=\sum_{k=1}^\infty\E\bigl[(D_kF)^2\bigr]\leq m\Var(F).
\end{align}

\subsection{Covariance formulae}\label{covid}
The adaptation of Stein's method for normal approximation to our framework relies on the fact that several useful covariance identities can be established, which in turn lead to fruitful new \textit{Stein identities} (see e.g. \cite{CGS}). In this subsection we provide three such formulae.

The first one, the Malliavin type covariance formula, has implicitly been used in \cite{DH} and a very similar formula has also been given in \cite[Lemma 2.5]{Duer}. We state it here for the sake of later reference and also provide its short proof.
\begin{prop}\label{mallcovid}
For all $F\in L^2_\X$ and $G\in\dom(D)$ one has
\begin{equation*}
\Cov(F,G)=-\E\Bigl[\bigl\langle D L^{-1}\bigl(F-\E[F]\bigr),DG\bigr\rangle_{L^2(\kappa\otimes\P_\F)}\Bigr].
\end{equation*}
\end{prop}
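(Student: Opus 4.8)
The plan is to reduce the covariance to a Malliavin inner product by expressing $F-\E[F]$ through the pseudo-inverse $L^{-1}$ and then peeling off the generator $L=-\delta D$ by means of the integration-by-parts formula \eqref{intpartsM}. Since the covariance is unaffected by additive constants, I would first write $\tilde{F}:=F-\E[F]$, so that $\tilde{F}$ is centered and lies in $L^2_\X$, and record that $\Cov(F,G)=\E[\tilde{F}G]$ because $\E[\tilde{F}]=0$.

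Next I would invoke the pseudo-inverse as introduced in Remark \ref{saLrem}(b): since $\tilde{F}\in L^2_\X$ is centered, $H:=L^{-1}\tilde{F}$ is a well-defined element of $\dom(L)$ with $LH=\tilde{F}$. By the very definition of $L$, membership $H\in\dom(L)$ entails that $DH\in\dom(\delta)$ and that $LH=-\delta(DH)$; hence $\tilde{F}=-\delta(DH)$. Substituting this and applying the Malliavin integration-by-parts formula \eqref{intpartsM} with the process $U:=DH\in\dom(\delta)$ and the test functional $G\in\dom(D)$, I obtain
\[
\Cov(F,G)=\E\bigl[G\tilde{F}\bigr]=-\E\bigl[G\,\delta(DH)\bigr]=-\bigl\langle DG,DH\bigr\rangle_{L^2(\kappa\otimes\P_\F)}.
\]
Using the symmetry of the inner product together with $DH=DL^{-1}\tilde{F}=DL^{-1}\bigl(F-\E[F]\bigr)$ then yields exactly the asserted identity; the outer expectation in the statement is redundant, since $\langle\cdot,\cdot\rangle_{L^2(\kappa\otimes\P_\F)}$ already incorporates the integration against $\P_\F$ (equivalently, one may read the inner product pointwise in $\ell^2(\N)$ with counting measure $\kappa$ and take $\E$ afterwards, which gives the same value).

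The only point requiring verification is the chain of domain memberships making the single application of \eqref{intpartsM} legitimate, namely $H=L^{-1}\tilde{F}\in\dom(L)$ (so that $DH\in\dom(\delta)$ and $LH=-\delta(DH)$) and $G\in\dom(D)$. The former is immediate from the construction of $L^{-1}$ in Remark \ref{saLrem}(b) and the latter is the standing hypothesis, so there is no genuine obstacle: once the reduction $\tilde{F}=-\delta(DL^{-1}\tilde{F})$ is in place, the result is a one-line consequence of integration by parts. I would therefore expect the writing of the proof to be entirely routine.
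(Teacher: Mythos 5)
Your proof is correct and follows essentially the same route as the paper's: write $F-\E[F]=LL^{-1}(F-\E[F])=-\delta D L^{-1}(F-\E[F])$ and apply the integration-by-parts formula \eqref{intpartsM} with $U=DL^{-1}(F-\E[F])\in\dom(\delta)$ and the test functional $G\in\dom(D)$. Your side remark that the outer expectation in the statement is redundant (since the $L^2(\kappa\otimes\P_\F)$ inner product already integrates over $\Omega$) is also accurate and consistent with the paper's notation.
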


\begin{proof}
Since $L^{-1}(F-\E[F])\in\dom(L)\subseteq\dom(D)$, using $\delta D=-L$ and \eqref{intpartsM} we have
\begin{align*}
\Cov(F,G)&=\E\bigl[(F-\E[F]) G\bigr]=\E\bigl[LL^{-1}(F-\E[F]) G\bigr]\\
&=-\E\bigl[\delta DL^{-1}((F-\E[F]) G\bigr]
=-\E\Bigl[\bigl\langle D L^{-1}\bigl(F-\E[F]\bigr),DG\bigr\rangle_{L^2(\kappa\otimes\P_\F)}\Bigr].
\end{align*}
\end{proof}

\begin{comment}
The second covariance formula makes use of the carr\'{e} du champ integration by parts formula \eqref{cdcintparts}.
\begin{prop}\label{cdccovid}
For all $F,G\in L^2_\X$ such that $G\in\dom(L)$ and $G L^{-1}(F-\E[F])\in\dom(L)$ one has
\begin{equation*}
\Cov(F,G)=\E\Bigl[\Gamma\bigl(-L^{-1}\bigl(F-\E[F]\bigr),G\bigr)\Bigr].
\end{equation*}
\end{prop}

\begin{proof}
Since $L^{-1}(F-\E[F])\in\dom(L)$, using \eqref{cdcintparts} we have
\begin{align*}
\Cov(F,G)&=\E\bigl[(F-\E[F]) G\bigr]=\E\bigl[LL^{-1}(F-\E[F]) G\bigr]\\
&=-\E\Bigl[\Gamma\bigl(L^{-1}\bigl(F-\E[F]\bigr),G\bigr)\Bigr]=\E\Bigl[\Gamma\bigl(-L^{-1}\bigl(F-\E[F]\bigr),G\bigr)\Bigr].
\end{align*}
\end{proof}
\end{comment}

The first equality in the next covariance formula, which is of the Clark-Ocone type, is Theorem 3.6 in \cite{DH}.
\begin{prop}\label{clarkcovid}
For all $F,G\in \dom(D)$ one has
\begin{equation*}
\Cov(F,G)=\E\Biggl[\sum_{k=1}^\infty D_k\E[F|\F_k]\, D_kG  \Biggr]=\sum_{k=1}^\infty \E\Bigl[ D_k\E[F|\F_k]\, D_kG  \Bigr] .
\end{equation*}
\end{prop}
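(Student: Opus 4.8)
The plan is to reduce everything to the infinite Hoeffding decomposition, which makes both equalities transparent, and then to deal with the only genuinely delicate point, namely the interchange of the infinite sum with the expectation.

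First I would record the explicit form of the integrand. Writing $F=\sum_{M\in\Pot_{fin}(\N)}F_M$ and $G=\sum_{N\in\Pot_{fin}(\N)}G_N$ for the infinite Hoeffding decompositions, property (b) of the Hoeffding components gives $\E[F\,|\,\F_k]=\sum_{M\subseteq[k]}F_M$, whence Proposition \ref{HoeffD} yields
\[
D_k\E[F\,|\,\F_k]=\sum_{M\subseteq[k]:\,k\in M}F_M=\sum_{M\in\Pot_{fin}(\N):\,\max(M)=k}F_M,
\]
while the same proposition gives $D_kG=\sum_{N\in\Pot_{fin}(\N):\,k\in N}G_N$. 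Equivalently, using the extension of \cite[Lemma 3.2]{DH} recorded above with $M=[k]$ (and $\F_{[k]}\cap\G_k=\F_{k-1}$), one has $D_k\E[F|\F_k]=\E[F|\F_k]-\E[F|\F_{k-1}]$, i.e.\ $D_k\E[F|\F_k]$ is exactly the $k$-th increment of the L\'evy martingale $(\E[F|\F_k])_{k}$; this is the Clark-Ocone viewpoint behind the name of the formula.

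Next I would compute the $k$-th summand. Since both series converge in $L^2(\P)$, the inner product expands term by term, and the orthogonality of the Hoeffding components ($\E[F_MG_N]=0$ unless $M=N$) collapses it:
\[
\E\bigl[D_k\E[F|\F_k]\,D_kG\bigr]
=\sum_{M:\,\max(M)=k}\ \sum_{N:\,k\in N}\E[F_MG_N]
=\sum_{M:\,\max(M)=k}\E[F_MG_M],
\]
where in the last step $M=N$ forces $k\in M$, which is automatic as $\max(M)=k\in M$. Summing over $k\in\N$ and using that every nonempty finite subset of $\N$ has a unique maximum, this gives
\[
\sum_{k=1}^\infty\E\bigl[D_k\E[F|\F_k]\,D_kG\bigr]=\sum_{\emptyset\neq M\in\Pot_{fin}(\N)}\E[F_MG_M]=\Cov(F,G),
\]
the final equality again being orthogonality of the decomposition. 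This establishes the second (right-hand) equality.

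Finally I would justify the interchange $\E[\sum_k\,\cdot\,]=\sum_k\E[\,\cdot\,]$ that turns this into the first equality, which I expect to be the main obstacle since the summands $a_k:=D_k\E[F|\F_k]\,D_kG$ are not of constant sign. By Cauchy-Schwarz in $L^2(\P)$ and then in $\ell^2(\N)$,
\[
\sum_{k=1}^\infty\E|a_k|\le\sum_{k=1}^\infty\norm{D_k\E[F|\F_k]}_2\,\norm{D_kG}_2
\le\Bigl(\sum_{k=1}^\infty\norm{D_k\E[F|\F_k]}_2^2\Bigr)^{1/2}\Bigl(\sum_{k=1}^\infty\norm{D_kG}_2^2\Bigr)^{1/2}.
\]
Here $\sum_k\norm{D_kG}_2^2<\infty$ because $G\in\dom(D)$ (Proposition \ref{domD}), while $\sum_k\norm{D_k\E[F|\F_k]}_2^2=\sum_k\E\bigl[(\E[F|\F_k]-\E[F|\F_{k-1}])^2\bigr]=\Var(F)<\infty$ by the Pythagorean identity for the orthogonal martingale increments. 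Hence $\sum_k|a_k|\in L^1(\P)$, so $\sum_k a_k$ converges absolutely in $L^1(\P)$ and dominated convergence (with dominating function $\sum_k|a_k|$) permits exchanging the sum and the expectation, giving the first equality. Everything apart from this absolute-summability bound is bookkeeping with the Hoeffding components.
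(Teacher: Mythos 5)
Your proof is correct, but it takes a genuinely different route from the paper's. The paper does not reprove the covariance identity itself: the first equality is simply quoted as Theorem 3.6 of \cite{DH}, and the only thing argued in the text is the legitimacy of interchanging the expectation and the infinite sum (the second equality), via the estimate \eqref{clarkex}, namely $\sum_{k=1}^\infty\E\bigl[(D_k\E[F|\F_k])^2\bigr]=\sum_{k=1}^\infty\E\bigl[(\E[D_kF|\F_k])^2\bigr]\leq\sum_{k=1}^\infty\E\bigl[(D_kF)^2\bigr]<\infty$, which rests on the conditional Jensen inequality and on the hypothesis $F\in\dom(D)$ through Proposition \ref{domD}, followed by Cauchy-Schwarz. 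You instead derive both equalities from scratch out of the infinite Hoeffding decomposition: you identify $D_k\E[F|\F_k]=\sum_{M:\,\max(M)=k}F_M$ (equivalently, the martingale increment $\E[F|\F_k]-\E[F|\F_{k-1}]$), collapse the cross terms using the orthogonality $\E[F_MG_N]=0$ for $M\not=N$, and recover $\Cov(F,G)=\sum_{\emptyset\not=M\in\Pot_{fin}(\N)}\E[F_MG_M]$ by grouping the nonempty sets $M$ according to the value of $\max(M)$. This is exactly the spirit in which the paper establishes its other structural results (Propositions \ref{HoeffD}, \ref{domD}, \ref{domL}), so your argument makes the proposition self-contained within the paper's framework rather than outsourcing it. Your treatment of the interchange is in fact sharper than the paper's: the Pythagorean identity $\sum_{k=1}^\infty\E\bigl[(D_k\E[F|\F_k])^2\bigr]=\Var(F)$ holds for \emph{every} $F\in L^2_\X$, so your proof establishes the formula under the weaker hypothesis $F\in L^2_\X$, $G\in\dom(D)$, whereas the paper's bound genuinely consumes $F\in\dom(D)$. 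Two pieces of bookkeeping you could state explicitly, though both are immediate: the interchange of conditional expectation with the $L^2$-convergent series when writing $\E[F|\F_k]=\sum_{M\subseteq[k]}F_M$, and the absolute convergence $\sum_{M}\babs{\E[F_MG_M]}<\infty$ (by the same double Cauchy-Schwarz as in your last paragraph) that licenses regrouping the sum over $M$ by the value of $\max(M)$.
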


Note that the second equality in Proposition \ref{clarkcovid} holds true as $F,G\in\dom(D)$ and, thus,  
\begin{align}\label{clarkex}
\sum_{k=1}^\infty \E\Bigl[\bigl( D_k\E[F|\F_k]\bigr)^2\Bigr]&=\sum_{k=1}^\infty \E\Bigl[\bigl(\E[ D_kF|\F_k]\bigr)^2\Bigr]
\leq\sum_{k=1}^\infty \E\Bigl[\E\bigl[( D_kF)^2|\F_k\bigr]\Bigr]\notag\\
&=\sum_{k=1}^\infty \E\bigl[( D_kF)^2\bigr]<\infty,
\end{align}
where the final inequality is by Proposition \ref{domD}. Hence, by the Cauchy-Schwarz inequality, it is justified to interchange the expectation and the infinite sum.

Finally, we present a novel covariance formula that makes use of the carr\'{e}-du-champ integration by parts formula in Proposition \ref{intpartsgamma0}. It can be considered an infinite analogue of the covariance formula from \cite[Lemma 2.5]{D23} in the context of non-linear exchangeable pairs.
\begin{prop}\label{cdccovid}
For all $F\in L^2_\X$ and $G\in \dom(D)$ one has
\begin{equation*}
\Cov(F,G)=\E\biggl[\Gamma_0\Bigl(-L^{-1}\bigl(F-\E[F]\bigr),G\Bigr)\biggr].
\end{equation*}
\end{prop}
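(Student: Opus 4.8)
The plan is to reduce the statement to the carr\'{e}-du-champ integration-by-parts formula of Proposition \ref{intpartsgamma0} via the pseudo-inverse $L^{-1}$, in direct analogy with the proof of the Malliavin covariance formula in Proposition \ref{mallcovid}.

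First I would set $H:=-L^{-1}\bigl(F-\E[F]\bigr)$. Since $F-\E[F]\in L^2_\X$ is centered, Remark \ref{saLrem}(b) guarantees that $H\in\dom(L)\subseteq\dom(D)$ is well-defined and satisfies $LH=-\bigl(F-\E[F]\bigr)$, equivalently $F-\E[F]=-LH$. Consequently,
\[
\Cov(F,G)=\E\bigl[(F-\E[F])G\bigr]=-\E\bigl[G\,LH\bigr].
\]
Next I would invoke Proposition \ref{intpartsgamma0}, applied with the first ($\dom(D)$) argument equal to $G$ and the second ($\dom(L)$) argument equal to $H$; this is legitimate precisely because $G\in\dom(D)$ and $H\in\dom(L)$. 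It yields $\E\bigl[G\,LH\bigr]=-\E\bigl[\Gamma_0(G,H)\bigr]$, so that $\Cov(F,G)=\E\bigl[\Gamma_0(G,H)\bigr]$. Finally, the bilinear form $\Gamma_0$ is manifestly symmetric in its two arguments---directly from its defining formula \eqref{Gamma0}, which is symmetric in the representatives $f$ and $g$---so that $\Gamma_0(G,H)=\Gamma_0(H,G)$, and substituting back $H=-L^{-1}\bigl(F-\E[F]\bigr)$ gives the claimed identity.

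There is essentially no serious obstacle here: the only points that must be checked are that $H$ lies in $\dom(L)$ (so that $LH$ makes sense, $\Gamma_0(H,G)\in L^1(\P)$ is well-defined by the discussion preceding Proposition \ref{formgamma}, and Proposition \ref{intpartsgamma0} applies) and that $\Gamma_0$ is symmetric; both are immediate from the earlier development. The reason the argument is so short is that all of the analytic work---the description of $\dom(L)$ in Proposition \ref{domL}, the construction of $L^{-1}$, the identity $\Gamma=\Gamma_0$ of Proposition \ref{formgamma}, and the integration-by-parts formula of Proposition \ref{intpartsgamma0}---has already been carried out, so this proposition merely repackages those facts into a covariance identity suited to the Clark-Ocone type bounds developed afterwards.
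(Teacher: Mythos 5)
Your proposal is correct and takes essentially the same route as the paper's own proof: both reduce the covariance to the carr\'{e}-du-champ integration-by-parts formula of Proposition \ref{intpartsgamma0}, applied with $G\in\dom(D)$ and $L^{-1}\bigl(F-\E[F]\bigr)\in\dom(L)$, using $LL^{-1}\bigl(F-\E[F]\bigr)=F-\E[F]$ together with the symmetry and bilinearity of $\Gamma_0$. The only cosmetic difference is that you absorb the minus sign into $H:=-L^{-1}\bigl(F-\E[F]\bigr)$ at the outset, whereas the paper carries it through and removes it at the last step.
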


\begin{proof}
Since $G\in \dom(D)$ and $L^{-1}(F-\E[F])\in \dom(L)$, by Proposition \ref{intpartsgamma0} we have
\begin{align*}
\Cov(F,G)&=\E\bigl[(F-\E[F]) G\bigr]=\E\bigl[LL^{-1}(F-\E[F]) G\bigr]\\
&=-\E\Bigl[\Gamma_0\bigl(L^{-1}\bigl(F-\E[F]\bigr),G\bigr)\Bigr]=\E\Bigl[\Gamma_0\bigl(-L^{-1}\bigl(F-\E[F]\bigr),G\bigr)\Bigr].
\end{align*}
\end{proof}

\begin{comment}
\begin{remark}
We remark that in \cite[Theorem 3.7]{DH} another covariance formula is given that is expressed in terms of the Ornstein-Uhlenbeck semigroup $(P_t)_{t\geq0}$ and which resembles the covariance identity on Poisson spaces in \cite[Theorem 8]{Lastsa}. 
\end{remark}
\end{comment}

\begin{remark}
At the end of this section on Malliavin calculus and infinite Hoeffding decompositions we make a final comment regarding the generality of the theory initiated in \cite{DH} and extended in the present work. Suppose that $I$ is in fact an uncountable index set and that, for each $i\in I$, $(E_i,\B_i,\mu_i)$ is a probability space. Denote by 
$(E,\B,\mu):=(\prod_{i\in I} E_i,\bigotimes_{i\in I}\B_i,\bigotimes_{i\in I}\mu_i)$ the corresponding product probability space and by $Y_L:(E,\B)\rightarrow (\prod_{i\in L}E_i,\bigotimes_{i\in L}\B_i)$ the canonical projections, $L\subseteq I$. From the well-known fact that any $B\in\B$ is already contained in 
$\sigma(Y_{J_B})$ for some countable $J_B\subseteq I$ and since the Borel-$\sigma$-field $\B(\R)$ on $\R$ is countably generated, 
one can infer that any measurable $f:(E,\B)\rightarrow(\R,\B(\R))$ is already measurable with respect to $\sigma(Y_{K})$ for some countable $K\subseteq I$. Hence, as long as only countably many such functionals $f$ are considered, the theory presented here in principle covers the most general case of functionals on products of arbitrarily many probability spaces.
\end{remark}

\section{Normal approximation via Malliavin and carr\'{e}-du-champ operators}\label{normapp} 

\subsection{Preparations}\label{preps}

Recall that a function $\psi:\R\rightarrow\R$ is Lipschitz-continuous, if and only if 
\[\sup_{x\not=y}\frac{\abs{\psi(y)-\psi(x)}}{\abs{y-x}}<\infty\]
and in this case the quantity on the left hand side equals both the smallest possible Lipschitz-constant for $\psi$ and also the essential supremum norm $\norm{\psi'}_\infty$ of the $\la$-a.e. existing derivative $\psi'$ of $\psi$, where $\lambda$ denotes the Lebesgue measure on $\R$.\smallskip\\

The proofs of our error bounds make use of \textit{Stein's method for normal approximation}. We briefly review some facts that are relevant for what follows and otherwise refer to \cite{CGS} for a comprehensive introduction to this topic. 

For a Borel-measurable function $h$ on $\R$ such that $\E|h(Z)|<\infty$ we denote by $\psi_h$ the particular solution to the \textit{Stein equation} 
 \begin{equation}\label{steineq}
  \psi'(x)-x\psi(x)=h(x)-\E[h(Z)]
 \end{equation}
that is given by 
\begin{equation}\label{steinsol}
 \psi_h(x)=e^{x^2/2}\int_{-\infty}^x\bigl(h(t)-\E[h(Z)]\bigr)e^{-t^2/2}dt\,.
\end{equation} 
It follows from standard facts of integration theory that $\psi_h$ is indeed $\la$-a.e. differentiable and solves \eqref{steineq} at its points of differentiability. Now, at the remaining points of $\R$, and contrary to the usual convention, we define $\psi_h'$ in such a way as to satisfy \eqref{steineq} pointwise on $\R$, i.e. we let $\psi_h'(x):= x\psi(x)+h(x)-\E[h(Z)]$, whenever $\psi_h$ is not differentiable at $x$. If $h=h_z=\1_{(-\infty,z]}$ for some $z\in\R$, then we also write $\psi_z$ for $\psi_{h_z}$ for short. The following result gathers important properties of the solutions $\psi_h$.
For proofs of these facts we refer the reader to Lemmas 2.3 and 2.4 in \cite{CGS}.

\begin{lemma}\label{solprops}
For a Borel-measurable function $h$ on $\R$ such that $\E|h(Z)|<\infty$ let $\psi_h$ and $\psi_h'$ be given as above. 
\begin{enumerate}[{\normalfont (i)}]
\item If $h\in \Lip(1)$, then $\psi_h\in C^1(\R)$ and both $\psi_h$ and $\psi_h'$ are Lipschitz-continuous with respective Lipschitz constants 
\begin{equation}\label{sfac}
 \norm{\psi_h'}_\infty\leq \sqrt{\frac{2}{\pi}}\quad\text{and}\quad \norm{\psi_h''}_\infty\leq 2\,.
\end{equation}
\item If $h=h_z=\1_{(-\infty,z]}$ for some $z\in\R$, then $\psi_z$ is Lipschitz-continuous on $\R$ and infinitely differentiable on $\R\setminus\{z\}$. Moreover, $\psi_z$ and $\psi_z'$ have the following further properties.
\begin{enumerate}[{\normalfont (a)}] 
\item The function $x\mapsto x\psi_z(x)$ is increasing on $\R$.
\item $|x\psi_z(x)|\leq 1$ and $|x\psi_z(x)-y\psi_z(y)|\leq1$ for all $x,y\in\R$.
\item $\norm{\psi_z'}_\infty\leq1$ and $\norm{\psi_z}_\infty\leq \sqrt{2\pi}/4$.
\end{enumerate}
\end{enumerate}
\end{lemma}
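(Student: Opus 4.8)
The plan is to establish both parts directly from the explicit solution \eqref{steinsol}, abbreviating $\tilde h:=h-\E[h(Z)]$, so that $\E[\tilde h(Z)]=0$ and $\psi_h$ solves $\psi_h'-x\psi_h=\tilde h$. Besides \eqref{steinsol} I will use the dual representation $\psi_h(x)=-e^{x^2/2}\int_x^\infty\tilde h(t)e^{-t^2/2}\,dt$, which is valid because $\int_\R\tilde h(t)e^{-t^2/2}\,dt=0$.

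For part (i) I would pass to the Mehler/Ornstein--Uhlenbeck semigroup $(T_t\eta)(x):=\E\bigl[\eta(e^{-t}x+\sqrt{1-e^{-2t}}Z)\bigr]$. Since $h\in\Lip(1)$ is absolutely continuous with $\norm{h'}_\infty\le1$ (as recalled in the Preparations), one checks that
\[\psi_h(x)=-\int_0^\infty e^{-t}\,\E\bigl[h'(e^{-t}x+\sqrt{1-e^{-2t}}Z)\bigr]\,dt,\]
which is $C^1$ by dominated convergence; in particular $\norm{\psi_h}_\infty\le\norm{h'}_\infty$. Differentiating in $x$ and performing a Gaussian integration by parts in $Z$ (i.e. $\E[\eta'(Z)]=\E[Z\eta(Z)]$) to shift the derivative back onto $h'$ yields
\[\psi_h'(x)=-\int_0^\infty\frac{e^{-2t}}{\sqrt{1-e^{-2t}}}\,\E\bigl[Z\,h'(e^{-t}x+\sqrt{1-e^{-2t}}Z)\bigr]\,dt.\]
Now $\bigl|\E[Z\,h'(\cdots)]\bigr|\le\E\abs{Z}\,\norm{h'}_\infty=\sqrt{2/\pi}$ and the substitution $u=e^{-2t}$ gives $\int_0^\infty e^{-2t}(1-e^{-2t})^{-1/2}\,dt=1$, so the first bound in \eqref{sfac} follows. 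A second differentiation and Gaussian integration by parts (now producing the Hermite factor $Z^2-1$) leads to
\[\psi_h''(x)=-\int_0^\infty\frac{e^{-3t}}{1-e^{-2t}}\,\E\bigl[(Z^2-1)\,h'(e^{-t}x+\sqrt{1-e^{-2t}}Z)\bigr]\,dt.\]

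Here lies the main obstacle: the weight $e^{-3t}/(1-e^{-2t})$ is non-integrable at $t=0$, so a crude estimate diverges and one must exploit $\E[Z^2-1]=0$ to recenter $h'$ inside the expectation; only after this cancellation does the singularity become integrable and produce the clean constant $\norm{\psi_h''}_\infty\le2\norm{h'}_\infty\le2$. Since $h'$ need not be Lipschitz, I would carry out the recentering after first approximating $h$ by smooth functions with the same bound on the derivative and then passing to the limit, or alternatively argue the bound directly from \eqref{steinsol} together with the sharp decay of $x\psi_h(x)$ and $x\psi_h'(x)$ as $\abs{x}\to\infty$, which is the route taken in \cite{CGS}.

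For part (ii), with $h=\1_{(-\infty,z]}$ and $\E[h(Z)]=\Phi(z)$, the integral \eqref{steinsol} evaluates to the piecewise formula
\[\psi_z(x)=\sqrt{2\pi}\,e^{x^2/2}\begin{cases}\Phi(x)\bigl(1-\Phi(z)\bigr),&x\le z,\\ \bigl(1-\Phi(x)\bigr)\Phi(z),&x>z,\end{cases}\]
where $\Phi$ is the standard normal distribution function; this is continuous at $x=z$, positive, and smooth off $z$, while $\psi_z'$ is given at $x\ne z$ by the Stein equation. All remaining assertions then reduce to classical monotonicity and boundedness properties of the scaled Mills ratios $x\mapsto e^{x^2/2}\int_x^\infty e^{-t^2/2}\,dt$ and $x\mapsto e^{x^2/2}\int_{-\infty}^x e^{-t^2/2}\,dt$: the monotonicity in (a) and the unit bounds in (b) follow from $0<x\,e^{x^2/2}\int_x^\infty e^{-t^2/2}\,dt<1$ for $x>0$ and its reflection, and the sharp constant $\sqrt{2\pi}/4$ in (c) is the value of $\psi_z$ at $x=z=0$, which one verifies to be the global maximum of $(x,z)\mapsto\psi_z(x)$. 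I expect the delicate points here to be the strictness in the monotonicity statement (a) and the identification of the extremal constant in (c); both are standard but require careful one-variable estimates rather than soft arguments.
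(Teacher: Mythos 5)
Your proposal necessarily differs from the paper, because the paper does not prove Lemma \ref{solprops} at all: it quotes the statement and refers to Lemmas 2.3 and 2.4 of \cite{CGS} for the proofs. Measured against those classical proofs, much of what you write is correct. The semigroup representation $\psi_h(x)=-\int_0^\infty e^{-t}\,\E\bigl[h'(e^{-t}x+\sqrt{1-e^{-2t}}Z)\bigr]\,dt$ does give the bounded solution (bounded solutions of \eqref{steineq} are unique, so it coincides with \eqref{steinsol}), the Gaussian integration by parts is legitimate since the Gaussian kernel absorbs the derivative, and $\E\abs{Z}=\sqrt{2/\pi}$ together with $\int_0^\infty e^{-2t}(1-e^{-2t})^{-1/2}\,dt=1$ yields $\norm{\psi_h'}_\infty\le\sqrt{2/\pi}$; this is a complete proof of the first bound in \eqref{sfac}. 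Your piecewise formula for $\psi_z$ in part (ii) is also correct, and the reduction of (a)--(c) to Mills-ratio estimates is the standard route; note only that the monotonicity in (a) needs the lower Mills bound $e^{x^2/2}\int_x^\infty e^{-t^2/2}\,dt>x/(1+x^2)$, which does not follow from the two-sided bound $0<x\,e^{x^2/2}\int_x^\infty e^{-t^2/2}\,dt<1$ that you invoke.

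The genuine gap is the second bound in \eqref{sfac}, $\norm{\psi_h''}_\infty\le 2$, which is also the estimate the paper actually relies on (in Proposition \ref{chain} and the Wasserstein bounds). You correctly identify the obstruction -- the weight $e^{-3t}/(1-e^{-2t})$ is not integrable at $t=0$, so one must use $\E[Z^2-1]=0$ -- but the repair you sketch fails. After recentering, the quantity $\babs{\E\bigl[(Z^2-1)\bigl(h'(e^{-t}x+\sqrt{1-e^{-2t}}Z)-h'(e^{-t}x)\bigr)\bigr]}$ decays as $t\downarrow0$ only at a rate governed by the modulus of continuity of $h'$; for merely Lipschitz $h$, the function $h'$ is just a bounded measurable function, so there is no decay and the $t$-integral still diverges. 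Approximating $h$ by smooth $h_n$ with $\norm{h_n'}_\infty\le1$ does not rescue this: the recentred estimate then involves $\norm{h_n''}_\infty$ (equivalently, the modulus of continuity of $h_n'$), which necessarily blows up along the approximating sequence, so you never obtain a bound on $\norm{\psi_{h_n}''}_\infty$ that is uniform in $n$ and depends only on $\norm{h'}_\infty$ -- and without that uniformity the (otherwise harmless) passage to the limit gives nothing. The constant $2$ is simply not reachable by this semigroup argument; the known proofs (Stein's, reproduced as Lemma 2.4 in \cite{CGS}) work from the explicit solution \eqref{steinsol}, the pointwise identity $\psi_h''(x)=\psi_h(x)+x\psi_h'(x)+h'(x)$, and decay estimates on $x\psi_h(x)$ and $x\psi_h'(x)$. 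Your fallback option is exactly this, but you do not carry it out; deferring to \cite{CGS} is legitimate (it is what the paper itself does), yet it means the hardest estimate of the lemma is asserted in your proposal rather than proved.
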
 

We next provide a formula that is a substitute for the exact chain rule formula in Gaussian Malliavin calculus \cite{Nualart} which cannot hold here as $D_k$ is not a derivation. This approximate chain rule formula is key to proving our normal approximation bounds in Theorems \ref{msbound} and \ref{cobound}.  

\begin{prop}[Approximate chain rule]\label{chain}
Suppose that $F\in L^2_\X$ and that $h:\R\rightarrow\R$ is either in $\Lip(1)$ or $h=h_z=\1_{(-\infty,z]}$ for some $z\in\R$. Moreover, let $\psi_h$ and $\psi_h'$ be given as above. Then, for $k\in\N$, 
 \begin{equation*}
  D_k\psi_h(F)=\psi_h'(F) D_kF -S_k-\E[R_k\,|\,\G_k],
 \end{equation*}
where the random variables $S_k$ and $R_k$ are defined by the equations
\begin{align}
 \psi_h(F)-\psi_h\bigl(\E[F\,|\,\G_k]\bigr)&=\psi_h'\bigl(\E[F\,|\,\G_k]\bigr) D_kF +R_k\label{cr1}\quad\text{and}\\
 \psi_h\bigl(\E[F\,|\,\G_k]\bigr)-\psi_h(F)&=\psi_h'(F)(-D_kF) +S_k\label{cr2},
\end{align}
respectively, and have the following properties.
\begin{enumerate}[{\normalfont (i)}]
\item If $h\in\Lip(1)$, then
\begin{align*}
 S_k&=\bigl(D_kF\bigr)^2\int_0^1(1-u)\psi_h''\bigl(u\E[F\,|\,\G_k]+(1-u)F\bigr)du,\\
 R_k&=\bigl(D_kF\bigr)^2\int_0^1(1-u)\psi_h''\bigl(uF+(1-u)\E[F\,|\,\G_k]\bigr)du.
\end{align*}
Moreover, one then has the bounds 
\begin{equation}\label{RkSk}
 \max\bigl(|R_k|,|S_k|\bigr)\leq\min\bigl((D_kF)^2,2|D_kF|\bigr)\,,\quad k\in\N\,.
\end{equation}
\item If $h=h_z=\1_{(-\infty,z]}$ for some $z\in\R$, then 
\begin{align}
 \max\bigl(|R_k|,|S_k|\bigr)&\leq 2|D_kF|\quad\text{and} \label{Rk}\\
%&=\Babs{\psi_z(F)-\psi_z\bigl(\E[F\,|\,\G_k]\bigr)-\psi_z'\bigl(\E[F\,|\,\G_k]\bigr)D_kF} \leq 
%\frac{(D_kF)^2}{2}\Biggl(|\E[F\,|\,\G_k]|+\frac{\sqrt{2\pi}}{4}\Biggr)\notag\\
%&\hspace{2cm}+D_kF\bigl(\1_{\{\E[F\,|\,\G_k]\leq z<\E[F\,|\,\G_k]+D_kF\}}-\1_{\{\E[F\,|\,\G_k]+D_kF\leq z<\E[F\,|\,\G_k]\}}\bigr)
S_k&=\int_0^{D_kF}\Bigl(\psi_z'(F)-\psi_z'\bigl(\E[F\,|\,\G_k]+t\bigr)\Bigr)dt,\quad k\in\N. \label{Sk}
\end{align}

\end{enumerate}
\end{prop}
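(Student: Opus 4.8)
The plan is to prove the displayed identity first and then to treat the two regularity cases in turn, using only the defining equations \eqref{cr1} and \eqref{cr2}, the elementary relation $D_kF=F-\E[F\,|\,\G_k]$, and the properties of $\psi_h$ recorded in Lemma \ref{solprops}. Since $\psi_h(F)\in L^1(\P)$ in both situations (as $\psi_h$ is Lipschitz for $h\in\Lip(1)$ and bounded for $h=h_z$), the expression $D_k\psi_h(F)=\psi_h(F)-\E[\psi_h(F)\,|\,\G_k]$ is well-defined throughout.

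First I would establish $D_k\psi_h(F)=\psi_h'(F)D_kF-S_k-\E[R_k\,|\,\G_k]$. Rearranging \eqref{cr2} gives $\psi_h(F)-\psi_h(\E[F\,|\,\G_k])=\psi_h'(F)D_kF-S_k$, so subtracting $\E[\psi_h(F)\,|\,\G_k]$ from $\psi_h(F)$ reduces the claim to identifying $\psi_h(\E[F\,|\,\G_k])-\E[\psi_h(F)\,|\,\G_k]$ with $-\E[R_k\,|\,\G_k]$. This is the crux of the argument and the step I expect to be the only genuinely delicate one. It is settled by applying $\E[\,\cdot\,|\,\G_k]$ to \eqref{cr1}: the factor $\psi_h'(\E[F\,|\,\G_k])$ is $\G_k$-measurable, so it pulls out of the conditional expectation, and since $\E[D_kF\,|\,\G_k]=\E[F\,|\,\G_k]-\E[F\,|\,\G_k]=0$ the linear term vanishes, leaving $\E[\psi_h(F)\,|\,\G_k]-\psi_h(\E[F\,|\,\G_k])=\E[R_k\,|\,\G_k]$. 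Substituting this back yields the stated formula.

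For part (i) I would read off the explicit integral forms of $R_k$ and $S_k$ from Taylor's theorem with integral remainder, which is available because Lemma \ref{solprops} (i) gives $\psi_h\in C^1(\R)$ with Lipschitz, hence absolutely continuous and a.e.\ twice differentiable, derivative. Writing $\psi_h(b)=\psi_h(a)+\psi_h'(a)(b-a)+(b-a)^2\int_0^1(1-u)\psi_h''\bigl(a+u(b-a)\bigr)\,du$ and inserting $(a,b)=(\E[F\,|\,\G_k],F)$ into \eqref{cr1} and $(a,b)=(F,\E[F\,|\,\G_k])$ into \eqref{cr2}, together with the identity $a+u(b-a)=uF+(1-u)\E[F\,|\,\G_k]$ (and its analogue with the roles of $F$ and $\E[F\,|\,\G_k]$ exchanged), reproduces exactly the two claimed integrals. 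The bound $\max(|R_k|,|S_k|)\le(D_kF)^2$ then follows from $\norm{\psi_h''}_\infty\le2$ and $\int_0^1(1-u)\,du=\tfrac12$, while the competing bound $2|D_kF|$ follows by estimating each summand in the definitions of $R_k$ and $S_k$ separately, using that $\psi_h$ is Lipschitz with constant $\norm{\psi_h'}_\infty\le\sqrt{2/\pi}\le1$; taking the minimum gives \eqref{RkSk}.

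For part (ii), where $\psi_z$ is merely Lipschitz with $\norm{\psi_z'}_\infty\le1$ and smooth off $z$, the bound \eqref{Rk} again comes from estimating the two summands in each of $R_k$ and $S_k$ by $\norm{\psi_z'}_\infty|D_kF|\le|D_kF|$. To verify \eqref{Sk} I would start from the rearrangement $S_k=\psi_z(\E[F\,|\,\G_k])-\psi_z(F)+\psi_z'(F)D_kF$ coming from \eqref{cr2}, and recognize $\psi_z(F)-\psi_z(\E[F\,|\,\G_k])$ as $\int_0^{D_kF}\psi_z'(\E[F\,|\,\G_k]+t)\,dt$ via the substitution $s=\E[F\,|\,\G_k]+t$ and the fundamental theorem of calculus for the absolutely continuous function $\psi_z$ (the pointwise redefinition of $\psi_z'$ at the single point $z$ is irrelevant to the value of the integral). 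Subtracting this from $\psi_z'(F)D_kF=\int_0^{D_kF}\psi_z'(F)\,dt$ then produces the stated formula, and the computation is valid for $D_kF$ of either sign under the usual oriented-integral convention.
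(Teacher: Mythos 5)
Your proposal is correct and follows essentially the same route as the paper's proof: the displayed identity is obtained by combining \eqref{cr2} with the conditional expectation of \eqref{cr1} (using the $\G_k$-measurability of $\psi_h'(\E[F\,|\,\G_k])$, the boundedness of $\psi_h'$, and $\E[D_kF\,|\,\G_k]=0$), part (i) via Taylor's formula with integral remainder, the bounds via the Lipschitz estimates of Lemma \ref{solprops}, and \eqref{Sk} via the fundamental theorem of calculus for the Lipschitz function $\psi_z$. Your explicit remarks on why Taylor's formula applies and why the pointwise redefinition of $\psi_z'$ at $z$ does not affect the integral are slightly more careful than the paper's wording, but the substance is identical.
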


\begin{proof}
In this proof we write $F_k:=\E[F\,|\,\G_k]$ in such a way that $F=F_k+D_kF$, $k\in\N$, as well as $\psi:=\psi_h$.
\begin{comment}
We define $R_k$ and $S_k$, $k\in\N$, by the equations
\begin{align}
 \psi(F)-\psi(F_k)&=\psi'(F_k) D_kF +R_k\label{cr1}\quad\text{and}\\
 \psi(F_k)-\psi(F)&=\psi'(F)(-D_kF) +S_k\label{cr2}.
\end{align}
\end{comment}
Hence, using \eqref{cr1} and \eqref{cr2}, the boundedness of $\psi'$ (by Lemma \ref{solprops}), that $\E\bigl[D_kF \,\bigl|\,\G_k\bigr]=0$ and that $F_k$ is $\G_k$-measurable we have 
\begin{align*}
 D_k\psi(F)&=\psi(F)-\E\bigl[\psi(F)\,\bigl|\,\G_k\bigr]
 =\psi(F)- \E\bigl[\psi(F_k) +  \psi'(F_k) D_kF +R_k \,\bigl|\,\G_k\bigr]\\
 &=\psi(F)-\psi(F_k)-\psi'(F_k)\E\bigl[D_kF \,\bigl|\,\G_k\bigr]-\E\bigl[R_k \,\bigl|\,\G_k\bigr]\\
 &=\psi(F)-\psi(F_k)-\E\bigl[R_k \,\bigl|\,\G_k\bigr]\\
 &=\psi'(F) D_kF -S_k-\E\bigl[R_k \,\bigl|\,\G_k\bigr]
 \end{align*}
and it remains to prove the respective bounds and formulae for $R_k$ and $S_k$. In the situation of (i) we make use of (a suitable version of) Taylor's formula
\begin{align*}
 \psi(x+h)&=\psi(x)+\psi'(x)h + h^2\int_0^1(1-u)\psi''(x+uh)du
\end{align*}
which, in view of Lemma \ref{solprops} (i), immediately yields the claimed formulae for $R_k$ and $S_k$ as well as the bound $\max\bigl(|R_k|,|S_k|\bigr)\leq(D_kF)^2$. Next observe that, since by Lemma \ref{solprops}, in both situations (i) and (ii), $\psi_h$ is Lipschitz-continuous with Lipschitz-constant one, from \eqref{cr1} and \eqref{cr2} we have 
\begin{align*}
|R_k|&=\babs{\psi_h(F)-\psi_h(F_k)-\psi_h'(F_k)D_kF}\leq 2\norm{\psi_h'}_\infty |D_kF|\leq 2|D_kF|\quad \text{and}\\
|S_k|&=\babs{\psi_h(F)-\psi_h(F_k)-\psi_h'(F)D_kF}\leq 2\norm{\psi_h'}_\infty |D_kF|\leq 2|D_kF|.
\end{align*} 
Hence, we have proved the bounds \eqref{RkSk} and \eqref{Rk}. Finally, in the situation of (ii), the formula \eqref{Sk} for $S_k$ follows from \eqref{cr2}, the fundamental theorem of calculus and the Lipschitzness of $\psi_z$. 
\end{proof}

\begin{remark}\label{chainrem}
Note that the proof of Proposition \ref{chain} significantly differs from the proofs of corresponding chain rule formuals in the Poisson \cite{PSTU} and Rademacher \cite{Zheng} settings. The reason is that, contrary to these situations, the derivative $D_k$ here is not defined by alternating the argument of the functional but via a conditional expectation. This is why a two-step Taylor approximation is necessary in its proof. Interestingly, thanks to its $\G_k$-measurability, the final term $\E[R_k|\G_k]$ consistently does not affect the bounds on normal approximation in the next subsection.    
\end{remark}

In order to apply the covariance formulas from Subsection \ref{covid}, we will also need the following result, whose proof is now simple thanks to Proposition \ref{domD}.

\begin{prop}\label{lipprop}
Let $\psi:\R\rightarrow\R$ be Lipschitz-continuous and $F\in\dom(D)$. Then, $\psi(F)\in\dom(D)$.
\end{prop}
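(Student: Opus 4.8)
The plan is to verify condition (iii) of Proposition \ref{domD} directly for the functional $\psi(F)$, using the Lipschitz bound to dominate the relevant sum by the corresponding sum for $F$, which is finite because $F\in\dom(D)$. Before invoking that criterion, I first need to know that $\psi(F)\in L^2_\X$, since the equivalences in Proposition \ref{domD} are stated for elements of $L^2_\X$. To this end I would write $L:=\Lnorm{\psi}$ for the smallest Lipschitz constant of $\psi$, so that $\abs{\psi(x)-\psi(y)}\leq L\abs{x-y}$ for all $x,y\in\R$ and, in particular, $\abs{\psi(x)}\leq\abs{\psi(0)}+L\abs{x}$. Fixing a representative $f\in L^2(\mu)$ of $F$, the composition $\psi\circ f$ is a representative of $\psi(F)$, and from the pointwise bound $\psi(F)^2\leq 2\psi(0)^2+2L^2F^2$ together with $F\in L^2_\X$ I conclude that $\psi(F)\in L^2_\X$.

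The core step is then the pointwise estimate
\[
\bigl(\psi\bigl(f(\X^{(k)})\bigr)-\psi\bigl(f(\X)\bigr)\bigr)^2\leq L^2\bigl(f(\X^{(k)})-f(\X)\bigr)^2,\quad k\in\N,
\]
which is immediate from the Lipschitz property of $\psi$. Taking expectations and summing over $k$ yields
\[
\sum_{k=1}^\infty\E\Bigl[\bigl(\psi\bigl(f(\X^{(k)})\bigr)-\psi\bigl(f(\X)\bigr)\bigr)^2\Bigr]\leq L^2\sum_{k=1}^\infty\E\Bigl[\bigl(f(\X^{(k)})-f(\X)\bigr)^2\Bigr]<\infty,
\]
where the finiteness of the right-hand side is exactly condition (iii) of Proposition \ref{domD} applied to $F\in\dom(D)$. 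Hence the representative $\psi\circ f$ of $\psi(F)$ satisfies condition (iii) as well, and the equivalence (iii)$\Leftrightarrow$(i) of Proposition \ref{domD} delivers $\psi(F)\in\dom(D)$.

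I do not expect any genuine obstacle here: the whole argument rests on the characterization of $\dom(D)$ through the membership of the difference process in $L^2(\kappa\otimes\P_\F)$, and the Lipschitz inequality transfers this membership verbatim from $F$ to $\psi(F)$. The only point meriting a word of care is the order of the logic, namely confirming $\psi(F)\in L^2_\X$ first so that the criteria of Proposition \ref{domD} are legitimately applicable; this is handled at once by the linear growth bound on $\psi$. Everything else is a one-line domination.
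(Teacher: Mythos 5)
Your proof is correct and follows essentially the same route as the paper: both verify condition (iii) of Proposition \ref{domD} for the representative $\psi\circ f$ via the pointwise Lipschitz domination $\bigl(\psi(f(\X^{(k)}))-\psi(f(\X))\bigr)^2\leq L^2\bigl(f(\X^{(k)})-f(\X)\bigr)^2$. Your additional check that $\psi(F)\in L^2_\X$ (via the linear growth bound) is a small point of care the paper leaves implicit, but it does not change the argument.
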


\begin{proof}
Let $K\in (0,\infty)$ be a Lipschitz constant for $\psi$.
The claim  follows immediately from Proposition \ref{domD}, since $\psi\circ f$ is a representative of $\psi(F)$, whenever  $f$ is a representative of $F$, and 
\[\babs{\psi\bigl(f(\X^{(k)})\bigr)-\psi\bigl(f(\X)\bigr)}^2\leq K^2 \babs{f(\X^{(k)})-f(\X)}^2\]
for any $k\in\N$.
\begin{comment}
To prove (i), write again $F_k:=\E[F|\G_k]$, $k\in\N$. Then, for $k\in\N$ we have 
\begin{align*}
D_k\psi(F)&=\psi(F)-\E\bigl[\psi(F)\,\big|\,\G_k\bigr]=\psi(F)-\psi(F_k)+\E\bigl[\psi(F_k)-\psi(F)\,\big|\,\G_k\bigr]
\end{align*}
and, thus,
\begin{align*}
&|D_k\psi(F)|^2\leq2\babs{\psi(F)-\psi(F_k)}^2+2\Babs{\E\bigl[\psi(F_k)-\psi(F)\,\big|\,\G_k\bigr]}^2\\
&\leq 2K^2 \babs{D_kF}^2+2\E\bigl[ \babs{\psi(F_k)-\psi(F)}^2\,\big|\,\G_k\bigr]\leq 2K^2 \babs{D_kF}^2+2K^2\,\E\bigl[ \babs{D_kF}^2\,\big|\,\G_k\bigr],
\end{align*}
where we have used the conditional Jensen inequality. Hence, taking expectations we have
\begin{align*}
&\E\bigl[|D_k\psi(F)|^2\bigr]\leq 4 K^2\,\E\bigl[|D_kF|^2\bigr],\quad k\in\N,
\end{align*}
implying 
\begin{align*}
\sum_{k=1}^\infty\E\bigl[|D_k\psi(F)|^2\bigr]\leq 4 K^2 \sum_{k=1}^\infty\E\bigl[|D_kF|^2\bigr]<\infty
\end{align*}
by Proposition \ref{domD}. Hence, again by Proposition \ref{domD}, it follows that $\psi(F)\in\dom(D)$.
\end{comment}
\end{proof}

\subsection{Abstract bounds on normal approximation}\label{bounds}
In this subsection we provide three types of abstract Wasserstein and Berry-Esseen bounds on the normal approximation of certain $F\in L^2_\X$ that involve the operators $D,L^{-1}$ and $\Gamma_0$. These bounds are perfect counterparts to similar bounds in Gaussian, Poisson and Rademacher situations that we will refer to below. Our proofs combine the theory from Section \ref{malliavin} and Subsection \ref{preps} with the above facts from Stein's method of normal approximation. In particular, the approximate chain rule from Proposition \ref{chain} will play a key role in the proofs of two of our theorems. 

We begin with the Malliavin-Stein bounds which make use of the covariance formula in Proposition \ref{mallcovid}. The Wasserstein bound \eqref{mswass} looks very similar to the bound in \cite[Theorem 3.1]{PSTU} on the Poisson space and to the bound in \cite[Theorem 3.1]{Zheng} for Rademacher functionals. As we explain below, it is an improvement of the bound provided in Theorem 5.9 of \cite{DH} and of the very similar bound proved on page 629 of \cite{Duer}.

\begin{theorem}[Malliavin-Stein bounds]\label{msbound}
Suppose that $F\in\dom(D)$ is such that $\E[F]=0$. Then, we have the following bounds:
\begin{align}
 d_\W(F,Z)&\leq \sqrt{\frac{2}{\pi}}\E\Bbabs{1-\sum_{k=1}^\infty D_k(-L^{-1}F)D_kF}\notag\\
&\;+\sum_{k=1}^\infty\E\Bigl[\babs{D_k(-L^{-1}F)}\bigl(D_kF\bigr)^2\Bigr]\quad\text{and}\label{mswass}\\
d_\K(F,Z)&\leq \E\Bbabs{1-\sum_{k=1}^\infty D_k(-L^{-1}F)D_kF}+
2\sum_{k=1}^\infty\E\Babs{ D_k\babs{D_kL^{-1}F}  D_kF}\notag\\
&\;+2\E\Bbabs{\sum_{k=1}^\infty\E\bigl[\abs{D_kL^{-1}F}\,\big|\,\G_k\bigr]D_kF}.\label{mskol}
\end{align}
\end{theorem}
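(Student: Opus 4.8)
The plan is to run Stein's method for both metrics in parallel, converting the Stein expression into Malliavin operators via the covariance formula of Proposition \ref{mallcovid} and then isolating the leading term with the approximate chain rule of Proposition \ref{chain}. Fix a test function $h$, taking $h\in\Lip(1)$ for the Wasserstein bound and $h=h_z=\1_{(-\infty,z]}$ for the Kolmogorov bound, and let $\psi_h,\psi_h'$ be the Stein solution from \eqref{steinsol}. Since $\E[F]=0$, the Stein equation \eqref{steineq} gives $\E[h(F)]-\E[h(Z)]=\E[\psi_h'(F)]-\E[F\psi_h(F)]$. Because $\psi_h$ is Lipschitz by Lemma \ref{solprops}, Proposition \ref{lipprop} ensures $\psi_h(F)\in\dom(D)$, so Proposition \ref{mallcovid} applies with $G=\psi_h(F)$ and yields $\E[F\psi_h(F)]=\Cov(F,\psi_h(F))=\E\bigl[\sum_{k=1}^\infty D_k(-L^{-1}F)\,D_k\psi_h(F)\bigr]$, where $L^{-1}F\in\dom(L)\subseteq\dom(D)$ makes the inner product a genuine $L^1$-object.

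Next I would substitute the chain rule identity $D_k\psi_h(F)=\psi_h'(F)D_kF-S_k-\E[R_k\,|\,\G_k]$ of Proposition \ref{chain}. The crucial simplification is that the $\E[R_k\,|\,\G_k]$-contribution vanishes: as $\E[R_k\,|\,\G_k]$ is $\G_k$-measurable and $D_k$ annihilates $\G_k$-measurable variables, the symmetry relation $\E[Y D_k(-L^{-1}F)]=\E[(-L^{-1}F)D_kY]$ with $Y=\E[R_k\,|\,\G_k]$ makes each summand zero, exactly as anticipated in Remark \ref{chainrem}. This leaves the master identity
\[
\E[h(F)]-\E[h(Z)]=\E\Bigl[\psi_h'(F)\Bigl(1-\sum_{k=1}^\infty D_k(-L^{-1}F)D_kF\Bigr)\Bigr]+\E\Bigl[\sum_{k=1}^\infty D_k(-L^{-1}F)S_k\Bigr],
\]
the quadratic term being absolutely convergent by Cauchy--Schwarz in $L^2(\kappa\otimes\P_\F)$ together with $F,-L^{-1}F\in\dom(D)$. (If the right-hand side of \eqref{mswass} or \eqref{mskol} is infinite there is nothing to prove, so I may assume throughout that the relevant series converge.)

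For the Wasserstein bound the two pieces are then immediate: Lemma \ref{solprops}(i) gives $\norm{\psi_h'}_\infty\le\sqrt{2/\pi}$, bounding the first summand by $\sqrt{2/\pi}\,\E|1-\sum_k D_k(-L^{-1}F)D_kF|$, while $|S_k|\le(D_kF)^2$ from \eqref{RkSk} controls the second by $\sum_k\E[|D_k(-L^{-1}F)|(D_kF)^2]$, giving \eqref{mswass}. For the Kolmogorov bound, Lemma \ref{solprops}(ii) gives $\norm{\psi_z'}_\infty\le1$, which produces the first term of \eqref{mskol}; the remaining task is to bound $|\E[\sum_k D_k(-L^{-1}F)S_k]|$ by the second and third terms.

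This last estimate is where the main difficulty lies, since $\psi_z'$ is discontinuous and the crude bound $|S_k|\le 2|D_kF|$ from \eqref{Rk} is too lossy to recover the two-term structure. Instead I would use the exact representation \eqref{Sk}, $S_k=\int_0^{D_kF}\bigl(\psi_z'(F)-\psi_z'(\E[F|\G_k]+t)\bigr)dt$, and split $\psi_z'(x)=x\psi_z(x)+\bigl(\1_{(-\infty,z]}(x)-\Phi(z)\bigr)$, where $\Phi(z)=\P(Z\le z)$, into its monotone part $x\psi_z(x)$ (nondecreasing, with increments bounded by $1$, by Lemma \ref{solprops}(ii)(a),(b)) and its indicator part. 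The indicator increments are nonzero only on the event that $z$ lies between $\E[F|\G_k]$ and $F$, and it is precisely this localization that forces the conditional expectations to appear. Writing $|D_kL^{-1}F|=\E[|D_kL^{-1}F|\,|\,\G_k]+D_k|D_kL^{-1}F|$ and pairing these two pieces against the corresponding parts of $S_k$, the $D_k|D_kL^{-1}F|$ contributions should assemble into the second term of \eqref{mskol}, and the $\E[|D_kL^{-1}F|\,|\,\G_k]$ contributions into the third, where the sum must be kept inside the absolute value because only the aggregate concentration of $F$ near $z$ is controllable, not the individual summands. The delicate points will be rigorously justifying the interchange of summation and expectation under these only-conditionally-integrable increments and correctly tracking the factor $2$ through the split.
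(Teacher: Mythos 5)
Your proposal follows essentially the same route as the paper's own proof: the covariance identity of Proposition \ref{mallcovid} applied to $\psi_h(F)$, the approximate chain rule of Proposition \ref{chain} with the $\E[R_k\,|\,\G_k]$-term killed by $\G_k$-measurability, the bounds of Lemma \ref{solprops} for the Wasserstein case, and for the Kolmogorov case the exact representation \eqref{Sk} combined with the monotonicity of $x\mapsto x\psi_z(x)$ and of the indicator, followed by the decomposition $\abs{D_kL^{-1}F}=\E[\abs{D_kL^{-1}F}\,|\,\G_k]+D_k\abs{D_kL^{-1}F}$ producing the second and third terms of \eqref{mskol}. The details you flag as delicate (interchanging sum and expectation, and the factor $2$) are resolved in the paper exactly along the lines you indicate, via the symmetry of $D_k$, the boundedness of $F\psi_z(F)+\1_{\{F>z\}}$, and dominated convergence.
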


\begin{remark}\label{msboundrem}
\begin{enumerate}[(a)]
\item  If, additionally, $\Var(F)=\E[F^2]=1$, then by Proposition \ref{mallcovid} the respective first terms in the above bounds may be estimated by means of the inequality 
\[\E\Bbabs{1-\sum_{k=1}^\infty D_k(-L^{-1}F)D_kF}\leq \biggl(\Var\Bigl(\sum_{k=1}^\infty D_k(-L^{-1}F)D_kF\Bigr)\biggr)^{1/2}.\]
\item The proof will show that the second term in the bound may be replaced with the (smaller) term
\[\sup_{z\in\R}\Bbabs{\sum_{k=1}^\infty\E\biggl[ D_k\babs{D_kL^{-1}F}  D_kF \Bigl(\chi_z(F)-\chi_z\bigl(\E[F\,|\,\G_k]\bigr)  \Bigr)\biggr]},\]
where $\chi_z(x):=x\psi_z(x)+\1_{(z,\infty)}(x)$ is an increasing function of $x\in\R$ by Lemma \ref{solprops}. 
\end{enumerate}
\end{remark}

\begin{proof}[Proof of Theorem \ref{msbound}]
Let $h$ be either in $ \Lip(1)$ or $h=h_z=\1_{(-\infty,z]}$ for some $z\in\R$ and denote by $\psi=\psi_h$ the solution to the Stein equation \eqref{steineq} given by \eqref{steinsol}.
\begin{comment}
For the proof of the bound in (i) we may and will assume that 
\begin{align}\label{wassfin}
\sum_{k=1}^\infty\E\Bigl[\babs{D_k(-L^{-1}F)}\bigl(D_kF\bigr)^2\Bigr]<\infty.
\end{align}
\end{comment}
Since $\psi_h$ is Lipschitz by Lemma \ref{solprops}, it follows from Proposition \ref{lipprop} that $\psi_h(F)\in\dom(D)$. Thus, since $F$ is centered, by Proposition \ref{mallcovid} we have 
\begin{align}\label{msb0}
 \E\bigl[F\psi_h(F)\bigr]&=\Cov\bigl(F,\psi_h(F)\bigr)=\E\Bigl[\sum_{k=1}^\infty D_k(-L^{-1}F)D_k\psi_h(F)\Bigr]\notag\\
& =\sum_{k=1}^\infty\E\Bigl[ D_k(-L^{-1}F)D_k\psi_h(F)\Bigr].
\end{align}
Now, for $k\in\N$, by Proposition \ref{chain} we can write (with $R_k$ and $S_k$ defined therein)
\begin{align*}
 D_k\psi_h(F)=\psi_h'(F)D_kF-S_k-\E[R_k|\G_k], \quad k\in\N,
\end{align*}
implying 
\begin{align*}
 \E\bigl[F\psi_h(F)\bigr]&=\sum_{k=1}^\infty \E\Bigl[\psi_h'(F)D_k(-L^{-1}F)D_kF\Bigr]
 -\sum_{k=1}^\infty\E\Bigl[ D_k(-L^{-1}F)S_k\Bigr]\\
 &\;-\sum_{k=1}^\infty \E\Bigl[D_k(-L^{-1}F)\E[R_k|\G_k]\Bigr]=:T_1+T_2+T_3.
 \end{align*}
We still need to show that splitting up the infinite sum on the right hand side of \eqref{msb0} into the three terms $T_1, T_2$ and $T_3$ is justified. Since $\psi_h'$ is bounded by Lemma \ref{solprops}, $F\in\dom(D)$ and $L^{-1}F\in\dom(L)\subseteq\dom(D)$, it follows by an application of the Cauchy-Schwarz inequality that $|T_1|\leq\sum_{k=1}^\infty \E\babs{\psi_h'(F)D_k(-L^{-1}F)D_kF} <\infty$. 
That $|T_2|\leq\sum_{k=1}^\infty\E\babs{ D_k(-L^{-1}F)S_k}<\infty$ follows similarly from the bounds \eqref{RkSk} and \eqref{Rk} and again from $F\in\dom(D)$, $L^{-1}F\in\dom(D)$. 
Finally, again from $F\in\dom(D)$, $L^{-1}F\in\dom(D)$ and from the bounds \eqref{RkSk} and \eqref{Rk}, it follows by an application of the Cauchy-Schwarz and the conditional Jensen inequalities that 
\begin{align}\label{Rkfin}
|T_3|&\leq\sum_{k=1}^\infty \E\babs{D_k(-L^{-1}F)\E[R_k|\G_k]}\leq \biggl(\sum_{k=1}^\infty \E\babs{D_k(-L^{-1}F)}^2\biggr)^{1/2}  \biggl(\sum_{k=1}^\infty \babs{\E[R_k|\G_k]}^2\biggr)^{1/2}\notag\\
&\leq2 \biggl(\sum_{k=1}^\infty \E\babs{D_k(-L^{-1}F)}^2\biggr)^{1/2} \biggl(\sum_{k=1}^\infty \E\babs{D_kF}^2\biggr)^{1/2} <\infty.
\end{align}   
 Next, we observe that, in fact,  
 \begin{align*}
  T_3&=-\sum_{k=1}^\infty \E\Bigl[D_k(-L^{-1}F)\E[R_k|\G_k]\Bigr]
  =\sum_{k=1}^\infty \E\Bigl[\E\bigl[D_kL^{-1}F\,\big|\,\G_k\bigr]\E\bigl[R_k\big|\G_k\bigr]\Bigr]=0
 \end{align*}
 as $\E\bigl[D_kL^{-1}F\,\big|\,\G_k\bigr]=0$. Note that, here, we have used the fact that\\ $\E|D_k(L^{-1}F) \E[R_k|\G_k]|<\infty$ for each $k\in\N$, which immediately follows from \eqref{Rkfin}.
\smallskip\\

Now, we first continue with the proof of the Wasserstein bound \eqref{mswass}. If $h\in \Lip(1)$, then from \eqref{sfac} and \eqref{RkSk} we have that 
 \begin{align*}
  |T_2|&\leq \frac{\norm{\psi_h''}_\infty}{2}\sum_{k=1}^\infty \E\Bigl[\bigl(D_kF\bigr)^2\babs{D_k(-L^{-1}F)}\Bigr]\leq \sum_{k=1}^\infty \E\Bigl[\bigl(D_kF\bigr)^2\babs{D_k(-L^{-1}F)}\Bigr].
 \end{align*}
Now, noting that, by Fubini's theorem, since $F, L^{-1}F\in\dom(D)$ and since $\norm{\psi_h'}_\infty<\infty$,
\[T_1= \E\Bigl[\psi_h'(F)\sum_{k=1}^\infty D_k(-L^{-1}F)D_kF\Bigr]\]
 we obtain that 
\begin{align*}
 &\babs{\E[h(Z)]-\E[h(F)]}=\babs{\E\bigl[\psi_h'(F)-F\psi_h(F)\bigr]}\\
& \leq \E\Bbabs{\psi'(F)\Bigl(1-\sum_{k=1}^\infty D_k(-L^{-1}F)D_kF\Bigr)}+|T_2|\\
&\leq \sqrt{\frac{2}{\pi}}\E\Babs{1-\sum_{k=1}^\infty D_k(-L^{-1}F)D_kF}
+\sum_{k=1}^\infty\E\Bigl[\babs{D_k(-L^{-1}F)}\bigl(D_kF\bigr)^2\Bigr],
\end{align*}
where we have used \eqref{sfac} again for the final inequality. Since the right hand side of this bound does not depend on $h$, the bound \eqref{mswass} follows by taking the supremum over all $h\in\Lip(1)$.\smallskip\\

To continue the proof of the Berry-Esseen bound \eqref{mskol} first note that from the above reasoning with $\psi_z=\psi_h$ we already have established that
\begin{align}\label{msb1}
&\babs{\P(Z\leq z)-\P(F\leq z)}=\babs{\E\bigl[\psi_z'(F)-F\psi_z(F)\bigr]}\notag\\
& \leq \E\Babs{1-\sum_{k=1}^\infty D_k(-L^{-1}F)D_kF}+|T_2|,
\end{align} 
where we have applied the bound on $\norm{\psi_z'}_\infty$ in Lemma \ref{solprops} (ii), (c) this time. It thus remains to bound $|T_2|$. Using \eqref{Sk}, the fact that $\psi_z$ solves \eqref{steineq} pointwise and writing $F_k:=\E[F\,|\,\G_k]$, $k\in\N$, we have 
\begin{align*}%\label{T2}
|T_2|&=\Babs{\sum_{k=1}^\infty\E\Bigl[ D_kL^{-1}F S_k\Bigr]}=\BBabs{\sum_{k=1}^\infty\E\Biggl[ D_kL^{-1}F \int_0^{D_kF}\Bigl(\psi_z'(F)-\psi_z'\bigl(F_k+t\bigr)\Bigr)dt \Biggr] }\notag\\
%&\leq\BBabs{\sum_{k=1}^\infty\E\Biggl[ D_kL^{-1}F \int_0^{D_kF}\Bigl(F\psi_z(F)-(F_k+t)\psi_z\bigl(F_k+t\bigr)\Bigr)dt \Biggr]}\notag\\
%&\;+ \BBabs{\sum_{k=1}^\infty\E\Biggl[ D_kL^{-1}F \int_0^{D_kF}\Bigl(\1_{\{F\leq z\}}-\1_{\{F_k+t\leq z\}}\Bigr)dt }\notag\\
&\leq \sum_{k=1}^\infty\E\Biggl[ \Babs{D_kL^{-1}F}\Bbabs{ \int_0^{D_kF}\Bigl(F\psi_z(F)-(F_k+t)\psi_z\bigl(F_k+t\bigr)\Bigr)dt} \Biggr]\notag\\
&\;+ \sum_{k=1}^\infty\E\Biggl[\Babs{ D_kL^{-1}F} \Bbabs{\int_0^{D_kF}\Bigl(\1_{\{F\leq z\}}-\1_{\{F_k+t\leq z\}}\Bigr)dt } \Biggr]=:T_{2,1}+T_{2,2}.
\end{align*}
Since $x\mapsto x\psi_z(x)$ is increasing on $\R$ by Lemma \ref{solprops} (ii), (a), by distinguishing the cases $D_kF\geq0$ and $D_kF<0$, for $k\in\N$ we have 
\begin{align*}
0\leq I_{k,1}:=\int_0^{D_kF}\Bigl(F\psi_z(F)-(F_k+t)\psi_z\bigl(F_k+t\bigr)\Bigr)dt\leq D_kF \Bigl(F\psi_z(F)-F_k\psi_z\bigl(F_k\bigr)\Bigr).
\end{align*}
Similarly, since $x\mapsto \1_{\{x\leq z\}}$ is non-increasing on $\R$ we have 
\begin{align*}
0&\geq I_{k,2}:=\int_0^{D_kF}\Bigl(\1_{\{F\leq z\}}-\1_{\{F_k+t\leq z\}}\Bigr)dt \geq D_kF \Bigl(\1_{\{F\leq z\}}-\1_{\{F_k\leq z\}}\Bigr)\notag\\
&=-D_kF \Bigl(\1_{\{F>z\}}-\1_{\{F_k>z\}}\Bigr)
\end{align*}
and, thus, 
\begin{align*}
|I_{k,2}|&=-I_{k,2}\leq D_kF \Bigl(\1_{\{F>z\}}-\1_{\{F_k>z\}}\Bigr),\quad k\in\N.
\end{align*}
Hence, we obtain
\begin{align}\label{msb2}
|T_2|&\leq T_{2,1}+T_{2,2}\leq \sum_{k=1}^\infty\E\biggl[ \babs{D_kL^{-1}F} \Bigl(|I_{k,1}|+|I_{k,2}|\Bigr)\biggr]\notag\\
&\leq \sum_{k=1}^\infty\E\biggl[ \babs{D_kL^{-1}F}  D_kF \Bigl(F\psi_z(F)+\1_{\{F>z\}} -F_k\psi_z\bigl(F_k\bigr)- \1_{\{F_k>z\}}  \Bigr)\biggr]\notag\\
&=\sum_{k=1}^\infty\E\biggl[ D_k\babs{D_kL^{-1}F}  D_kF \Bigl(F\psi_z(F)+\1_{\{F>z\}} -F_k\psi_z\bigl(F_k\bigr)- \1_{\{F_k>z\}}  \Bigr)\biggr]\notag\\
&\;+\sum_{k=1}^\infty\E\biggl[\E\Bigl[\babs{D_kL^{-1}F}\,\Big|\,\G_k\Bigr]  (D_kF) D_k\Bigl(F\psi_z(F)+\1_{\{F>z\}}\Bigr)\biggr]\notag\\
&\leq 2\sum_{k=1}^\infty\E\Babs{ D_k\babs{D_kL^{-1}F}  D_kF}\notag\\
&\;+\BBabs{\sum_{k=1}^\infty\E\biggl[\E\Bigl[\babs{D_kL^{-1}F}\,\Big|\,\G_k\Bigr]  (D_kF) D_k\Bigl(F\psi_z(F)+\1_{\{F>z\}}\Bigr)\biggr] }
\end{align}
Note that for the equality we have used the fact that, for $k\in\N$, 
\[H_k:=\E\Bigl[\babs{D_kL^{-1}F}\,\Big|\,\G_k\Bigr] \Bigl(F_k\psi_z\bigl(F_k\bigr)+ \1_{\{F_k>z\}}  \Bigr)\]
is $\G_k$-measurable, that $\E[D_kF|\G_k]=0$ and that $H_k D_kF\in L^1(\P)$, which holds since $x\mapsto x\psi_z(x)$ is bounded by Lemma \ref{solprops} (ii). Moreover, for the final inequality we have applied the second bound in Lemma \ref{solprops} (ii), (b). Next, we write
\[U_k:=\E\Bigl[\babs{D_kL^{-1}F}\,\Big|\,\G_k\Bigr]  D_kF,\quad k\in\N,\]
and let $U:=(U_k)_{k\in\N}$. Then note that, for $k\in\N$, by the symmetry of $D_k$ and since $D_kU_k=U_k$
%\[D_k\biggl( \E\Bigl[\babs{D_kL^{-1}F}\,\Big|\,\G_k\Bigr]  (D_kF)\biggr) =\E\Bigl[\babs{D_kL^{-1}F}\,\Big|\,\G_k\Bigr]  (D_kF)\]
 we have 
\begin{align*}
\E\biggl[\E\Bigl[\babs{D_kL^{-1}F}\,\Big|\,\G_k\Bigr]  (D_kF) D_k\Bigl(F\psi_z(F)+\1_{\{F>z\}}\Bigr)\biggr]
=\E\Bigl[U_k\bigl(F\psi_z(F)+\1_{\{F>z\}}\bigr)\Bigr]
\end{align*} 
and, therefore, \eqref{msb2} implies that 
\begin{align*}
|T_2|&\leq2\sum_{k=1}^\infty\E\Babs{ D_k\babs{D_kL^{-1}F}  D_kF}+\BBabs{\sum_{k=1}^\infty\E\Bigl[U_k\bigl(F\psi_z(F)+\1_{\{F>z\}}\bigr)\Bigr] }.
\end{align*}
Now, since $F,L^{-1}F\in\dom(D)$, the Cauchy-Schwarz and the conditional Jensen inequalities imply that $U\in L^1(\kappa\otimes\P_\F)$ so that, in view of Lemma \ref{solprops} (ii), (b) and the dominated convergence theorem, we can interchange the infinite sum and the expectation in the second term on the right hand side to obtain that 
\begin{align}\label{msb3}
|T_2|&\leq 2\sum_{k=1}^\infty\E\Babs{ D_k\babs{D_kL^{-1}F}  D_kF}+\BBabs{\E\biggl[\bigl(F\psi_z(F)+\1_{\{F>z\}}\bigr) \sum_{k=1}^\infty U_k \biggr] }\notag\\
&\leq2\sum_{k=1}^\infty\E\Babs{ D_k\babs{D_kL^{-1}F}  D_kF}+2\E\Bbabs{\sum_{k=1}^\infty U_k}
 \end{align}
where we have applied the bound in Lemma \ref{solprops} (ii), (b) dor the final inequality. The bound \eqref{mskol} now follows from \eqref{msb1}-\eqref{msb3} by taking the supremum over $z\in\R$.
 \end{proof}

The next result is a bound on the normal approximation that makes use of the Clark-Ocone covariance identity in Proposition \ref{clarkcovid}. Such bounds are less common in the Malliavin-Stein theory, with the works \cite{PrTo} and \cite{PrTo2} being two notable exceptions. 

\begin{theorem}[Clark-Ocone bounds]\label{cobound}
 Suppose that $F\in\dom(D)$ is such that $\E[F]=0$. Then, we have the following bounds:
 \begin{align}
  d_\W(F,Z)&\leq\sqrt{\frac{2}{\pi}}\E\Bbabs{1- \sum_{k=1}^\infty \bigl(D_k\E[F\,|\,\F_k]\bigr)\, D_kF} \notag\\
&\;  + \sum_{k=1}^\infty\E\Bigl[\babs{D_k\E[F\,|\,\F_k]}\bigl(D_kF\bigr)^2\Bigr]\quad\text{and}\label{cowass}\\
   d_\K(F,Z)&\leq\E\Bbabs{1- \sum_{k=1}^\infty \bigl(D_k\E[F\,|\,\F_k]\bigr)\, D_kF} +
2\sum_{k=1}^\infty\E\Babs{ D_k\babs{ D_k\E[F\,|\,\F_k]}  D_kF}\notag\\
&\;+2\E\Bbabs{\sum_{k=1}^\infty \E\bigl[\abs{D_k\E[F\,|\,\F_k]}\,\big|\,\G_k\bigr]D_kF }. \label{cokol}
\end{align}
\end{theorem}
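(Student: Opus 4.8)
The plan is to mirror the proof of Theorem~\ref{msbound} almost verbatim, the only structural change being that the Malliavin covariance identity of Proposition~\ref{mallcovid} is replaced by the Clark-Ocone covariance identity of Proposition~\ref{clarkcovid}. Concretely, I would fix $h$ either in $\Lip(1)$ or of the form $h=h_z=\1_{(-\infty,z]}$, let $\psi=\psi_h$ be the associated Stein solution from \eqref{steinsol}, and note by Proposition~\ref{lipprop} that $\psi_h(F)\in\dom(D)$ since $\psi_h$ is Lipschitz. Because $F$ is centered, Proposition~\ref{clarkcovid} then gives
\[
\E\bigl[F\psi_h(F)\bigr]=\Cov\bigl(F,\psi_h(F)\bigr)=\sum_{k=1}^\infty\E\Bigl[\bigl(D_k\E[F\,|\,\F_k]\bigr)\,D_k\psi_h(F)\Bigr].
\]
Inserting the approximate chain rule $D_k\psi_h(F)=\psi_h'(F)D_kF-S_k-\E[R_k\,|\,\G_k]$ from Proposition~\ref{chain} and splitting the sum produces, exactly as before, three terms $T_1+T_2+T_3$, where now the weight $D_k(-L^{-1}F)$ is everywhere replaced by $D_k\E[F\,|\,\F_k]$.

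Two structural facts make this substitution legitimate and are the crux of the argument. First, the square-summability \eqref{clarkex}, namely $\sum_k\E[(D_k\E[F\,|\,\F_k])^2]\le\sum_k\E[(D_kF)^2]<\infty$, shows that the Clark-Ocone process lies in $L^2(\kappa\otimes\P_\F)$; this is the precise analogue of $D(-L^{-1}F)\in L^2(\kappa\otimes\P_\F)$ and, combined with $F\in\dom(D)$, the boundedness of $\psi_h'$, and the bounds \eqref{RkSk} and \eqref{Rk}, justifies every Cauchy--Schwarz estimate needed to split the sum into $T_1,T_2,T_3$. Second, I would show $T_3=0$: since $D_kG=G-\E[G\,|\,\G_k]$ for any $G$, one has $\E[D_k\E[F\,|\,\F_k]\,|\,\G_k]=0$ automatically, and as $\E[R_k\,|\,\G_k]$ is $\G_k$-measurable, each summand of $T_3$ vanishes after conditioning on $\G_k$. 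This is the exact analogue of the identity $\E[D_kL^{-1}F\,|\,\G_k]=0$ used in Theorem~\ref{msbound}.

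For the Wasserstein bound \eqref{cowass} I would then estimate $T_1$ using $\norm{\psi_h'}_\infty\le\sqrt{2/\pi}$ after pulling $\psi_h'(F)$ out of the sum by Fubini, and bound $|T_2|$ by $\tfrac12\norm{\psi_h''}_\infty\le1$ times $\sum_k\E[|D_k\E[F\,|\,\F_k]|(D_kF)^2]$ via \eqref{RkSk}; taking the supremum over $h\in\Lip(1)$ yields \eqref{cowass}. For the Berry--Esseen bound \eqref{cokol} I would specialize to $\psi_z$ and reproduce the more delicate analysis of $T_2$: using the representation \eqref{Sk} of $S_k$, the monotonicity of $x\mapsto x\psi_z(x)$ (Lemma~\ref{solprops}(ii)(a)) and of $x\mapsto\1_{\{x\le z\}}$ to control the two resulting integrals $I_{k,1},I_{k,2}$, the symmetry of $D_k$, and the identity $D_kU_k=U_k$ for the process $U_k:=\E\bigl[\abs{D_k\E[F\,|\,\F_k]}\,\big|\,\G_k\bigr]\,D_kF$ (which holds because the conditional expectation is $\G_k$-measurable and $\E[D_kF\,|\,\G_k]=0$). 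Applying the bounds $|x\psi_z(x)-y\psi_z(y)|\le1$ and $|x\psi_z(x)|\le1$ from Lemma~\ref{solprops}(ii)(b) and taking the supremum over $z$ then gives \eqref{cokol}.

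I expect no genuine obstacle beyond bookkeeping: once the two structural properties \eqref{clarkex} and $\E[D_k\E[F\,|\,\F_k]\,|\,\G_k]=0$ are in place, the proof is an almost line-by-line transcription of Theorem~\ref{msbound}. The only part demanding real care is the Kolmogorov estimate for $T_2$ --- the splitting into $I_{k,1}$ and $I_{k,2}$ and the verification that $D_kU_k=U_k$ --- but this transfers unchanged from the Malliavin--Stein argument.
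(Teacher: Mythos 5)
Your proposal is correct and follows essentially the same route as the paper's own proof: replace the Malliavin covariance identity by the Clark--Ocone identity of Proposition~\ref{clarkcovid}, insert the approximate chain rule from Proposition~\ref{chain}, kill the $T_3$ term via $\E\bigl[D_k\E[F\,|\,\F_k]\,\big|\,\G_k\bigr]=0$, and then transcribe the $T_1$, $T_2$ estimates (including the $I_{k,1}$, $I_{k,2}$ splitting and $D_kU_k=U_k$ in the Kolmogorov case) from Theorem~\ref{msbound}. The two structural facts you isolate, namely \eqref{clarkex} and the vanishing of the conditional expectation, are exactly the ingredients the paper relies on.
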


\begin{remark}\label{remcobound}
\begin{enumerate}[(a)]
\item  If, additionally, $\Var(F)=\E[F^2]=1$, then by Proposition \ref{clarkcovid} the respective first term in the bounds may be estimated by means of the inequality 
\[\E\Babs{1- \sum_{k=1}^\infty \bigl(D_k\E[F\,|\,\F_k]\bigr)\, D_kF}\leq\Bigl(\Var\bigl(\sum_{k=1}^\infty \bigl(D_k\E[F\,|\,\F_k]\bigr)\, D_kF\bigr)\Bigr)^{1/2}.\] 
\item The second term in the bound \eqref{cokol} may be replaced with the (smaller) term 
\[\sup_{z\in\R}\Bbabs{\sum_{k=1}^\infty\E\biggl[ D_k\babs{ D_k\E[F\,|\,\F_k]} D_kF \Bigl(\chi_z(F)-\chi_z\bigl(\E[F\,|\,\G_k]\bigr)  \Bigr)\biggr]},\]
where, again, $\chi_z(x):=x\psi_z(x)+\1_{(z,\infty)}(x)$. 
\item Note that, by an application of the H\"older and the conditional Jensen inequalities, for the second term appearing in the Wasserstein bound \eqref{cowass} one has that 
\begin{align*}
 \sum_{k=1}^\infty \E\Bigl[ \babs{\E[D_kF\,|\,\F_k]}\, (D_kF)^2\Bigr]&\leq \sum_{k=1}^\infty \E\Bigl[ \babs{\E[D_kF\,|\,\F_k]}^3\Bigr]^{1/3} 
 \E\Bigl[|D_kF|^3\Bigr]^{2/3} \leq \sum_{k=1}^\infty \E\babs{D_kF}^3.
\end{align*}
 Alternatively, this term may also be estimated as follows: Using the Cauchy-Schwarz inequality we have that 
\begin{align*}
 &\sum_{k=1}^\infty\E\Bigl[\babs{D_k\E[F\,|\,\F_k]}\bigl(D_kF\bigr)^2\Bigr]
 \leq \sum_{k=1}^\infty \biggl(\E\Bigl[\E\bigl[D_kF|\F_k\bigr]^2\Bigr]\biggr)^{1/2}
 \biggl(\E\Bigl[\bigl(D_kF\bigr)^4\Bigr]\biggr)^{1/2}\\
 &\leq \biggl( \sum_{k=1}^\infty \E\Bigl[\E\bigl[D_kF|\F_k\bigr]^2\Bigr]\biggr)^{1/2}
 \biggl(\sum_{k=1}^\infty\E\Bigl[\bigl(D_kF\bigr)^4\Bigr]\biggr)^{1/2}= \biggl(\sum_{k=1}^\infty\E\Bigl[\bigl(D_kF\bigr)^4\Bigr]\biggr)^{1/2}\,,
\end{align*}
where we have applied Proposition \ref{clarkcovid} for the final equality. Finally, we also present the following third bound for this term:
\begin{align}\label{cobrem}
&\sum_{k=1}^\infty\E\Bigl[\babs{D_k\E[F\,|\,\F_k]}\bigl(D_kF\bigr)^2\Bigr]\leq \biggl( \sum_{k=1}^\infty \E\Bigl[\E\bigl[D_kF\big|\F_k\bigr]^2\bigl(D_kF\bigr)^2 \Bigr]\biggr)^{1/2}
 \biggl(\sum_{k=1}^\infty\E\Bigl[\bigl(D_kF\bigr)^2\Bigr]\biggr)^{1/2}\notag\\
&= \biggl( \sum_{k=1}^\infty \Var\Bigl(\E\bigl[D_kF\big|\F_k\bigr]D_kF \Bigr)+ \sum_{k=1}^\infty \Var\Bigl(\E\bigl[D_kF\big|\F_k\bigr]\Bigr)^2\biggr)^{1/2}
 \biggl(\sum_{k=1}^\infty\E\Bigl[\bigl(D_kF\bigr)^2\Bigr]\biggr)^{1/2},%\notag
\end{align}
where we have again used the Cauchy-Schwarz inequality and the identity
\[ \E\Bigl[\E\bigl[D_kF\big|\F_k\bigr]D_kF \Bigr]=\E\Bigl[\E\bigl[D_kF\big|\F_k\bigr]^2 \Bigr] =\Var\Bigl(\E\bigl[D_kF\big|\F_k\bigr]\Bigr), \]
as $\E\bigl[\E[D_kF|\F_k] \bigr]=\E[D_kF]=0$.
\end{enumerate}
\end{remark}

\begin{proof}[Proof of Theorem \ref{cobound}]
Again, let $h$ be either in $ \Lip(1)$ or $h=h_z=\1_{(-\infty,z]}$ for some $z\in\R$ and denote by $\psi=\psi_h$ the solution to the Stein equation \eqref{steineq} given by \eqref{steinsol}. 
\begin{comment}
For the proof of (i) we may and will assume that 
\begin{align}\label{wassfin2}
 \sum_{k=1}^\infty\E\Bigl[\babs{D_k\E[F\,|\,\F_k]}\bigl(D_kF\bigr)^2\Bigr]<\infty.
\end{align}
\end{comment}
Since $\psi_h$ is Lipschitz-continuos by Lemma \ref{solprops}, by Proposition \ref{lipprop} we have $\psi_h(F)\in\dom(D)$ and since $F$ is centered, it follows from Propositions \ref{clarkcovid} and \ref{chain}  that 
\begin{align}
&\E[F\psi_h(F)]=\Cov(F,\psi_h(F))=  \sum_{k=1}^\infty \E\bigl[D_k\E[F\,|\,\F_k]\, D_k\psi_h(F)\bigr]\label{co1}\\
&=  \sum_{k=1}^\infty \E\bigl[D_k\E[F\,|\,\F_k]\, \psi_h'(F) D_kF\bigr]-  \sum_{k=1}^\infty \E\bigl[  \bigl(D_k\E[F\,|\,\F_k]\bigr)\, S_k\bigr]\notag\\
&\;- \sum_{k=1}^\infty  \E\bigl[  D_k\E[F\,|\,\F_k]\, \E[R_k\,|\,\G_k]\bigr]\notag\\
&= \sum_{k=1}^\infty \E\bigl[\psi_h'(F)D_k\E[F\,|\,\F_k]\,  D_kF\bigr]-  \sum_{k=1}^\infty \E\bigl[  \bigl(D_k\E[F\,|\,\F_k]\bigr)\, S_k\bigr]\,.\notag
\end{align}
For the last identity we have used that 
\begin{align*}
 \sum_{k=1}^\infty \E\Bigl[  D_k\E[F\,|\,\F_k]\, \E[R_k\,|\,\G_k]\Bigr]=\sum_{k=1}^\infty \E\Bigl[ \E\bigl[ D_k\E[F\,|\,\F_k]\,\big|\,\G_k\bigr]\, \E[R_k\,|\,\G_k]\Bigr]=  0\,,
\end{align*}
by the definition of $D_k$ and since $\E[R_k\,|\,\G_k]$ is $\G_k$-measurable. Justifying that splitting up the infinite sum in \eqref{co1} into the three individual sums is legitimate as well as proving that $D_k\E[F\,|\,\F_k]\, \E[R_k\,|\,\G_k]\in L^1(\P)$ for each $k\in\N$ can be done in a similar way as in the proof of Theorem \ref{msbound} and is therefore omitted.\smallskip\\

Let us now first assume that $h\in\Lip(1)$. Then, from \eqref{co1} and the bound \eqref{RkSk} in Proposition \ref{chain} on the $|S_k|$, $k\in\N$,  we have that 
\begin{align}
 &\babs{\E[h(F)]-\E[h(Z)]}=\babs{\E[\psi_h'(F)]-\E[F\psi_h(F)]}\notag\\
& \leq \Babs{\E\Bigl[\psi_h'(F)\Bigl(1-\sum_{k=1}^\infty \bigl(D_k\E[F\,|\,\F_k]\bigr)\, D_kF\Bigr)\Bigr]}
+\Bbabs{\sum_{k=1}^\infty\E\bigl[ D_k\E[F\,|\,\F_k] S_k\bigr]}\label{co3}\\
&\leq \sqrt{\frac{2}{\pi}}\E\Babs{1- \sum_{k=1}^\infty \bigl(D_k\E[F\,|\,\F_k]\bigr)\, D_kF} 
  + \sum_{k=1}^\infty\E\Bigl[\babs{D_k\E[F\,|\,\F_k]}\bigl(D_kF\bigr)^2\Bigr]\,.\notag
\end{align}
Note that we have again made use of the fact that, by \eqref{clarkex} and \eqref{sfac}, we have
\[\sum_{k=1}^\infty \E\bigl[\psi_h'(F)D_k\E[F\,|\,\F_k]\,  D_kF\bigr]=\E\biggl[ \psi_h'(F) \sum_{k=1}^\infty D_k\E[F\,|\,\F_k]\, D_kF\biggr].\]
Again, taking the supremum over all $h\in\Lip(1)$ finishes the proof of \eqref{cowass}.\smallskip\\

To continue the proof of the bound \eqref{cokol} assume from now on that $h=h_z=\1_{(-\infty,z]}$ for some $z\in\R$. In this case, \eqref{co3} and Lemma \ref{solprops} (ii), (c) yield %the above arguments yield 
\begin{align}\label{co5}
&\babs{\P(F\leq z)-\P(Z\leq z)}\leq \E\Bbabs{1- \sum_{k=1}^\infty \bigl(D_k\E[F\,|\,\F_k]\bigr)\, D_kF}+\Bbabs{\sum_{k=1}^\infty\E\bigl[ D_k\E[F\,|\,\F_k] S_k\bigr]}.
\end{align}
Now, similarly as for the term $|T_2|$ in the proof of Theorem \ref{msbound}, we can show that
\begin{align}\label{co4}
&\Bbabs{\sum_{k=1}^\infty\E\bigl[ D_k\E[F\,|\,\F_k] S_k\bigr]}\leq 2\sum_{k=1}^\infty\E\Babs{ D_k\babs{ D_k\E[F\,|\,\F_k]}  D_kF}\notag\\
&\;+\BBabs{\sum_{k=1}^\infty\E\biggl[\E\Bigl[\babs{ D_k\E[F\,|\,\F_k]}\,\Big|\,\G_k\Bigr]  (D_kF) D_k\Bigl(F\psi_z(F)+\1_{\{F>z\}}\Bigr)\biggr] }\notag\\
&= 2\sum_{k=1}^\infty\E\Babs{ D_k\babs{ D_k\E[F\,|\,\F_k]}  D_kF}\notag\\
&\;+\BBabs{\E\biggl[\Bigl(F\psi_z(F)+\1_{\{F>z\}}\Bigr)\sum_{k=1}^\infty \E\Bigl[\babs{ D_k\E[F\,|\,\F_k]}\,\Big|\,\G_k\Bigr]  (D_kF) \biggr] }\notag\\
&\leq2\sum_{k=1}^\infty\E\Babs{ D_k\babs{ D_k\E[F\,|\,\F_k]}  D_kF}+2\E\Bbabs{\sum_{k=1}^\infty \E\bigl[\abs{D_k\E[F\,|\,\F_k]}\,\big|\,\G_k\bigr]D_kF },
\end{align}
where we have used the symmetry of the $D_k$ and the assumption $F\in\dom(D)$ in order to interchange the expectation and the infinite for the equality. Then, the bound \eqref{cokol} follows from \eqref{co5} and \eqref{co4} by taking the supremum over $z\in\R$.
\end{proof}

We finally provide bounds that make use of the carr\'{e}-du-champ operator $\Gamma_0$ and do not involve the Malliavin operators $D$ and $\delta$. 

\begin{theorem}[Carr\'{e}-du-champ bounds]\label{cdcbound}
 Suppose that $F\in\dom(D)$ is such that $\E[F]=0$. Moreover, let $f,g\in L^2(\mu)$ be representatives of $F$ and $L^{-1}F$, respectively. Then, we have the following bounds:
\begin{align}
  d_\W(F,Z)&\leq \sqrt{\frac{2}{\pi}}\E\Babs{1-\Gamma_0(F,-L^{-1}F)}\notag\\
  &\hspace{2cm}  +\frac12\sum_{k=1}^\infty\E\Bigl[\babs{g(\X^{(k)})-g(\X)}\bigl(f(\X^{(k)})-f(\X)\bigr)^2\Bigr]\quad\text{and}\label{wasscdc}\\
	d_\K(F,Z)&\leq\E\Babs{1-\Gamma_0(F,-L^{-1}F)}\notag\\
	&\hspace{2cm}+\E\Bbabs{\E\biggl[\sum_{k=1}^\infty\Babs{g\bigl(\X^{(k)}\bigr)- g\bigl(\X\bigr) }\Bigl(f\bigl(\X^{(k)}\bigr)- f\bigl(\X\bigr)  \Bigr)\,\bigg|\,F\biggr]}\label{kolcdc}
 \end{align}

\end{theorem}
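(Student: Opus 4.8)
The plan is to run Stein's method and use the carr\'{e}-du-champ covariance formula of Proposition \ref{cdccovid} to rewrite $\E[F\psi_h(F)]$, expanding the increments of the Stein solution along the resampling differences $f(\X^{(k)})-f(\X)$. Concretely, fix $h\in\Lip(1)$ or $h=h_z=\1_{(-\infty,z]}$ and let $\psi=\psi_h$ solve \eqref{steineq}, so that $\E[h(F)]-\E[h(Z)]=\E[\psi_h'(F)]-\E[F\psi_h(F)]$. Since $\psi_h$ is Lipschitz (Lemma \ref{solprops}), Proposition \ref{lipprop} gives $\psi_h(F)\in\dom(D)$, and because $\E[F]=0$, Proposition \ref{cdccovid} (with $G=\psi_h(F)$) yields $\E[F\psi_h(F)]=\E[\Gamma_0(-L^{-1}F,\psi_h(F))]$. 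Writing $g$ for a representative of $L^{-1}F$ and unfolding \eqref{Gamma0}, I would insert the first-order expansion $\psi_h(f(\X^{(k)}))-\psi_h(f(\X))=\psi_h'(F)(f(\X^{(k)})-f(\X))+\rho_k$ into the resulting series and pull the $\X$-measurable factor $\psi_h'(F)$ through the conditional expectations, recognising the leading contribution as $\E[\psi_h'(F)\,\Gamma_0(F,-L^{-1}F)]$. This produces the exact identity
\[\E[\psi_h'(F)]-\E[F\psi_h(F)]=\E\bigl[\psi_h'(F)\bigl(1-\Gamma_0(F,-L^{-1}F)\bigr)\bigr]-\tfrac12\sum_{k=1}^\infty\E\bigl[(g(\X)-g(\X^{(k)}))\rho_k\bigr],\]
where splitting the series is justified exactly as in the proof of Theorem \ref{msbound}, using $F,L^{-1}F\in\dom(D)$.

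The first term is at most $\norm{\psi_h'}_\infty\,\E|1-\Gamma_0(F,-L^{-1}F)|$, which gives the leading summand of both bounds once $\norm{\psi_h'}_\infty\leq\sqrt{2/\pi}$ (for $h\in\Lip(1)$) respectively $\norm{\psi_h'}_\infty\leq1$ (for $h=h_z$) is inserted from Lemma \ref{solprops}. In the Wasserstein case the remainder is immediate: Taylor's formula together with $\norm{\psi_h''}_\infty\leq2$ gives $|\rho_k|\leq(f(\X^{(k)})-f(\X))^2$, so the remainder is bounded by $\tfrac12\sum_k\E[|g(\X^{(k)})-g(\X)|(f(\X^{(k)})-f(\X))^2]$, which is exactly \eqref{wasscdc}.

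The main obstacle is the Kolmogorov remainder, where $\psi_z$ is merely Lipschitz. Here I would mirror the monotonicity analysis of the term $|T_2|$ in the proof of Theorem \ref{msbound}: writing $\rho_k=\int_0^{f(\X^{(k)})-f(\X)}\bigl(\psi_z'(F+s)-\psi_z'(F)\bigr)\,ds$ and substituting $\psi_z'(x)=x\psi_z(x)+\1_{(-\infty,z]}(x)-\P(Z\leq z)$, the fact that $x\mapsto x\psi_z(x)$ is increasing (Lemma \ref{solprops}(ii)(a)) and that $x\mapsto\1_{\{x\leq z\}}$ is decreasing gives the sign-definite bound $|\rho_k|\leq(f(\X^{(k)})-f(\X))\bigl(\chi_z(f(\X^{(k)}))-\chi_z(F)\bigr)$, with $\chi_z(x)=x\psi_z(x)+\1_{(z,\infty)}(x)$ increasing. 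Bounding $|g(\X)-g(\X^{(k)})|$ and using the exchangeability of $(\X,\X^{(k)})$ (which leaves $|g(\X^{(k)})-g(\X)|$ invariant and flips the sign of $f(\X^{(k)})-f(\X)$) to turn the $\chi_z(f(\X^{(k)}))$ term into a $\chi_z(F)$ term, the remainder collapses to $-\E[\chi_z(F)\,W]$, where $W:=\E\bigl[\sum_k|g(\X^{(k)})-g(\X)|(f(\X^{(k)})-f(\X))\bigm|F\bigr]$.

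The delicate point — and the crux of the whole argument — is to obtain the clean constant $1$ rather than a lossy factor coming from a crude $\norm{\chi_z}_\infty$ estimate. This is where the conditioning on $F$ pays off: exchangeability also forces $\E[W]=0$, so $\chi_z(F)$ may be recentred to $\chi_z(F)-\P(Z\leq z)$, and since $\chi_z$ is increasing with limits $\P(Z\leq z)\pm1$ one has $|\chi_z(F)-\P(Z\leq z)|\leq1$; hence $|{-\E[\chi_z(F)W]}|=|\E[(\chi_z(F)-\P(Z\leq z))W]|\leq\E|W|$, which is precisely the second summand of \eqref{kolcdc}. Everything else — unconditional convergence of the series, the interchange of sums and expectations, and the vanishing of the conditional remainder terms — is routine and can be handled exactly as in the proofs of Theorems \ref{msbound} and \ref{cobound}.
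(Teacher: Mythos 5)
Your proposal is correct, and up to the Kolmogorov remainder it coincides with the paper's own proof: the same application of Proposition \ref{cdccovid} to $G=\psi_h(F)$ (legitimized by Lemma \ref{solprops} and Proposition \ref{lipprop}), the same exact decomposition of $\E[\psi_h'(F)]-\E[F\psi_h(F)]$ into $\E\bigl[\psi_h'(F)\bigl(1-\Gamma_0(F,-L^{-1}F)\bigr)\bigr]$ plus a remainder built from the Taylor errors $\rho_k$, and the same estimate $|\rho_k|\leq\bigl(f(\X^{(k)})-f(\X)\bigr)^2$ for \eqref{wasscdc}. Where you genuinely diverge is the final estimate for \eqref{kolcdc}. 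The paper never takes absolute values inside the $k$-th summand: writing $\Delta_k=g(\X^{(k)})-g(\X)$, it splits the integral remainder into its $x\psi_z(x)$-part and its indicator part, derives \emph{two-sided signed} bounds $\pm\E[F\psi_z(F)W]$ and $\pm\E[\1_{\{F>z\}}W]$ for the two resulting sums (via the decomposition over $\{\Delta_k>0\}$, $\{\Delta_k<0\}$, $\{\Delta_k=0\}$ and exchangeability of $(\X,\X^{(k)})$), and bounds each by $\E|W|$ using only $|x\psi_z(x)|\leq1$; the prefactor $\tfrac12$ coming from $\Gamma_0$ then produces the constant $1$. You instead take absolute values at the outset and control $|\rho_k|$ by the single increasing function $\chi_z$; this costs a factor $2$ (your intermediate bound $-\E[\chi_z(F)W]$ equals $|\E[F\psi_z(F)W]|+|\E[\1_{\{F>z\}}W]|$, exactly twice the paper's intermediate bound), which you then recover by the recentering trick: $\E[W]=0$ by exchangeability, hence $-\E[\chi_z(F)W]=-\E[(\chi_z(F)-\P(Z\leq z))W]\leq\E|W|$ provided $\sup_x|\chi_z(x)-\P(Z\leq z)|\leq1$. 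Both routes land on exactly \eqref{kolcdc}; yours is shorter and conceptually cleaner, while the paper's only invokes constants that are explicitly listed in Lemma \ref{solprops}.

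That last inequality is the one point you must still justify, and it is the crux of your constant: Lemma \ref{solprops} as stated gives $|x\psi_z(x)|\leq1$ and $|x\psi_z(x)-y\psi_z(y)|\leq1$, which only yield $|\chi_z(x)-\P(Z\leq z)|\leq2$; with that, your argument produces the constant $2$ in front of the second term of \eqref{kolcdc} instead of $1$. What you need are the limits
\begin{equation*}
\lim_{x\to-\infty}x\psi_z(x)=\P(Z\leq z)-1\qquad\text{and}\qquad\lim_{x\to+\infty}x\psi_z(x)=\P(Z\leq z),
\end{equation*}
which, combined with the monotonicity of $x\psi_z(x)$ from Lemma \ref{solprops} (ii), (a) and the unit jump of $\1_{(z,\infty)}$, give $\P(Z\leq z)-1\leq\chi_z(x)\leq\P(Z\leq z)+1$ for all $x\in\R$. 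These limits are standard and follow in two lines from \eqref{steinsol}: writing $\Phi$ for the standard normal distribution function, one has $\psi_z(x)=\sqrt{2\pi}e^{x^2/2}\Phi(x)(1-\Phi(z))$ for $x\leq z$ and $\psi_z(x)=\sqrt{2\pi}e^{x^2/2}\Phi(z)(1-\Phi(x))$ for $x>z$, and the Mills-ratio asymptotics $1-\Phi(x)\sim e^{-x^2/2}/(x\sqrt{2\pi})$ as $x\to\infty$ (and the analogous statement for $\Phi(x)$ as $x\to-\infty$) give the claim. So this is a small, fillable gap in an otherwise complete and correct argument, not a flaw in its architecture.
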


\begin{remark}\label{cdcboundrem}
\begin{enumerate}[(a)]
\item  If, additionally, $\Var(F)=\E[F^2]=1$, then by Proposition \ref{cdccovid} the respective first term appearing in the bounds may be estimated by means of the inequality 
\[\E\Babs{1-\Gamma_0(F,-L^{-1}F)}\leq\Bigl(\Var\bigl(\Gamma_0(F,-L^{-1}F)\bigr)\Bigr)^{1/2}.\]
\item In the second term of the bound \eqref{kolcdc}, conditioning with respect to $F$ may be replaced by conditioning with respect to $\X$, which can only make this quantity larger. Moreover, note that 
\[\E\biggl[\sum_{k=1}^\infty\Babs{g\bigl(\X^{(k)}\bigr)- g\bigl(\X\bigr) }\Bigl(f\bigl(\X^{(k)}\bigr)- f\bigl(\X\bigr)  \Bigr)\bigg|F\biggr]
=\sum_{k=1}^\infty \E\biggl[\Babs{g\bigl(\X^{(k)}\bigr)- g\bigl(\X\bigr) }\Bigl(f\bigl(\X^{(k)}\bigr)- f\bigl(\X\bigr)  \Bigr)\bigg|F\biggr]
\]
is a well-defined random variable in $L^1(\P)$ since $F,L^{-1}F\in\dom(D)$. This follows in the same way as the well-definedness of $\Gamma_0(F,G)$ for $F,G\in\dom(D)$.  
\item By an application of the Cauchy-Schwarz inequality, for the second term in \eqref{wasscdc} one has 
\begin{align*}
&\sum_{k=1}^\infty\E\Bigl[\babs{g(\X^{(k)})-g(\X)}\bigl(f(\X^{(k)})-f(\X)\bigr)^2\Bigr]\\
&\leq \biggl(\sum_{k=1}^\infty\E\Bigl[\bigl(g(\X^{(k)})-g(\X)\bigr)^2\Bigr]\biggr)^{1/2} \biggl(\sum_{k=1}^\infty\E\Bigl[\bigl(f(\X^{(k)})-f(\X)\bigr)^4\Bigr]\biggr)^{1/2}\\
&=\biggl(\E\Bigl[\Gamma_0\bigl(L^{-1}F,L^{-1}F)\bigr)\Bigr]\biggr)^{1/2} \biggl(\sum_{k=1}^\infty\E\Bigl[\bigl(f(\X^{(k)})-f(\X)\bigr)^4\Bigr]\biggr)^{1/2}\\
&=\Bigl(-\E\bigl[FL^{-1}F\bigr]\Bigr)^{1/2}\biggl(\sum_{k=1}^\infty\E\Bigl[\bigl(f(\X^{(k)})-f(\X)\bigr)^4\Bigr]\biggr)^{1/2},
\end{align*}
where we have used that $g(\X)=L^{-1}F$. 
\item If $F\in L^4_\X$ is in fact $\F_n$-measurable for some $n\in\N$, then it can be shown as in \cite[Lemma 2.6]{D23} that 
\begin{align}\label{fundid}
\sum_{k=1}^\infty\E\Bigl[\bigl(f(\X^{(k)})-f(\X)\bigr)^4\Bigr]&=12\E\bigl[F^2\Gamma_0(F,F)\bigr]+4\E\bigl[F^3 LF\bigr]
\end{align}
so that for the second term in the bound \eqref{wasscdc} we further obtain
\begin{align*}
&\frac12\sum_{k=1}^\infty\E\Bigl[\babs{g(\X^{(k)})-g(\X)}\bigl(f(\X^{(k)})-f(\X)\bigr)^2\Bigr]\\
&\leq\Bigl(-\E\bigl[FL^{-1}F\bigr]\Bigr)^{1/2}\Bigl( 3\E\bigl[F^2\Gamma_0(F,F)\bigr]+\E\bigl[F^3 LF\bigr] \Bigr)^{1/2}.
\end{align*}
\begin{comment}
\item If $F\in L^4_\X\cap\dom(D)$ and $\E[F^2\Gamma_0(F,F)]<\infty$, then by Proposition \ref{magic} one has in fact that 
\begin{align}\label{fundid}
\sum_{k=1}^\infty\E\Bigl[\bigl(f(\X^{(k)})-f(\X)\bigr)^4\Bigr]&=12\E\bigl[F^2\Gamma_0(F,F)\bigr]-4\E\bigl[\Gamma_0(F^3,F)\bigr]<\infty
\end{align}
so that for the second term in the bound \eqref{wasscdc} we further obtain the following bound that no longer explicitly contains the couplings $(\X,\X^{(k)})$, $k\in\N$:
\begin{align*}
&\frac12\sum_{k=1}^\infty\E\Bigl[\babs{g(\X^{(k)})-g(\X)}\bigl(f(\X^{(k)})-f(\X)\bigr)^2\Bigr]\\
&\leq\Bigl(-\E\bigl[FL^{-1}F\bigr]\Bigr)^{1/2}\Bigl( 3\E\bigl[F^2\Gamma_0(F,F)\bigr]-\E\bigl[\Gamma_0(F^3,F)\bigr]\Bigr)^{1/2}.
\end{align*}
\end{comment}
%In general, we can show that \eqref{fundid} holds whenever $F\in L^6_\X\cap\dom(L)$.
\end{enumerate}
\end{remark}

\begin{proof}[Proof of Theorem \ref{cdcbound}]
Again, let $h$ be either in $ \Lip(1)$ or $h=h_z=\1_{(-\infty,z]}$ for some $z\in\R$ and denote by $\psi=\psi_h$ the solution to the Stein equation \eqref{steineq} given by \eqref{steinsol}.
Since $\psi_h$ is Lipschitz-continuos by Lemma \ref{solprops}, by Proposition \ref{lipprop} we have $\psi_h(F)\in\dom(D)$ and since $F$ is centered, it follows from Proposition \ref{cdccovid} that 
\begin{align}\label{cdcb1}
 \E\bigl[F\psi_h(F)\bigr]&=\Cov\bigl(F,\psi_h(F)\bigr)
 =\E\Bigl[\Gamma_0\bigl(-L^{-1}F,\psi_h(F)\bigr)\Bigr]\notag\\
& =\E\Bigl[\psi_h'(F)\Gamma_0\bigl(-L^{-1}F,F\bigr)\Bigr]+\E\bigl[R_{\psi_h}(F,-L^{-1}F)\bigr],
\end{align}
where 
\begin{align}\label{Rpsi}
&R_{\psi_h}(F,-L^{-1}F):=\Gamma_0\bigl(-L^{-1}F,\psi_h(F)\bigr)-\psi_h'(F)\Gamma_0\bigl(-L^{-1}F,F\bigr)\notag\\
&=\frac12\sum_{k=1}^\infty\E\Bigl[\Bigl(\psi_h\bigl(f(\X^{(k)})\bigr)-\psi_h\bigl(f(\X)\bigr)-\psi_h'(F)\bigl(f(\X^{(k)})-f(\X)\bigr)\Bigr)\notag\\
&\hspace{6cm}\cdot\Bigl(g(\X)-g(\X^{(k)})\Bigr)\,\Bigl|\,\X\Bigr].
\end{align}
From \eqref{cdcb1} we thus have
\begin{align}\label{cdcb2}
 &\E[h(Z)]-\E[h(F)]=\E\bigl[\psi_h'(F)-F\psi_h(F)\bigr]\notag\\
&= \E\Bigl[\psi_h'(F)\Bigl(1-\Gamma_0\bigl(-L^{-1}F,F\bigr)\Bigr)\Bigr]+\E\bigl[R_{\psi_h}(F,-L^{-1}F)\bigr]=:T_1+T_2.
\end{align}
We have 
\begin{align}\label{cdcb3}
|T_1|&\leq\norm{\psi_h'}_\infty\,\E\Babs{1-\Gamma_0(F,-L^{-1}F)}
\end{align}
yielding the respective first term in the bounds \eqref{wasscdc} and \eqref{kolcdc} by virtue of Lemma \ref{solprops}. To bound $|T_2|$ we argue separately for the Wasserstein and Kolmogorov bounds. 

Suppose first that $h\in \Lip(1)$. Then, denoting by 
\begin{align*}
r_{\psi_h}(x,y):=\psi_h(x+y)-\psi_h(x)-\psi_h'(x)y=\int_0^y(y-s)\psi_h''(x+s)ds
\end{align*}
the error term in the Taylor expansion of $\psi_h$, by \eqref{sfac} we have 
\[|r_{\psi_h}(x,y)|\leq \frac{\fnorm{\psi_h''}}{2}y^2\leq y^2\,,\quad x,y\in\R,\]
and, hence, from \eqref{Rpsi} we obtain
\begin{align}\label{cdcb4}
\babs{R_{\psi_h}(F,-L^{-1}F) }&\leq\frac12\sum_{k=1}^\infty\E\Bigl[\babs{g(\X^{(k)})-g(\X)}\bigl(f(\X^{(k)})-f(\X)\bigr)^2\,\Bigl|\,\X\Bigr].
\end{align}
Therefore, \eqref{cdcb2}, \eqref{cdcb3}, \eqref{cdcb4} and \eqref{sfac} imply that
\begin{align*}
 &\babs{\E[h(Z)]-\E[h(F)]}=\babs{\E\bigl[\psi_h'(F)-F\psi_h(F)\bigr]}\leq |T_1|+|T_2|\\
&\leq\norm{\psi_h'}_\infty\,\E\Babs{1-\Gamma_0(F,-L^{-1}F)}+ \E\babs{R_{\psi_h}(F,-L^{-1}F)}  \\
%&\;+\frac12\sum_{k=1}^\infty\E\biggl[\E\Bigl[\babs{g(\X^{(k)})-g(\X)}\bigl(f(\X^{(k)})-f(\X)\bigr)^2\,\Bigl|\,\X\Bigr]\biggr]\\
&\leq\sqrt{\frac{2}{\pi}}\E\Babs{1-\Gamma_0\bigl(-L^{-1}F,F\bigr)} +\frac12\sum_{k=1}^\infty\E\Bigl[\babs{g(\X^{(k)})-g(\X)}\bigl(f(\X^{(k)})-f(\X)\bigr)^2\Bigr]
\end{align*}
and taking the supremum over all $h\in\Lip(1)$ finishes the proof of \eqref{wasscdc}. \smallskip\\

Suppose now that $h=h_z=\1_{(-\infty,z]}$ for some $z\in\R$. Then, writing 
\[G:=g(\X)=L^{-1}F,\quad F_k:=f(\X^{(k)})\quad\text{and}\quad G_k:=g(\X^{(k)}),\quad k\in\N,\]
since $\psi_z=\psi_h$ is Lipschitz-continuous by Lemma \ref{solprops} (ii), from \eqref{Rpsi} we have  
\begin{align}\label{cdcb5}
T_2&=\frac12\sum_{k=1}^\infty\E\Bigl[\bigl(\psi_z(F_k)-\psi_z(F)-\psi_z'(F)(F_k-F)\bigr)\bigl(G_k-G\bigr)\Bigr]\notag \\
&=\frac12 \sum_{k=1}^\infty\E\biggl[\bigl(G_k-G\bigr)\int_{0}^{F_k-F}\bigl(\psi_z'(F+t)-\psi_z'(F)\bigr)dt\biggr].
\end{align}
Now, for each $k\in\N$, since $\psi_z$ solves \eqref{steineq} pointwise, we have 
\begin{align*}
T_{2,k}&:=\E\biggl[\bigl(G_k-G\bigr)\int_{0}^{F_k-F}\bigl(\psi_z'(F_k+t)-\psi_z'(F)\bigr)dt\biggr]\notag \\
&=\E\biggl[\bigl(G_k-G\bigr)\int_{0}^{F_k-F}\bigl((F+t)\psi_z(F+t)-F\psi_z(F)\bigr)dt\biggr]\notag\\
&\;+\E\biggl[\bigl(G_k-G\bigr)\int_{0}^{F_k-F}\bigl(\1_{\{F+t\leq z\}}-\1_{\{F\leq z\}}\bigr)dt\biggr]=:T_{3,k}+T_{4,k}.
\end{align*} 
Since $x\mapsto x\psi_z(x)$ is increasing by Lemma \ref{solprops} (ii), (a) andsince  $x\mapsto \1_{\{x\leq z\}}$ is non-increasing, we have 
\begin{align}
0&\leq I_{k,1}:=\int_{0}^{F_k-F}\bigl((F+t)\psi_z(F+t)-F\psi_z(F)\bigr)dt\notag\\
&\leq (F_k-F)\bigl(F_k\psi_z(F_k)-F\psi_z(F)\bigr)\quad\text{and} \label{Ik1}\\
0&\geq I_{k,2}:=\int_{0}^{F_k-F}\bigl(\1_{\{F+t\leq z\}}-\1_{\{F\leq z\}}\bigr)dt\geq  (F_k-F)\bigl(\1_{\{F_k\leq z\}}-\1_{\{F\leq z\}}\bigr)\notag\\
&=-(F-F_k)\bigl(\1_{\{F> z\}}-\1_{\{F_k> z\}}\bigr).\label{Ik2}
\end{align}
Next, writing $\Delta_k:=G_k-G=g(\X^{(k)})-g(\X)$ and using that $(\X,\X^{(k)})$ has the same distribution as $(\X^{(k)},\X )$, $k\in\N$, from \eqref{Ik1} we obtain
\begin{align*}
T_{3,k}&=\E\bigl[\Delta_k I_{k,1}\bigr]\leq \E\bigl[\Delta_k\1_{\{\Delta_k>0\}} I_{k,1}\bigr]+\frac12\E\bigl[\Delta_k\1_{\{\Delta_k=0\}} I_{k,1}\bigr]\\
&=\E\bigl[|\Delta_k|\1_{\{\Delta_k>0\}} I_{k,1}\bigr]+\frac12\E\bigl[|\Delta_k|\1_{\{\Delta_k=0\}} I_{k,1}\bigr]\\
&\leq\E\Bigl[|\Delta_k|\1_{\{\Delta_k>0\}} (F_k-F)\bigl(F_k\psi_z(F_k)-F\psi_z(F)\bigr)\Bigr]\\
&\;+\frac12\E\Bigl[|\Delta_k|\1_{\{\Delta_k=0\}}(F_k-F)\bigl(F_k\psi_z(F_k)-F\psi_z(F)\bigr)\Bigr]\\
&=-\E\Bigl[|\Delta_k|\bigl(\1_{\{\Delta_k>0\}}+\1_{\{\Delta_k<0\}}+\1_{\{\Delta_k=0\}}\bigr) (F_k-F)F\psi_z(F)\Bigr]\\
&=-\E\Bigl[F\psi_z(F)\E\bigl[|G_k-G|(F_k-F)\,\big|\, F\bigr]\Bigr].
\end{align*}
Since we can obtain analogously that 
\[T_{3,k}\geq\E\Bigl[F\psi_z(F)\E\bigl[|G_k-G|(F_k-F)\,\big|\, F\bigr]\Bigr],\quad k\in\N,\]
using the first bound in Lemma \ref{solprops} (ii), (b), we conclude that 
\begin{align*}
\Bbabs{\sum_{k=1}^\infty T_{3,k}}&\leq \Biggl|\E\Biggl[F\psi_z(F)\E\biggl[\sum_{k=1}^\infty|G_k-G|(F_k-F)\,\bigg|\, F\biggr]\Biggr]\Biggr|\\
&\leq \E\Bbabs{\sum_{k=1}^\infty\E\Bigl[|G_k-G|(F_k-F)\,\Big|\, F\Bigr] }.
\end{align*}
Similarly, this time using \eqref{Ik2} in place of \eqref{Ik1}, we can prove that 
 \begin{align*}
\Bbabs{\sum_{k=1}^\infty T_{4,k}}&\leq \E\Bbabs{\sum_{k=1}^\infty\E\Bigl[|G_k-G|(F_k-F)\,\Big|\, F\Bigr] }
\end{align*}
so that we obtain 
\begin{align}\label{cdcb6}
|T_2|&=\frac12\Bbabs{\sum_{k=1}^\infty T_{2,k}}=\frac12\Bbabs{\sum_{k=1}^\infty(T_{3,k}+T_{4,k})}\leq\E\Bbabs{\sum_{k=1}^\infty\E\Bigl[|G_k-G|(F_k-F)\,\Big|\, F\Bigr] }.
\end{align}
Therefore, from \eqref{cdcb2}, \eqref{cdcb3} and \eqref{cdcb6} we can conclude that 
\begin{align*}
&\babs{\P(Z\leq z)-\P(F\leq z)}=\babs{\E\bigl[\psi_z'(F)-F\psi_z(F)\bigr]}\leq|T_1|+|T_2|\notag\\
&\leq \E\Babs{1-\Gamma_0\bigl(-L^{-1}F,F\bigr)}+ \E\Bbabs{\sum_{k=1}^\infty\E\Bigl[|G_k-G|(F_k-F)\,\Big|\, F\Bigr] }
\end{align*}
and the Berry-Esseen bound \eqref{kolcdc} follows from taking the supremum over all $z\in\R$.
\end{proof}

\begin{remark}\label{boundrem}
\begin{enumerate}[(a)]
\item The Wasserstein bounds in Theorems \ref{msbound} and \ref{cdcbound} are similar to corresponding bounds for Poisson functionals \cite{PSTU, DP18a} and functionals of Rademacher sequences \cite{Zheng, DK19}. Moreover,  in the situation of functionals of finitely many independent random variables $X_1,\dotsc,X_n$, the bound \eqref{wasscdc} in Theorem \ref{cdcbound} is a special case of the bound in \cite[Theorem 2.8]{D23}. 
\item As indicated above, the Wasserstein bound \eqref{mswass} in Theorem \ref{msbound} slightly improves on the bound in \cite[Theorem 5.9]{DH} since, thanks to our new chain rule formula in Proposition \ref{chain}, the second term of the bound in Theorem \ref{msbound} is generally smaller than the corresponding term in that paper. Moreover, our bound is very similar to (but still smaller than) the Wasserstein bound stated at the bottom of page 629 in \cite{Duer}. 
 We further remark that in the proof given in \cite{DH} no argument for $\psi_h(F)\in\dom(D)$ (as in Proposition \ref{lipprop} (i) above) has been provided.
%\item For functionals of finitely many independent random variables $X_1,\dotsc,X_n$, the bound \eqref{wasscdc} in Theorem \ref{cdcbound} is a special case of the bound in \cite[Theorem 2.8]{D23}. 
\item The Clark-Ocone bound \eqref{wasscdc} on the Wasserstein distance in Theorem \ref{cobound} compares to the bound for Rademacher functionals given in \cite[Theorem 3.2]{PrTo} (see Remark \ref{raderem} (c) below). 
\item The new Berry-Esseen bound \eqref{kolcdc} compares to similar carr\'{e}-du-champ bounds in the Poisson \cite{DP18a} and Rademacher \cite{DK19,ERTZ} settings, although its terms are simpler and, apart from $F\in\dom(D)$, in contrast to these references, we do not have to make any additional technical assumptions. It formally resembles the recent Berry-Esseen bound by Shao and Zhang \cite{ShaZh} for exchangeable pairs satisfying (at least approximately) Stein's linear regression property \cite{St86}. Note that, in the recent work \cite{D23} we have pointed out that, for functionals $F$ of independent random variables, this linear regression property is equivalent to the condition that $F\in\mathcal{H}_p$ for some $p\in\N$. In this sense our Berry-Esseen bound \eqref{kolcdc} is in fact more general than the bound in \cite{ShaZh}, since no such homogeneity assumption has been made here. Moreover, the bound in \cite{ShaZh} does not seem applicable to products of infinitely many probability spaces.  The bound \eqref{wasscdc} on the Wasserstein distance may be considered a more general, infinite version of bounds provided in the recent articles \cite{DP17,DP19, D23}, which have been derived by the technique of exchangeable pairs and which have been successfully applied to several relevant examples that had not been amenable by previous methods. 
%Our Berry-Esseen bound \eqref{kolcdc} may in fact be used to complement all the Wasserstein bounds in concrete applications treated in the references \cite{DP17,DP19,D23} by Berry-Esseen bounds of the same order.            
\item The Berry-Esseen bound \eqref{mskol} may be compared to corresponding bounds in the Poisson \cite{Sch16,ET14,LRPY} and Rademacher \cite{KRT1, KRT2,ERTZ} situations (see again Remark \ref{raderem} (e) below for details on the latter). It looks simpler and makes milder assumptions than these bounds, since it neither (explicitly) features the divergence operator $\delta$ nor the parameter $z\in\R$ stemming from the indicator function $\1_{\{x\leq z\}}$. On the other hand, the respective second term in the bounds \eqref{mskol} and\eqref{cokol} might be difficult to control in practice.   
\item The proofs of the Berry-Esseen bounds in Theorems \ref{msbound}, \ref{cobound} and \ref{cdcbound} make use of a recent trick due to Shao and Zhang \cite{ShaZh}, who have applied it in the context of exchangeable pairs. This trick essentially consists in exploiting the monotonicity of the functions $x\mapsto x\psi_z(x)$ and $x\mapsto\1_{\{x\leq z\}}$ instead of relying on a first order Taylor estimate of the function $\psi_z$ as e.g. given in \cite[Proposition 3.1 (f)]{LRP3}. This allows to dispense with certain redundant terms in the resulting bound, leading to shorter proofs in concrete applications. We remark that this trick has also been applied in the Poisson \cite{LRPY} and Rademacher frameworks \cite{ERTZ}. However, somewhat surprisingly, contrary to these two references, we do not need to make any additional assumptions here like $F\psi_z(F)\in\dom(D)$ for all $z\in\R$ or that some process $U=(U_k)_{k\in\N}$ be in $\dom(\delta)$. In fact, a variant of the Malliavin integration by parts formula \eqref{intpartsM}, where the conditions $U\in\dom(\delta)$ and $F\in\dom(D)$ are replaced with $U\in L^1(\kappa\otimes\P_\F)$ and $F\in L^\infty_\X$, has been (implicitly) applied at the end of the proofs of Theorems \ref{msbound} and \ref{cobound}.    
\item For many relevant applications it would be interesting to also have comparable bounds on the multivariate normal approximation of random vectors $(F_1,\dotsc,F_d)^T\in (L_\X^2)^d$. It is indeed possible to combine the above techniques with suitable versions of Stein's method for multivariate normal approximation to obtain such bounds. Such results and relevant applications will be dealt with in a follow-up project.
\item We briefly discuss here some alternative approaches to the normal approximation of functionals on product spaces. Contrary to the approach considered here and in the works \cite{DH,Duer}, nearly all these methods only deal with functionals of finitely many, say $n$, independent random variables $X_1,\dotsc,X_n$. To the best of our knowledge, the only other exception is the approach by Privault and Serafin \cite{PS1,PS2} that exploits the well-established Malliavin calculus for sequences of independent and uniformly distributed random variables in the interval $[-1,1]$ to prove Wasserstein and Berry-Esseen bounds. By means of the quantile transformation, the bounds in \cite{PS1,PS2} can in principle be applied to functionals of general sequences of real-valued random variables, which is illustrated therein by means of several examples. 

The classical works by van Zwet \cite{vanZwet} and Friedrich \cite{Fr} derive Berry-Esseen bounds for symmetric functionals of i.i.d. and for general functionals of independent random variables, respectively, by combining Esseen's smoothing inequality with suitable projection methods. As a consequence, their theorems appear to be applicable mainly to functionals whose first Hoeffding projection is asymptotically dominant. Chen and Shao \cite{ChSh07} develop a version of the concentration inequality approach to Stein's method and apply it to prove a general Berry-Esseen bound that has a similar realm of applications but seems easier to apply than the bound in \cite{Fr}. 

Chatterjee \cite{Cha08} develops a new version of Stein's method that is based on an elegant covariance formula, which itself is expressed in terms of difference operators making use of an independent copy $X'$ of the vector $X=(X_1,\dotsc,X_n)$. He obtains a general bound on the Wasserstein distance to normality and demonstrates the flexibility of his approach by means of several impressive examples. This approach has been considerably extended by Lachi\`{e}ze-Rey and Peccati \cite{LRP3} who provide corresponding Berry-Esseen bounds and investigate further classes of relevant examples. We remark that the difference operator $\Delta_k$ of \cite{Cha08} is in our notation given by $\Delta_k F=f(\X)-f(\X^{(k)})$, where $F=f(\X)$, and has thus been used implicitly in \cite{DH} and in the present paper as well. In particular, one has $D_kF=\E[\Delta_k F|\X]$ for the Malliavin derivative considered here. However, the covariance formula in \cite[Lemma 2.3]{Cha08} is rather different from those considered in Subsection \ref{covid} and, therefore, the bounds on normal approximation in \cite{Cha08,LRP3} are not straightforward to compare to those in the present section. 
%One advantage of the methods of the present paper over the approach from \cite{Cha08} might be that, with the exception of the original form of the bounds in Theorem \ref{cdcbound}, our bounds only involve operators and random variables (in an explicit way) that are functionals of $\X$ but not of the independent copy $\X'$. Note that, in view of Remark \ref{cdcboundrem}, this is actually also the case for the bound \eqref{wasscdc} in Theorem \ref{cdcbound}. Hence, our bounds might by easier to apply in applications, for which the respective operators are amenable to computations. On the other hand, the operator $\Delta_k$ is more symmetric than $D_k$, which might also be helpful in certain applications.
\end{enumerate}
\end{remark}

\subsection{Normal approximation bounds for functionals of Rademacher sequences}\label{rade}
In this subsection we present consequences of the Theorems \ref{msbound} and \ref{cobound} in the special case of a (non-homogeneous and non-symmetric) Rademacher sequence $\X=(X_k)_{k\in\N}$. That is, here we assume that the $X_k$, $k\in\N$, are independent and that for certain $p_k=1-q_k\in(0,1)$ one has $\P(X_k=1)=p_k=1-\P(X_k=-1)=1-q_k$. We refer to Remark \ref{domrem} (e) for the notation used here. In particular, as $\hat{L}=L$ we continue to write $L$ in place of $\hat{L}$. 

\begin{cor}\label{radecor}
Let $F\in\dom(\hat{D})=\dom(D)$ be centered. Then, we have the Malliavin-Stein bounds
\begin{align}
d_\W(F,Z)&\leq \sqrt{\frac{2}{\pi}}\E\Bbabs{1-\sum_{k=1}^\infty \hat{D}_k(-L^{-1}F)\hat{D}_kF\,Y_k^2}\notag\\
&\;+\sum_{k=1}^\infty \frac{1-2p_kq_k}{\sqrt{p_kq_k}}\E\Bigl[\babs{\hat{D}_k(-L^{-1}F)}\bigl(\hat{D}_kF\bigr)^2\Bigr],\label{rmswass}\\
d_\K(F,Z)&\leq \E\Bbabs{1-\sum_{k=1}^\infty \hat{D}_k(-L^{-1}F)\hat{D}_kF\,Y_k^2}+
2\sum_{k=1}^\infty|p_k-q_k|\E\Bigl[\babs{ \hat{D}_kL^{-1}F}\babs{  \hat{D}_kF}\Bigr]\notag\\
&\;+4\E\Bbabs{\sum_{k=1}^\infty\sqrt{p_kq_k}\bigl(\abs{\hat{D}_kL^{-1}F}\hat{D}_kF\bigr) Y_k},\label{rmskol}
\end{align}
as well as the following Clark-Ocone bounds:
\begin{align}
 d_\W(F,Z)&\leq\sqrt{\frac{2}{\pi}}\E\Bbabs{1- \sum_{k=1}^\infty \bigl(\E[\hat{D}_kF\,|\,\F_{k-1}]\, \hat{D}_kF\bigr)Y_k^2} \notag\\
&\;  + \sum_{k=1}^\infty\frac{1-2p_kq_k}{\sqrt{p_kq_k}} \E\Bigl[\babs{\E[\hat{D}_kF\,|\,\F_{k-1}]}\bigl(\hat{D}_kF\bigr)^2\Bigr],\label{rcowass}\\
   d_\K(F,Z)&\leq\E\Bbabs{1- \sum_{k=1}^\infty \bigl(\E[\hat{D}_kF\,|\,\F_{k-1}]\, \hat{D}_kF\bigr)Y_k^2}+
2\sum_{k=1}^\infty |p_k-q_k|\E\Bigl[\babs{\E[\hat{D}_kF\,|\,\F_{k-1}]}\babs{  \hat{D}_kF}\Bigr] \notag\\
&\;+4\E\Bbabs{\sum_{k=1}^\infty \sqrt{p_kq_k}\,\Bigl(\babs{\E[\hat{D}_kF\,|\,\F_{k-1}]}\hat{D}_kF\Bigr)Y_k }. \label{rcokol}
\end{align}
\end{cor}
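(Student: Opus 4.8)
The plan is to obtain Corollary \ref{radecor} by specializing the abstract bounds \eqref{mswass}--\eqref{mskol} and \eqref{cowass}--\eqref{cokol} to the Rademacher case, using the single structural identity $D_kG=Y_k\hat{D}_kG$ from Remark \ref{domrem} (e), which holds for every $G\in L^2_\X$, together with the facts that $\dom(D)=\dom(\hat{D})$ and $L=\hat{L}$, so that all the quantities $\hat{D}_k(-L^{-1}F)$ etc.\ are well defined. The whole argument then reduces to rewriting each summand of each bound via $D_k=Y_k\hat{D}_k$ and evaluating a handful of moments of $Y_k$, exploiting throughout that $\hat{D}_kG$ is $\G_k$-measurable while $Y_k$ depends only on $X_k$ and is therefore independent of $\G_k$ and of every $\hat{D}_k$-type factor.

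First I would record the relevant moments of $Y_k=(X_k+q_k-p_k)/(2\sqrt{p_kq_k})$. Evaluating on the two atoms gives $Y_k=\sqrt{q_k/p_k}$ on $\{X_k=1\}$ and $Y_k=-\sqrt{p_k/q_k}$ on $\{X_k=-1\}$, whence $\E[Y_k^2]=1$, $\E\abs{Y_k}=2\sqrt{p_kq_k}$, and $\E\abs{Y_k}^3=(p_k^2+q_k^2)/\sqrt{p_kq_k}=(1-2p_kq_k)/\sqrt{p_kq_k}$. I will also need the single less obvious identity $\E\bigl[\babs{(\abs{Y_k}-2\sqrt{p_kq_k})Y_k}\bigr]=\abs{p_k-q_k}$, again obtained by direct evaluation on the two atoms.

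With these in hand, the substitution is mechanical. In every first term, $D_k(-L^{-1}F)D_kF=Y_k^2\,\hat{D}_k(-L^{-1}F)\hat{D}_kF$, producing the $Y_k^2$-weighted sums in \eqref{rmswass}, \eqref{rmskol}, \eqref{rcowass} and \eqref{rcokol}. For the Wasserstein second terms, $\babs{D_k(-L^{-1}F)}(D_kF)^2=\abs{Y_k}^3\babs{\hat{D}_k(-L^{-1}F)}(\hat{D}_kF)^2$ and, since $Y_k$ is independent of the $\hat{D}_k$-factors, factoring out $\E\abs{Y_k}^3=(1-2p_kq_k)/\sqrt{p_kq_k}$ yields the stated coefficient. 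For the Kolmogorov bounds the two delicate substitutions are $D_k\abs{D_kL^{-1}F}$ and $\E[\,\abs{D_kL^{-1}F}\mid\G_k]$: writing $\abs{D_kL^{-1}F}=\abs{Y_k}\,\babs{\hat{D}_kL^{-1}F}$ with $\hat{D}_kL^{-1}F$ being $\G_k$-measurable, and using $\E[\,\abs{Y_k}\mid\G_k]=\E\abs{Y_k}=2\sqrt{p_kq_k}$, one gets $\E[\,\abs{D_kL^{-1}F}\mid\G_k]=2\sqrt{p_kq_k}\,\babs{\hat{D}_kL^{-1}F}$ (giving the factor $4$ and the $Y_k$-weighted third terms) and $D_k\abs{D_kL^{-1}F}=(\abs{Y_k}-2\sqrt{p_kq_k})\babs{\hat{D}_kL^{-1}F}$, so that $\E\babs{D_k\abs{D_kL^{-1}F}D_kF}$ factors as $\E\bigl[\babs{(\abs{Y_k}-2\sqrt{p_kq_k})Y_k}\bigr]\,\E\bigl[\babs{\hat{D}_kL^{-1}F}\babs{\hat{D}_kF}\bigr]=\abs{p_k-q_k}\,\E\bigl[\babs{\hat{D}_kL^{-1}F}\babs{\hat{D}_kF}\bigr]$, matching \eqref{rmskol}.

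Finally, for the Clark-Ocone bounds I would first establish the auxiliary identity $D_k\E[F\mid\F_k]=Y_k\,\E[\hat{D}_kF\mid\F_{k-1}]$. This follows by combining $D_k\E[F\mid\F_k]=\E[D_kF\mid\F_k]$ (the identity $D_k\E[F\mid\F_M]=\E[D_kF\mid\F_M]$ recorded after the definition of $D_k$, with $M=[k]$) with $D_kF=Y_k\hat{D}_kF$, pulling the $\F_k$-measurable factor $Y_k$ out of the conditional expectation, and noting that $\hat{D}_kF$ is $\G_k$-measurable so that $\E[\hat{D}_kF\mid\F_k]=\E[\hat{D}_kF\mid\F_{k-1}]$. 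After this replacement the four Clark-Ocone terms are handled by exactly the same moment computations as above, with $\hat{D}_kL^{-1}F$ replaced everywhere by $\E[\hat{D}_kF\mid\F_{k-1}]$. The main obstacle is not any single estimate but rather bookkeeping care: one must verify that the independence of $Y_k$ from the remaining factors genuinely applies in each term (it does, since every surviving factor is measurable with respect to $\{X_j:j\neq k\}$), and that the auxiliary identity above is established correctly, as it is the one place where the finer filtration $\F_{k-1}$, rather than $\G_k$, enters.
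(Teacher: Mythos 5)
Your proposal is correct and takes essentially the same route as the paper: the paper's proof consists precisely of substituting $D_k=Y_k\hat{D}_k$ into Theorems \ref{msbound} and \ref{cobound}, exploiting the independence of $Y_k$ from the $\G_k$-measurable $\hat{D}_k$-factors, and evaluating the moments of $Y_k$, with the remaining details left to the reader. The details you supply are exactly the right ones and are correct — in particular your identity $\E\bigl[\babs{(\abs{Y_k}-2\sqrt{p_kq_k})Y_k}\bigr]=\abs{p_k-q_k}$ is equivalent to the paper's stated relations $\hat{D}_k\abs{Y_k}=q_k-p_k$ and $\E[Y_k^2]=1$ (since $\abs{Y_k}-2\sqrt{p_kq_k}=D_k\abs{Y_k}=Y_k(q_k-p_k)$), and your auxiliary identity $D_k\E[F\,|\,\F_k]=Y_k\,\E[\hat{D}_kF\,|\,\F_{k-1}]$ is the implicit step needed to produce the Clark-Ocone terms in the stated form.
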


\begin{proof}
The proofs of these bounds follow from Theorems \ref{msbound} and \ref{cobound} by using that, as has been mentioned in Remark \ref{domrem} (e), for $F\in L^1_\X$, we have 
\begin{align}\label{relD}
D_kF= Y_k \hat{D}_kF,\quad k\in\N, 
\end{align}
as well as from the following relations that are either standard facts from the Malliavin calculus for Rademacher sequences \cite{Priv08} or straightforward to verify: For $k\in\N$ and $F\in L^1_\X$ with representative $f\in L^1(\mu)$ one has  
\begin{align*}
\hat{D}_kF&=\sqrt{p_kq_k}\Bigl(f\bigl(X_1,\dotsc,X_{k-1},1,X_{k+1},\dotsc\bigr)-f\bigl(X_1,\dotsc,X_{k-1},-1,X_{k+1},\dotsc\bigr)\Bigr)
\end{align*} 
so that $Y_k$ and $\hat{D}_kF$ are, in particular, independent, and from
\begin{align*}
\hat{D}_k| Y_k|&=q_k-p_k, &\E|Y_k|&=2 \sqrt{p_kq_k},&\E\bigl[Y_k|Y_k|\bigr]&=q_k-p_k, &\E|Y_k|^3&=\frac{1-2p_kq_k}{\sqrt{p_kq_k}}.
\end{align*} 
We leave the details of these computations and derivations to the reader.
\end{proof}

\begin{remark}\label{raderem}
\begin{enumerate}[(a)]
\item As a general remark, the main purpose of Corollary \ref{radecor} is to illustrate the flexibility of the theory in this paper. It is reasonable to expect that methods which are specifically tailored to the Rademacher situation should in general yield sharper estimates. 
\item The bound \eqref{rmswass} is similar to the bound given in \cite[Theorem 3.1]{Zheng} but (as in fact all bounds given in Corollary \ref{rade}) it features an extra factor of $Y_k^2$ in the sum appearing in the first term of the bound. Thanks to the independence of $Y_k$ and $\hat{D}_kF, \hat{D}_kL^{-1}F$ mentioned in the above proof, this does not introduce too much additional complexity in the bound, however. Note that in fact $Y_k^2=1$ in the symmetric case, where $p_k=q_k=1/2$ for all $k\in\N$.
\item Similarly, the Clark-Ocone bound \eqref{rcowass} on the Wasserstein distance compares to the bound for Rademacher functionals given in \cite[Theorem 3.2]{PrTo}. Note, however, that the bound in \cite{PrTo} is expressed in terms of $C^2$ test functions and, moreover, features an additional term $B_3$ and therefore appears to be larger than ours. The Berry-Esseen bound \eqref{rcokol} seems to be the first of the Clark-Ocone type in the Malliavin-Stein world for Rademacher sequences.
\item The respective second terms in our Berry-Esseen bounds \eqref{rmskol} and \eqref{rcokol} are only likely to be small, when $p_k$ and $q_k$ are reasonably close to each other for (almost) all $k\in\N$. In fact, in the symmetric situation these terms of course vanish identically.
\item We finally compare the bound \eqref{rmskol} to existing ones from the literature: In the symmetric case it outperforms the Berry-Esseen bound given in \cite[Theorem 3.1]{KRT1} which, as compared to \eqref{rmskol}, contains two additional terms. Moreover, the last term of the bound from \cite{KRT1} still features the supremum over all $z\in\R$ as well as the indicator $\1_{\{F>z\}}$ and is therefore less explicit than the final term in \eqref{rmskol}. In the general case, the bound \eqref{rmskol} looks simpler and features fewer terms than the bound provided in \cite[Proposition 4.1]{KRT2}. However, the latter bound has proven to yield accurate estimates on the rate of convergence even in situations where $p_k$ and $q_k$ are not close to each other or, in fact, are even close to 1 and 0 (or vice versa), respectively. The recent bounds given in displays (3.4) and (3.6) of \cite[Theorem 3.1]{ERTZ} are improvements of the bound in \cite[Proposition 4.1]{KRT2} and, in non-symmetric situations, further seem to be more flexible than our bound \eqref{rmskol} . However, the technical condition (3.3) in \cite{ERTZ} for the bounds in (3.4) and (3.6) to hold, or the condition that $U\in\dom(\hat{\delta})$, where $U_k:= (p_kq_k)^{-1/2}\bigl(\abs{\hat{D}_kL^{-1}F}\hat{D}_kF\bigr)$, $k\in\N$, which must additionally be assumed for the bound (3.6) to apply, might be difficult to verify in practice.
\end{enumerate}
\end{remark}

\section{Proofs of applications}\label{proofs}
\subsection{Proof of Theorem \ref{infdejong}}\label{djproof}
We make use of the following lemma, the first part of which is certainly known. For the second part, we have not been able to find a suitable reference though. 

\begin{lemma}\label{dislemma}
Let $Y,W,W_n$, $n\in\N$, be real-valued random variables. 
\begin{enumerate}[{\normalfont(a)}]
\item If $W,W_n\in L^1(\P)$, $n\in\N$, for the same probability space $(\Om,\F,\P)$ and $(W_n)_{n\in\N}$ converges to $W$ in $L^1(\P)$, then $d_\W(W,Y)\leq\liminf_{n\to\infty} d_\W(W_n,Y)$.
\item If $(W_n)_{n\in\N}$ converges in distribution to $W$, then $d_\K(W,Y)\leq\liminf_{n\to\infty} d_\K(W_n,Y)$.
\end{enumerate}
\end{lemma}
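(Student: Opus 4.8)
The plan is to prove each part by using the stability of the respective distributional distances under the given modes of convergence, invoking the fact that both $d_\W$ and $d_\K$ are defined as suprema over suitable classes of test functions. For part (a), I would fix a test function $h\in\Lip(1)$ and write
\begin{align*}
\babs{\E[h(W)]-\E[h(Y)]}&\leq \babs{\E[h(W)]-\E[h(W_n)]}+\babs{\E[h(W_n)]-\E[h(Y)]}\\
&\leq \E\babs{h(W)-h(W_n)}+d_\W(W_n,Y)\\
&\leq \E\babs{W-W_n}+d_\W(W_n,Y),
\end{align*}
where the last inequality uses that $h$ is $1$-Lipschitz. Passing to the $\liminf$ as $n\to\infty$ and using $\E\babs{W-W_n}=\norm{W-W_n}_1\to0$ by the assumed $L^1$-convergence, I obtain $\babs{\E[h(W)]-\E[h(Y)]}\leq\liminf_{n\to\infty}d_\W(W_n,Y)$. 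Since the right-hand side does not depend on $h$, taking the supremum over all $h\in\Lip(1)$ yields the claim.

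For part (b) I would first recall that convergence in distribution $W_n\to W$ implies $\P(W_n\leq z)\to\P(W\leq z)$ at every continuity point $z$ of the distribution function $F_W$ of $W$. Fix such a continuity point $z$; then
\[
\babs{\P(W\leq z)-\P(Y\leq z)}=\lim_{n\to\infty}\babs{\P(W_n\leq z)-\P(Y\leq z)}\leq\liminf_{n\to\infty}d_\K(W_n,Y).
\]
This gives the desired bound $\babs{\P(W\leq z)-\P(Y\leq z)}\leq\liminf_{n\to\infty}d_\K(W_n,Y)$ for every continuity point $z$ of $F_W$. To conclude I would take the supremum over all such $z$ and observe that, since $F_W$ is monotone and hence has at most countably many discontinuities, the continuity points are dense in $\R$; by right-continuity of both $F_W$ and the distribution function of $Y$, the supremum of $\babs{\P(W\leq z)-\P(Y\leq z)}$ over continuity points $z$ of $F_W$ already equals $d_\K(W,Y)=\sup_{z\in\R}\babs{\P(W\leq z)-\P(Y\leq z)}$.

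The main obstacle is the density argument at the end of part (b): one must argue carefully that restricting the supremum to continuity points of $F_W$ does not lose anything. The cleanest way to handle this is to fix an arbitrary $z\in\R$, choose a decreasing sequence $z_m\downarrow z$ of continuity points of $F_W$, and use the right-continuity of the distribution functions of $W$ and of $Y$ to pass to the limit in $\babs{\P(W\leq z_m)-\P(Y\leq z_m)}$, thereby transferring the bound established at continuity points to the arbitrary point $z$. Part (a) is entirely routine, and the only subtlety in part (b) is this standard but easily overlooked approximation step.
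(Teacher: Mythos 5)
Your proof is correct. Part (a) is exactly the paper's argument: the same triangle-inequality decomposition, the same use of the Lipschitz property of $h$, and the same passage to the $\liminf$ followed by the supremum over $h\in\Lip(1)$. In part (b) you share the paper's starting point---at continuity points $z$ of $F_W$, convergence in distribution gives $\babs{\P(W\leq z)-\P(Y\leq z)}=\lim_{n\to\infty}\babs{\P(W_n\leq z)-\P(Y\leq z)}\leq\liminf_{n\to\infty}d_\K(W_n,Y)$---but you transfer this bound to an arbitrary $z$ by a genuinely different mechanism. The paper splits the absolute value into its two signed halves and treats them asymmetrically: the difference $\P(W\leq z)-\P(Y\leq z)$ is controlled by approximating $z$ from the right by a continuity point $z_\eps$ and invoking only the right-continuity of $F_Y$, while the reverse difference $\P(Y\leq z)-\P(W\leq z)$ is handled by the Portmanteau theorem applied to the closed set $(-\infty,z]$. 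You instead take a decreasing sequence of continuity points $z_m\downarrow z$ (available because the discontinuity set of $F_W$ is at most countable) and use right-continuity of \emph{both} $F_W$ and $F_Y$ to pass to the limit in the absolute difference in a single step, with no case distinction and no separate appeal to the Portmanteau theorem. Your route is slightly more economical and symmetric; the paper's route has the minor structural advantage that right-continuity of $F_W$ is never needed, each one-sided estimate resting on one standard tool. Both arguments are complete and yield the stated lemma.
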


\begin{remark}
Note that the statement of Lemma \ref{dislemma} is reminiscent of well-known properties of norms under weak and weak-$\ast$ convergence in a functional analysis context. It is unclear to the author if this observation may be turned into a rigorous proof though.
\end{remark}

\begin{proof}[Proof of Lemma \ref{dislemma}]
To prove (a) fix $h\in\Lip(1)$. Then, 
\begin{align*}
&\babs{\E[h(W)]-\E[h(Y)]}\leq \babs{\E[h(W)]-\E[h(W_n)]}+\babs{\E[h(W_n)]-\E[h(Y)]}\\
&\leq  \babs{\E[h(W)]-\E[h(W_n)]}+d_\W(W_n,Y)\leq \E\babs{W-W_n}+d_\W(W_n,W).
\end{align*}
Hence, as the left hand side does not depend on $n$ it follows from the $L^1$-convergence that 
\[\babs{\E[h(W)]-\E[h(Y)]}\leq\liminf_{n\to\infty}\Bigl(\E\babs{W-W_n}+d_\W(W_n,Y)\Bigr)=\liminf_{n\to\infty}d_\W(W_n,Y).\]
Taking the supremum over all $h\in\Lip(1)$ yields (a). 

To  prove (b) fix $z\in\R$. If $z$ is a continuity point of the distribution function $F_W$ of $W$, then, by convergence in distribution, we have
\begin{align*}
\babs{\P(W\leq z)-\P(Y\leq z)}=\lim_{n\to\infty}\babs{\P(W_n\leq z)-\P(Y\leq z)}\leq \liminf_{n\to\infty} d_\K(W_n,Y).
\end{align*}
On the other hand, if $F_W$ is discontinuous at $z$, then, for given $\eps>0$ we may choose a continuity point $z_\eps$ of $F_W$ such that $z<z_\eps<z+\eps$. Then, again by convergence in distribution, 
%letting $c\in(0,\infty)$ be a bound on the density function of $Y$ 
we have  
\begin{align*}
&\P(W\leq z)-\P(Y\leq z)\leq \P(W\leq z_\eps)-\P(Y\leq z_\eps) +  \P(z< Y\leq z_\eps) \notag\\
& \leq \babs{\P(W\leq z_\eps)-\P(Y\leq z_\eps)} +F_Y(z+\eps)-F_Y(z)\notag\\
&=\lim_{n\to\infty} \babs{\P(W_n\leq z_\eps)-\P(Y\leq z_\eps)}+F_Y(z+\eps)-F_Y(z)\notag\\
&\leq \liminf_{n\to\infty} d_\K(W_n,Y)+F_Y(z+\eps)-F_Y(z)
\end{align*}
and, since $\eps>0$ was arbitrary and the distribution function $F_Y$ of $Y$ is right-continuous at $z$, letting $\eps\downarrow0$, we obtain that 
\begin{align}\label{kol2}
\P(W\leq z)-\P(Y\leq z)\leq\liminf_{n\to\infty} d_\K(W_n,Y). 
\end{align} 
Moreover, again by convergence in distribution and since $(-\infty,z]$ is a closed subset of $\R$, the Portmanteau theorem implies that
\begin{align}\label{kol1}
\P(Y\leq z)-\P(W\leq z)&\leq \P(Y\leq z)-\limsup_{n\to\infty}\P(W_n\leq z)\notag\\
&= \liminf_{n\to\infty}\bigl(\P(Y\leq z)-\P(W_n\leq z)\bigr)\leq \liminf_{n\to\infty}d_\K(W_n,Y).
\end{align}
Since $z\in\R$ was arbitrary, \eqref{kol2} and \eqref{kol1} imply (b).
\end{proof}

\begin{proof}[Proof of Theorem \ref{infdejong}]
Define the $\mathbb{F}$-martingale $(F_n)_{n\in\N}$, where 
\[F_n:=\E\bigl[F\,\big|\,\F_n\bigr]=\sum_{M\subseteq[n]: |M|=p} F_M,\quad n\in\N.\]
In particular, for $n\in\N$, $F_n\in L^4_\X$ is a degenerate $U$-statistic of order $p$ based on $X_1,\dotsc,X_n$ and $(F_n)_{n\in\N}$ converges to $F$ $\P$-a.s. and in $L^4(\P)$ by the $L^4$-convergence theorem for martingales. Further, for $n\in\N$, let $\sigma_n^2:=\Var(F_n)$ and note that $\lim_{n\to\infty}\sigma_n^2=1$ and, hence, we may w.l.o.g. assume that $\sigma_n^2>0$ and define $G_n:=\sigma_n^{-1} F_n$ for any $n\in\N$.  Then, we have that also $(G_n)_{n\in\N}$ converges in $L^4(\P)$ and, hence, in particular in $L^1(\P)$ and in distribution to $F$. Since $G_n$ is normalized, from \cite[Theorem 1.3]{DP17} and \cite[Theorem 2.1]{Doe23b}, for any $n\in\N$, we have the bounds
 \begin{align}
d_\W(G_n,Z)&\leq \Biggl(\sqrt{\frac{2}{\pi}}+\frac{4}{3}\Biggr)\sqrt{\babs{\E[G_n^4]-3}}+\sqrt{\kappa_p}\Biggl(\sqrt{\frac{2}{\pi}}+ \frac{2\sqrt{2}}{\sqrt{3}}\Biggr)\rho(G_n)\quad\text{and}  \label{djw}\\
d_\K(G_n,Z)&\leq  11.9\sqrt{\babs{\E[G_n^4]-3}} +\bigl(3.5+10.8\sqrt{\kappa_p}\bigr)\rho(G_n).\label{djk}
\end{align} 
Now, noting that, for each $k\in\N$, as $n\to\infty$,
\begin{align*}
\Inf_k(F_n)&=\sum_{M\subseteq[n]:k\in M}\E[F_M^2]\Big\uparrow \sum_{M\subseteq\N:k\in M}\E[F_M^2]=\Inf_k(F)
\end{align*}
we obtain that also $\rho(F_n^2)$ is increasing in $n$ and, thus, 
%$(\Inf_k(F_n))_{n\in\N}$ is non-decreasing and converges  
\begin{align}\label{dj4}
\lim_{n\to\infty}\rho^2(F_n)=\sup_{n\in\N}\sup_{k\in\N}\Inf_k(F_n)=\sup_{k\in\N}\sup_{n\in\N}\Inf_k(F_n)=\sup_{k\in\N}\Inf_k(F)=\rho^2(F).
\end{align}
Furthermore, as $\rho(G_n)^2=\sigma_n^{-2}\rho^2(F_n)$, $n\in\N$, from \eqref{dj4} we have that $\lim_{n\to\infty}\rho^2(G_n)=\rho^2(F)$. Thus, from Lemma \ref{dislemma}, \eqref{djw}, \eqref{djk} and the $L^4$-convergence of $(G_n)_{n\in\N}$ to $F$ we conclude that 
\begin{align*}
d_\W(F,Z)&\leq\liminf_{n\to\infty}d_\W(G_n,Z)\leq \Biggl(\sqrt{\frac{2}{\pi}}+\frac{4}{3}\Biggr)\sqrt{\babs{\E[F^4]-3}}+\sqrt{\kappa_p}\Biggl(\sqrt{\frac{2}{\pi}}+ 
\frac{2\sqrt{2}}{\sqrt{3}}\Biggr)\rho(F)
\end{align*} 
and 
\begin{align*}
d_\K(F,Z)&\leq\liminf_{n\to\infty}d_\K(G_n,Z)\leq 11.9\sqrt{\babs{\E[F^4]-3}} +\bigl(3.5+10.8\sqrt{\kappa_p}\bigr)\rho(F),
\end{align*} 
completing the proof of Theorem \ref{infdejong}.
\end{proof}

\begin{remark}
It is in fact possible to prove Theorem \ref{infdejong} by means of Theorem \ref{cdcbound} and other results and tools from the present paper. Since the resulting proof would have been longer, we have decided against it and leave the details of such an intrinsic argument to the interested reader. 
\end{remark}

\subsection{Proof of Theorem \ref{rstheo}}\label{rsproof}
We apply the bound \eqref{cowass} from Theorem \ref{cobound}. To fit the application into the framework of Sections \ref{mallop} and \ref{normapp}, we let $X_0:=N$ so that $\X:=(X_n)_{n\in\N_0}$ is our independent sequence. Note that, here, we deal with sequences indexed by $\N_0$ in place of $\N$ which, of course, causes no trouble. Letting $\sigma^2:=\Var(S)$ it is easy to see that $\sigma^2=\E[N]\E[X_1^2]$ and that $F=\sigma^{-1}(S-\E[S])=\sigma^{-1}S$. Moreover, we have $D_kF=\sigma^{-1}D_kS$ for all $k\in\N_0$. Writing $S_n:=\sum_{j=1}^nX_j$, $n\in\N_0$, for $k=0$ we have 
\begin{align*}
D_0S&=S-\E\bigl[S\,\big|\,\G_0\bigr]=S-\sum_{n=0}^\infty\P(N=n)S_n=\sum_{n=1}^\infty \bigl(\1_{\{N=n\}}-\P(N=n)\bigr)S_n
\end{align*}    
and, hence, by independence
\begin{align}\label{rs1}
\E\bigl[D_0S\,\big|\,\F_0\bigr]&=\E\bigl[D_0S\,\big|\,N\bigr]=\sum_{n=1}^\infty \bigl(\1_{\{N=n\}}-\P(N=n)\bigr)\E[S_n]=0.
\end{align}
Note that interchanging the infinite sum and the expectation here is justified since 
\begin{align*}
&\sum_{n=1}^\infty \E\babs{\bigl(\1_{\{N=n\}}-\P(N=n)\bigr)S_n}=\sum_{n=1}^\infty \E\babs{\1_{\{N=n\}}-\P(N=n)}\E\babs{S_n}\\
&\leq \E|X_1|\Bigl(\sum_{n=1}^\infty\bigl( n \E\bigl[\1_{\{N=n\}}\bigr] +n\P(N=n)\bigr)\Bigr)=2 \E|X_1|\E[N]<\infty.
\end{align*}
Further, for $k\geq1$ we have 
\begin{align}\label{rs2}
D_kS&=S-\E\bigl[S\,\big|\,\G_k\bigr]=\1_{\{N\geq k\}}\bigl(X_k-\E[X_k]\bigr)=\1_{\{N\geq k\}}X_k=\E\bigl[D_kS\,\big|\,\F_k\bigr]
\end{align}
as both $X_k$ and $N$ are $\F_k$-measurable. Therefore, \eqref{rs1}, \eqref{rs2} and Remark \ref{remcobound} (a) imply that 
\begin{align}\label{rs3}
&\E\Bbabs{1- \sum_{k=0}^\infty \bigl(D_k\E[F\,|\,\F_k]\bigr)\, D_kF}=\E\Bbabs{1- \sum_{k=1}^\infty \bigl(D_k\E[F\,|\,\F_k]\bigr)\, D_kF}\notag\\
&\leq\biggl(\Var\Bigl(\sum_{k=1}^\infty \bigl(D_k\E[F\,|\,\F_k]\bigr)\, D_kF\Bigr)\biggr)^{1/2}=\frac{1}{\sigma^2}\biggl(\Var\Bigl(\sum_{k=1}^\infty \1_{\{N\geq k\}}X_k^2\Bigr)\biggr)^{1/2}.
\end{align}
Now, by the variance decomposition formula, with $T:=\sum_{k=1}^\infty \1_{\{N\geq k\}}X_k^2$, we have 
\begin{align*}
\Var(T)&=\Var\bigl(\E[T|N]\bigr)+\E\bigl[\Var(T|N)\bigr]
\end{align*}
and, since
\begin{align*}
\E[T|N]&=\E[X_1^2]\sum_{k=1}^\infty \1_{\{N\geq k\}}=N\,\E[X_1^2],
\end{align*}
and, using conditional independence, 
\begin{align*}
\Var(T|N)&=\Var\Bigl(\sum_{k=1}^\infty \1_{\{N\geq k\}}X_k^2\,\Big|\,N\Bigr)=\sum_{k=1}^\infty \Var\Bigl(\1_{\{N\geq k\}}X_k^2\,\Big|\,N\Bigr)\\
&=\sum_{k=1}^\infty \1_{\{N\geq k\}}\Bigl(\E[X_1^4]-\E[X_1^2]^2\Bigr),
\end{align*}
we conclude that 
\begin{align}\label{rs4}
\Var(T)&=\Var\bigl(N\,\E[X_1^2]\bigr)+\Bigl(\E[X_1^4]-\E[X_1^2]^2\Bigr)\E\biggl[\sum_{k=1}^\infty \1_{\{N\geq k\}}\biggr]\notag\\
&=\E[X_1^2]^2\Var(N)+\Bigl(\E[X_1^4]-\E[X_1^2]^2\Bigr)\sum_{n=1}^\infty\P(N\geq n)\notag\\
&=\E[X_1^2]^2\Var(N)+\Bigl(\E[X_1^4]-\E[X_1^2]^2\Bigr)\E[N].
\end{align}
Hence, \eqref{rs3} and \eqref{rs4} together imply that 
\begin{align}\label{rs5}
&\E\Bbabs{1- \sum_{k=0}^\infty \bigl(D_k\E[F\,|\,\F_k]\bigr)\, D_kF}\notag\\
&\leq \frac{1}{\E[N]\E[X_1^2]}\biggl(\E[X_1^2]^2\Var(N)+\Bigl(\E[X_1^4]-\E[X_1^2]^2\Bigr)\E[N]\biggr)^{1/2}\notag\\
&\leq\frac{\sqrt{\Var(N)}}{\E[N]}+\biggl(\frac{\E[X_1^4]}{\E[X_1^2]^2}-1\biggr)^{1/2}\frac{1}{\sqrt{\E[N]}}.
\end{align} 
We now turn to bounding the second term from the bound \eqref{cowass}. From \eqref{rs1}, \eqref{rs2} and the independence of $N$ and the $X_k$ we have 
\begin{align}\label{rs6}
&\sum_{k=0}^\infty\E\Bigl[\babs{D_k\E[F\,|\,\F_k]}\bigl(D_kF\bigr)^2\Bigr]=\sum_{k=1}^\infty\E\Bigl[\babs{D_kF}^3\Bigr]
=\sigma^{-3}\sum_{k=1}^\infty\E\Bigl[\1_{\{N\geq k\}}\babs{X_k}^3\Bigr]\notag\\
&=\sigma^{-3}\E|X_1|^3\sum_{k=1}^\infty\P(N\geq k)=\frac{\E[N]\E|X_1|^3}{\E[N]^{3/2}\E[X_1^2]^{3/2}}=\frac{\E|X_1|^3}{\E[X_1^2]^{3/2} \sqrt{\E[N]}}.
\end{align} 
The claim now follows from \eqref{cowass}, \eqref{rs5} and \eqref{rs6}.

\subsection{Proof of Theorem \ref{monotheo}}\label{monoproof}
We first introduce some useful notation and do some preparations. As in the statement, let $X_1,\dotsc,X_n$ be independent and on $[c]$ uniformly distributed random variables. Thus, letting $X_j:=1$ for all $j\geq n+1$, we obtain an infinite independent sequence $\X=(X_j)_{j\in\N}$ so that the situation fits into the framework of Sections \ref{malliavin} and \ref{normapp}. Throughout this proof we write $a_{i,j}$ for $a_{i,j}(G)$ and define the kernels $\psi:[c]^2\rightarrow\R$ and $\rho:[c]^3\rightarrow\R$ by
\begin{align*}
\psi(x,y)&:=\1_{\{x=y\}}-\frac{1}{c}\quad\text{and}\\
\rho(x,y,z)&:=\1_{\{x=y=z\}}-\frac{1}{c}\Bigl(\1_{\{x=y\}}+\1_{\{x=z\}}+\1_{\{y=z\}}\Bigr)+\frac{2}{c^2},
\end{align*}
respectively. It is straightforward to check that both $\psi$ and $\rho$ are symmetric in their arguments and, moreover, completely degenerate (or canonical) with respect to the uniform distribution on $[c]$. Recall that the latter property means that
\begin{align}
\E\bigl[\psi(x,X_1)\bigr]&=\frac{1}{c}\sum_{s=1}^c\psi(x,s)=0,\quad x\in[c],\label{degpsi}\\
\E\bigl[\rho(x,y,X_1)\bigr]&=\frac{1}{c}\sum_{s=1}^c\rho(x,y,s)=0,\quad x,y\in[c].\label{degrho}
\end{align} 
It is straightforward to check that 
\begin{align*}
\E\bigl[T_2(G)\bigr]&=\frac{m}{c}\quad\text{and}\quad \sigma^2:=\Var\bigl(T_2(G)\bigr)=\frac{m}{c}\Bigl(1-\frac{1}{c}\Bigr)
\end{align*}
and, hence, we have that
\begin{align}\label{Fmono}
F&=\frac{T_2(G)-\E[T_2(G)]}{\sqrt{\Var(T_2(G))}}=\frac{1}{\sigma}\sum_{1\leq i<j\leq n} a_{i,j}\psi(X_i,X_j)
\end{align}
has the form of a weighted, degenerate $U$-statistic of order two with kernel $\psi$ and weights $a_{i,j}$, $1\leq i<j\leq n$. We borrow the following important convention from the proof of \cite[Theorem 4]{BFY}: Without loss of generality we may and will assume that the vertices in $V=[n]$ are ordered in such a way that their degrees are non-increasing, i.e. we assume that 
$\deg(1)\geq \deg(2)\geq\ldots\geq \deg(n)$. Otherwise we could simply relabel the vertices of $G$. We will frequently make use of the following lemma.

\begin{lemma}\label{le1}
With the above convention we have 
\begin{enumerate}[{\normalfont (i)}]
\item $\displaystyle \sum_{1\leq i<j<k\leq n}a_{i,k}a_{j,k}\leq \sqrt{2}m^{3/2}$,
\item $\displaystyle \sum_{1\leq i<j<k\leq n}a_{i,j}a_{j,k}\leq \sqrt{2}m^{3/2}$.
\end{enumerate}
\end{lemma}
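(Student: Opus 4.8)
The plan is to reduce both estimates to a single structural fact about the chosen vertex ordering. For a vertex $k$ I write $d_k^-:=\sum_{i<k}a_{i,k}$ and $d_k^+:=\sum_{i>k}a_{i,k}$ for its numbers of neighbours with smaller, respectively larger, index, so that $d_k^-+d_k^+=\deg(k)$ and $\sum_{k=1}^n d_k^-=\sum_{k=1}^n d_k^+=m$ (each edge is counted exactly once, by its larger, respectively smaller, endpoint). Summing over the apex first, the left-hand side of (i) equals $\sum_{k=1}^n\binom{d_k^-}{2}$, since for fixed $k$ the inner sum counts the unordered pairs of back-neighbours of $k$. Summing over the middle vertex first, the left-hand side of (ii) equals $\sum_{j=1}^n d_j^-\,d_j^+$, since for fixed $j$ a contribution $a_{i,j}a_{j,k}$ with $i<j<k$ forces $i$ to be a back-neighbour and $k$ a forward-neighbour of $j$, and these two choices are independent.

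The key step, and the only place where the degree-monotonicity convention $\deg(1)\geq\dots\geq\deg(n)$ enters, is the uniform bound $d_k^-\leq\sqrt{2m}$ for every $k$. To establish it I fix $k$, abbreviate $d:=d_k^-$, and observe that the $d$ back-neighbours of $k$ are distinct vertices in $\{1,\dots,k-1\}$, whence $k\geq d+1$; moreover each of the first $k$ vertices has degree at least $\deg(k)\geq d_k^-=d$. Therefore $2m=\sum_{i=1}^n\deg(i)\geq\sum_{i=1}^k\deg(i)\geq k\,\deg(k)\geq(d+1)d\geq d^2$, which rearranges to $d\leq\sqrt{2m}$, as wanted.

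Given this bound the two assertions follow quickly. For (i) I use $\binom{d_k^-}{2}\leq\tfrac12(d_k^-)^2\leq\tfrac12\bigl(\max_k d_k^-\bigr)d_k^-$ and sum, obtaining $\sum_k\binom{d_k^-}{2}\leq\tfrac12\sqrt{2m}\sum_k d_k^-=\tfrac{1}{\sqrt2}\,m^{3/2}\leq\sqrt2\,m^{3/2}$. For (ii) I factor out the maximal back-degree, $\sum_j d_j^-\,d_j^+\leq\bigl(\max_j d_j^-\bigr)\sum_j d_j^+\leq\sqrt{2m}\cdot m=\sqrt2\,m^{3/2}$. The main obstacle is really the back-degree estimate, and it is genuinely where the ordering is indispensable: without it the bound is false, as the star $K_{1,n-1}$ shows, where $\sum_j\deg(j)^2\asymp m^2$, so any argument routed through $\sum_j\deg(j)^2$ is hopeless; after ordering by decreasing degree the centre acquires back-degree $0$ and every leaf back-degree $1$, collapsing both sums to $0$. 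The only other point requiring care is the bookkeeping that identifies the two triple sums with $\sum_k\binom{d_k^-}{2}$ and $\sum_j d_j^-\,d_j^+$ under the constraints $i<j<k$, which is routine.
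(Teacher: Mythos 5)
Your proof is correct, but it takes a genuinely different route from the paper's. The paper does not argue from first principles: for (i) it simply points to the proof of Lemma 1 in \cite{BFY}, and for (ii) it sums over the middle vertex $j$, bounds the inner sum $\sum_{k>j}a_{j,k}$ by $\deg(j)$, uses the ordering to rewrite $\deg(j)$ as $\deg(i)\wedge\deg(j)$ for each edge $\{i,j\}$ with $i<j$, and then invokes the classical inequality $\sum_{\{i,j\}\in E}\bigl(\deg(i)\wedge\deg(j)\bigr)\leq\sqrt{2}\,m^{3/2}$, quoted from page 37 of \cite{BHJ}. You instead prove everything in a self-contained way: you identify the two sums exactly as $\sum_{k}\binom{d_k^-}{2}$ and $\sum_{j}d_j^-d_j^+$, and reduce both to the single uniform back-degree bound $\max_k d_k^-\leq\sqrt{2m}$, which you derive from the degree-monotone ordering via $2m\geq\sum_{i\leq k}\deg(i)\geq k\deg(k)\geq(d_k^-+1)d_k^-$; this chain is sound, since it uses only $k\geq d_k^-+1$ and $\deg(k)\geq d_k^-$, both immediate. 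Combined with $\sum_k d_k^-=\sum_k d_k^+=m$, this yields both claims, and for (i) it even gives the slightly better constant $m^{3/2}/\sqrt{2}$. What the paper's route buys is brevity, at the cost of two external citations; what yours buys is a fully elementary, citation-free argument that isolates precisely where the ordering convention is indispensable (your star-graph remark makes that point convincingly), and whose key step plays the same structural role as the $\min$-degree inequality the paper imports from \cite{BHJ}.
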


\begin{proof}
Part (i) has actually been proved in the proof of \cite[Lemma 1]{BFY}. The proof of part (ii) is very similar. Since it is important for what follows, we give the argument: Using that $\deg(j)\leq\deg(i)$ if $i<j$, we have
\begin{align*}
&\sum_{1\leq i<j<k\leq n}a_{i,j}a_{j,k}=\sum_{1\leq i<j\leq n-1}a_{i,j}\sum_{k=j+1}^n a_{j,k}\leq \sum_{1\leq i<j\leq n-1}a_{i,j}\deg(j)\\
&=\sum_{1\leq i<j\leq n-1}a_{i,j}\bigl(\deg(j)\wedge\deg(i)\bigr)\leq \sum_{\{i,j\}\in E}\bigl(\deg(j)\wedge\deg(i)\bigr) \leq \sqrt{2} m^{3/2},
\end{align*}
where the final inequality is proved on page 37 of \cite{BHJ}. 
\end{proof}

\begin{lemma}\label{le2}
We have the following variance formulae:
\begin{enumerate}[{\normalfont (i)}]
\item $\displaystyle \Var\bigl(\psi(X_1,X_2)\bigr)=\E\bigl[\psi(X_1,X_2)^2\bigr]=\frac{1}{c}\Bigl(1-\frac{1}{c}\Bigr)$.
\item $\displaystyle \Var\bigl(\rho(X_1,X_2,X_3)\bigr)=\E\bigl[\rho(X_1,X_2,X_3)^2\bigr]=\frac{2}{c^4}-\frac{3}{c^3}+\frac{1}{c^2}$.
\end{enumerate}
\end{lemma}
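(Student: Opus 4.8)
The plan is to reduce both identities to raw second-moment computations. Since \eqref{degpsi} and \eqref{degrho} assert that $\psi$ and $\rho$ are completely degenerate, integrating out the final argument and then the remaining ones shows that $\E[\psi(X_1,X_2)]=0$ and $\E[\rho(X_1,X_2,X_3)]=0$; hence in each case $\Var=\E[(\cdot)^2]$, and it remains only to evaluate the two second moments for independent $X_i$ uniform on $[c]$.

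For part (i), I would simply expand $\psi(X_1,X_2)^2=\bigl(\1_{\{X_1=X_2\}}-c^{-1}\bigr)^2$ and use the two elementary facts $\1_{\{X_1=X_2\}}^2=\1_{\{X_1=X_2\}}$ and $\E\bigl[\1_{\{X_1=X_2\}}\bigr]=\P(X_1=X_2)=c^{-1}$. This gives $\E[\psi^2]=c^{-1}-2c^{-2}+c^{-2}=c^{-1}(1-c^{-1})$ at once.

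For part (ii), the structural point that drives the computation is that any product of two distinct pairwise-coincidence indicators collapses to the triple-coincidence indicator, i.e. $\1_{\{x=y\}}\1_{\{x=z\}}=\1_{\{x=y\}}\1_{\{y=z\}}=\1_{\{x=z\}}\1_{\{y=z\}}=\1_{\{x=y=z\}}$, while $\1_{\{x=y=z\}}\1_{\{x=y\}}=\1_{\{x=y=z\}}$ and each squared indicator equals itself. Abbreviating $A:=\1_{\{x=y=z\}}$ and $S:=\1_{\{x=y\}}+\1_{\{x=z\}}+\1_{\{y=z\}}$, so that $\rho=A-c^{-1}S+2c^{-2}$, I would expand $\rho^2$ and take expectations term by term using only the two probabilities $\P(X_1=X_2)=c^{-1}$ and $\P(X_1=X_2=X_3)=c^{-2}$; the collapse identities reduce every mixed expectation ($\E[A^2]$, $\E[AS]$, $\E[S^2]$ and the various cross terms) to these. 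Collecting the resulting powers of $c^{-1}$ then produces $\frac{2}{c^4}-\frac{3}{c^3}+\frac{1}{c^2}$.

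The computation is entirely elementary, so the only genuine obstacle is bookkeeping: the expansion of $\rho^2$ carries several mixed terms, and one must apply the collapse identities consistently and track the cancellations among the $c^{-3}$ and $c^{-4}$ contributions without arithmetic slips. I expect no conceptual difficulty beyond this careful accounting.
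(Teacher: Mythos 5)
Your proposal is correct and follows exactly the route the paper takes: the degeneracy relations \eqref{degpsi} and \eqref{degrho} give $\Var=\E[(\cdot)^2]$, and the second moments are then evaluated by elementary expansion (the paper simply omits this computation as ``straightforward''). Your collapse identities and the values $\P(X_1=X_2)=c^{-1}$, $\P(X_1=X_2=X_3)=c^{-2}$ do indeed yield $\E[\rho^2]=\frac{1}{c^2}-\frac{3}{c^3}+\frac{2}{c^4}$, matching the claim.
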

\begin{proof}
The respective first identities hold thanks to the degeneracy of $\psi$ and $\rho$. The actual formulae follow from straightforward computations that we omit here.
\end{proof}

\begin{lemma}\label{le3}
\begin{enumerate}[{\normalfont (i)}]
\item The Hoeffding decomposition of $W:=\psi(X_1,X_2)\psi(X_1,X_3)$ is given by 
\[W=\frac{1}{c}\psi(X_2,X_3)+\rho(X_1,X_2,X_3).\]
\item The Hoeffding decomposition of $V:=\psi(X_1,X_2)^2$ is given by 
\[V=\frac{1}{c}\Bigl(1-\frac{1}{c}\Bigr)+\Bigl(1-\frac{2}{c}\Bigr)\psi(X_1,X_2).\]
\end{enumerate}
\end{lemma}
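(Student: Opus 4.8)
The plan is to establish both identities by a direct expansion of the indicator kernels and then to upgrade the resulting algebraic identities to genuine Hoeffding decompositions by appealing to the uniqueness statement in Proposition \ref{infhoeff}. For part (ii) I would start from $V=\psi(X_1,X_2)^2=\bigl(\1_{\{X_1=X_2\}}-\tfrac1c\bigr)^2$ and use $\1_{\{X_1=X_2\}}^2=\1_{\{X_1=X_2\}}$ to collapse this to $\bigl(1-\tfrac2c\bigr)\1_{\{X_1=X_2\}}+\tfrac1{c^2}$. Expanding the claimed right-hand side $\tfrac1c\bigl(1-\tfrac1c\bigr)+\bigl(1-\tfrac2c\bigr)\psi(X_1,X_2)$ in the same indicator and matching the constant term and the coefficient of $\1_{\{X_1=X_2\}}$ then verifies the equality.

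For part (i) the key observation is that $\1_{\{X_1=X_2\}}\1_{\{X_1=X_3\}}=\1_{\{X_1=X_2=X_3\}}$, so expanding $W=\bigl(\1_{\{X_1=X_2\}}-\tfrac1c\bigr)\bigl(\1_{\{X_1=X_3\}}-\tfrac1c\bigr)$ yields $W=\1_{\{X_1=X_2=X_3\}}-\tfrac1c\bigl(\1_{\{X_1=X_2\}}+\1_{\{X_1=X_3\}}\bigr)+\tfrac1{c^2}$. I would then substitute the definitions of $\psi$ and $\rho$ into $\tfrac1c\psi(X_2,X_3)+\rho(X_1,X_2,X_3)$ and collect terms, checking in particular that the two $\1_{\{X_2=X_3\}}$ contributions (the one from $\tfrac1c\psi(X_2,X_3)$ and the one inside $\rho$) cancel and that the remaining indicator and constant terms reproduce $W$ exactly.

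The conceptual point, rather than a computational obstacle, is the final step: one must argue that these algebraic identities actually \emph{are} the Hoeffding decompositions. Here I would note that each summand is a completely degenerate (canonical) function of a single index set: the constant $\tfrac1c\bigl(1-\tfrac1c\bigr)$ equals $\E[V]$ by Lemma \ref{le2}~(i) and is the $\emptyset$-component; $\bigl(1-\tfrac2c\bigr)\psi(X_1,X_2)$ and $\tfrac1c\psi(X_2,X_3)$ are canonical functions of $\{1,2\}$ and $\{2,3\}$, respectively, by \eqref{degpsi}; and $\rho(X_1,X_2,X_3)$ is canonical on $\{1,2,3\}$ by \eqref{degrho}. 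Since canonicity of a kernel on an index set $M$ is precisely the defining degeneracy property of the Hoeffding component indexed by $M$, and since each term lives over a distinct subset, the uniqueness part of Proposition \ref{infhoeff} forces these expressions to be the Hoeffding components (with all remaining components vanishing). The only thing requiring care is to record the degeneracy of $\psi$ and $\rho$ explicitly via \eqref{degpsi} and \eqref{degrho}, so that uniqueness may legitimately be invoked rather than stopping at the bare algebraic identity.
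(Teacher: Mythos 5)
Your proposal is correct and takes essentially the same approach as the paper: the paper's proof also consists of verifying the algebraic identities directly (calling this ``elementary to verify'') and then invoking the degeneracy of $\psi$ and $\rho$ to conclude that the identities are in fact the Hoeffding decompositions. Your write-up simply makes explicit the indicator expansions and the uniqueness argument that the paper compresses into one sentence.
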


\begin{proof}
Both claims follow from the degeneracy of $\psi$ and $\rho$, respectively, and from the fact that the identities in (i) and (ii) actually hold true, which is elementary to verify.
\end{proof}

\begin{proof}[Proof of Theorem \ref{monotheo}]
We apply the bound \eqref{cowass} in Theorem \ref{cobound} as well as the bound \eqref{cobrem}.
We will first deal with the first term in the bound \eqref{cowass}. Note that, for $k\in[n]$, from \eqref{Fmono} and \eqref{degpsi} we have
\begin{align}\label{mon1}
D_kF&=F-\E[F|\G_k]=\frac{1}{\sigma}\sum_{1\leq i<j\leq n} a_{i,j}\psi(X_i,X_j)-\frac{1}{\sigma}\sum_{1\leq i<j\leq n:\,k\in\{i,j\}} a_{i,j}\psi(X_i,X_j)\notag\\
&=\frac{1}{\sigma}\sum_{i\in[n]:\,i\not=k}a_{i,k}\psi(X_i,X_k)
\end{align}
and, thus, again by \eqref{degpsi}
\begin{align}\label{mon2}
\E\bigl[D_kF\,\big|\F_k\bigr]&=\frac{1}{\sigma}\sum_{i=1}^{k-1}a_{i,k}\psi(X_i,X_k).
\end{align}
Hence, from \eqref{mon1},\eqref{mon2} and Lemma \ref{le3} we have that 
\begin{align}\label{hdm1}
&(D_kF)\E\bigl[D_kF\,\big|\F_k\bigr]=\frac{1}{\sigma^2}\sum_{i=1}^{k-1}a_{i,k}\psi(X_i,X_k)^2+\frac{2}{\sigma^2}\sum_{1\leq i<j<k-1}a_{i,k} a_{j,k}\psi(X_i,X_k)\psi(X_j,X_k)\notag\\
&\;+\frac{1}{\sigma^2}\sum_{i=1}^{k-1}\sum_{j=k+1}^n a_{i,k} a_{j,k}\psi(X_i,X_k)\psi(X_j,X_k)\notag\\
&=\frac{1}{\sigma^2}\sum_{i=1}^{k-1}a_{i,k}\biggl(\frac{1}{c}\Bigl(1-\frac{1}{c}\Bigr)+\Bigl(1-\frac{2}{c}\Bigr)\psi(X_i,X_k)\biggr)\notag\\
&\;+\frac{2}{\sigma^2}\sum_{1\leq i<j<k-1}a_{i,k} a_{j,k}\biggl(\frac{1}{c}\psi(X_i,X_j)+\rho(X_i,X_j,X_k)\biggr)\notag\\
&\;+\frac{1}{\sigma^2}\sum_{i=1}^{k-1}\sum_{j=k+1}^n a_{i,k} a_{j,k}\biggl(\frac{1}{c}\psi(X_i,X_j)+\rho(X_i,X_j,X_k)\biggr)\notag\\
&=\E\Bigl[(D_kF)\E\bigl[D_kF\,\big|\F_k\bigr]\Bigr]+\frac{c-2}{c\sigma^2}\sum_{i=1}^{k-1}a_{i,k}\psi(X_i,X_k)
+\frac{2}{c\sigma^2}\sum_{1\leq i<j<k-1}a_{i,k} a_{j,k}\psi(X_i,X_j)\notag\\
&\;+\frac{1}{c\sigma^2}\sum_{i=1}^{k-1}\sum_{j=k+1}^n a_{i,k} a_{j,k}\psi(X_i,X_j)+\frac{2}{\sigma^2}\sum_{1\leq i<j<k-1}a_{i,k} a_{j,k}\rho(X_i,X_j,X_k)\notag\\
&\;+\frac{1}{\sigma^2}\sum_{i=1}^{k-1}\sum_{j=k+1}^n a_{i,k} a_{j,k}\rho(X_i,X_j,X_k)
\end{align}
is the Hoeffding decomposition of $(D_kF)\E\bigl[D_kF\,\big|\F_k\bigr]$. Summing over $k=1,\dotsc,n$ and grouping terms we hence obtain the Hoeffding decomposition of 
\begin{align}\label{hdm2}
&S:=\sum_{k=1}^n(D_kF)\E\bigl[D_kF\,\big|\F_k\bigr]\notag\\
&=\E[S]+\frac{1}{c\sigma^2}\sum_{1\leq i<j\leq n}\biggl( (c-2) a_{i,j} +2\sum_{k=j+1}^n a_{i,k}a_{j,k}+\sum_{k=i+1}^{j-1}a_{i,k}a_{j,k} \biggr)\psi(X_i,X_j)\notag\\
&\;+\frac{1}{\sigma^2}\sum_{1\leq i<k<j\leq n}\Bigl(a_{i,k}a_{j,k}+2a_{i,j}a_{j,k}\1_{\{k< j-1\}}\Bigr)\rho(X_i,X_j,X_k).
\end{align}
Using the orthogonality of the Hoeffding components, Lemma \ref{le2}, $a_{i,j}^2=a_{i,j}$ as well as the inequalities $(a+b+c)^2\leq 3a^2+3b^2+3c^2$ and $(a+b)^2\leq 2a^2+2b^2$ we thus have that 
\begin{align}\label{mon3}
&\Var(S)=\frac{1}{c^2\sigma^4}\sum_{1\leq i<j\leq n}\biggl((c-2) a_{i,j} +2\sum_{k=j+1}^n a_{i,k}a_{j,k}+\sum_{k=i+1}^{j-1}a_{i,k}a_{j,k} \biggr)^2 \E\bigl[\psi(X_i,X_j)^2\bigr]\notag\\
&\;+\frac{1}{\sigma^4}\sum_{1\leq i<k<j\leq n}\Bigl(a_{i,k}a_{j,k}+2a_{i,j}a_{j,k}\1_{\{k< j-1\}}\Bigr)^2\E\bigl[\rho(X_i,X_j,X_k)^2\bigr]\notag\\
&\leq\frac{1}{m^2 (c-1) }\sum_{1\leq i<j\leq n}3\biggl( (c-2)^2a_{i,j} +4\sum_{k,l=j+1}^n a_{i,k}a_{j,k}a_{i,l}a_{j,l}+\sum_{k,l=i+1}^{j-1}a_{i,k}a_{j,k} a_{i,l}a_{j,l}\biggr)\notag\\
&\;+\frac{c^2-3c+2}{(c-1)^2m^2}\sum_{1\leq i<k<j\leq n}2\Bigl(a_{i,k}a_{j,k}+4a_{i,j}a_{j,k}\Bigr)\notag\\
&\leq \frac{3(c-2)}{m^2 }\sum_{1\leq i<j\leq n}a_{i,j}+\frac{12}{m^2 (c-1) }\sum_{1\leq i<j<k\leq n} a_{i,k}a_{j,k}+\frac{3}{m^2 (c-1) }\sum_{1\leq i<k<j\leq n}a_{i,k}a_{j,k}\notag\\
&\;+\frac{24}{m^2 (c-1) }\sum_{1\leq i<j<k<l\leq n}a_{i,k}a_{j,k}a_{i,l}a_{j,l} +\frac{6}{m^2 (c-1) }\sum_{1\leq i<k<l<j\leq n}a_{i,k}a_{j,k} a_{i,l}a_{j,l}\notag\\
&\;+\frac{2}{m^2}\sum_{1\leq i<k<j\leq n}\Bigl(a_{i,k}a_{j,k}+4a_{i,j}a_{j,k}\Bigr) .
\end{align}
Now, noting that 
\begin{align*}
N(C_4,G)&=\frac18\sum_{(i,j,k,l)\in[n]^4_{\not=}}a_{i,k}a_{k,j} a_{j,l}a_{l,i}\\
&\geq \sum_{1\leq i_1<i_2<i_3<i_4\leq n}a_{i_{\pi(1)},i_{\pi(2)}}a_{i_{\pi(2)},i_{\pi(3)}} a_{i_{\pi(3)},i_{\pi(4)}}a_{i_{\pi(4)},i_{\pi(1)}}
%\sum_{1\leq i<k<l<j\leq n}a_{i,k}a_{k,j} a_{j,l}a_{l,i},
\end{align*}
for each permutation $\pi$ of $\{1,2,3,4\}$, that $\sum_{1\leq i<j\leq n}a_{i,j}=m$ and by Lemma \ref{le1}, we obtain from \eqref{mon3} that 
\begin{align}\label{mon4}
&\Var(S)\leq\frac{3(c-2)}{m }+\frac{12\sqrt{2}}{m^{1/2} (c-1) }+\frac{24N(C_4,G)}{m^2 (c-1) }+\frac{3\sqrt{2}}{m^{1/2} (c-1) }+\frac{6N(C_4,G)}{m^2 (c-1) } +\frac{10\sqrt{2}}{m^{1/2}  }\notag\\
&=\frac{3(c-2)}{m }+\frac{10\sqrt{2}}{m^{1/2}  }+\frac{15\sqrt{2}}{m^{1/2} (c-1) }+\frac{30N(C_4,G)}{m^2 (c-1) }.
\end{align}
Thus, for the first term in the bound \eqref{cowass} we have 
\begin{align}\label{ftm}
&\sqrt{\frac{2}{\pi}}\E\Bbabs{1- \sum_{k=1}^n \bigl(D_k\E[F\,|\,\F_k]\bigr)\, D_kF}\leq \sqrt{\frac{2}{\pi}}\sqrt{\Var(S)}\notag\\
&\leq\sqrt{\frac{2}{\pi}}\biggl(\frac{3(c-2)}{m }+\frac{10\sqrt{2}}{m^{1/2}  }+\frac{15\sqrt{2}}{m^{1/2} (c-1) }+\frac{30N(C_4,G)}{m^2 (c-1) }\biggr)^{1/2}.
\end{align}
To estimate the second term in \eqref{cowass}, we apply the bound \eqref{cobrem}. First note that, from \eqref{poincare2} we have that 
\begin{align}\label{mon5}
\sum_{k=1}^\infty\E\bigl[(D_kF)^2\bigr]\leq 2\Var(F)=2
\end{align}
since $F\in\mathcal{H}_2$ as $\psi$ is a canonical kernel of order 2. Next, from \eqref{hdm1}, using orthogonality, Lemma \ref{le2} and $a_{i,j}^2=a_{i,j}$, similarly as for \eqref{mon3}, for $k\in[n]$, we have 
\begin{align*}
&\Var\Bigl((D_kF)\E\bigl[D_kF\,\big|\F_k\bigr]\Bigr)\leq\frac{c-2}{m^2}\sum_{i=1}^{k-1}a_{i,k}+\frac{4}{m^2(c-1)}\sum_{1\leq i<j<k-1}a_{i,k} a_{j,k}\\
&\;+\frac{1}{m^2(c-1)}\sum_{i=1}^{k-1}\sum_{j=k+1}^n a_{i,k} a_{j,k}+\frac{4}{m^2}\sum_{1\leq i<j<k-1}a_{i,k} a_{j,k}+\frac{1}{m^2}\sum_{i=1}^{k-1}\sum_{j=k+1}^n a_{i,k} a_{j,k}
\end{align*}
so that, using Lemma \ref{le1} again, by summing over $k=1,\dotsc,n$, we obtain 
\begin{align}\label{mon6}
 &\sum_{k=1}^n\Var\Bigl((D_kF)\E\bigl[D_kF\,\big|\F_k\bigr]\Bigr)\notag\\
&\leq\frac{c-2}{m}+\frac{4\sqrt{2}}{m^{1/2}(c-1)}+\frac{\sqrt{2}}{m^{1/2}(c-1)}+\frac{4\sqrt{2}}{m^{1/2}}+\frac{\sqrt{2}}{m^{1/2}}\notag\\
&=\frac{c-2}{m}+\frac{5\sqrt{2}}{m^{1/2}(c-1)}+\frac{5\sqrt{2}}{m^{1/2}}.
\end{align}
Finally, from \eqref{mon2}, again using Lemma \ref{le1} we have that 
\begin{align}\label{mon7}
&\sum_{k=1}^n\Var\Bigl(\E\bigl[D_kF\,\big|\F_k\bigr]\Bigr)^2=\frac{(1-1/c)^2}{c^2\sigma^4}\sum_{k=1}^n\biggl(\sum_{i=1}^{k-1}a_{i,k}\biggr)^2\notag\\
&=\frac{c-1}{cm^2}\biggl(\sum_{1\leq i<k\leq n}a_{i,k}+2\sum_{1\leq i<j<k\leq n}a_{i,k}a_{j,k}\biggr)
\leq \frac{1}{m}+\frac{2\sqrt{2}}{m^{1/2}}.
\end{align}
Hence, from \eqref{cobrem}, \eqref{mon5}, \eqref{mon6} and \eqref{mon7} we obtain for the second term in the bound \eqref{cowass} that
\begin{align}\label{stm}
&\sum_{k=1}^n\E\Bigl[\babs{D_k\E[F\,|\,\F_k]}\bigl(D_kF\bigr)^2\Bigr]\leq\sqrt{2}\biggl(\frac{c-1}{m}+\frac{5\sqrt{2}}{m^{1/2}(c-1)}+\frac{7\sqrt{2}}{m^{1/2}}\biggr)^{1/2}
\end{align}
so that the claim follows from \eqref{cowass}, \eqref{ftm} and \eqref{stm}.
\end{proof}

\section{Appendix} \label{appendix}
In this section we give a self-contained proof of Proposition \ref{infhoeff} on infinite Hoeffding decompositions. We first review the concepts of unconditional convergence and summability in Banach spaces.

\subsection{Unconditional convergence and summability in Banach spaces } 
Let $(B,\norm{\cdot})$ be a normed space, let $\emptyset\not=I$ be an index set and let $(x_i)_{i\in I}$ be a family of vectors in $B$. 

The (symbolic) series $\sum_{i\in I} x_i$ is said to \textit{converge unconditionally} to the 
vector $x\in B$, if the following two conditions hold:
\begin{itemize}
 \item The set $I^*:=\{i\in I\,:\, x_i\not=0\}$ is at most countably infinite.
 \item If $\abs{I^*}=\infty$ and $I^*=\{i_n\, :\, n\in\N\}$ is an enumeration of $I^*$, then the series $\sum_{n=1}^\infty x_{i_n}$ converges to $x$, i.e.
 \[\lim_{m\to\infty}\Bigl\|\sum_{n=1}^m x_{i_n} -x\Bigr\|=0\,.\]
\end{itemize}
On the other hand, the family $(x_i)_{i\in I}$ is called \textit{summable}, if there is an $s\in B$ with the following property: For each $\epsilon>0$ there is a finite set $I_\eps\subseteq I$ such that 
for all finite $J$ with $I_\eps\subseteq J\subseteq I$ we have 
\[\Bigl\|\sum_{i\in J} x_i -s\Bigr\|<\eps\,.\]
It is not difficult to see that such an $s$ is necessarily unique. Moreover, it is a standard result in functional analysis that summability and unconditional convergence of series in Banach spaces are equivalent and that, with the above notation, one has $x=s$.

In finite-dimensional spaces, these conditions are further equivalent to absolute convergence of the series, i.e. to the condition that $\sum_{n\in\N}\norm{x_{i_n}}<\infty$. 

For infinite-dimensional Banach spaces $B$, however, according to the famous \textit{Dvoretzky-Rogers theorem} \cite{DvoRo}, there always exists a series $\sum_{i\in I} x_i$ that is unconditionally but not absolutely convergent.

The following result about the unconditional convergence of families of centered and uncorrelated random variables in $L^2(\P)$ is certainly known. As we have not found a suitable reference, though, we include its statement and also provide a complete proof.

\begin{prop}\label{serieslemma}
Let $\emptyset\not=I$ be %a finite or countable 
an index set and suppose that $(X_i)_{i\in I}$ is a family of square-integrable, centered and pairwise uncorrelated random variables in $L^2(\P)$ for some probability space $(\Omega,\A,\P)$. Assume that the family $(\Var(X_i))_{i\in I}$ of nonnegative real numbers satisfies
\[\sup\Bigl\{\sum_{i\in J}\Var(X_i)\,:\,J\subseteq I\text{ finite }\Bigr\}<\infty. \]
Then, there is a random variable $S\in L^2(\P)$ such that the series $\sum_{i\in I} X_i$ converges unconditionally to $S$ in the $L^2(\P)$-sense.
\end{prop}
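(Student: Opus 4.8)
The plan is to reduce the assertion to the standard functional-analytic facts recalled above: in a complete normed space a family is summable precisely when it satisfies the Cauchy criterion for summability, and summability of a series is equivalent to its unconditional convergence (with the same limit). Since $L^2(\P)$ is a Hilbert space and hence complete, it therefore suffices to verify the Cauchy criterion for summability of $(X_i)_{i\in I}$, namely that for every $\eps>0$ there is a finite set $I_\eps\subseteq I$ such that $\norm{\sum_{i\in J}X_i}_2<\eps$ for every finite $J\subseteq I$ that is disjoint from $I_\eps$.

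First I would exploit orthogonality to pass from the vector-valued problem to a scalar one. Because the $X_i$ are centered and pairwise uncorrelated, one has $\E[X_iX_j]=\Cov(X_i,X_j)=0$ for $i\neq j$, so that for any finite $J\subseteq I$ the Pythagorean identity
\[\norm{\textstyle\sum_{i\in J}X_i}_2^2=\sum_{i\in J}\E[X_i^2]=\sum_{i\in J}\Var(X_i)\]
holds. This is the key step of the argument.

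Next I would observe that the hypothesis says exactly that the family $(\Var(X_i))_{i\in I}$ of nonnegative reals has bounded finite partial sums, hence is summable in $\R$ with finite total $V:=\sup\{\sum_{i\in J}\Var(X_i):J\subseteq I\text{ finite}\}$. Consequently, for each $\eps>0$ one may choose a finite $I_\eps\subseteq I$ with $\sum_{i\in I_\eps}\Var(X_i)>V-\eps^2$. Then any finite $J\subseteq I$ disjoint from $I_\eps$ satisfies, since $I_\eps\cup J$ is finite, the inequality $\sum_{i\in J}\Var(X_i)\le V-\sum_{i\in I_\eps}\Var(X_i)<\eps^2$. Combined with the Pythagorean identity this gives $\norm{\sum_{i\in J}X_i}_2<\eps$, which is precisely the Cauchy criterion.

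Finally, by completeness of $L^2(\P)$ the Cauchy criterion guarantees the existence of an $S\in L^2(\P)$ to which $(X_i)_{i\in I}$ is summable, and the recalled equivalence upgrades this to unconditional convergence of $\sum_{i\in I}X_i$ to $S$; the countability of $I^*=\{i\in I:\Var(X_i)>0\}$ required by the definition is automatic, since each set $\{i\in I:\Var(X_i)>1/n\}$ must be finite by boundedness of the partial sums. I do not expect a genuine obstacle here: the only point requiring care is to phrase the tail estimate uniformly over all finite tail-sets $J$ rather than merely along a single enumeration, which is exactly what the Cauchy criterion for summability (as opposed to ordinary series convergence) encodes.
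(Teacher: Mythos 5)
Your proof is correct, but it is packaged differently from the paper's. You verify the Cauchy criterion for summability of the net of finite partial sums (uniformly over all finite sets $J$ disjoint from $I_\eps$), using the Pythagorean identity $\norm{\sum_{i\in J}X_i}_2^2=\sum_{i\in J}\Var(X_i)$ together with the fact that a family of nonnegative reals with bounded finite partial sums is summable; you then invoke the equivalence, in complete spaces, of the Cauchy criterion with summability and of summability with unconditional convergence. The paper instead argues by hand: it fixes an enumeration $(i_n)$ of the countable set $I^*$, shows the partial sums $S_n=\sum_{k\le n}X_{i_k}$ form a Cauchy \emph{sequence} (same orthogonality computation), obtains $S$ by completeness, and then proves permutation invariance directly via a three-term triangle inequality, choosing $n_0$ with $\sum_{k>n_0}\Var(X_{i_k})<\eps^2$ and $m_0$ with $[n_0]\subseteq\pi([m_0])$. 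The computational core is identical; what differs is that you outsource the enumeration-independence to the summability--unconditional-convergence equivalence, while the paper proves it explicitly. Your route is shorter and treats arbitrary index sets uniformly, but note one small citation point: the paper's preliminaries recall only the equivalence of summability and unconditional convergence, not the Cauchy criterion for summability in complete spaces, so in the paper's context that criterion would need to be stated or proved -- and supplying exactly that content is, in effect, what the paper's enumeration-plus-permutation argument does. Your closing remark that countability of $I^*$ is automatic (each $\{i:\Var(X_i)>1/n\}$ is finite) is correct and matches the paper's first step.
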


\begin{proof}
It is not difficult to see that the assumption implies that the set $I^*:=\{i\in I\,:\, \Var(X_i)\not=0\}$ is countable and we may assume without loss of generality that it is countably infinite, since in the finite case the result is immediate.
%Without loss of generality we may assume that  $I^*:=\{i\in I\,:\, \Var(X_i)\not=0\}$ is countably infinite, as, in the finite case the result is immediate. 
Let us first fix an enumeration $(i_n)_{n\in\N}$ of $I^*$. Then, we obviously have 
\[\sum_{n=1}^\infty \Var(X_{i_n})=\sup\Bigl\{\sum_{i\in J}\Var(X_i)\,:\,J\subseteq I\text{ finite }\Bigr\}<\infty.\]
Therefore, using orthogonality, for integers $1\leq m<n$ it follows that 
\begin{align*}
 \Bigl\|\sum_{k=m+1}^n X_{i_k}\Bigr\|_2^2&=\sum_{k,l=m+1}^n\E\bigl[X_{i_k}X_{i_l}\bigr]=\sum_{k=m+1}^n\Var\bigl(X_{i_k}\bigr)%\\
 \longrightarrow0\,,\quad\text{as }n,m\to\infty.
\end{align*}
Hence, the sequence $(S_n)_{n\in\N}$ with $S_n=\sum_{k=1}^n X_{i_k}$ is Cauchy in $L^2(\P)$ and converges by completeness to some $S\in L^2(\P)$. 
It remains to make sure that this random variable $S$ does not depend on the particular enumeration chosen. Thus, let $\pi:\N\rightarrow\N$ be any bijection. Then, as above, one obtains that also the series 
$\sum_{k=1}^\infty X_{i_{\pi(k)}}$ converges to some $S_\pi\in L^2(\P)$. We show that $S=S_\pi$ $\P$-a.s. Let $\eps>0$ be given and choose $n_0=n_0(\eps)\in\N$ such that 
\[\sum_{k=n_0+1}^\infty \Var\bigl(X_{i_k}\bigr)<\eps^2\,.\]
Furthermore, choose $m_0=m_0(\eps)\in\N$ such that $[n_0]\subseteq\pi([m_0])$. Then, by the triangle inequality, we obtain 
\begin{align*}
 \Enorm{S-S_\pi}&\leq\BEnorm{S-\sum_{k=1}^{n_0}X_{i_k}}+\BEnorm{\sum_{k=1}^{n_0}X_{i_k}-\sum_{k=1}^{m_0}X_{i_{\pi(k)}}}+\BEnorm{\sum_{k=1}^{m_0}X_{i_{\pi(k)}}-S_\pi}\\
 &\leq \Bigl(\sum_{k=n_0+1}^\infty\Var\bigl(X_{i_k}\bigr)\Bigr)^{1/2}+\Bigl(\sum_{l\in\pi([m_0])\setminus[n_0]}\Var\bigl(X_{i_{l}}\bigr)\Bigr)^{1/2}\\
&\; +\Bigl(\sum_{l\in\N\setminus\pi([m_0])}\Var\bigl(X_{i_l}\bigr)\Bigr)^{1/2}\leq 3 \Bigl(\sum_{k=n_0+1}^\infty\Var\bigl(X_{i_k}\bigr)\Bigr)^{1/2}<3\epsilon\,.
\end{align*}
Since $\eps>0$ was arbitrary, this implies that $S=S_\pi$ $\P$-a.s.and $\sum_{i\in I} X_i$ converges unconditionally to $S$.\\
\end{proof}

\subsection{Proof of Proposition \ref{infhoeff} }
Recall that $F_M=F_{\max(M),M}=F_{n,M}$ for all finite $M\subseteq\N$ and all $n\geq\max(M)$. 
 We first prove \eqref{genhd2}. Due to \eqref{cons3}, for $n\in\N$ we have 
 \begin{align}\label{hd1}
  F_n&=\sum_{\substack{M\subseteq\N:\\ \max(M)\leq n}}F_{M}.
  %=\sum_{k=0}^n\sum_{\substack{M\subseteq\N:\\ \max(M)=k}}F_{M}\,.
\end{align}
Hence, using orthogonality and \eqref{l2bound2} we obtain 
\begin{align}\label{hd2}
 \sum_{\substack{M\subseteq\N:\\ \max(M)\leq n}}\Var\bigl(F_{M}\bigr)&
 %=\sum_{k=0}^n\sum_{\substack{M\subseteq\N:\\ \max(M)=k}}\Var\bigl(F_{M}\bigr)
 =\Var(F_n)\leq \Var(F)\,.
\end{align}
As the right hand side of the inequality \eqref{hd2} does not depend on $n$ we conclude that 
\begin{align}\label{hd3}
 \sum_{\substack{M\subseteq\N:\\ \abs{M}<\infty}}\Var\bigl(F_{M}\bigr)&=\lim_{n\to\infty}\sum_{\substack{M\subseteq\N:\\ \max(M)\leq n}}\Var\bigl(F_{M}\bigr)=\lim_{n\to\infty}\Var(F_n)=\Var(F)<+\infty\,,
\end{align}
where we have used the $L^2(\P)$ convergence of $(F_n)_{n\in\N}$ to $F$ to obtain the last identity. As the summands are pairwise orthogonal and $L^2(\P)$ is a Banach space, by Proposition \ref{serieslemma} this already implies that the first infinite series $\sum_{\substack{M\subseteq\N: \abs{M}<\infty}}F_{M} $ on the right hand side of \eqref{genhd2} converges unconditionally in the $L^2(\P)$-sense to some limit $\tilde{F}\in L^2_\X$. We need to make sure that $\tilde{F}=F$ holds $\P$-a.s. As $(F_n)_{n\in\N}$ converges to $F$ in $L^2(\P)$ this will follow if we can show that also $\tilde{F}$ is the $L^2(\P)$-limit of $(F_n)_{n\in\N}$. Let $(M_k)_{k\in\N}$ be an enumeration of the finite subsets of $\N$. Then, by unconditional convergence, we have 
\[\tilde{F}=\sum_{k=1}^\infty F_{M_k}\]
in the $L^2(\P)$-sense. Thus, given $\eps>0$ we may choose $k_0\in\N$ such that 
\[\Biggl(\sum_{k=k_0+1}^\infty\E\bigl[F_{M_k}^2\bigr]\Biggr)^{1/2}=\BEnorm{\sum_{k=k_0+1}^\infty F_{M_k}}=\BEnorm{\tilde{F}-\sum_{k=1}^{k_0} F_{M_k}} <\eps.\]
Then, for $n\geq n_0$, where $n_0\in\N$ is chosen in such a way that $M_1,\dotsc,M_{k_0}\subseteq[n_0]$ it holds that
\begin{align*}
 \Enorm{\tilde{F}-F_n}&=\Biggl(\sum_{\substack{k\in\N:\\ M_k\not\subseteq [n]}}\E\bigl[F_{M_k}^2\bigr]\Biggr)^{1/2}\leq \Biggl(\sum_{k=k_0+1}^\infty\E\bigl[F_{M_k}^2\bigr]\Biggr)^{1/2}
 <\eps,
\end{align*}
as desired. Thus, $F=\tilde{F}$ $\P$-a.s.
This proves the first identity in \eqref{genhd2}. By the same argument with obvious adaptations one shows that \eqref{genhd1} is true for each $p\in\N_0$. 
It remains to prove the second identity in \eqref{genhd2}. Since 
\begin{align*}
 &\sup_{m\in\N}\sum_{p=0}^m\Var\bigl(F^{(p)}\bigr)= \sup_{m\in\N}\lim_{n\to\infty}\sum_{p=0}^m\Var\bigl(F^{(p)}_n\bigr)=\sup_{m\in\N}\sup_{n\in\N}\sum_{p=0}^m\Var\bigl(F^{(p)}_n\bigr)\\
 &=\sup_{n\in\N}\sup_{m\in\N}\sum_{p=0}^m\Var\bigl(F^{(p)}_n\bigr)=\sup_{n\in\N}\sum_{p=0}^n\Var\bigl(F^{(p)}_n\bigr)=\sup_{n\in\N}\Var(F_n)=\Var(F)<\infty,
\end{align*}
it follows from \eqref{orthrel} and Proposition \ref{serieslemma} again that the series $\sum_{p=0}^\infty F^{(p)}$ converges unconditionally to some $T\in L^2_\X$ in the $L^2(\P)$-sense. Given $\eps>0$ we can hence find an $m_0\in\N$ such that for all $m\geq m_0$ 
\[\Biggl(\sum_{p=m+1}^\infty\E\Bigl[\bigl(F^{(p)} \bigl)^2\Bigr]\Biggr)^{1/2}=\BEnorm{\sum_{p=m+1}^\infty F^{(p)}}=\BEnorm{T-\sum_{p=0}^{m} F^{(p)}}<\eps.\]
Then, for given $\eps>0$ with $(M_k)_{k\in\N}$ and $k_0$ as above we choose $m_1\geq m_0$ large enough to ensure that $|M_{k}|\leq m_1$ for $k=1,\dotsc,k_0$. Then, we have 
\begin{align*}
 \Enorm{T-F}&\leq\BEnorm{T-\sum_{p=0}^{m_1} F^{(p)}}+\BEnorm{\sum_{p=0}^{m_1} F^{(p)}-\sum_{k=1}^{k_0}F_{M_k}}+\BEnorm{\sum_{k=k_0+1}^{\infty}F_{M_k}}\\
 &\leq \BEnorm{\sum_{p=m_1+1}^\infty F^{(p)}}+2\BEnorm{\sum_{k=k_0+1}^{\infty}F_{M_k}}<3\eps,
\end{align*}
where we have used the fact that the convergence in 
\[\sum_{p=0}^{m_1} F^{(p)}=\sum_{\substack{M\subseteq\N:\\ |M|\leq m_1}} F_M\]
is again unconditional, which follows from the unconditional convergence in \eqref{genhd1} and the fact that summability is obviously preserved under addition of finitely many series.
Thus, $T=F$ $\P$-a.s. which finishes the proof of Proposition \ref{infhoeff}.

\begin{comment}
Moreover, since each $F^{(p)}$ is the $L^2(\P)$-limit of $(F_n^{(p)})_{n\in\N}$, we have 
$\lim_{n\to\infty}\Enorm{\sum_{p=0}^m(F^{(p)}-F_n^{(p)}}=0$

Using the unconditional convergence in \eqref{genhd2} and \eqref{genhd1} we finally obtain
\begin{align}\label{hd5}
 F&=\sum_{\substack{M\subseteq\N:\\ \abs{M}<\infty}}F_{M}=\sum_{p=0}^\infty \sum_{\substack{M\subseteq\N:\\ \abs{M}=p}}F_{M}=\sum_{p=0}^\infty F^{(p)}\,,
\end{align}
 which proves the second identity in \eqref{genhd2}. Finally, the orthogonality of the summands in \eqref{genhd2} has been observed in \eqref{orthrel}.\\
\end{comment}

\normalem
\bibliography{malliavin_product}{}
\bibliographystyle{alpha}
\end{document}